\newcommand{\MailFile}[1]{}
\newtheorem{thm}{Theorem}[section]
\newtheorem{lem}[thm]{Lemma}
\newtheorem{cor}[thm]{Corollary}
\newtheorem{prop}[thm]{Proposition}
\newtheorem{obs}[thm]{Observation}
\newtheorem*{RankConjecture}{Rank Conjecture}
\newtheorem*{MainTheorem}{Main Theorem}
\newtheorem*{TheoremGeom}{Theorem \ref{thm:main_theorem_geometric_formulation}}
\newtheorem*{TheoremAlg}{Theorem \ref{thm:main_theorem_algebraic_formulation}}
\theoremstyle{definition}
\newtheorem{exmpl}[thm]{Example}
\theoremstyle{remark}
\newtheorem*{rem}{Remark}
\newtheorem{remnum}[thm]{Remark}
\newcommand{\real}{\mathbb{R}}
\newcommand{\integer}{\mathbb{Z}}
\newcommand{\field}{\mathbb{F}}
\newcommand{\nat}{\mathbb{N}}
\numberwithin{equation}{section}
\newcommand{\calA}{\mathcal{A}}
\newcommand{\calG}{\mathcal{G}}
\newcommand{\calO}{\mathcal{O}}
\newcommand{\abs}[1]{\left\lvert#1\right\rvert}
\newcommand{\conv}{\operatorname{conv}}
\newcommand{\gen}[1]{\left\langle{}#1\right\rangle}
\newcommand{\lk}{\operatorname{lk}}
\newcommand{\pr}{\operatorname{pr}}
\newcommand{\typ}{\operatorname{typ}}
\newcommand{\st}{\operatorname{st}}
\DeclareMathOperator{\SL}{SL}
\newcommand{\Aut}{\operatorname{Aut}}
\newcommand{\relint}{\operatorname{relint}}
\newcommand{\horizontal}{\multimap}
\newcommand{\direction}{\rightslice}
\newcommand{\nin}{\notin}
\newcommand{\union}{\cup}
\newcommand{\Union}{\bigcup}
\newcommand{\intersect}{\cap}
\newcommand{\Intersect}{\bigcap}
\renewcommand{\Join}{\bigast}
\renewcommand{\equiv}{\Leftrightarrow}
\newcommand{\scaprod}[2]{\left\langle{}#1\mid{}#2\right\rangle}
\newcommand{\defeq}{\mathrel{\mathop{:}}=}
\newcommand{\op}{\mathrel{\operatorname{op}}}
\newcommand{\opm}{\operatorname{op}}
\newcommand{\optionalsubsuper}[3]{%
\ifthenelse{\equal{#2}{}}{%
  \ifthenelse{\equal{#3}{}}{%
    #1}{%
    #1^{#3}}
  }{%
  \ifthenelse{\equal{#3}{}}{%
    #1_{#2}}{%
    #1_{#2}^{#3}}}}
\newcommand{\SumOfLocalRanks}{m}
\newcommand{\Dimension}{n}
\newcommand{\GlobalField}{k}
\newcommand{\Places}{S}
\newcommand{\Place}{s}
\newcommand{\Valuation}{\nu}
\newcommand{\Class}[1]{[#1]}
\newcommandx{\Integers}[1][1=\Places]{\calO_{#1}}
\newcommandx{\FiniteField}[1][1={}]{\field_{#1}}
\newcommandx{\GroupScheme}[1][1={}]{\calG\ifthenelse{\equal{#1}{}}{}{(#1)}}
\newcommand{\Group}{G}
\newcommand{\TopFin}[1]{F_{#1}}
\newcommandx{\EMSpace}[2][2=1]{K(#1,#2)}
\newcommand{\Another}[1]{{#1}'}
\newcommand{\YetAnother}[1]{{#1}''}
\newcommand{\Infty}[1]{#1^{\infty}}
\newcommandx{\Ray}[2][1={},2={}]{\optionalsubsuper{\rho}{#1}{#2}}
\newcommand{\Geodesic}{\gamma}
\newcommand{\Parameter}{r}
\newcommand{\Constant}{c}
\newcommandx{\Proj}[2][1={},2={}]{\optionalsubsuper{\pr}{#1}{#2}}
\newcommand{\Busemann}{\beta}
\newcommandx{\ReparBusemann}[1][1={}]{{\xi_{#1}}}
\newcommand{\EuclBuilding}{X}
\newcommand{\PosEuclBuilding}{\EuclBuilding_+}
\newcommand{\NegEuclBuilding}{\EuclBuilding_-}
\newcommand{\EuclTwinBuilding}{(\PosEuclBuilding,\NegEuclBuilding)}
\newcommand{\EuclDistance}{\mu}
\newcommandx{\EuclCoDistance}[1][1={}]{\optionalsubsuper{\EuclDistance^*}{#1}{}}
\newcommand{\SpherDistance}{d}
\newcommand{\Apartment}{\Sigma}
\newcommand{\PosApartment}{\Apartment_+}
\newcommand{\NegApartment}{\Apartment_-}
\newcommand{\TwinApartment}{(\PosApartment,\NegApartment)}
\newcommand{\EuclSpace}{\mathbb{E}}
\newcommand{\Space}{Y}
\newcommand{\Vector}{v}
\newcommand{\AltVector}{w}
\newcommand{\FaceVector}{f}
\newcommand{\NormalVector}{n}
\newcommand{\PointAtInfty}{\xi}
\newcommand{\Point}{x}
\newcommand{\AltPoint}{y}
\newcommand{\PosPoint}{x_+}
\newcommand{\NegPoint}{x_-}
\newcommand{\SpherPoint}{p}
\newcommand{\Weyl}{W}
\newcommand{\DummyType}{X}
\newcommandx{\Zonotope}[1][1={}]{Z\ifthenelse{\equal{#1}{}}{}{(#1)}}
\newcommand{\Polytope}{P}
\newcommand{\Face}{F}
\newcommand{\NormalCone}[1]{N(#1)}
\newcommand{\Directions}{D}
\newcommand{\Direction}{z}
\newcommand{\ConvexSet}{M}
\newcommand{\ConvexSetPoint}{m}
\newcommandx{\InftyRay}[3][3={}]{\optionalsubsuper{[#1,#2)}{#3}{}}
\newcommand{\Cell}{\sigma}
\newcommand{\PosCell}{\Cell_+}
\newcommand{\NegCell}{\Cell_-}
\newcommand{\BigCell}{\tau}
\newcommand{\PosBigCell}{\BigCell_+}
\newcommand{\NegBigCell}{\BigCell_-}
\newcommand{\Vertex}{v}
\newcommand{\PosVertex}{\Vertex_+}
\newcommand{\NegVertex}{\Vertex_-}
\newcommand{\AltVertex}{w}
\newcommand{\Chamber}{C}
\newcommand{\PosChamber}{\Chamber_+}
\newcommand{\NegChamber}{\Chamber_-}
\newcommand{\AltChamber}{D}
\newcommand{\Wall}{H}
\newcommand{\SpherBuilding}{\Delta}
\newcommand{\NorthPole}{n}
\newcommandx{\SubComplex}[3][1=\SpherBuilding,3={}]{\optionalsubsuper{#1}{#3}{#2}}
\newcommandx{\Equator}[2][2={}]{\SubComplex[#1]{=\pi/2}[#2]}
\newcommandx{\OpenHemisphere}[2][2={}]{\SubComplex[#1]{>\pi/2}[#2]}
\newcommandx{\ClosedHemisphere}[2][2={}]{\SubComplex[#1]{\ge\pi/2}[#2]}
\newcommandx{\Vertical}[2][2={}]{\SubComplex[#1]{\textup{ver}}[#2]}
\newcommandx{\Horizontal}[2][2={}]{\SubComplex[#1]{\textup{hor}}[#2]}
\newcommandx{\Descending}[1]{{#1}^{\downarrow}}
\newcommand{\Min}{^{\textup{min}}}
\newcommand{\Project}[1]{\bar{#1}}
\newcommand{\SomeSet}{T}
\newcommand{\Up}{\nearrow}
\newcommand{\Down}{\searrow}
\newcommand{\Height}{h}
\newcommand{\Depth}{\operatorname{dp}}
\newcommand{\Subdivision}[1]{\mathring{#1}}
\newcommandx{\Link}[2][1={},2={}]{\optionalsubsuper{\lk}{#1}{#2}}
\newcommandx{\DescendingLink}[1][1={}]{\Link[#1][\downarrow]}
\newcommandx{\Star}[2][1={},2={}]{\optionalsubsuper{\st}{#1}{#2}}
\newcommand{\FacePart}{\partial}
\newcommand{\CofacePart}{\delta}
\newcommand{\Boundary}{\partial}
\newcommand{\Gradient}{\nabla}
\newcommand{\InftyGradient}{\Infty\Gradient}
\newcommand{\NullLevel}{\Space_0}
\newcommand{\Roof}[1]{\hat{#1}}
\newcommand{\FaceLattice}{\mathcal{F}}
\newcommand{\Stabilizer}[2]{\operatorname{Stab}_{#1}(#2)}
\newcommand{\Retraction}{\rho}
\newcommand{\WDistance}{\delta}
\newcommand{\WCoDistance}{\WDistance^*}
\newcommand{\PosWDistance}{\WDistance_+}
\newcommand{\Vertices}{\operatorname{vert}}
\newcommand{\Adeles}{\mathbb{A}}
\newcommand{\SystemOfApartments}{\calA}
\title{Finiteness Properties of Chevalley Groups over the\\Laurent Polynomial Ring over a Finite Field}
\author{Stefan Witzel}
\date{}
\begin{document}
\maketitle


\begin{abstract}
We show that if $\GroupScheme$ is a Chevalley group of rank $\Dimension$ and $\FiniteField[q][t,t^{-1}]$ is the ring of Laurent polynomials over a finite field, then $\GroupScheme(\FiniteField[q][t,t^{-1}])$ is of type $\TopFin{2\Dimension-1}$. This bound is optimal because it is known -- and we show again -- that the group is not of type $\TopFin{2\Dimension}$.
\end{abstract}


Let $\GlobalField$ be a global field and let $\GroupScheme$ be a connected, noncommutative, absolutely almost simple, isotropic $\GlobalField$-group. Let $\Places$ be a non-empty, finite set of places of $\GlobalField$ and denote by $\Integers$ the set of $\Places$-integers. Let $\SumOfLocalRanks$ be the sum of local ranks of $\GroupScheme$ with respect to the completions of $\GlobalField$ at places in $\Places$. In \cite[page 197]{brown89}, \cite{behr98}, and \cite{buxwor08} the question is raised, whether the following holds:

\begin{RankConjecture}[]
The group $\GroupScheme[\Integers]$ is of type $\TopFin{\SumOfLocalRanks-1}$ but not of type $\TopFin{\SumOfLocalRanks}$.
\end{RankConjecture}

The negative statement was proven by Bux and Wortman in \cite{buxwor07}. Results in favor of the positive statement include \cite{buxwor08} also by Bux and Wortman in which they prove it under the assumption that $\GroupScheme$ has global rank one. If the the $\GlobalField_\Place$-rank is one for all $s \in S$ it also follows from a theorem by Bestvina, Eskin and Wortman according to which $\GroupScheme[\Integers]$ is of type $\TopFin{\abs{S} - 1}$, see \cite{wortman10}. Concerning higher ranks, Bux, Gramlich and the author in \cite{buxgrawit10} proved the conjecture in case $\GroupScheme$ is an $\FiniteField[q]$-group and $\Integers = \FiniteField[q][t]$. The mentioned results were preceded by partial results notably by Abels \cite{abels91}, Abramenko \cite{abramenko87,abramenko96}, Behr \cite{behr98}, Stuhler \cite{stuhler80}, and by Devillers, Gramlich, Mühlherr and the author \cite{devgramue08,grawit09}, see \cite{buxwor07} for a historical overview.

From now on we consider the global field $\GlobalField = \FiniteField[q](t)$. We denote by $\GroupScheme$ a connected, noncommutative, absolutely almost simple $\FiniteField[q]$-group of rank $\Dimension > 0$. The group $\Group \defeq \GroupScheme[{\FiniteField[q][t,t^{-1}]}]$ is $\Places$-arithmetic with $\Places = \{\Class{\Valuation_0},\Class{\Valuation_\infty}\}$ containing the places of the $t$-adic valuation $\Valuation_0$ and the valuation at infinity $\Valuation_\infty$ (see \cite[Chapter~1]{niexin09}). Since we know that the negative part of the Rank Conjecture holds, we see that $\Group$ is not of type $\TopFin{2\Dimension}$. On the other hand the fact that $\GroupScheme[{\FiniteField[q][t]}]$ is of type $\TopFin{\Dimension-1}$ (by \cite{buxgrawit10}) implies that $\Group$ is also of type $\TopFin{\Dimension-1}$ (see Proposition~\ref{prop:increasing_places_preserves_topfin}). The aim of this article is to close the gap by confirming the Rank Conjecture in this case:

\begin{MainTheorem}
Let $\GroupScheme$ be a connected, noncommutative, absolutely almost simple $\FiniteField[q]$-group of rank $\Dimension > 0$.
The group $\GroupScheme[{\FiniteField[q][t,t^{-1}]}]$ is of type $\TopFin{2\Dimension-1}$ but not of type $\TopFin{2\Dimension}$.
\end{MainTheorem}

To our knowledge this is the first time that the full finiteness length (i.e.\ the maximal $\Dimension$ for which the group is of type $\TopFin{n}$, cf.\ \cite[Chapter~7]{geoghegan}) of an $\Places$-arithmetic group of rank $>1$ and with $\abs{\Places} > 1$ is determined. The ``smallest'' case where the result is new is $\SL_3(\FiniteField[q][t,t^{-1}])$. The Main~Theorem was predicted by Abramenko in Conjecture~3 of \cite{abramenko96}.

The completions of $\FiniteField[q](t)$ with respect to $\Valuation_0$ and $\Valuation_\infty$ (see \cite[Chapter~II]{serre79}) are $\FiniteField[q]((t))$ and $\FiniteField[q]((t^{-1}))$ respectively. The group $\Group$ embeds diagonally as a discrete subgroup of $\GroupScheme[{\FiniteField[q]((t))}] \times \GroupScheme[{\FiniteField[q]((t^{-1}))}]$ and in particular acts on the associated Bruhat--Tits building (see \cite{brutit72,brutit84}). But more is true: $\Group$ happens to be a Kac--Moody-group (in the sense of \cite{tits87}) so that there is a twin building that contains the two individual buildings (\cite[Section~10]{buxgrawit10}, see also \cite[Section~6.12]{abrbro}). Using these facts it is not difficult to deduce the Main Theorem from:

\begin{TheoremGeom}
Let $\EuclTwinBuilding$ be an irreducible locally finite Euclidean twin building of dimension $n$. Assume that $\Group$ acts strongly transitively on $\EuclTwinBuilding$ and that the kernel of the action is finite. Then $\Group$ is of type $\TopFin{2n-1}$ but not of type $\TopFin{2n}$.
\end{TheoremGeom}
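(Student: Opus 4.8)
The plan is to apply Brown's criterion \cite{brown89} to the action of $\Group$ on the CAT(0) polyhedral complex $Y \defeq \PosEuclBuilding\times\NegEuclBuilding$. Since $\PosEuclBuilding$ and $\NegEuclBuilding$ are contractible and locally finite, $Y$ is contractible of dimension $2n$, and since the kernel of the action is finite the action of $\Group$ on $Y$ is proper, i.e.\ all cell stabilizers are finite (the intersection of the two ``opposite'' parahoric-type stabilizers is finite). The action is however not cocompact, so a $\Group$-cocompact filtration of $Y$ is needed, and it is supplied by the twinning: the codistance of the twin building gives a $\Group$-invariant function which, after a Euclidean refinement, yields a map $\mu^{*}\colon Y\to\real_{\ge 0}$ whose sublevel sets $Y_{\le r}\defeq\{\mu^{*}\le r\}$ are increasing, exhaust $Y$, and are $\Group$-cocompact (strong transitivity yields transitivity on each codistance stratum). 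By Brown's criterion $\Group$ is of type $\TopFin{2n-1}$ as soon as the system $(Y_{\le r})_{r}$ is essentially $(2n-2)$-connected, and $\Group$ fails to be of type $\TopFin{2n}$ precisely when this system is not essentially $(2n-1)$-connected; I would prove both statements.

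\textbf{Setting up the Morse function.} The codistance $\mu^{*}$ cannot be used directly: its sublevel sets are not transverse to the cell structure of $Y$ and its naive descending links are insufficiently connected. I would therefore replace it by a small $\Group$-invariant perturbation, built factorwise by recording the admissible local directions in each of $\PosEuclBuilding$ and $\NegEuclBuilding$ as a zonotope (a Minkowski sum of segments) and adding the resulting support-type terms, so that the two contributions are convolved along the product. The outcome is a genuine Morse function $h$ on $Y$, $\Group$-invariant, for which passing a single critical value amounts to coning off one descending link. Consequently $\Group$ is of type $\TopFin{2n-1}$ once every descending link of $h$ is shown to be $(2n-2)$-connected.

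\textbf{Descending links: the main obstacle.} The link in $Y$ of a critical point is the join of the corresponding links in $\PosEuclBuilding$ and $\NegEuclBuilding$ together with an interval coming from the codistance coordinate, hence a spherical complex of dimension $2n-1$. Inside it, the descending link is to be analysed by a projection decomposition: the codistance direction distinguishes in each factor a residue toward which the point is being pushed, and projecting to these residues exhibits the descending link as a join of a ``vertical'' part --- a full subcomplex of a Euclidean-building link, which is highly connected by standard arguments --- and a ``horizontal'' part, which in each factor is a hemisphere-type subcomplex of an $(n-1)$-dimensional thick spherical building (the chambers whose projection to a fixed residue stays far from a fixed chamber). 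The crux is the sharp connectivity of the horizontal complexes: refining the hemisphere-complex estimates of \cite{buxgrawit10} one shows each is $(n-2)$-connected, so that the join of the two is $(2n-2)$-connected since $(n-2)+(n-2)+2=2n-2$. Combined with the Morse theory of $h$ this gives type $\TopFin{2n-1}$. I expect obtaining the full $(2n-2)$-connectivity --- rather than one less --- to be the hardest point, and the reason the perturbed function $h$ is required instead of $\mu^{*}$ itself.

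\textbf{Failure of $\TopFin{2n}$.} For optimality I would reprove, in the present twin-building setting, the bound of Bux and Wortman \cite{buxwor07}. Fixing a twin apartment $\TwinApartment\cong(\EuclSpace^{n},\EuclSpace^{n})$, the product of a Weyl sector of $\PosApartment$ with a Weyl sector of $\NegApartment$ carries an essential $(2n-1)$-cycle near infinity; the dimension $2n-1=(n-1)+(n-1)+1$ again reflects the join structure of the links at infinity. Reduction theory for the $\Group$-action shows such cycles cannot be filled inside any $\Group$-cocompact subcomplex $Y_{\le r}$, so the system $(\tilde H_{2n-1}(Y_{\le r}))_{r}$ is not essentially trivial. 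Equivalently, in the Morse picture the horizontal complexes above are homotopy equivalent to nontrivial wedges of $(n-1)$-spheres, hence infinitely many descending links have nonzero top homology $H_{2n-1}$; this simultaneously caps the connectivity obtained in the previous step and certifies, via Brown's criterion, that $\Group$ is not of type $\TopFin{2n}$. Finiteness of the cell stabilizers, guaranteed by finiteness of the kernel, validates the use of Brown's criterion throughout.
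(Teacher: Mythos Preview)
Your overall strategy is right and matches the paper: act on $Y=\PosEuclBuilding\times\NegEuclBuilding$, filter by a perturbed codistance, apply Brown's criterion. But the heart of your descending-link analysis rests on a decomposition that does not exist, and this is exactly the obstacle the paper is built to overcome.

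The codistance $\EuclCoDistance$ --- and any zonotope perturbation of it --- is a function of both coordinates simultaneously; it is \emph{not} a sum of a function on $\PosEuclBuilding$ and a function on $\NegEuclBuilding$. Consequently its gradient at a point $(\PosPoint,\NegPoint)$ is a direction in $\Link\PosPoint*\Link\NegPoint$ that genuinely mixes the two join factors, and the ``horizontal'' subcomplex (the part perpendicular to that gradient) does \emph{not} split as a join of a piece in $\Link\PosPoint$ and a piece in $\Link\NegPoint$. Your connectivity count $(n-2)+(n-2)+2=2n-2$ presupposes exactly such a splitting. The paper says this explicitly at the start of Section~\ref{sec:horizontal_links}: Bux--Wortman could work factor by factor because their Morse function was a linear combination of single-factor functions, whereas here the Morse function is ``properly a function of the product''. (Your phrase ``an interval coming from the codistance coordinate'' also has no referent: $Y$ has no extra coordinate, and $\dim\Link\Subdivision\Cell=2n-1$ on the nose.)

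What actually replaces your missing splitting is the machinery of Sections~\ref{sec:horizontal_links} and~\ref{sec:horizontal_cells_and_subdivision}. For a horizontal cell $\BigCell$ one defines $\BigCell\Min$, the minimal face with $\BigCell\horizontal\BigCell\Min$; a combinatorial game of ``moves'' (Proposition~\ref{prop:bound_on_moves}) gives a bounded \emph{depth} invariant; and the Morse function is $(\max h,\Depth,\dim)$ on the barycentric subdivision. Non-essential cells then have contractible descending links. For essential cells the descending link factors as $\Link[\FacePart]\Subdivision\Cell*\Descending{\Horizontal\Link}\Cell*\Descending{\Vertical\Link}\Cell$, where the first is a full sphere, the vertical part is an open hemisphere complex (Schulz), and the horizontal part is handled not by a factorwise hemisphere argument but by showing, via the zonotope richness and a height-preserving reflection group on stars (Corollary~\ref{cor:apartments_that_contain_chamber_look_good}), that in every relevant apartment the non-descending locus is convex, so Proposition~\ref{prop:apartmentwise_coconvex_complexes} applies. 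The dimensions of the three pieces add to $2n-1$, but not via your $n{-}1$ plus $n{-}1$ split. Finally, the ``not $\TopFin{2n}$'' part in the paper comes from exhibiting, at arbitrarily large height, essential cells whose descending link is \emph{properly} spherical (non-contractible), which feeds directly into Brown's criterion; no separate reduction-theory cycle argument is needed.
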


Recall that a group acts strongly transitively on a twin building if it acts transitively on pairs $(\Chamber,\Apartment)$ where $\Chamber$ is a chamber contained in the twin apartment $\Apartment$ (\cite[Section~6.3]{abrbro}). Using the theory of groups associated to buildings, see Sections~6~to~8 of \cite{abrbro}, we get the following variant of Theorem~\ref{thm:main_theorem_geometric_formulation}:

\begin{TheoremAlg}
Let $\Group$ be a group that admits a twin Tits system $(\Group,B_+,B_-,N,S)$ and set as usual $W = N/(N \intersect B_+)$. If $(W,S)$ is of irreducible affine type and rank $n+1$, $[B_\varepsilon s B_\varepsilon:B_\varepsilon]$ is finite for all $s \in S$ and $\varepsilon \in \{+,-\}$, and $\Intersect_{g \in G}gB_+g^{-1} \intersect \Intersect_{g \in G}gB_-g^{-1}$ is finite, then $\Group$ is of type $\TopFin{2n-1}$ but not of type $\TopFin{2n}$.

This is in particular the case if there is an RGD system $(\Group,(U_\alpha)_{\alpha\in\Phi},T)$ of type $(W,S)$, each $U_\alpha$ is finite, and $\Group_+ \defeq \gen{U_\alpha \mid \alpha \in \Phi}$ has finite index in $\Group$.
\end{TheoremAlg}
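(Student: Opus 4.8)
The idea is to deduce both assertions from Theorem~\ref{thm:main_theorem_geometric_formulation} by producing from the combinatorial data an irreducible locally finite Euclidean twin building on which $\Group$ acts strongly transitively with finite kernel. From the twin Tits system $(\Group,B_+,B_-,N,S)$ the general theory of groups acting on (twin) buildings (\cite[Sections~6--8]{abrbro}) yields a twin building $\EuclTwinBuilding$ with halves $\PosEuclBuilding,\NegEuclBuilding$, namely the buildings of the BN-pairs $(\Group,B_+,N)$ and $(\Group,B_-,N)$, on which $\Group$ acts in a type-preserving way and strongly transitively, i.e.\ transitively on pairs consisting of a chamber and a twin apartment containing it; both halves and the twinning have Weyl group $W=N/(N\intersect B_+)$. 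Since $(W,S)$ is of irreducible affine type with $\abs{S}=n+1$, the Coxeter complex of $(W,S)$ is the Euclidean space $\EuclSpace^{n}$ tiled by alcoves, so each half is an $n$-dimensional Euclidean building, irreducible because $(W,S)$ is. Local finiteness is where the double-coset hypothesis enters: for a chamber $\Chamber$ of $\PosEuclBuilding$ and $s\in S$ the panel of type $s$ through $\Chamber$ is finite because $[B_+sB_+:B_+]<\infty$, and symmetrically for $\NegEuclBuilding$. Finally, $g\in\Group$ acts trivially on $\EuclTwinBuilding$ exactly when it fixes every chamber of both halves, i.e.\ $g\in\Intersect_{h\in\Group}hB_+h^{-1}\intersect\Intersect_{h\in\Group}hB_-h^{-1}$, which is finite by hypothesis. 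Hence Theorem~\ref{thm:main_theorem_geometric_formulation} applies and gives that $\Group$ is of type $\TopFin{2n-1}$ but not of type $\TopFin{2n}$.

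For the second assertion I would verify that an RGD system $(\Group,(U_\alpha)_{\alpha\in\Phi},T)$ of type $(W,S)$ satisfies the hypotheses of the first part. Setting $B_\varepsilon\defeq T\gen{U_\alpha\mid\alpha\in\Phi_\varepsilon}$ for $\varepsilon\in\{+,-\}$ and taking $N$ generated by $T$ together with the elements $m(u)$ from the RGD axioms produces a twin Tits system $(\Group,B_+,B_-,N,S)$ of type $(W,S)$ (\cite[Section~8]{abrbro}). For a simple root $\alpha_s$ the double coset $B_\varepsilon sB_\varepsilon/B_\varepsilon$ is in bijection with the corresponding simple root group, so $[B_\varepsilon sB_\varepsilon:B_\varepsilon]<\infty$ precisely because each $U_\alpha$ is finite. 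It remains to see that the kernel $K\defeq\Intersect_{h}hB_+h^{-1}\intersect\Intersect_{h}hB_-h^{-1}$ is finite: it is normal in $\Group$ and contained in $B_+\intersect B_-$, and $K\intersect\Group_+$, being normal in $\Group_+=\gen{U_\alpha\mid\alpha\in\Phi}$ and contained in $B_+$, centralises every root group, hence is central in $\Group_+$; using the RGD structure one locates this central part in the torus part $T\intersect\Group_+$, which is a finitely generated abelian torsion group (generated by the finitely many finite rank-one tori $\gen{U_{\alpha_s}}\intersect T$) and hence finite. Since $[\Group:\Group_+]<\infty$, the whole kernel $K$ is finite, and the hypotheses of the first part hold.

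\textbf{Main obstacle.} The reduction itself is essentially a dictionary between the algebraic and geometric languages, and the substantial mathematics sits in Theorem~\ref{thm:main_theorem_geometric_formulation}. Within the present argument the two points requiring genuine care are the verification of local finiteness in the twin setting (one must use the hypothesis for \emph{both} signs, since a twin building with only one locally finite half is not what Theorem~\ref{thm:main_theorem_geometric_formulation} needs) and the finiteness of the kernel in the RGD case, where one must pin down that a normal subgroup of $\Group$ meeting $\Group_+$ in a central, torsion, finitely generated subgroup forces the torus contribution to be finite once $\Group_+$ has finite index in $\Group$.
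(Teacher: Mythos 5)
Your proposal takes the same route as the paper: both assertions are reduced to Theorem~\ref{thm:main_theorem_geometric_formulation} by constructing the affine twin building from the twin Tits system (the paper cites \cite[Theorem~6.87]{abrbro}), reading off local finiteness from the double-coset index hypothesis, and identifying $\Intersect_g gB_+g^{-1}\intersect\Intersect_g gB_-g^{-1}$ with the kernel of the action. Where the paper handles the RGD case simply by citing \cite[Theorems~8.80, 8.81 and Proposition~8.82]{abrbro} for the twin BN-pair, local finiteness, and the identification of the kernel with $C_\Group(\Group_+)$, you re-derive those facts from the RGD axioms; the re-derivation is in the right spirit (note that the centrality argument really uses containment in $B_+\intersect B_-=T$ together with $T\intersect U_\alpha=1$, not just containment in $B_+$, and the finiteness of $T\intersect\Group_+$ is exactly what the reference packages), so the content matches.
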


The proof of Theorem~\ref{thm:main_theorem_geometric_formulation} is largely a refinement of arguments used in \cite{buxgrawit10}. However since here we have to consider horospheres in a product of two buildings, we cannot reuse the statements from Section~5 of \cite{buxwor08} about horizontal links of Busemann functions and will give a replacement for products of buildings in Section~\ref{sec:horizontal_links}. The Morse function had to be adjusted which as a side-effect led to a more transparent proof thanks to Proposition~\ref{prop:essential_coface_links_dont_subdivide}. Another new complication is resolved from Lemma~\ref{lem:reflect_halve_twin_preserves_height} to Corollary~\ref{cor:apartments_that_contain_chamber_look_good} and explained directly before.

The text is organized as follows: In Section~\ref{sec:spherical_and_euclidean_buildings}, we collect facts about spherical and Euclidean buildings that we will need later on. In Section~\ref{sec:horizontal_links} we prove the mentioned statements about horizontal links of Busemann functions. They may be of independent interest for proving the rank conjecture for other $\Places$-arithmetic groups with $\abs{\Places} > 1$. The main result of that section -- and a central ingredient of this article -- is Proposition~\ref{prop:bound_on_moves}.

Sections~\ref{sec:zonotopes} and \ref{sec:euclidean_twin_buildings} provide tools we need from convex geometry and the theory of twin buildings respectively. The actual proof of Theorem~\ref{thm:main_theorem_geometric_formulation} starts in Section~\ref{sec:height_and_gradient} with an outline.

\vspace{.5\baselineskip}

The work for this article was supported by DFG and Studienstiftung des deutschen Volkes. During the research for this article the author enjoyed the hospitality of SFB~701 of the Universität~Bielefeld as well as of Mathematisches Forschungsinstitut Oberwolfach. The author wants to thank Jan Essert and Petra Schwer for helpful discussions and Markus-Ludwig Wermer and Andreas Mars for comments on the manuscript. The author is especially indebted to Ralf Gramlich and Kai-Uwe Bux for many helpful comments and critical questions during the research for this article.


\section{Spherical and Euclidean buildings}
\label{sec:products_of_euclidean_buildings}
\label{sec:spherical_subcomplexes}
\label{sec:spherical_and_euclidean_buildings}

In this section we recall some known properties about spherical and Euclidean buildings and collect additional facts that we will need in later sections. Our general reference for buildings is \cite{abrbro}. We regard buildings as cell complexes equipped with a metric as is described in the separate sections below. The metrics have bounded curvature in the sense that they satisfy the CAT($\kappa$)-inequality (see \cite[Part~II]{brihae}) for $\kappa = 1$ (if the building is spherical) respectively $\kappa = 0$ (if the building is Euclidean).

\subsection*{Spherical buildings}

Let $\SpherBuilding$ be a thick spherical building regarded as a simplicial complex. The apartments of $\SpherBuilding$ are tessellated unit spheres which we equip with the angular metric (see \cite[Chapter~I.2]{brihae}). The metrics on the apartments fit together to give a metric $\SpherDistance$ on $\SpherBuilding$ that is CAT($1$) (see \cite[Example~12.39]{abrbro}).

Recall that $\SpherBuilding$ decomposes as a spherical join $\Join_i \SpherBuilding_i$ of irreducible spherical buildings.

We denote by $\typ \SpherBuilding$ the Coxeter diagram of $\SpherBuilding$ and if $\Cell$ is a simplex of $\SpherBuilding$, then $\typ \Cell$ is the subdiagram spanned by the types of vertices of $\Cell$. Note that the type of $\Link \Cell$ is the diagram $\typ \SpherBuilding \setminus \typ \Cell$ obtained from $\typ \SpherBuilding$ by removing nodes and edges that meet $\typ \Cell$. Recall also that the connected components of $\typ\SpherBuilding$ are $\typ\SpherBuilding_i$ for the irreducible join factors $\SpherBuilding_i$.

We will encounter spherical buildings as links of an affine building. We will filter the affine building and it will be important that the descending parts of the links with respect to the filtration be highly connected. So in what follows we provide a supply of highly connected subcomplexes of spherical buildings. For us \emph{$\Dimension$-spherical} means $\Dimension$-dimensional and $(\Dimension-1)$-connected and \emph{properly $\Dimension$-spherical} means $\Dimension$-spherical and not contractible.

Let $\NorthPole \in \SpherBuilding$ be an arbitrary point of which we think as the ``north pole''. Depending on $\NorthPole$ there are the following subcomplexes of $\SpherBuilding$: The \emph{equator complex} $\Equator{\SpherBuilding}$ of simplices $\Cell \subseteq \Delta$ such that $\SpherDistance(\NorthPole,\Point) = \pi/2$ for every $\Point \in \Cell$. The \emph{closed hemisphere complex} $\ClosedHemisphere{\SpherBuilding}$ and the \emph{open hemisphere complex} $\OpenHemisphere{\SpherBuilding}$ are defined analogously.

The \emph{horizontal part} $\Horizontal{\SpherBuilding}$ is the join of join factors $\SpherBuilding_i$ that are contained in $\Equator{\SpherBuilding}$ and the \emph{vertical part} $\Vertical{\SpherBuilding}$ is the join of the remaining join factors so that there is obviously a decomposition
\begin{equation}
\label{eq:horizontal_vertical_decomposition}
\SpherBuilding = \Horizontal{\SpherBuilding} * \Vertical{\SpherBuilding} \text{ .}
\end{equation}
Note that being horizontal (contained in the horizontal part) is more restrictive than being equatorial (contained in the equator complex). This is illustrated by:

\begin{obs}
\label{obs:equator_decomposition}
Let $\SpherBuilding = \Join_i \Lambda_i$ be a join of (not necessarily irreducible) spherical buildings $\Lambda_i$ and let $\NorthPole \in \SpherBuilding$. For every $\Lambda_i$ not contained in $\Horizontal\SpherBuilding$ let $\NorthPole_i$ be the projection of $\NorthPole$ to $\Lambda_i$. A simplex $\Cell = \Join_i \Cell_i$ is equatorial if and only if $\Cell_i$ is equatorial in $\Lambda_i$ with respect to $\NorthPole_i$ whenever $\NorthPole_i$ exists.\qed
\end{obs}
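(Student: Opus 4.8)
The plan is to read the claim directly off the metric coordinates of the spherical join. Recall the standard description (cf.\ \cite[Chapter~I.5]{brihae} and \cite[Example~12.39]{abrbro}): a point of $\SpherBuilding = \Join_i \Lambda_i$ is encoded by a family $(\Point_i)_i$ with $\Point_i \in \Lambda_i$ together with weights $(u_i)_i$ satisfying $u_i \ge 0$ and $\sum_i u_i^2 = 1$, where $\Point_i$ is only relevant when $u_i > 0$, and the metric is given by
\begin{equation*}
  \cos\SpherDistance(\Point,\AltPoint) = \sum_i u_i v_i \cos\SpherDistance(\Point_i,\AltPoint_i)
\end{equation*}
for $\Point$ with data $\bigl((\Point_i),(u_i)\bigr)$ and $\AltPoint$ with data $\bigl((\AltPoint_i),(v_i)\bigr)$, a summand being read as $0$ if $u_i = 0$ or $v_i = 0$. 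Writing $\NorthPole$ in these coordinates as $\bigl((\NorthPole_i),(a_i)\bigr)$, the projection of $\NorthPole$ to $\Lambda_i$ exists and equals $\NorthPole_i$ precisely when $a_i > 0$, with $\SpherDistance(\NorthPole,\NorthPole_i) = \arccos a_i$ in that case.

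The first step is to identify the set $\{i : \NorthPole_i \text{ exists}\}$ with $\{i : \Lambda_i \not\subseteq \Horizontal\SpherBuilding\}$. If $a_i = 0$, the displayed formula shows that every point of $\Lambda_i$ (placed in $\SpherBuilding$ with $u_i = 1$) lies in $\Equator\SpherBuilding$; the same holds for each irreducible join factor of $\Lambda_i$, so $\Lambda_i \subseteq \Horizontal\SpherBuilding$ by the definition of the horizontal part. For the reverse, the formula also shows that a join of equatorial complexes is equatorial, hence $\Horizontal\SpherBuilding \subseteq \Equator\SpherBuilding$; so if $\Lambda_i \subseteq \Horizontal\SpherBuilding$ and $a_i > 0$, the point $\NorthPole_i \in \Lambda_i$ would be both in $\Equator\SpherBuilding$ and at distance $\arccos a_i < \pi/2$ from $\NorthPole$ -- impossible -- forcing $a_i = 0$.

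Then I would fix $\Cell = \Join_i \Cell_i$ and specialize: for $\Point \in \Cell$ with data $\bigl((\Point_i),(u_i)\bigr)$ one has $\Point_i \in \Cell_i$ whenever $u_i > 0$, and $\cos\SpherDistance(\NorthPole,\Point) = \sum_{i : a_i > 0} a_i u_i \cos\SpherDistance(\NorthPole_i,\Point_i)$. If every $\Cell_i$ with $a_i > 0$ is equatorial with respect to $\NorthPole_i$, each factor $\cos\SpherDistance(\NorthPole_i,\Point_i)$ is zero, so $\SpherDistance(\NorthPole,\Point) = \pi/2$ for all $\Point \in \Cell$, i.e.\ $\Cell$ is equatorial. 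Conversely, if some $\Cell_{i_0}$ with $a_{i_0} > 0$ is not equatorial, pick $\Point_{i_0} \in \Cell_{i_0}$ with $\cos\SpherDistance(\NorthPole_{i_0},\Point_{i_0}) \ne 0$; the point of $\SpherBuilding$ obtained by placing $\Point_{i_0}$ with $u_{i_0} = 1$ lies in the face $\Cell_{i_0}$ of the closed simplex $\Cell$ and satisfies $\cos\SpherDistance(\NorthPole,\cdot) = a_{i_0}\cos\SpherDistance(\NorthPole_{i_0},\Point_{i_0}) \ne 0$, so $\Cell$ is not equatorial.

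I do not expect a genuine obstacle: once the join metric is written down, both implications are forced. The only points demanding a little care are the identification in the second step -- that ``$\NorthPole_i$ exists'' and ``$\Lambda_i \not\subseteq \Horizontal\SpherBuilding$'' are the same condition, which is exactly where the definition of $\Horizontal\SpherBuilding$ gets used -- and the bookkeeping that $\Equator\SpherBuilding$ constrains \emph{all} points of a closed simplex, so that the ``pure'' point exhibited in the converse really does belong to $\Cell$.
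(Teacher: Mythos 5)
Your proof is correct. The paper states this as an observation with an immediate \texttt{\textbackslash qed} and supplies no argument, so there is no author's proof to compare against; your direct computation with the join--metric formula $\cos\SpherDistance(\Point,\AltPoint)=\sum_i u_i v_i\cos\SpherDistance(\Point_i,\AltPoint_i)$ is exactly the kind of unwinding the authors evidently regard as routine. Both your preliminary identification (that $\NorthPole_i$ exists iff $\Lambda_i\not\subseteq\Horizontal\SpherBuilding$, i.e.\ iff the weight $a_i$ is positive) and the two directions of the equivalence are sound, including the point you flag that in the converse the ``pure'' point with $u_{i_0}=1$ really lies in the closed simplex $\Cell$ because $\Cell_{i_0}$ is a face of $\Join_i\Cell_i$. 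One small stylistic caution: the phrase ``a join of equatorial complexes is equatorial'' is looser than what you actually use --- the precise fact is that each irreducible factor $\SpherBuilding_j$ contained in $\Equator\SpherBuilding$ has weight $a_j=0$, which kills the corresponding summands; stated baldly, the phrase risks sounding circular with the observation itself, so it is worth spelling out as you essentially do in the surrounding argument.
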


Each $\Lambda_i$ is $\pi$-convex (as defined in \cite[Chapter~I.1]{brihae}) and it is contained in $\Horizontal\SpherBuilding$ if and only if it has distance $\pi/2$ from $\NorthPole$. The projection mentioned in the observation is the one given by:

\begin{lem}
\label{lem:spherical_projection}
If $\ConvexSet \subseteq \SpherBuilding$ is a $\pi$-convex set and $\SpherPoint \in \SpherBuilding$ satisfies $d(\SpherPoint,\ConvexSet) < \pi/2$ then there is a unique point $\Proj[\ConvexSet] \SpherPoint$ in $\ConvexSet$ that is closest to $\SpherPoint$. Moreover the angle $\angle_{\Proj[\ConvexSet]\SpherPoint}(\SpherPoint,\ConvexSetPoint)$ is at least $\pi/2$ for every $\ConvexSetPoint \in \ConvexSet$.
\end{lem}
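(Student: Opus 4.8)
\emph{The plan.} The whole statement is the standard construction of a nearest--point projection onto a complete convex subset, run inside the \textup{CAT}$(1)$ space $\SpherBuilding$ with spherical rather than Euclidean comparison. I would use the following facts about \textup{CAT}$(1)$ spaces, each inherited from the round $2$--sphere via the comparison inequality (see \cite[Part~II]{brihae}): the open ball $B(\SpherPoint,\pi/2)$ is convex, and $\SpherDistance(\SpherPoint,\cdot)$ is convex along every geodesic contained in $B(\SpherPoint,\pi/2)$, strictly convex unless that geodesic passes through $\SpherPoint$. First I would reduce to the case that $\ConvexSet$ is closed, hence complete (as $\SpherBuilding$ is), set $\delta \defeq \SpherDistance(\SpherPoint,\ConvexSet)<\pi/2$, and note we may also assume $\delta>0$, since otherwise $\SpherPoint\in\ConvexSet$ and there is nothing to prove.

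\emph{Existence and uniqueness.} Choose $\ConvexSetPoint_n\in\ConvexSet$ with $\SpherDistance(\SpherPoint,\ConvexSetPoint_n)\to\delta$. For large $m,n$ both $\SpherDistance(\SpherPoint,\ConvexSetPoint_m),\SpherDistance(\SpherPoint,\ConvexSetPoint_n)<\pi/2$, so $\SpherDistance(\ConvexSetPoint_m,\ConvexSetPoint_n)<\pi$; by $\pi$--convexity the geodesic $[\ConvexSetPoint_m,\ConvexSetPoint_n]$ lies in $\ConvexSet$, so in particular its midpoint is at distance $\ge\delta$ from $\SpherPoint$. If $\SpherDistance(\ConvexSetPoint_m,\ConvexSetPoint_n)$ did not tend to $0$, it would be $\ge\ell_0>0$ along a subsequence; in a comparison triangle for $\SpherPoint\ConvexSetPoint_m\ConvexSetPoint_n$ on the sphere the vertices for $\ConvexSetPoint_m,\ConvexSetPoint_n$ lie in the convex open hemisphere about the vertex $\bar\SpherPoint$ for $\SpherPoint$, so strict convexity of $\SpherDistance(\bar\SpherPoint,\cdot)$ along the opposite edge makes the comparison midpoint closer to $\bar\SpherPoint$ than $\tfrac12\bigl(\SpherDistance(\SpherPoint,\ConvexSetPoint_m)+\SpherDistance(\SpherPoint,\ConvexSetPoint_n)\bigr)$ by a fixed amount depending only on $\delta$ and $\ell_0$, whence, by the \textup{CAT}$(1)$ inequality, the midpoint of $[\ConvexSetPoint_m,\ConvexSetPoint_n]$ would be at distance $<\delta$ from $\SpherPoint$ for $m,n$ large --- a contradiction. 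Hence $(\ConvexSetPoint_n)$ is Cauchy, its limit lies in $\ConvexSet$ and realizes $\delta$, and we take $\Proj[\ConvexSet]\SpherPoint$ to be this limit. Uniqueness is the same argument: the midpoint of two minimizers lies in $\ConvexSet$ and, by strict convexity, is strictly closer to $\SpherPoint$ than $\delta$ unless the minimizers coincide.

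\emph{The angle estimate.} Put $\SpherPoint_0\defeq\Proj[\ConvexSet]\SpherPoint$ and fix $\ConvexSetPoint\in\ConvexSet$ with $\ConvexSetPoint\ne\SpherPoint_0$; assume first $\SpherDistance(\SpherPoint_0,\ConvexSetPoint)<\pi$, so the geodesic $\ReparRay$ from $\SpherPoint_0$ to $\ConvexSetPoint$ runs inside $\ConvexSet$ and $\SpherDistance(\SpherPoint,\ReparRay(t))\ge\delta$ for all $t$. For $t$ small enough that $\SpherDistance(\SpherPoint,\ReparRay(t))<\pi/2$, the spherical law of cosines in a comparison triangle for $\SpherPoint\SpherPoint_0\ReparRay(t)$ gives
\begin{equation*}
\cos\SpherDistance(\SpherPoint,\ReparRay(t)) = \cos\delta\,\cos t + \sin\delta\,\sin t\,\cos\tilde\angle_{\SpherPoint_0}(\SpherPoint,\ReparRay(t)) \text{ ,}
\end{equation*}
with $t=\SpherDistance(\SpherPoint_0,\ReparRay(t))$; since the left side is at most $\cos\delta$, one gets $\cos\tilde\angle_{\SpherPoint_0}(\SpherPoint,\ReparRay(t))\le\cot\delta\,\tan(t/2)$, and this tends to $0$ as $t\to0^+$ precisely because $\delta<\pi/2$. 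So the comparison angles at $\SpherPoint_0$ have liminf $\ge\pi/2$ as $t\to0^+$, and since in a \textup{CAT}$(1)$ space the Alexandrov angle is their limit, $\angle_{\SpherPoint_0}(\SpherPoint,\ConvexSetPoint)\ge\pi/2$. The remaining case $\SpherDistance(\SpherPoint_0,\ConvexSetPoint)=\pi$ is handled by approximating $\ConvexSetPoint$ by points of $\ConvexSet$ on a geodesic issuing from $\SpherPoint_0$ towards it and passing to the limit, using $\operatorname{diam}\SpherBuilding=\pi$.

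\emph{Where the work is.} Beyond this, everything is routine; the one thing to keep track of is that the convexity statements of positive--curvature comparison only hold within radius $\pi/2$, so at each step --- forming midpoints of a minimizing sequence, taking a short initial arc of a geodesic into $\ConvexSet$ --- one has to know the relevant configuration stays inside $B(\SpherPoint,\pi/2)$. The hypothesis $\SpherDistance(\SpherPoint,\ConvexSet)<\pi/2$ is exactly what guarantees this, and it is also exactly what makes $\cot\delta$ positive in the angle estimate; once this bookkeeping is in place, the rest is elementary spherical trigonometry together with the defining comparison inequality for $\SpherBuilding$.
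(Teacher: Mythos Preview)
The paper gives no proof of its own here; it simply cites \cite[Proposition~II.2.4(1) and Exercise~II.2.6(1)]{brihae}. Your argument is the standard nearest-point projection construction for CAT($\kappa$) spaces carried out in the $\kappa=1$ regime, which is exactly what those references do, so there is nothing substantive to compare.

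Two small remarks. First, ``reduce to the case that $\ConvexSet$ is closed'' is not a reduction: if $\ConvexSet$ is not closed the conclusion can simply fail (an open arc on a circle), so closedness is a tacit hypothesis --- harmless here because every $\ConvexSet$ to which the paper applies the lemma is a subcomplex or a cell. Second, your treatment of the case $\SpherDistance(\Proj[\ConvexSet]\SpherPoint,\ConvexSetPoint)=\pi$ does not work: points on a geodesic from $\Proj[\ConvexSet]\SpherPoint$ towards $\ConvexSetPoint$ need not lie in $\ConvexSet$, since $\pi$-convexity says nothing when the endpoints are at distance exactly $\pi$. Indeed the angle inequality can fail in that case (on a round sphere take $\ConvexSet$ to be a pair of antipodal points and $\SpherPoint$ near one of them; there is a geodesic to the other passing through $\SpherPoint$, giving angle $0$). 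This is a minor imprecision in the lemma as stated rather than a flaw in your approach --- Bridson--Haefliger's Exercise~II.2.6(1) also restricts to distance $<\pi$ --- and it is irrelevant for every use the paper makes of the result.
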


\begin{proof}
This is proven as Proposition~II.2.4~(1) in \cite{brihae}, see also Exercise~II.2.6~(1).
\end{proof}

Schulz has investigated hemisphere complexes in his thesis \cite{schulz}, see also \cite{schulz10}. His main result is:

\begin{thm}
\label{thm:hemisphere_complexes}
The closed hemisphere complex is properly $(\dim \SpherBuilding)$-spherical. The open hemisphere complex is properly $(\dim \Vertical{\SpherBuilding})$-spherical.
\end{thm}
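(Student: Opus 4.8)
\textbf{Plan of proof for Theorem~\ref{thm:hemisphere_complexes}.}
The statement is Schulz's theorem, so the task is to outline a proof along the lines of \cite{schulz,schulz10}. The plan is to argue by induction on $\dim\SpherBuilding$ using a Morse-theoretic sweep of the building by the distance from the north pole $\NorthPole$. First I would handle the closed hemisphere complex $\ClosedHemisphere{\SpherBuilding}$. One shows that $\SpherBuilding$ deformation retracts onto the closed ball of radius $\pi/2$ around $\NorthPole$ (this is easy since apartments are spheres and $\SpherDistance$ is CAT($1$), so geodesics toward $\NorthPole$ are unique away from the antipode) and then pushes the part of that ball lying in the open northern cap down to $\ClosedHemisphere{\SpherBuilding}$; the fibres of this second push are contractible by a link argument, giving that $\ClosedHemisphere{\SpherBuilding}$ is $(\dim\SpherBuilding-1)$-connected. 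The dimension is clear because a chamber can be chosen to have all vertices at distance exactly $\pi/2$. Non-contractibility (``properly spherical'') follows from thickness: one exhibits a non-trivial cycle, e.g.\ by finding a subcomplex that is a genuine $(\dim\SpherBuilding)$-sphere made of equatorial chambers through an apartment, and checks it is not a boundary, or alternatively one computes that the reduced homology in top degree is non-zero using the decomposition below.

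For the open hemisphere complex $\OpenHemisphere{\SpherBuilding}$ the key reduction is the join decomposition \eqref{eq:horizontal_vertical_decomposition}, $\SpherBuilding = \Horizontal{\SpherBuilding} * \Vertical{\SpherBuilding}$. A simplex lies in $\OpenHemisphere{\SpherBuilding}$ iff \emph{no} vertex is equatorial or northern, and since all of $\Horizontal{\SpherBuilding}$ sits on the equator, $\OpenHemisphere{\SpherBuilding}$ is entirely contained in (the subcomplex spanned by) $\Vertical{\SpherBuilding}$; moreover inside $\Vertical{\SpherBuilding}$ every irreducible join factor genuinely meets the open northern and open southern caps. So it suffices to treat the case where $\SpherBuilding = \Vertical{\SpherBuilding}$ is a join of irreducible non-equatorial factors, and then by the homotopy behaviour of joins (the open-hemisphere complex of a join is the join of the open-hemisphere complexes, up to the usual connectivity bookkeeping $\Join$ of $n_i$-spherical complexes being $(\sum n_i + (\#\text{factors}-1))$-spherical) one reduces to a single irreducible spherical building that is not equatorial. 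In that case one again runs the Morse function ``distance from $\NorthPole$'', sweeping from the north pole outward: as one crosses a chamber, the relevant relative link is an open hemisphere complex of a proper link of $\SpherBuilding$, which by induction on dimension is properly spherical of the right dimension, and one assembles the connectivity of $\OpenHemisphere{\SpherBuilding}$ from these pieces. Non-contractibility is obtained from the inductive non-contractibility of the pieces together with thickness, which prevents the top cycle from bounding.

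The main obstacle is the Morse-theoretic bookkeeping in the open case: one must identify precisely which simplices are added when the radius parameter passes a critical value, show that the descending (or ascending) links at these critical simplices are exactly open hemisphere complexes of links $\Link[\SpherBuilding]{\Cell}$ — using that $\typ\Link{\Cell} = \typ\SpherBuilding\setminus\typ\Cell$ and that verticality is inherited correctly — and then feed in the inductive hypothesis with the right connectivity shift. Getting the dimension count $\dim\Vertical{\SpherBuilding}$ rather than $\dim\SpherBuilding$ to come out correctly is exactly where the horizontal/vertical split does the work, since horizontal factors contribute only equatorial simplices and hence raise neither the dimension nor the connectivity of $\OpenHemisphere{\SpherBuilding}$. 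I would also need Lemma~\ref{lem:spherical_projection} to make sense of the north poles $\NorthPole_i$ of the join factors (as in Observation~\ref{obs:equator_decomposition}) and to guarantee the uniqueness of geodesic retractions used throughout. Since this is a known theorem I would ultimately cite \cite{schulz,schulz10} for the full details rather than reproduce the entire Morse argument.
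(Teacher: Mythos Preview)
The paper does not prove this theorem; it attributes it to Schulz and cites \cite{schulz,schulz10}, adding only the remark that the closed case is immediate from Schulz's more general result that the subcomplex supported by any non-empty coconvex set is $(\dim\SpherBuilding)$-spherical. Since you likewise conclude by citing Schulz, your approach agrees with the paper's.

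One point in your sketch is garbled, though: for the closed hemisphere complex you propose retracting $\SpherBuilding$ onto the closed ball of radius $\pi/2$ around $\NorthPole$ and then pushing the northern cap ``down to $\ClosedHemisphere{\SpherBuilding}$''. But $\ClosedHemisphere{\SpherBuilding}$ is the subcomplex of simplices at distance $\ge\pi/2$ from $\NorthPole$, i.e.\ the \emph{southern} hemisphere, so retracting toward $\NorthPole$ goes the wrong way. The clean statement (and the one the paper invokes) is that the open ball of radius $\pi/2$ around $\NorthPole$ is convex, so its complement is coconvex, and $\ClosedHemisphere{\SpherBuilding}$ is precisely the subcomplex supported by that coconvex set; Schulz's coconvex theorem then gives the result directly. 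Your reduction of the open case to $\Vertical{\SpherBuilding}$ via the join decomposition is correct in spirit and is indeed how Schulz proceeds.
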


The statement for closed hemisphere complexes is immediate from Schulz' more general result that the subcomplex supported by any non-empty coconvex set of $\SpherBuilding$ is $(\dim \SpherBuilding)$-spherical (the subcomplex supported by a subset is the subcomplex of cells contained in that subset). The proof of this result can be extended to give \cite[Proposition~3.3]{buxgrawit10}:

\begin{prop}
\label{prop:apartmentwise_coconvex_complexes}
Let $\Chamber$ be a chamber of $\SpherBuilding$ and let $\ConvexSet \subseteq \SpherBuilding$ be such that for every apartment $\Apartment$ that contains $\Chamber$ the intersection $\ConvexSet \intersect \Apartment$ is a proper, convex, open subset of $\Apartment$. Then the subcomplex supported by $\SpherBuilding \setminus \ConvexSet$ is $(\dim \SpherBuilding)$-spherical.
\end{prop}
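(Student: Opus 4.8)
The statement is a refinement of Schulz' theorem (Theorem~\ref{thm:hemisphere_complexes}) in which the single point $\NorthPole$ and its closed hemisphere complex $\ClosedHemisphere{\SpherBuilding}$ are replaced by an arbitrary set $\ConvexSet$ that looks, in every apartment through a fixed chamber $\Chamber$, like the complement of the subcomplex supported by a closed hemisphere (a proper convex open set). The idea is therefore to run Schulz' argument (as extended in \cite[Proposition~3.3]{buxgrawit10}) but to only ever work inside apartments that contain $\Chamber$, so that all we need to know about $\ConvexSet$ is its behaviour on that system of apartments. First I would recall the overall shape of Schulz' proof: one builds up $\SpherBuilding \setminus \ConvexSet$ (or rather the full subcomplex it supports) by attaching cells in order of increasing distance from the ``bad'' region, and shows that each attachment is along a complex that is highly connected, using a Morse-theoretic/combinatorial Lemma that reduces connectivity of the total space to connectivity of the descending links; those descending links are analyzed apartment by apartment, where coconvexity makes them balls or spheres of the right dimension.

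\textbf{Key steps.} The plan is: (i) Fix the chamber $\Chamber$ and let $\SystemOfApartments$ be the set of apartments containing $\Chamber$; recall that any two simplices of $\SpherBuilding$, and in particular any simplex together with $\Chamber$, lie in a common apartment, and that $\SpherBuilding = \Union_{\Apartment \in \SystemOfApartments} \Apartment$ is still covered by $\SystemOfApartments$ because every chamber is connected to $\Chamber$ by a gallery, hence lies in an apartment through $\Chamber$ (a convexity argument using the building axioms). (ii) Replay Schulz' filtration of the subcomplex $\SubComplex{}$ supported by $\SpherBuilding \setminus \ConvexSet$ by some combinatorial height (e.g.\ gallery distance from $\Chamber$, or the ``level'' used by Schulz), and invoke the standard Morse Lemma: $\SubComplex{}$ is $(\dim \SpherBuilding)$-spherical provided each relevant descending link is $(\dim \SpherBuilding - 1)$-spherical. (iii) For a given simplex, pick an apartment $\Apartment \in \SystemOfApartments$ that contains both it and $\Chamber$; by hypothesis $\ConvexSet \intersect \Apartment$ is a proper convex open subset of $\Apartment$, so its complement in $\Apartment$ is a coconvex set in a sphere, and Schulz' apartment-level analysis applies verbatim to show the descending link computed inside $\Apartment$ is a ball or sphere of the right dimension. (iv) Assemble the apartment-wise computations: since different apartments through $\Chamber$ agree on $\Chamber$ and are glued along convex subcomplexes, the descending link in $\SpherBuilding$ is a union of these apartment pieces and one checks (again exactly as in \cite{buxgrawit10}) that this union is still $(\dim \SpherBuilding - 1)$-connected. (v) For the dimension statement note $\dim \SubComplex{} = \dim \SpherBuilding$ because $\SpherBuilding \setminus \ConvexSet$ contains a chamber (as $\ConvexSet \intersect \Apartment$ is proper in each $\Apartment$), while ``$(\dim\SpherBuilding - 1)$-connected'' is exactly what the Morse Lemma yields from the descending-link bounds.

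\textbf{Main obstacle.} The routine steps are the height function, the Morse Lemma, and the single-apartment computations, which are all directly borrowed from Schulz and from \cite[Proposition~3.3]{buxgrawit10}. The real point where care is needed is step~(iv): one must be sure that restricting to the system of apartments $\SystemOfApartments$ through $\Chamber$ still covers $\SpherBuilding$ and, more importantly, that the descending link of a simplex $\Cell$ in $\SpherBuilding$ is genuinely built from the descending links of $\Cell$ in the apartments $\Apartment \ni \Cell, \Chamber$ — i.e.\ that no chamber containing $\Cell$ lies outside every such apartment. This is where the hypothesis that $\ConvexSet \intersect \Apartment$ is the \emph{same kind} of set (proper, convex, open) for \emph{every} $\Apartment \in \SystemOfApartments$ is used, rather than for all apartments: it guarantees that the coconvex picture is consistent across $\SystemOfApartments$ so the gluing argument of \cite{buxgrawit10} goes through. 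I expect that the proof in the text simply says ``the proof of \cite[Proposition~3.3]{buxgrawit10} (which in turn extends Schulz' argument) works without change, observing only that it only uses apartments containing $\Chamber$'', and I would structure my proof the same way: quote the two sources, isolate the one lemma about apartments through a chamber, and verify that the coconvexity hypothesis is used only through that lemma.
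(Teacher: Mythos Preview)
Your proposal is correct and matches the paper's treatment: the paper does not give its own proof but simply records the statement as \cite[Proposition~3.3]{buxgrawit10}, noting that Schulz' argument for coconvex subcomplexes extends to this apartment-wise setting. Your anticipated summary (``the proof of \cite[Proposition~3.3]{buxgrawit10} works, observing only that it uses apartments containing $\Chamber$'') is exactly what the paper does, and your outline of how that extension actually runs is accurate.
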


To prove this kind of statements, \cite[Proposition~3.2]{buxgrawit10} is a useful tool. We only need the following special case:

\begin{prop}
\label{prop:retract_set_onto_subcomplex}
If $\SpherBuilding$ is a spherical building and $\ConvexSet$ is an open subset that meets every chamber in a convex set, then $\SpherBuilding \setminus \ConvexSet$ deformation retracts onto the subcomplex that it supports.
\end{prop}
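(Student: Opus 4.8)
\emph{Proof proposal.} The plan is to build a strong deformation retraction cell by cell, by downward induction on dimension, pushing the part of each simplex that meets $\ConvexSet$ radially towards its boundary. Write $X \defeq \SpherBuilding \setminus \ConvexSet$ and let $\SpherBuilding'$ be the subcomplex of $\SpherBuilding$ supported by $X$, so that a closed cell $\Closure{\BigCell}$ lies in $\SpherBuilding'$ exactly when $\Closure{\BigCell} \intersect \ConvexSet = \emptyset$; for a cell $\BigCell$ put $U_{\BigCell} \defeq \ConvexSet \intersect \Closure{\BigCell}$. Two preliminary remarks: $\SpherBuilding'$ is genuinely a subcomplex, since the cells meeting $\ConvexSet$ form an upward closed family; and $U_{\BigCell}$ is a relatively open, geodesically convex subset of $\Closure{\BigCell}$ --- relatively open because $\ConvexSet$ is open in $\SpherBuilding$, and convex because $\Closure{\BigCell}$ is a face of some closed chamber $\Closure{\Chamber}$, the set $\ConvexSet \intersect \Closure{\Chamber}$ is convex by hypothesis, $\Closure{\BigCell}$ is convex in $\Closure{\Chamber}$, and $\Closure{\Chamber}$ --- a spherical simplex contained in an open hemisphere --- is a compact space in which geodesics between points are unique. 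Note $\SpherBuilding' \subseteq X$; we want a deformation retraction of $X$ onto $\SpherBuilding'$ keeping $\SpherBuilding'$ pointwise fixed at all times.

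First I would settle the local model, namely a single cell $\BigCell$ with $\emptyset \neq U_{\BigCell} \subsetneq \Closure{\BigCell}$ (if $U_{\BigCell} = \Closure{\BigCell}$ then $\Closure{\BigCell} \subseteq \ConvexSet$ contributes nothing to $X$, and if $U_{\BigCell} = \emptyset$ then $\BigCell \in \SpherBuilding'$). Then $\dim \BigCell \ge 1$, and since a nonempty relatively open subset of $\Closure{\BigCell}$ cannot be contained in the closed, nowhere dense set $\partial \Closure{\BigCell}$, the set $U_{\BigCell}$ meets the relative interior of $\Closure{\BigCell}$; fix a point $\ConvexSetPoint \in U_{\BigCell} \intersect \relint \Closure{\BigCell}$. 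For $\Point \in \Closure{\BigCell} \setminus U_{\BigCell}$ let $r(\Point)$ be the unique point at which the geodesic ray from $\ConvexSetPoint$ through $\Point$ leaves $\Closure{\BigCell}$; then $r$ is continuous, restricts to the identity on $\partial \Closure{\BigCell}$, and takes values in $\partial \Closure{\BigCell} \setminus \ConvexSet$. Sliding $\Point$ along this ray to $r(\Point)$ yields a strong deformation retraction of $\Closure{\BigCell} \setminus \ConvexSet$ onto $\partial \Closure{\BigCell} \setminus \ConvexSet$ fixing $\partial \Closure{\BigCell}$. The only thing to check is that the homotopy never enters $\ConvexSet$: if a point $\AltPoint$ on the subarc from $\Point$ to $r(\Point)$ lay in $U_{\BigCell}$, then convexity of $U_{\BigCell}$ together with $\ConvexSetPoint \in U_{\BigCell}$ would force the geodesic from $\ConvexSetPoint$ to $\AltPoint$ --- which passes through $\Point$ --- to lie entirely in $U_{\BigCell}$, contradicting $\Point \notin U_{\BigCell}$.

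Next I would globalize by downward induction on $k$, setting $X_k \defeq (X \intersect \SpherBuilding^{(k)}) \union \SpherBuilding'$, where $\SpherBuilding^{(k)}$ denotes the $k$-skeleton; thus $X_{\dim \SpherBuilding} = X$, while $X_0 = \SpherBuilding'$ because the vertices lying in $X$ are exactly the vertices of $\SpherBuilding'$. To deformation retract $X_k$ onto $X_{k-1}$ I would run the local retraction above simultaneously inside every $k$-cell $\BigCell \notin \SpherBuilding'$ (taking it to be the identity when $\Closure{\BigCell} \subseteq \ConvexSet$) and the identity on $X_{k-1}$ and on every $k$-cell of $\SpherBuilding'$; since each local homotopy fixes $\partial \Closure{\BigCell}$, these prescriptions agree on overlaps, stay inside $X_k$, and at time $1$ land in $(X \intersect \SpherBuilding^{(k-1)}) \union \SpherBuilding' = X_{k-1}$. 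Composing the deformation retractions of $X = X_{\dim \SpherBuilding}$ onto $X_{\dim \SpherBuilding - 1}$, of that onto $X_{\dim \SpherBuilding - 2}$, and so on down to $X_0 = \SpherBuilding'$ --- a finite composition, as the building has finite rank --- gives the desired deformation retraction, which fixes $\SpherBuilding'$ at every stage.

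The geometric content --- that a straight-line retraction away from an interior point of the convex hole $\ConvexSet$ cannot fall back into $\ConvexSet$ --- is immediate from convexity, so the real work is point-set bookkeeping: one must verify that the glued map $X_k \times [0,1] \to X_k$ is continuous (its restriction to $\Closure{\BigCell} \times [0,1]$ is continuous for each closed cell $\BigCell$, a simplex having only finitely many faces, and $X_k$ carries the weak topology with respect to the sets $\Closure{\BigCell} \intersect X_k$, which is preserved under crossing with the compact space $[0,1]$), and that $\SpherBuilding'$ truly remains fixed throughout. The only geometric inputs are that every cell of $\SpherBuilding$ is a face of a chamber and that a closed chamber of $\SpherBuilding$ lies in an open hemisphere --- so that convexity passes to all simplices and radial projection onto the boundary of a cell from one of its interior points is well defined --- both of which are standard. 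I expect this continuity bookkeeping to be the main, though routine, obstacle. (Alternatively one may simply note that the statement is the special case of \cite[Proposition~3.2]{buxgrawit10} in which $\ConvexSet$ meets each closed chamber in a convex set, the argument above being exactly the relevant portion of its proof.)
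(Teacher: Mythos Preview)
Your proof is correct and follows the standard radial-retraction argument. The paper itself does not prove this proposition but merely records it as a special case of \cite[Proposition~3.2]{buxgrawit10}; you have supplied precisely that argument (and you note this yourself at the end), so there is nothing to compare.
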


Theorem~\ref{thm:hemisphere_complexes} shows that it is crucial to understand whether a simplex is horizontal or not. For this we have the following criterion, see \cite[Lemma~4.2]{buxwor08}:

\begin{lem}
\label{lem:geometric_criterion}
A simplex $\Cell \subseteq \SpherBuilding$ lies in $\Horizontal\SpherBuilding$ if and only if $\SpherDistance(\Cell,\Vertex) = \pi/2$ for every non-equatorial vertex $\Vertex$ adjacent to $\Cell$.
\end{lem}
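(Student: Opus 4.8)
The statement is a local criterion: a simplex $\Cell$ sits in the horizontal part $\Horizontal\SpherBuilding$ precisely when it stays maximally far (distance $\pi/2$) from every non-equatorial vertex it is adjacent to. The plan is to reduce everything to the join decomposition $\SpherBuilding = \Horizontal\SpherBuilding * \Vertical\SpherBuilding$ of \eqref{eq:horizontal_vertical_decomposition}, together with the finer decomposition $\SpherBuilding = \Join_i \SpherBuilding_i$ into irreducible join factors, and then argue factor by factor. Writing $\Cell = \Join_i \Cell_i$ accordingly, I would first record the easy direction: if $\Cell \subseteq \Horizontal\SpherBuilding$, then $\Cell$ is a join of simplices lying in horizontal join factors, each of which has distance exactly $\pi/2$ from the north pole $\NorthPole$; any vertex $\Vertex$ adjacent to $\Cell$ that is \emph{not} equatorial must have a nonzero component in some vertical join factor, and in the spherical join metric the distance from a point of one join factor to a point of another is always $\pi/2$, so $\SpherDistance(\Cell,\Vertex) = \pi/2$. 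This uses only the definition of the spherical join metric and the fact (noted right after Observation~\ref{obs:equator_decomposition}) that a join factor lies in $\Horizontal\SpherBuilding$ iff it has distance $\pi/2$ from $\NorthPole$.

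For the converse, suppose $\Cell$ is not horizontal, so some join-factor component $\Cell_{i_0}$ meets a \emph{vertical} join factor $\SpherBuilding_{i_0}$, i.e.\ $\SpherBuilding_{i_0}$ is not contained in $\Equator\SpherBuilding$. The task is to produce a non-equatorial vertex $\Vertex$ adjacent to $\Cell$ with $\SpherDistance(\Cell,\Vertex) < \pi/2$. Because $\SpherBuilding_{i_0}$ is vertical, its projection $\NorthPole_{i_0}$ of $\NorthPole$ exists (Observation~\ref{obs:equator_decomposition}) and $\SpherDistance(\NorthPole,\SpherBuilding_{i_0}) < \pi/2$; the plan is to work entirely inside the irreducible building $\SpherBuilding_{i_0}$, since a vertex of $\SpherBuilding_{i_0}$ is automatically adjacent to $\Cell$ (it lies in a factor joined to everything), and its distance to $\Cell$ equals its distance to $\Cell_{i_0}$ in $\SpherBuilding_{i_0}$. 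So I am reduced to: in an irreducible vertical spherical building with a north pole at distance $< \pi/2$, given a simplex $\Cell_{i_0}$, find a non-equatorial vertex at distance $< \pi/2$ from it. Here I would use Lemma~\ref{lem:spherical_projection}: choose a chamber $\Chamber$ of $\SpherBuilding_{i_0}$ containing $\Cell_{i_0}$ and containing a vertex $\Vertex$ close to (in the same ``quadrant'' as) the projection point $\Proj[\SpherBuilding_{i_0}]\NorthPole$; concretely, inside an apartment through $\Chamber$ one can pick $\Vertex$ to be a vertex of $\Chamber$ whose spherical distance to $\NorthPole$ is $< \pi/2$ (such a vertex exists because in an irreducible Coxeter complex the chamber containing a given point near $\NorthPole$ has at least one vertex on the same side of the equator, and the angle estimate of Lemma~\ref{lem:spherical_projection} forces $\SpherDistance(\Cell_{i_0},\Vertex) \le \SpherDistance(\Vertex, \text{something}) < \pi/2$ after a short computation in the apartment). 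That vertex $\Vertex$ is then non-equatorial and adjacent to $\Cell$ with $\SpherDistance(\Cell,\Vertex) < \pi/2$, contradicting the hypothesis.

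\textbf{Main obstacle.} The delicate point is the converse direction, specifically the elementary spherical-geometry argument inside a single irreducible apartment: given that $\NorthPole$ has distance $< \pi/2$ from an irreducible spherical building factor, one must argue that \emph{every} chamber of that factor — in particular a chamber containing the prescribed simplex $\Cell_{i_0}$ — has a vertex at distance strictly less than $\pi/2$ from $\NorthPole$, and that this vertex is at distance $<\pi/2$ from $\Cell_{i_0}$ itself (not merely from some point of the chamber). The cleanest route is probably to invoke Lemma~\ref{lem:spherical_projection} with $\ConvexSet$ the irreducible factor (or the carrier of $\Cell_{i_0}$): the angle condition $\angle_{\Proj\NorthPole}(\NorthPole,\ConvexSetPoint) \ge \pi/2$ translates, via the spherical law of cosines in the apartment, into $\SpherDistance(\NorthPole,\ConvexSetPoint) \ge \SpherDistance(\NorthPole,\Proj\NorthPole)$, and this lets one locate a vertex of $\Cell_{i_0}$'s carrier (or of a chamber containing it) on the near side of the equator. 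I expect this to be the only part needing real care; everything else is bookkeeping with joins. In fact, given that this is \cite[Lemma~4.2]{buxwor08}, I would lean on that reference for the irreducible-factor computation and present the join-theoretic reduction as the new content.
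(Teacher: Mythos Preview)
The paper does not prove this lemma at all: it simply cites \cite[Lemma~4.2]{buxwor08}. So there is no ``paper's own proof'' to compare against, and your closing remark that you would lean on that reference is exactly what the paper does.

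Your forward direction and the join-decomposition reduction to a single vertical irreducible factor are correct and constitute the natural framework. The converse direction, however, is tangled. You try to locate a non-equatorial vertex via the projection $\Proj[\SpherBuilding_{i_0}]\NorthPole$ and Lemma~\ref{lem:spherical_projection}, but this detour is both unnecessary and not clearly carried out: you cannot freely ``choose a chamber \ldots containing a vertex $\Vertex$ close to'' the projection point, since the chamber must contain the prescribed simplex $\Cell_{i_0}$; and the inequality ``$\SpherDistance(\Cell_{i_0},\Vertex) \le \SpherDistance(\Vertex, \text{something}) < \pi/2$'' is left as a placeholder rather than an argument.

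The clean route is purely combinatorial and avoids projections entirely. The key fact is that in an \emph{irreducible} spherical Coxeter complex any two vertices of a common chamber have distance strictly less than $\pi/2$ (equivalently: the inverse Cartan matrix of an irreducible root system has all entries positive, so the fundamental coweights pairwise make acute angles). Granting this, the converse is immediate: if $\Cell_{i_0}$ is nonempty in a vertical factor $\SpherBuilding_{i_0}$, take any chamber $\Chamber\supseteq\Cell$; the chamber $\Chamber_{i_0}=\Chamber\cap\SpherBuilding_{i_0}$ is top-dimensional in $\SpherBuilding_{i_0}$, hence cannot lie entirely in the codimension-one equator, so it has a non-equatorial vertex $\Vertex$; and by the key fact $\SpherDistance(\AltVertex,\Vertex)<\pi/2$ for any vertex $\AltVertex$ of $\Cell_{i_0}$. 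This also shows that Lemma~\ref{lem:geometric_criterion} is equivalent to the diagram criterion Lemma~\ref{lem:diagram_criterion}, which the paper \emph{does} prove: ``$\SpherDistance(\Cell,\Vertex)=\pi/2$'' for $\Vertex$ and $\Cell$ in a common chamber is exactly ``no path from $\typ\Vertex$ to $\typ\Cell$''.
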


This can also be formulated in a diagram-theoretic way as follows:

\begin{lem}
\label{lem:diagram_criterion}
Let $\Cell \subseteq \SpherBuilding$ be a simplex and let $\Chamber$ be a chamber that contains $\Cell$. Then $\Cell$ lies in $\Horizontal\SpherBuilding$ if and only if
every vertex $\Vertex$ of $\Chamber$, such that there is a path from $\typ \Vertex$ to $\typ \Cell$, is equatorial.
\end{lem}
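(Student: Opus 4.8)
The plan is to translate the geometric criterion of Lemma~\ref{lem:geometric_criterion} into diagram language, the one genuinely new ingredient being a lemma saying that horizontality of an irreducible join factor can be read off from a single chamber. The key dictionary entry I would establish first is: for a vertex $\Vertex$ adjacent to $\Cell$ -- so that $\{\Vertex\}\cup\Cell$ spans a simplex and therefore lies in a common chamber $\AltChamber$ and a common apartment $\Apartment$ -- one has $\SpherDistance(\Cell,\Vertex)=\pi/2$ if and only if there is no path in $\typ\SpherBuilding$ from $\typ\Vertex$ to $\typ\Cell$. To prove this, decompose $\Apartment=\Join_i\Apartment_i$ into the apartments of the irreducible factors $\SpherBuilding_i$, so that $\Cell=\Join_i\Cell_i$ (with possibly empty $\Cell_i$) and $\Vertex$ sits in a single factor $\Apartment_{i_0}$. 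If $\typ\Vertex$ is not joined to $\typ\Cell$ then $\Cell_{i_0}=\emptyset$, hence $\Cell\subseteq\Join_{i\neq i_0}\Apartment_i$, which lies at distance $\pi/2$ from all of $\Apartment_{i_0}$, so $\SpherDistance(\Cell,\Vertex)=\pi/2$. If $\typ\Vertex$ is joined to $\typ\Cell$, pick a vertex $\AltVertex$ of $\Cell_{i_0}$; then $\Vertex$ and $\AltVertex$ are distinct vertices of a chamber of the \emph{irreducible} spherical Coxeter complex $\Apartment_{i_0}$, hence at angular distance $<\pi/2$ (equivalently: the inverse Cartan matrix of an irreducible crystallographic root system has all entries positive, which suffices here since only crystallographic types occur, and more generally the fundamental weights of an irreducible finite reflection group pairwise subtend angles $<\pi/2$), so $\SpherDistance(\Cell,\Vertex)\le\SpherDistance(\AltVertex,\Vertex)<\pi/2$.

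The new ingredient is: if $\SpherBuilding_i$ is an irreducible join factor and some chamber $\Chamber_i$ of $\SpherBuilding_i$ has all of its vertices equatorial, then $\SpherBuilding_i$ is horizontal, i.e.\ $\SpherBuilding_i\subseteq\Equator\SpherBuilding$, which by the remark following Observation~\ref{obs:equator_decomposition} is equivalent to $\SpherDistance(\NorthPole,\SpherBuilding_i)=\pi/2$. Suppose instead $\SpherDistance(\NorthPole,\SpherBuilding_i)<\pi/2$. By Lemma~\ref{lem:spherical_projection} the projection $\NorthPole_i=\Proj[\SpherBuilding_i]\NorthPole$ then exists, and since the factors of a spherical join are mutually orthogonal the geodesic from $\NorthPole_i$ to $\NorthPole$ leaves $\SpherBuilding_i$ at a right angle; the spherical right-triangle identity then gives $\cos\SpherDistance(\NorthPole,\Point)=\cos\SpherDistance(\NorthPole,\NorthPole_i)\,\cos\SpherDistance(\NorthPole_i,\Point)$ for every $\Point\in\SpherBuilding_i$, with $\cos\SpherDistance(\NorthPole,\NorthPole_i)>0$, so a vertex $\Vertex\in\SpherBuilding_i$ is equatorial if and only if $\SpherDistance(\NorthPole_i,\Vertex)=\pi/2$. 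Choosing an apartment $\Apartment_i$ of $\SpherBuilding_i$ that contains both $\Chamber_i$ and $\NorthPole_i$ and identifying it with a unit sphere, the vertices of $\Chamber_i$ form a basis of the ambient vector space and so cannot all be orthogonal to the nonzero vector $\NorthPole_i$; hence $\Chamber_i$ has a non-equatorial vertex, a contradiction.

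To assemble the proof, set $I_0=\{i:\typ\SpherBuilding_i\cap\typ\Cell\neq\emptyset\}$, so that a vertex $\Vertex$ admits a path from $\typ\Vertex$ to $\typ\Cell$ exactly when $\Vertex\in\SpherBuilding_i$ for some $i\in I_0$, and $\Cell=\Join_{i\in I_0}\Cell_i$ lies in $\Horizontal\SpherBuilding$ if and only if every $i\in I_0$ is horizontal. If every $i\in I_0$ is horizontal, then each such vertex of $\Chamber$ lies in some $\SpherBuilding_i\subseteq\Equator\SpherBuilding$ and is equatorial (this implication also follows directly from Lemma~\ref{lem:geometric_criterion} via the dictionary entry). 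Conversely, if every vertex of $\Chamber$ lying in $\bigcup_{i\in I_0}\SpherBuilding_i$ is equatorial, then for each $i\in I_0$ the face $\Chamber\cap\SpherBuilding_i$ is a chamber of $\SpherBuilding_i$ all of whose vertices are equatorial, so $\SpherBuilding_i$ is horizontal by the previous paragraph, and hence $\Cell$ is horizontal. I expect the second step -- proving that the pointwise condition ``equatorial'' detects the join-theoretic condition ``horizontal'' -- to be the main obstacle; its delicate points are that the projection onto a join factor realises a genuine right angle and that the vertices of a spherical chamber are linearly independent, whereas the rest is routine bookkeeping with join decompositions and with paths to a subdiagram.
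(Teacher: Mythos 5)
Your proof is correct and essentially follows the paper's own route: both reduce to the key claim that a chamber of an irreducible join factor $\SpherBuilding_i$ can only have all its vertices equatorial if $\SpherBuilding_i$ itself is horizontal, and then assemble the factorwise statement. The paper states that key claim in one sentence without justification (``$\SpherBuilding_i\cap C$ is a chamber of $\SpherBuilding_i$ which can only be equatorial if all of $\SpherBuilding_i$ is''), whereas you supply the argument in full -- projecting the north pole into $\SpherBuilding_i$ via Lemma~\ref{lem:spherical_projection}, noting that the projection geodesic is genuinely perpendicular because $\NorthPole$ lies on the segment from $\NorthPole_i$ to its complementary component $\NorthPole'$, applying the spherical right-triangle identity to reduce equatoriality in $\SpherBuilding$ to perpendicularity to $\NorthPole_i$ inside $\SpherBuilding_i$, and then using linear independence of the vertices of a spherical chamber. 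Your first-paragraph dictionary translating $\SpherDistance(\Cell,\Vertex)=\pi/2$ into the absence of a diagram path is not strictly needed for the argument (the paper uses a more direct observation), but it is correct and clarifies why the two formulations of the criterion agree.
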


\begin{proof}
Clearly every horizontal vertex is equatorial. So what remains to be shown is that if $\SpherBuilding_i$ is an irreducible component of $\SpherBuilding$ that is contained in $\Vertical\SpherBuilding$, then $\SpherBuilding_i \intersect C$ contains a non-equatorial vertex. But $\SpherBuilding_i \intersect C$ is a chamber of $\SpherBuilding_i$ which can only be equatorial if all of $\SpherBuilding_i$ is.
\end{proof}

\subsection*{Euclidean buildings}

Let $\Space$ be a building of affine type regarded as a polysimplicial complex. Each apartment is a tessellated Euclidean space and the metrics on the apartments fit together to a metric on $\Space$ that is CAT($0$) (see \cite[Section~11.2]{abrbro}). We will denote this metric by $\EuclDistance$. Unless explicitly stated otherwise, we consider the complete system of apartments on $\Space$ (see \cite[Section~4.5]{abrbro}).

Let $\Point \in \Space$ be any point. An isometry from a closed interval into $\Space$ is a \emph{geodesic}, see \cite[Chapter~I.1]{brihae} for related definitions. A geodesic $\Geodesic$ that issues at $\Point$ defines a \emph{direction} $\Geodesic_\Point$ which is the set of geodesics that coincide with $\Geodesic$ on an initial interval (see \cite[Chapter~II.3]{brihae}). The \emph{link of a point $\Point \in \Space$} is the set $\Link[\Space] \Point$ (or just $\Link \Point$ if misunderstandings are unlikely) of directions of geodesics in $\Space$ that issue at $\Point$. The link is a CAT($1$)-space via the angular metric
\[
d_{\Link\Point}(\Geodesic_\Point,\Another\Geodesic_\Point) = \angle_{\Point}(\Geodesic,\Another\Geodesic)
\]
(see \cite[Chapter~II.3]{brihae}). To avoid confusion we will often prefer the angle notation on the right even for directions.

If $\Cell \subseteq \Space$ is a (non-empty) cell and $\Geodesic$ is a geodesic that issues at an interior point $\Point$ of $\Cell$, we say that $\Geodesic_\Point$ is \emph{perpendicular} to $\Cell$ if $\angle_\Point(\Geodesic,\Another\Point) = \pi/2$ for every $\Another\Point \in \Cell \setminus \Point$. The links of two interior points of $\Cell$ are naturally isometric and the subspace of directions perpendicular to $\Cell$ of any of them is the \emph{link of $\Cell$}, denoted $\Link \Cell$.

The link of a cell $\Cell$ carries a simplicial cell structure, namely if $\BigCell$ is a coface of $\Cell$ then the directions that point into $\BigCell$ form a simplex $\BigCell \direction \Cell$ of $\Link \Cell$. In fact $\Link \Cell$ (the simplicial complex equipped with the CAT($1$)-metric) is a spherical building. The notation $\BigCell \direction \Cell$ is non-standard, in a simplicial complex it is common to write $\BigCell \setminus \Cell$ instead (which is reminiscent of abstract simplicial complexes) and we will follow this custom whenever we talk about simplicial complexes.

The link of a point $\Point$ decomposes as a spherical join
\begin{equation}
\label{eq:link_of_point_decomposition}
\Link \Point = \Boundary_\Point \Cell * \Link \Cell
\end{equation}
where $\Cell$ is the carrier of $\Point$ (i.e.\ the smallest cell that contains it) and $\Boundary_\Point \Cell$ is the boundary of $\Cell$ regarded as the space of directions $\Geodesic_\Point$ where $\Geodesic$ is a geodesic segment from $\Point$ to a point of $\Cell$. The subscript takes account of the fact that the angular metric on $\Boundary \Cell$ depends on $\Point$.

What we have just said about links of points and cells holds in the same way for spherical buildings.

The (closed) \emph{star $\Star\Cell$} of a cell $\Cell$ is the subcomplex of $\Space$ consisting of cells that contain $\Cell$ and their faces. Recall that whenever $\Chamber$ is a chamber, there is a unique chamber in $\Star\Cell$ that has shortest Weyl-distance to $\Chamber$, called the \emph{projection of $\Chamber$ onto $\Star\Cell$} and denoted $\Proj[\Star\Cell]\Chamber$ (\cite[Section~4.9]{abrbro}). If $\BigCell$ is an arbitrary cell we define $\Proj[\Star\Cell]\BigCell \defeq \Intersect_{\Chamber \ge \BigCell} \Proj[\Star\Cell]\Chamber$.

Recall also the concept of retractions (\cite[Section~4.4]{abrbro}): Given a chamber $\Chamber$ that is contained in an apartment $\Apartment$, there is a map $\Retraction \colon \Space \to \Apartment$ that fixes $\Apartment$, preserves distance from $\Chamber$ (that is, it preserves Weyl-distance from $\Chamber$ as well as metric distance from every point of $\Chamber$), and is contracting otherwise. It takes a chamber $\AltChamber$ to the unique chamber in $\Apartment$ that has distance $\WDistance(\Chamber,\AltChamber)$ to $\Chamber$ where $\WDistance$ denotes Weyl-distance. The map $\Retraction$ is called the \emph{retraction onto $\Apartment$ centered at $\Chamber$}.

Now we turn to the asymptotic structure of $\Space$ (\cite[Chapters~II.8,~II.9]{brihae}). Two geodesic rays in $\Space$ are \emph{asymptotic} if they have bounded distance. Every geodesic ray $\Ray$ defines a Busemann function
\[
\Busemann(\Point) = \lim_{t \to \infty} t - \EuclDistance(\Ray(t),\Point) \text{ .}
\]
Beware that our Busemann functions have reversed sign compared to the definition in \cite{brihae}. If two rays define the same Busemann function then they are asymptotic. Conversely the Busemann functions $\Busemann_1$, $\Busemann_2$ defined by two asymptotic rays may differ by a constant, i.e.\ $\Busemann_1 = \Busemann_2 + \Constant$. A \emph{point at infinity} is the class $\Infty\Ray$ of rays asymptotic to a given ray $\Ray$ or, equivalently, the class $\Infty\Busemann$ of Busemann functions that differ from a given Busemann function $\Busemann$ by a constant. The \emph{visual boundary} $\Infty\Space$ consists of all points at infinity. It becomes a CAT(1)-space via the Tits metric
\[
d_{\Infty\Space}(\Infty\Ray,\Infty{\Another\Ray}) = \angle(\Ray,\Another\Ray)
\]
(see \cite[Chapter~II.9]{brihae}). Let $\Vertex$ be a special vertex (\cite[Definition~10.18]{abrbro}), let $\Chamber \ge \Vertex$ be a chamber, and let $\Apartment$ be an apartment that contains $\Chamber$; the subset of $\Apartment$ covered by geodesic rays in $\Apartment$ that issue at $\Vertex$ and have an initial segment contained in $\Chamber$ is called a \emph{sector} of $\Apartment$. Taking visual boundaries of sectors to be maximal simplices turns $\Infty\Space$ into a simplicial complex that is in fact a spherical building, the \emph{building at infinity of $\Space$} (with respect to the complete system of apartments), see \cite[Theorem~11.79]{abrbro}.

For every point $\Point \in \Space$ and every point at infinity $\PointAtInfty \in \Infty\Space$, there is a unique geodesic ray $\Ray$ that issues at $\Point$ and defines $\PointAtInfty$, i.e.\ $\PointAtInfty = \Infty\Ray$ (see \cite[Proposition~II.8.2]{brihae}). This means that a point at infinity $\PointAtInfty$ defines a direction $\PointAtInfty_\Point \in \Link \Point$ at any point $\Point$, namely $\PointAtInfty_\Point \defeq \Ray_\Point$ where $\Ray$ issues at $\Point$ and defines $\PointAtInfty$. Using this we get a projection from the asymptotic structure to the local structure:

\begin{obs}
\label{obs:asymptotic_to_local_is_cellular}
For every point $\Point \in \Infty\Space$ the map $\Infty\Space \to \Link\Point$ that takes $\PointAtInfty$ to $\PointAtInfty_\Point$ maps simplices into simplices.\qed
\end{obs}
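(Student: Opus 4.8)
The plan is to unwind the definitions and reduce the claim to a statement about apartments, where everything is piecewise linear. First I would fix a point $\Point \in \Space$ and a point at infinity $\PointAtInfty \in \Infty\Space$, and recall that by \cite[Proposition~II.8.2]{brihae} there is a unique geodesic ray $\Ray$ issuing at $\Point$ with $\Infty\Ray = \PointAtInfty$; this is precisely what makes the map $\PointAtInfty \mapsto \PointAtInfty_\Point = \Ray_\Point$ well-defined, so the content of the observation is only that it is cellular (sends simplices into simplices). A simplex of $\Infty\Space$ is, by the construction of the building at infinity via sectors, the visual boundary $\Infty\Cell$ of a sector $\Cell$ of some apartment $\Apartment$; I would take $\Apartment$ to be an apartment that moreover contains $\Point$ (possible since the apartment system is complete — any point and any sector lie in a common apartment by the building axioms), so that the whole configuration lives inside the single Euclidean space $\Apartment$.

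Next I would work entirely in $\Apartment$. Inside a Euclidean apartment, for each point at infinity $\PointAtInfty' \in \Infty\Cell$ the ray from $\Point$ to $\PointAtInfty'$ is just the Euclidean ray in direction of the corresponding unit vector, and the assignment $\PointAtInfty' \mapsto \Ray_\Point$ is the restriction to $\Infty\Cell$ of the canonical identification between the visual boundary $\Infty\Apartment$ (a tessellated unit sphere) and the link $\Link[\Apartment]\Point$ (also a tessellated unit sphere) obtained by parallel transport of directions. Under this identification a sector based at a special vertex maps to a chamber of the link, and more generally the face $\Infty\BigCell$ of $\Infty\Cell$ corresponding to a face $\BigCell \le \Cell$ maps onto a face of that chamber; this is just the statement that translation carries the fan of a sector to a subfan of the Coxeter fan at $\Point$. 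Hence $\Infty\Cell$ is mapped bijectively and simplicially onto a simplex of $\Link[\Apartment]\Point \subseteq \Link[\Space]\Point$. Since $\Infty\Cell$ was an arbitrary simplex of $\Infty\Space$ and the map does not depend on the chosen apartment (uniqueness of the ray), this proves that the global map sends each simplex into a simplex.

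The only mild subtlety, and the step I would be most careful about, is making sure the map computed inside the chosen apartment $\Apartment$ agrees with the intrinsic map of the observation — i.e.\ that the ray from $\Point$ to $\PointAtInfty'$ computed in $\Apartment$ is the same as the one computed in $\Space$. This follows because a geodesic ray in an apartment is a geodesic ray in $\Space$ (apartments are isometrically and convexly embedded, \cite[Section~11.2]{abrbro}) and the ray representing a given point at infinity is unique by \cite[Proposition~II.8.2]{brihae}; so no genuine obstacle arises, the work is purely bookkeeping. Since the analogous statements about links, apartments, sectors and visual boundaries hold verbatim for spherical buildings, the same argument gives the corresponding cellular projection there as well.
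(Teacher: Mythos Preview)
The paper gives no proof at all for this observation---it is stated with an immediate \qed---so there is nothing to compare against beyond noting that your argument is the natural elaboration the author presumably had in mind. Your reduction to a single apartment containing both $\Point$ and a representative sector, together with the remark that the Coxeter fan at $\Point$ is a coarsening of the (translated) fan at a special vertex, is correct and is exactly the right way to see why the map is cellular.
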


For the following statement it is crucial that $\Space$ is Euclidean (in hyperbolic space a Busemann function cannot be constant on a non-trivial geodesic).

\begin{obs}
\label{obs:busemann_angle_criterion}
Let $\PointAtInfty \in \Infty\Space$ be a point at infinity and let $\Point \in \Space$ be a point. Assume that the Busemann function $\Busemann$ defines $\PointAtInfty$. If $\Geodesic$ issues at $\Point$ then $\Busemann$ is constant on an initial interval $\Geodesic$ (i.e.\ $\Busemann \circ \Geodesic$ is constant on an initial interval) if and only if $\angle_\Point(\Geodesic_\Point,\PointAtInfty_\Point) = \pi/2$. And $\Busemann$ is increasing (decreasing) on $\Geodesic$ if and only if $\angle_\Point(\Geodesic_\Point,\PointAtInfty_\Point) < \pi/2$ ($> \pi/2$).\qed
\end{obs}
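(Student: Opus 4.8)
The plan is to reduce the whole statement to elementary Euclidean trigonometry inside a single apartment, the point being that on a flat a Busemann function is genuinely affine (this is where the hypothesis that $\Space$ is Euclidean is used, and indeed the statement fails in hyperbolic space).

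First I would set up the local picture at $\Point$. Let $\Ray$ be the unique geodesic ray issuing at $\Point$ with $\Infty\Ray=\PointAtInfty$, which exists by \cite[Proposition~II.8.2]{brihae} (recalled above); then $\PointAtInfty_\Point=\Ray_\Point$ by definition, and since all three assertions only involve differences $\Busemann(\cdot)-\Busemann(\Point)$, I may replace $\Busemann$ by the representative of $\Infty\Busemann$ associated to $\Ray$, so that $\Busemann(\Ray(t))=\Busemann(\Point)+t$; recall also that $\Busemann$ is $1$-Lipschitz. Since $\Link\Point$ is a spherical building, the directions $\Geodesic_\Point$ and $\PointAtInfty_\Point$ lie in a common apartment of $\Link\Point$, and I would lift this to an apartment $\Apartment$ of $\Space$ through $\Point$ with $\Link[\Apartment]\Point$ equal to that apartment (lift a pair of opposite chambers to chambers of $\Space$ above the carrier of $\Point$, then take an apartment of $\Space$ containing both). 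Because $\Space$ is Euclidean, a neighbourhood of $\Point$ is isometric to a neighbourhood of the cone point of the Euclidean cone on $\Link\Point$, so the germs of $\Geodesic$ and of $\Ray$ at $\Point$ — being the radial germs in the directions $\Geodesic_\Point$ and $\PointAtInfty_\Point$ — both lie in $\Apartment$.

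Now the computation. Identifying $\Apartment$ with $\EuclSpace^\Dimension$ with $\Point$ at the origin, $\Busemann|_\Apartment$ is affine, $\Busemann|_\Apartment(\Point')=\Busemann(\Point)+\scaprod{\Point'}{\AltVector}$ with $\abs{\AltVector}\le 1$ by $1$-Lipschitzness. Writing $\Ray(t)=t\AltVector'$ for small $t$ with $\abs{\AltVector'}=1$, the relation $\Busemann(\Ray(t))=\Busemann(\Point)+t$ forces $\scaprod{\AltVector'}{\AltVector}=1$, hence $\AltVector=\AltVector'$ by Cauchy--Schwarz, so $\AltVector$ is the unit vector representing $\PointAtInfty_\Point$. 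Writing $\Geodesic(t)=t\Vector$ for small $t$ with $\abs{\Vector}=1$, this gives, for all sufficiently small $t\ge 0$,
\[
\Busemann(\Geodesic(t))-\Busemann(\Point)=t\,\scaprod{\Vector}{\AltVector}=t\cos\angle_\Point(\Geodesic_\Point,\PointAtInfty_\Point),
\]
the last equality holding because inside the isometrically embedded flat $\Apartment$ the Euclidean angle between $\Vector$ and $\AltVector$ equals the Alexandrov angle $\angle_\Point(\Geodesic_\Point,\PointAtInfty_\Point)$. The three equivalences then drop out of this identity: $\Busemann\circ\Geodesic$ is constant, strictly increasing, or strictly decreasing on an initial interval according as $\cos\angle_\Point(\Geodesic_\Point,\PointAtInfty_\Point)$ is zero, positive, or negative, and these three cases are exactly $\angle_\Point(\Geodesic_\Point,\PointAtInfty_\Point)=\pi/2$, $<\pi/2$, $>\pi/2$.

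The only step carrying genuine content is producing the apartment $\Apartment$ and checking that it contains the germs of both geodesics; this rests on $\Link\Point$ being a spherical building together with the conical local structure of a Euclidean building, and it is the place where Euclideanness is essential. A softer route that sidesteps the lifting is to obtain the one-sided bound $\Busemann(\Geodesic(t))-\Busemann(\Point)\le t\cos\angle_\Point(\Geodesic_\Point,\PointAtInfty_\Point)$ from the CAT($0$) comparison inequality applied to the triangles $\Point,\Ray(s),\Geodesic(t)$ as $s\to\infty$; but this only yields the ``$\Leftarrow$'' halves of the three statements, and attaining the bound on an initial interval still requires the local Euclidean picture.
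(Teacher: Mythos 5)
Your approach---reduce to a flat piece through $\Point$ on which the Busemann function is affine---is the right one and is surely what the paper has in mind for this unproved observation. But there is a gap where you assert that ``$\Busemann|_\Apartment$ is affine.'' You only arranged for $\Apartment$ to contain the \emph{germs} of $\Geodesic$ and of $\Ray$ at $\Point$; for such an apartment, $\Busemann|_\Apartment$ is in general only piecewise affine and concave (with the paper's sign convention). Indeed the ray $\Ray$ will typically leave $\Apartment$ at bounded distance from $\Point$, and beyond the cell at which it branches off, the gradient of $\Busemann|_\Apartment$ changes. What you actually need is that $\Busemann|_\Apartment$ is affine on a \emph{neighbourhood of $\Point$} with gradient the unit vector $\AltVector$ pointing along $\Ray$; as written you have verified only that \emph{if} $\Busemann|_\Apartment$ were linear near $\Point$, its gradient would have to be $\AltVector$.

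Two short repairs. The cleaner one is to choose the apartment differently: let $\Cell$ be a cell carrying an initial segment of $\Geodesic$, and take $\Apartment$ (in the complete system) containing $\Cell$ and with $\PointAtInfty \in \Infty\Apartment$---such an apartment exists since a cell of $\Space$ and a chamber of $\Infty\Space$ lie in a common apartment. Then $\Ray \subseteq \Apartment$ automatically because $\Apartment$ is closed and convex with $\Point \in \Apartment$ and $\PointAtInfty \in \Infty\Apartment$, so $\Busemann|_\Apartment$ is genuinely (globally) affine with gradient $\AltVector$, and your computation goes through verbatim; note you want the germ of $\Geodesic$ inside the apartment, not that of $\Ray$. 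The other repair keeps your apartment and proves the local affineness: the one-sided directional derivative $f(\Vector) = \tfrac{d}{dt}\big|_{t=0^+}\Busemann|_\Apartment(t\Vector)$ at $\Point$ is positively homogeneous, concave (as $\Busemann$ is a pointwise limit of the concave functions $t - \EuclDistance(\Ray(t),\cdot)$), $1$-Lipschitz, and satisfies $f(\AltVector)=1$; writing $f$ as a support function and invoking the equality case of Cauchy--Schwarz forces $f=\scaprod{\cdot}{\AltVector}$, and piecewise affineness of $\Busemann$ on chambers then upgrades the derivative identity to the identity on an initial interval that you need. Either fix is short, but---as you yourself flag in your closing sentence---the affineness is precisely where the Euclidean hypothesis is used, so it should not be left unproved.
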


The Euclidean building $\Space$ decomposes as a direct product $\prod_i \EuclBuilding_i$ of irreducible Euclidean buildings. The visual boundary $\Infty\Space$ is the spherical join $\Join_i \Infty\EuclBuilding_i$ of the individual visual boundaries. Similarly the link $\Link[\Space] \Cell$ of a cell $\Cell = \prod_i \Cell_i \in \Space$ is the spherical join  $\Join_i \Link[\EuclBuilding_i] \Cell_i$ of the links of its components.

At every point $\Point = (\Point_i)_i \in \Space$ we have embeddings
\begin{eqnarray*}
\iota_{i,\Point} \colon \EuclBuilding_i &\to& \Space\\
\AltPoint_i &\mapsto& (\AltPoint_i,(\Point_j)_{j \ne i}) \text{ .}
\end{eqnarray*}

\begin{obs}
\label{obs:horizontal_perpendicular}
Let $\PointAtInfty \in \Infty\Space$ and $\Point \in \Space$. Let $\Ray$ be the geodesic ray that issues at $\Point$ and defines $\PointAtInfty$ and let $\Busemann$ be the Busemann function it defines. Identify every $X_j$ with $\iota_{j,\Point}(X_j)$. Let $i$ be any index. The following are equivalent:
\begin{enumerate}
\item $\Ray$ is perpendicular to $\EuclBuilding_i$, i.e.\ $\angle_\Point(\Ray,\Another\Point) = \pi/2$ for every $\Another\Point \in \EuclBuilding_i$.
\item $\Ray$ is contained in $\prod_{j \ne i} \EuclBuilding_j$.
\item $\Busemann$ is constant on $X_i$.
\item $\PointAtInfty \in \Join_{j \ne i} \Infty\EuclBuilding_j$.\qed
\end{enumerate}
\end{obs}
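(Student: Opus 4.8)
The plan is to route everything through the product decomposition of geodesic rays. Take $\Ray$ to be unit speed and write $\Ray(t) = (\Ray_i(t))_i$, where each $\Ray_i$ is a constant-speed geodesic (possibly constant) in $\EuclBuilding_i$ issuing at $\Point_i$, with speed $c_i \ge 0$ and $\sum_i c_i^2 = 1$; see \cite[Ch.~I.5]{brihae} for geodesics in products. I would then show that each of (i)--(iv) is equivalent to the single condition $c_i = 0$, which proves the observation.

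The equivalence with (ii) is essentially the definition: $c_i = 0$ means exactly that $\Ray_i$ is the constant ray at $\Point_i$, i.e.\ that $\Ray$ stays inside the subbuilding $\prod_{j\ne i}\EuclBuilding_j$ of $\Space$ (the points agreeing with $\Point$ in the $i$-th coordinate). For (iii) I would compute directly: for $T \ge 0$ and $t \ge T$ only the $i$-th coordinate contributes a cross-term, so $\EuclDistance(\Ray(t), \iota_{i,\Point}(\Ray_i(T)))^2 = t^2 + c_i^2 T^2 - 2 c_i^2 t T$, whence $\Busemann(\iota_{i,\Point}(\Ray_i(T))) = c_i^2 T$. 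Thus $\Busemann$ is non-constant on $X_i$ when $c_i > 0$; and when $c_i = 0$ the coordinatewise form of $\EuclDistance^2$ shows that $\EuclDistance(\Ray(t),\AltPoint)^2 - \EuclDistance(\Ray(t),\AltPoint')^2$ is independent of $t$ for $\AltPoint, \AltPoint' \in X_i$, so $\Busemann$ is constant on $X_i$. For (i), I would use the join decomposition $\Link\Point = \Join_j \Link\Point_j$ together with the fact that a geodesic from $\Point$ to a point of $\iota_{i,\Point}(\EuclBuilding_i)$ has its direction in the join factor $\Link\Point_i$: the spherical-join metric then gives $\cos \angle_\Point(\Ray, \AltPoint) = c_i \cos\varphi$ for such an endpoint $\AltPoint$, with $\varphi$ an angle measured in $\Link\Point_i$, and this equals $\pi/2$ for every choice of $\AltPoint$ precisely when $c_i = 0$ (if $c_i > 0$, aiming along $\Ray_i$ itself gives angle $\arccos c_i < \pi/2$). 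Finally for (iv): if $c_i = 0$ then $\Ray \subseteq \prod_{j\ne i}\EuclBuilding_j$, hence $\PointAtInfty = \Infty\Ray$ lies in the visual boundary of $\prod_{j\ne i}\EuclBuilding_j$, which is $\Join_{j\ne i}\Infty\EuclBuilding_j$; conversely, if $\PointAtInfty$ lies in that subjoin it is represented by a ray issuing at $\Point$ and staying in $\prod_{j\ne i}\EuclBuilding_j$, and by uniqueness of the ray from $\Point$ representing a given point at infinity this ray must be $\Ray$, giving (ii).

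I do not expect any real obstacle: the statement is bookkeeping around the product decompositions of geodesics, of links, and of visual boundaries taken from \cite{brihae}, and the only point needing a little care is that under these decompositions the relevant join coordinate of $\Ray$ — its direction at $\Point$, or its class at infinity — in the $i$-th factor carries weight exactly $c_i$; once that is pinned down, the four equivalences reduce to the one-line computations above. The one step that is not purely formal is that a Busemann function is non-constant (indeed strictly monotone) along its defining ray, used in the implication (iii)$\Rightarrow$(ii); as noted before Observation~\ref{obs:busemann_angle_criterion}, the attendant constancy phenomena are a feature of Euclidean rather than, say, hyperbolic geometry.
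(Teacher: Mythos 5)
Your proposal is correct. The paper leaves this observation without proof (the \qed follows the statement directly), treating it as routine bookkeeping, and your argument — reducing all four conditions to $c_i = 0$ via the product decomposition of unit-speed geodesic rays, the spherical-join formula for the angle, and the limit computation for the Busemann function — is exactly the natural unpacking one would supply.
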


\begin{rem}
Since the last statement is independent of the basepoint $\Point$, so are the other.
\end{rem}

Let $\Proj[i]$ denote the projection of $\Space$ onto $\EuclBuilding_i$. If $\Ray$ is a geodesic ray in $\Space$ then $\Proj[i] \circ \Ray$ is a reparametrized geodesic ray, i.e.\ there is a geodesic ray $\Another\Ray$ in $\EuclBuilding_i$ and an $\Parameter \in [0,1]$ such that $\Proj[i] \circ \Ray(t) = \Another\Ray(\Parameter \cdot t)$. It is clear from the last observation that $r = 0$ if and only if $\Ray$ is perpendicular to $\EuclBuilding_i$. If this is not the case, then $\Another\Ray$ can be recovered from $\Proj[i] \circ \Ray$ and we denote it by $\Proj[i] \Ray$.

At infinity this translates as follows: $\Infty\EuclBuilding_i$ is $\pi$-convex in $\Infty\Space$. For a point $\PointAtInfty \in \Space$ either $\PointAtInfty \in \Join_{j \ne i} \Infty\EuclBuilding_j$ or $d(\PointAtInfty,\Infty\EuclBuilding_i) < \pi/2$. In the latter case $\PointAtInfty$ projects to a unique projection point $\Proj[i]\PointAtInfty$ in $\Infty\EuclBuilding_i$ by Lemma~\ref{lem:spherical_projection}. The two projections we just defined are compatible in the sense that $\Proj[i](\Infty\Ray) = \Infty{(\Proj[i]\Ray)}$.

\begin{obs}
\label{obs:perp_factorwise}
Let $\Ray$ be a geodesic ray in $\Space$ that is perpendicular to $\Cell = \prod_i \Cell_i$. If $\BigCell = \prod_i \BigCell_i$ is a coface of $\Cell$, then $\Ray$ is perpendicular to $\BigCell$ if and only if $\Proj[i]\Ray$ is perpendicular to $\BigCell_i$ for every $i$ for which $\Ray$ is not perpendicular to $\EuclBuilding_i$.
\end{obs}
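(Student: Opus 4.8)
The plan is to reduce the statement to a routine computation in a spherical join. First I would fix an interior point $\Point = (\Point_i)_i$ of $\Cell$ at which $\Ray$ issues, so that $\Point_i = \Proj[i]\Point$ is an interior point of $\Cell_i$. Since $\Ray$ is perpendicular to $\Cell$, its direction $\Ray_\Point$ lies in the join factor $\Link\Cell$ of the decomposition $\Link\Point = \Boundary_\Point\Cell * \Link\Cell$ from~\eqref{eq:link_of_point_decomposition}; applying that same decomposition at an interior point of $\BigCell$ and using that $\Ray$ is already perpendicular to $\Cell$, the condition ``$\Ray$ is perpendicular to $\BigCell$'' (that is, $\angle_\Point(\Ray,\AltPoint) = \pi/2$ for every $\AltPoint \in \BigCell$) translates into the condition that $\Ray_\Point$ be perpendicular, inside the spherical building $\Link\Cell$, to the simplex $\BigCell \direction \Cell$. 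So everything takes place in $\Link\Cell$.

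Next I would bring in the product structure. One has $\Link\Cell = \Join_i \Link[\EuclBuilding_i]{\Cell_i}$ and, correspondingly, $\BigCell \direction \Cell = \Join_i (\BigCell_i \direction \Cell_i)$, where the $i$-th factor on the right is empty exactly when $\BigCell_i = \Cell_i$. The ray $\Ray$ projects to reparametrized geodesic rays $\Proj[i] \circ \Ray$; let $r_i \in [0,1]$ be the reparametrization factor of the $i$-th one, so that $\sum_i r_i^2 = 1$ as $\Ray$ is a unit-speed geodesic in a product. By Observation~\ref{obs:horizontal_perpendicular} one has $r_i = 0$ precisely when $\Ray$ is perpendicular to $\EuclBuilding_i$, and when $r_i > 0$ the unit-speed ray $\Proj[i]\Ray$ is defined and its direction at $\Point_i$ is exactly the component of $\Ray_\Point$ in the $i$-th join factor, writing $\Ray_\Point = (r_i, (\Ray_\Point)_i)_i$ in join coordinates on $\Join_i \Link[\EuclBuilding_i]{\Point_i}$.

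It then remains to prove the elementary fact about a spherical join $\Join_i L_i$ that a point $v = (r_i, v_i)_i$ is perpendicular to a simplex $\Join_i \sigma_i$ if and only if $v_i$ is perpendicular to $\sigma_i$ in $L_i$ for every index $i$ with $r_i > 0$ and $\sigma_i \neq \emptyset$. I would read this off from the join distance formula $\cos d(v,p) = \sum_i r_i s_i \cos d_{L_i}(v_i, p_i)$, in which a summand is $0$ whenever $r_i s_i = 0$: for the ``if'' direction every $p \in \Join_i\sigma_i$ has $p_i \in \sigma_i$ as soon as $s_i > 0$, so all nonzero summands vanish; for the ``only if'' direction one tests $v$ against the points of $\Join_i\sigma_i$ that are supported on a single factor. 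Applying this with $L_i = \Link[\EuclBuilding_i]{\Cell_i}$, $\sigma_i = \BigCell_i \direction \Cell_i$ and $v = \Ray_\Point$, and noting that $(\Ray_\Point)_i$ is perpendicular to $\BigCell_i \direction \Cell_i$ in $\Link[\EuclBuilding_i]{\Cell_i}$ precisely when $\Proj[i]\Ray$ is perpendicular to $\BigCell_i$, yields the observation.

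I do not expect a serious obstacle. The two points that need care are the identification of the join coordinate $(\Ray_\Point)_i$ with the direction of $\Proj[i]\Ray$ --- i.e.\ matching the decomposition of $\Ray$ into reparametrized factor geodesics with the spherical-join coordinates on the link of a point in a product --- and the bookkeeping for the degenerate indices, where $\Ray$ is perpendicular to $\EuclBuilding_i$ (then $r_i = 0$ and that index imposes no condition, consistently with its vanishing join weight) or where $\BigCell_i = \Cell_i$ (then the $i$-th factor of $\BigCell \direction \Cell$ is empty and is likewise silent). Both become routine once the conventions for products of Euclidean buildings and for spherical joins are pinned down.
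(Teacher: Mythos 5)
Your argument is correct and runs essentially the same way as the paper's: pass to $\Link\Cell = \Join_i \Link[\EuclBuilding_i]\Cell_i$, identify the direction $\Ray_\Point$ and its factorwise join coordinates with the reparametrization factors and the directions of the $\Proj[i]\Ray$, and reduce to the factorwise perpendicularity criterion for simplices of a spherical join. The only real difference is that you derive that criterion directly from the join distance formula, whereas the paper disposes of it by citing Observation~\ref{obs:equator_decomposition}.
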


\begin{proof}
Let $\Point$ be the interior point of $\Cell$ where $\Ray$ issues. The spherical building $\Link\Cell$ decomposes as a spherical join $\Join_i \Link \Cell_i$. If $\Ray$ is perpendicular to $\EuclBuilding_i$, then clearly $\Ray_\Point$ has distance $\pi$ from $\Link \Cell_i$. Otherwise the projection of $\pr_i(\Ray_\Point)$ onto $\Link \Cell_i$ is the same as $(\pr_i \Ray)_\Point$. So the statement follows from Observation~\ref{obs:equator_decomposition}.
\end{proof}


\section{Horizontal links}
\label{sec:horizontal_links}

The purpose of this section is to generalize Section~5 of \cite{buxwor08} to Euclidean buildings that are not necessarily irreducible.
Though Bux and Wortman do deal with products of buildings (in Section~7 of the same article), they can avoid the generality we are aiming for, because their Morse function is a linear combination of functions each defined on one factor only, while ours will properly be a function of the product.

The superficial problem in adapting the results of \cite{buxwor08} is that a product of buildings is not simplicial. In particular it cannot be a flag complex. Thus it is not immediately clear how to translate the arguments that are carried out in the Euclidean building. Though this is certainly possible, we prefer to take a different approach: namely to deduce all that we need from the characterizing property of the fundamental object of study, the face $\BigCell\Min$ of a cell $\BigCell$; and to carry out the argument inside the links, which are spherical buildings.

Throughout the section let $\Space = \prod_i\EuclBuilding_i$ be a finite product of irreducible Euclidean buildings. Even if $\Space$ is not a flag complex, it has the following property reminiscent of flag complexes:

\begin{obs}
\label{obs:polyflag}
If $\Cell^1,\ldots,\Cell^k$ are cells in a product of flag-complexes and for $1 \le l < m \le k$ the cell $\Cell^l \vee \Cell^m$ exists, then $\Cell^1 \vee \cdots \vee \Cell^k$ exists.
\end{obs}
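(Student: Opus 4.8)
The plan is to reduce the statement about a product of flag complexes to the corresponding fact about a single flag complex, which is a standard consequence of the flag condition. Write $\Space = \prod_i Y_i$ with each $Y_i$ a flag complex, and write each cell $\Cell^l$ as a product $\Cell^l = \prod_i \Cell^l_i$ with $\Cell^l_i$ a simplex (possibly empty) of $Y_i$. The key observation is that a product of simplices $\prod_i \BigCell_i$ is a cell of $\Space$ precisely when each $\BigCell_i$ is a simplex of $Y_i$, and that the join $\Cell^l \vee \Cell^m$ exists in $\Space$ if and only if $\Cell^l_i \vee \Cell^m_i$ exists in $Y_i$ for every $i$; in that case $(\Cell^l \vee \Cell^m)_i = \Cell^l_i \vee \Cell^m_i$.

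First I would fix an index $i$ and collect the simplices $\Cell^1_i,\ldots,\Cell^k_i$ of $Y_i$. By the previous paragraph, the hypothesis that $\Cell^l \vee \Cell^m$ exists for all $1 \le l < m \le k$ gives that $\Cell^l_i \vee \Cell^m_i$ exists in $Y_i$ for all $l < m$. Let $V_i$ be the union of the vertex sets of $\Cell^1_i,\ldots,\Cell^k_i$. Pairwise existence of joins means that any two vertices lying in some $\Cell^l_i$ and $\Cell^m_i$ respectively span an edge of $Y_i$ (either they coincide, or both lie in a common simplex $\Cell^l_i \vee \Cell^m_i$, hence are joined by an edge); so $V_i$ spans a clique in the $1$-skeleton of $Y_i$. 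Since $Y_i$ is a flag complex, $V_i$ is the vertex set of a simplex $\BigCell_i$ of $Y_i$, and by construction each $\Cell^l_i$ is a face of $\BigCell_i$, so $\BigCell_i = \Cell^1_i \vee \cdots \vee \Cell^k_i$.

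Finally I would assemble the factors: the product $\BigCell \defeq \prod_i \BigCell_i$ is a cell of $\Space$ because each $\BigCell_i$ is a simplex of $Y_i$, and each $\Cell^l = \prod_i \Cell^l_i$ is a face of $\BigCell$ since $\Cell^l_i$ is a face of $\BigCell_i$ for every $i$. Moreover $\BigCell$ is the smallest such cell, being the product of the smallest such simplices in each factor, so $\BigCell = \Cell^1 \vee \cdots \vee \Cell^k$ exists, as claimed. I do not anticipate a genuine obstacle here; the only point requiring a little care is the bookkeeping that translates "join exists" and "is a face of" between $\Space$ and its factors, together with the remark that empty simplices $\Cell^l_i$ are harmless since they impose no clique condition and are faces of everything. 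Applying this to products of buildings, each of which is a flag complex by \cite{abrbro}, gives the observation for $\Space = \prod_i \EuclBuilding_i$.
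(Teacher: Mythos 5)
Your proof is correct and takes the same approach as the paper: decompose $\Cell^l = \prod_i \Cell^l_i$, observe that $\Cell^l \vee \Cell^m$ exists if and only if each $\Cell^l_i \vee \Cell^m_i$ exists, and thereby reduce to one statement per factor, which holds because each factor is a flag complex. The paper leaves the flag-complex step implicit; you spell it out (vertices pairwise joined by edges span a clique, hence a simplex), which is a reasonable expansion rather than a different argument. One small remark: since cells $\Cell^l$ of $\Space = \prod_i Y_i$ are nonempty, each component $\Cell^l_i$ is automatically nonempty, so the parenthetical "possibly empty" does not actually arise — which is consistent with your observation that empty factors would be harmless anyway.
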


\begin{proof}
Write $\Cell^l = \prod_i \Cell^l_i$. Then $\Cell^l \vee \Cell^m$ exists if and only if $\Cell^l_i \vee \Cell^m_i$ exist for every $i$. The statement is thus translated to a family of statements, one for each factor, that hold because the factors are flag-complexes.
\end{proof}

Let $\Busemann$ be a Busemann function on $\Space$. A cell on which $\Busemann$ is constant is called \emph{horizontal}. If $\Cell$ is horizontal and $\Ray$ is a ray that defines $\Busemann$ and issues at a point of $\Cell$, then $\Ray$ is perpendicular to $\Cell$ by Observation~\ref{obs:busemann_angle_criterion} so that it defines a point $\NorthPole$ in $\Link \Cell$. This point shall be our north pole and the notions from Section~\ref{sec:spherical_subcomplexes} carry over accordingly. In particular the \emph{horizontal link} $\Horizontal\Link\Cell$ is the join of all join factors of $\Link \Cell$ that are perpendicular to $\NorthPole$.

We write
\[
\BigCell \horizontal \Cell \quad \text{if} \quad \BigCell \direction \Cell \subseteq \Horizontal\Link \Cell
\]
and say for short that ``$\BigCell$ lies in the horizontal link of $\Cell$''. Note that this in particular requires $\BigCell$ to be horizontal (but is a stronger condition). Observation~\ref{obs:horizontal_perpendicular} tells us that lying in the horizontal link actually does not depend on $\Busemann$ but only on the point at infinity that it defines. If we want to emphasize the point at infinity $\PointAtInfty$ with respect to which $\BigCell$ lies in the horizontal link of $\Cell$ then we write $\BigCell \horizontal_\PointAtInfty \Cell$.

The next two observations deal with the interaction of $\PointAtInfty$ and its projections onto the factors of $\Infty\Space$.

\begin{obs}
\label{obs:horizontal_iff_horizontal_on_factors}
Let $\BigCell = \prod_i \BigCell_i$ and $\Cell = \prod_i \Cell_i$ be non-empty horizontal cells. Let $I$ be the set of indices $i$ such that $d(\PointAtInfty,\Infty\EuclBuilding_i) \ne \pi/2$.
Then
\[
\BigCell \horizontal_{\PointAtInfty} \Cell \quad \text{if and only if} \quad \BigCell_i \horizontal_{\Proj[i]\PointAtInfty} \Cell_i \text{ for every } i \in I\text{ .}
\]
In other words
\[
\Horizontal\Link\Cell = (\Join_{i \in I} \Horizontal\Link \Cell_i) * (\Join_{i \nin I} \Link \Cell_i)
\]
where the north pole of $\Link \Cell_i$ is given by $\Proj[i]\PointAtInfty$.
\end{obs}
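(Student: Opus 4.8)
The strategy is to reduce the statement to the corresponding factorwise statements by exploiting two pieces of structure that have already been set up: the spherical join decomposition $\Link[\Space]\Cell = \Join_i \Link[\EuclBuilding_i]\Cell_i$ from Section~\ref{sec:spherical_and_euclidean_buildings}, and the compatibility of the north poles with the projections $\Proj[i]$. The claim is really a statement about which join factors of $\Link\Cell$ are perpendicular to the north pole $\NorthPole$, so the first thing I would do is pin down $\NorthPole$ in terms of its images in the factors.

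First I would fix a ray $\Ray$ defining $\Busemann$ and issuing at an interior point $\Point = (\Point_i)_i$ of $\Cell$; since $\Cell$ is horizontal, $\Ray$ is perpendicular to $\Cell$ by Observation~\ref{obs:busemann_angle_criterion}, so $\Ray_\Point$ is a well-defined point $\NorthPole$ of $\Link\Cell$. Decompose $\Ray_\Point$ according to the join $\Link\Cell = \Join_i \Link\Cell_i$; by Observation~\ref{obs:horizontal_perpendicular}, the factor $\Link\Cell_i$ lies at distance $\pi$ from $\NorthPole$ (equivalently, $\NorthPole$ has no component in that factor) precisely when $\Ray$ is perpendicular to $\EuclBuilding_i$, which by the same observation is exactly the case $\PointAtInfty \in \Join_{j \ne i}\Infty\EuclBuilding_j$, i.e.\ $d(\PointAtInfty,\Infty\EuclBuilding_i) = \pi/2$, i.e.\ $i \nin I$. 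For $i \in I$, on the other hand, $\Ray$ is not perpendicular to $\EuclBuilding_i$, the projection $\pr_i\Ray$ is an honest ray in $\EuclBuilding_i$, and the projection of $\pr_i(\Ray_\Point)$ onto $\Link\Cell_i$ is $(\pr_i\Ray)_\Point$ — this is exactly the identification used in the proof of Observation~\ref{obs:perp_factorwise}. Hence the north pole induced on the factor $\Link\Cell_i$ by the compatibility $\Proj[i](\Infty\Ray) = \Infty(\Proj[i]\Ray)$ is precisely $\Proj[i]\PointAtInfty$.

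Next I would combine this with Observation~\ref{obs:equator_decomposition}. Writing $\Link\Cell$ as the join $\Join_i \Link\Cell_i$ of (not necessarily irreducible) spherical buildings with north pole $\NorthPole$, Observation~\ref{obs:equator_decomposition} says that $\Link\Cell_i$ is \emph{not} contained in the horizontal part exactly when $\NorthPole$ has a projection onto it — which by the previous paragraph is exactly when $i \in I$ — and that in that case the horizontal part of $\Link\Cell_i$ is computed with respect to $\Proj[i]\PointAtInfty$. A join factor of $\Link\Cell$ is perpendicular to $\NorthPole$ if and only if it is a join factor of some $\Link\Cell_i$ with $i \in I$ that is perpendicular to $\Proj[i]\PointAtInfty$, or an entire $\Link\Cell_i$ with $i \nin I$. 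This gives the displayed decomposition
\[
\Horizontal\Link\Cell = \bigl(\Join_{i \in I} \Horizontal\Link\Cell_i\bigr) * \bigl(\Join_{i \nin I}\Link\Cell_i\bigr)\text{ ,}
\]
and the ``if and only if'' statement for $\BigCell \horizontal_\PointAtInfty \Cell$ follows by testing membership of the simplex $\BigCell\direction\Cell = \Join_i (\BigCell_i\direction\Cell_i)$ in this join factor by factor (using that a simplex of a spherical join lies in a join of certain factors iff each of its components does, which is how Observation~\ref{obs:equator_decomposition} is phrased).

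\textbf{Main obstacle.} None of the steps is individually hard; the delicate point is bookkeeping about the north pole. Specifically, I must be careful that the point $\NorthPole \in \Link\Cell$ is genuinely the join of the points $\Proj[i]\PointAtInfty$ interpreted in $\Link\Cell_i$ (for $i\in I$) together with ``nothing'' in the factors $i \nin I$, and that this is what Observation~\ref{obs:equator_decomposition} consumes as its hypothesis ``for every $\Lambda_i$ not contained in $\Horizontal\SpherBuilding$ let $\NorthPole_i$ be the projection of $\NorthPole$ to $\Lambda_i$''. Matching up ``$\Ray$ not perpendicular to $\EuclBuilding_i$'' with ``$\NorthPole$ has a projection onto $\Link\Cell_i$'' with ``$d(\PointAtInfty,\Infty\EuclBuilding_i)\ne\pi/2$'' is the crux, and this is precisely the content of Observations~\ref{obs:horizontal_perpendicular} and~\ref{obs:perp_factorwise}, so the proof is short once those identifications are invoked cleanly. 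The independence of everything from the choice of $\Busemann$ (as opposed to $\PointAtInfty$) is already recorded in the remark after Observation~\ref{obs:horizontal_perpendicular} and needs no separate argument.
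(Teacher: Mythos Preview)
Your proposal is correct and follows essentially the same approach as the paper: both arguments reduce to the join decomposition $\Link\Cell = \Join_i \Link\Cell_i$ and then use Observation~\ref{obs:perp_factorwise} (together with the identifications in its proof) to see that an irreducible join factor sitting inside $\Link\Cell_i$ is automatically equatorial when $i\notin I$ and is equatorial with respect to $\NorthPole$ iff equatorial with respect to $\Proj[i]\PointAtInfty$ when $i\in I$. Your write-up is more explicit about invoking Observations~\ref{obs:horizontal_perpendicular} and~\ref{obs:equator_decomposition} separately, whereas the paper compresses everything into a single appeal to Observation~\ref{obs:perp_factorwise}, but the content is the same.
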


\begin{proof}
Recall that $\Link \Cell$ decomposes as $\Join_i \Link \Cell_i$. So if $\SpherBuilding'$ is an irreducible join factor of $\Link \Cell$, then it is clearly a join factor of some $\Link \Cell_i$. By Observation~\ref{obs:perp_factorwise} either $d(\PointAtInfty,\Infty\EuclBuilding_i) = \pi/2$ and $\SpherBuilding'$ is automatically equatorial or $\SpherBuilding'$ is equatorial in $\Link \Cell$ if and only if it is equatorial in $\Link \Cell_i$ with respect to $\Proj[i]\PointAtInfty$.
\end{proof}

For each factor we have:

\begin{lem}
\label{lem:cohorizontal_faces_meet}
Let $\EuclBuilding$ be an irreducible Euclidean building and $\PointAtInfty \in \Infty\EuclBuilding$. If $\BigCell \horizontal_\PointAtInfty \Cell_1$ and $\BigCell \horizontal_\PointAtInfty \Cell_2$ then $\Cell_1 \intersect \Cell_2 \ne \emptyset$.
\end{lem}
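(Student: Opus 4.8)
The plan is to argue by contradiction, assuming $\Cell_1 \intersect \Cell_2 = \emptyset$, and to derive a contradiction from the defining property that a horizontal cell $\BigCell \horizontal_\PointAtInfty \Cell$ means the ray defining $\PointAtInfty$ issuing at a point of $\Cell$ is perpendicular to $\Cell$ and points into $\BigCell$, in particular that $\Busemann$ is constant on $\BigCell$ and the direction $\PointAtInfty_\Point$ lies in $\BigCell \direction \Cell_j$ at an interior point $\Point$ of $\Cell_j$. First I would note that since $\Cell_1, \Cell_2$ are both faces of $\BigCell$ and $\BigCell$ is a cell in an irreducible Euclidean building (hence in a polysimplicial complex), the two faces $\Cell_1$ and $\Cell_2$ of the single cell $\BigCell$ automatically satisfy: their intersection $\Cell_1 \intersect \Cell_2$ is again a face of $\BigCell$ — possibly empty. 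So the real content is to rule out the empty case. The key observation to exploit is that the Busemann function $\Busemann$ (restricted to the cell $\BigCell$, where it is constant) together with the perpendicularity condition forces the ray direction $\PointAtInfty_\Point$ to be the \emph{same} direction regardless of whether we issue from the interior of $\Cell_1$ or of $\Cell_2$ — more precisely, $\PointAtInfty$ determines a point in $\Link \BigCell$, and pulling back via the join decomposition $\Link \Point = \Boundary_\Point \BigCell * \Link \BigCell$ of \eqref{eq:link_of_point_decomposition}, one sees $\PointAtInfty_\Point$ has a well-defined ``projection along $\BigCell$''.

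The cleaner route, which I would actually pursue, is to work directly with the function $\Busemann$ and the geometry of the cell $\BigCell$. Since $\BigCell \horizontal_\PointAtInfty \Cell_1$, the ray $\Ray_1$ defining $\PointAtInfty$ issuing at an interior point $\Point_1$ of $\Cell_1$ is perpendicular to $\Cell_1$ and its initial direction lies in the (open) simplex $\BigCell \direction \Cell_1$ of $\Link \Cell_1$; the same with $1$ replaced by $2$. Now consider any interior point $\AltPoint$ of $\BigCell$ itself. There is a unique ray $\Ray$ from $\AltPoint$ defining $\PointAtInfty$; since $\Busemann$ is constant on $\BigCell$ and $\BigCell$ is convex, $\Ray$ is perpendicular to $\BigCell$, so it defines a north pole $\NorthPole_\BigCell \in \Link \BigCell$. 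The hypothesis $\BigCell \horizontal_\PointAtInfty \Cell_1$ is then equivalent (via Observation~\ref{obs:horizontal_perpendicular}, which says the relevant perpendicularity is basepoint-independent, together with the join decompositions) to the statement that under the inclusion $\Link \BigCell \hookrightarrow \Link \Cell_1$ (directions perpendicular to $\Cell_1$ that point into $\BigCell$) the north pole $\NorthPole_\BigCell$ corresponds correctly — but the crucial point is simpler: the ray $\Ray$ from $\AltPoint$, being perpendicular to $\BigCell$, stays, for an initial segment, inside the carrier of the direction. I would then use that $\BigCell$ is a single polysimplex in $\EuclBuilding$ to conclude that $\Ray$ projects to a ray that is ``parallel'' to $\BigCell$; combining the constancy of $\Busemann$ on both $\Cell_1$ and $\Cell_2$ and on $\BigCell$, and the fact that $\Cell_1, \Cell_2 \subseteq \Closure\BigCell$, one produces a geodesic inside $\Closure\BigCell$ from a point of $\Cell_1$ to a point of $\Cell_2$ along which $\Busemann$ is constant; the perpendicularity forces this geodesic to be contained in a single affine subspace orthogonal to $\PointAtInfty$, and since $\Cell_1 \direction \BigCell$ and $\Cell_2 \direction \BigCell$ both contain the antipode of $\NorthPole_\BigCell$'s relevant component — here one unwinds the flag-like property of Observation~\ref{obs:polyflag} applied to the faces of $\BigCell$ — the two faces cannot be opposite faces of $\BigCell$, i.e.\ they must share a vertex.

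Concretely: write $\BigCell = \prod_i \BigCell_i$, $\Cell_1 = \prod_i \Cell^1_i$, $\Cell_2 = \prod_i \Cell^2_i$. By the reduction via products (as in Observation~\ref{obs:horizontal_iff_horizontal_on_factors}) it suffices to treat each irreducible factor, so assume $\EuclBuilding$ irreducible. In an irreducible Euclidean (affine) building each cell is a genuine simplex, so $\BigCell$ is a simplex with faces $\Cell_1, \Cell_2$. If $\Cell_1 \intersect \Cell_2 = \emptyset$ then $\Cell_1$ and $\Cell_2$ are complementary faces and $\BigCell = \Cell_1 * \Cell_2$ as simplices, hence $\Link \BigCell$ appears as a join factor complement and, crucially, the direction from the barycenter of $\Cell_1$ into $\BigCell$ points ``toward'' $\Cell_2$. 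The hypothesis $\BigCell \horizontal_\PointAtInfty \Cell_1$ says $\NorthPole \in \Link \Cell_1$ (the direction of $\PointAtInfty$) is perpendicular to every join factor of $\Link \Cell_1$ that $\BigCell \direction \Cell_1$ meets; in particular, since $\BigCell \direction \Cell_1 \subseteq \Horizontal \Link \Cell_1$, the north pole makes angle exactly $\pi/2$ with $\BigCell \direction \Cell_1$. Symmetrically it makes angle $\pi/2$ with $\BigCell \direction \Cell_2$. But at an interior point $\AltPoint$ of $\BigCell$, the direction toward $\Cell_1$ and the direction toward $\Cell_2$ are antipodal in $\Link \AltPoint$ (because $\BigCell = \Cell_1 * \Cell_2$), and $\PointAtInfty_\AltPoint$ would then be perpendicular to two antipodal directions, forcing $\Busemann$ to be constant on the whole segment through $\AltPoint$ joining $\Cell_1$ to $\Cell_2$ — fine so far — but also $\Ray$ from $\AltPoint$ is perpendicular to $\BigCell$, so $\Ray$ leaves $\Star\BigCell$ staying perpendicular to $\BigCell$; tracing $\Busemann$ along the boundary path from $\Point_1 \in \Cell_1$ through $\AltPoint$ to $\Point_2 \in \Cell_2$ and using that $\Ray_1$ from $\Point_1$ is \emph{perpendicular to $\Cell_1$ but points into $\BigCell$}, I get that $\Ray_1$ is \emph{not} perpendicular to $\BigCell$ unless $\dim \Cell_1 = \dim \BigCell$, contradicting $\Cell_1 \ne \BigCell$ (which holds since $\Cell_2 \neq \emptyset$). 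Hence $\Cell_1 \intersect \Cell_2 \neq \emptyset$.

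\textbf{Main obstacle.} The delicate point is the very last implication: correctly relating ``$\Ray_1$ perpendicular to $\Cell_1$ and pointing into $\BigCell$'' (which is what $\BigCell \horizontal_\PointAtInfty \Cell_1$ encodes at $\Cell_1$) with ``$\Ray$ perpendicular to $\BigCell$'' (which is what it encodes at $\BigCell$), and extracting from the two simultaneous perpendicularities — at $\Cell_1$ and at $\Cell_2$ — the contradiction when $\Cell_1, \Cell_2$ are complementary faces of $\BigCell$. I expect this to require a careful use of the join decomposition $\Link \Point = \Boundary_\Point \Cell * \Link \Cell$ and of the fact (Observation~\ref{obs:horizontal_perpendicular}) that the horizontal-link condition is independent of basepoint, so that ``$\BigCell \horizontal_\PointAtInfty \Cell_j$'' can be read off at the common cell $\BigCell$ for both $j$; once phrased that way, the statement becomes ``a single north pole of $\Link \BigCell$ cannot be the image of two incompatible configurations'', which is essentially the observation that being horizontal in $\Link \BigCell$ is one condition, whereas $\Cell_1, \Cell_2$ disjoint would force it to come from two genuinely different faces. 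I would isolate this as the technical heart and handle the product reduction and the simplicial bookkeeping around it as routine.
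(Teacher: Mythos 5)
Your proposal has several genuine gaps, and the most serious one kills the argument: in your final paragraph you claim that if $\Cell_1\intersect\Cell_2=\emptyset$ then $\Cell_1$ and $\Cell_2$ are \emph{complementary} faces and $\BigCell=\Cell_1*\Cell_2$. This is false. In a simplex $\BigCell$ with vertex set $\{a,b,c,d\}$, the faces $\{a\}$ and $\{b\}$ are disjoint yet $\{a\}*\{b\}=\{a,b\}\ne\BigCell$. So the entire ``antipodal directions in $\Link\AltPoint$'' step, which depends on complementarity, covers only a special case and does not prove the lemma. You also never use irreducibility of $\EuclBuilding$ in any essential way, even though the very next example in the paper (Example~\ref{exmpl:cohorizontal_faces_dont_meet}) shows the lemma is \emph{false} for reducible $\EuclBuilding$; any correct proof must somewhere appeal to the fact that $\Busemann$ cannot be constant on a chamber of an irreducible building.

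There are two further local errors. In the second paragraph you assert that the direction of $\Ray_1$ ``lies in the (open) simplex $\BigCell\direction\Cell_1$ of $\Link\Cell_1$''. That is not what $\BigCell\horizontal_\PointAtInfty\Cell_1$ says: the direction of $\Ray_1$ is the north pole $\NorthPole$ of $\Link\Cell_1$, and $\BigCell\direction\Cell_1\subseteq\Horizontal\Link\Cell_1$ means $\BigCell\direction\Cell_1$ lies in the join factors of $\Link\Cell_1$ that are \emph{perpendicular} to $\NorthPole$, hence $\SpherDistance(\NorthPole,\BigCell\direction\Cell_1)=\pi/2$ — you even say this correctly later, contradicting your earlier sentence. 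And the closing deduction ``$\Ray_1$ is not perpendicular to $\BigCell$ unless $\dim\Cell_1=\dim\BigCell$'' is neither well-posed (perpendicularity to $\BigCell$ is defined for rays issuing from an \emph{interior} point of $\BigCell$, and $\Point_1\notin\relint\BigCell$) nor a contradiction the hypotheses force.

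The paper's argument is quite short and different in spirit: fix a chamber $\Chamber\ge\BigCell$, use irreducibility to pick a vertex $\Vertex$ of $\Chamber$ with $\Busemann(\Vertex)\ne\Busemann(\BigCell)$, pass to the quotient of $\Chamber$ by the directions spanned by $\Cell_1$ and $\Cell_2$, and observe that if $\Cell_1\intersect\Cell_2=\emptyset$ the images $\Project\Cell_1,\Project\Cell_2,\Project\Vertex$ are three distinct points for which Lemma~\ref{lem:geometric_criterion} (applied at $\Cell_1$ and at $\Cell_2$) gives $\angle_{\Project\Cell_1}(\Project\Cell_2,\Project\Vertex)=\angle_{\Project\Cell_2}(\Project\Cell_1,\Project\Vertex)=\pi/2$, an impossibility in a Euclidean triangle. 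Note how irreducibility enters only through the existence of the non-equatorial vertex $\Vertex$; that is the lever your proof is missing.
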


\begin{proof}
Let $\Busemann$ be a Busemann function that defines $\PointAtInfty$. Let $\Chamber$ be a chamber that contains $\BigCell$ and let $\Vertex$ be a vertex of $\Chamber$ with $\Busemann(v) \ne \Busemann(\BigCell)$. We take the quotient of $\Chamber$ modulo directions in $\Cell_1$ and $\Cell_2$. The images of $\BigCell$, $\Vertex$, $\Cell_1$, and $\Cell_2$ under this projection are denoted $\Project\BigCell$, $\Project\Vertex$, $\Project\Cell_1$, and $\Project\Cell_2$ respectively. If $\Cell_1$ and $\Cell_2$ did not meet, then $\Project\Cell_1$ and $\Project\Cell_2$ would be distinct points. In any case $\Project\Vertex$ is distinct from both. By Lemma~\ref{lem:geometric_criterion} we would have $\angle_{\Project\Cell_1}(\Project\Cell_2,\Project\Vertex) = \angle_{\Project\Cell_2}(\Project\Cell_1,\Project\Vertex) = \pi/2$ which is impossible.
\end{proof}

For the rest of the section all horizontal links are taken with respect to a fixed Busemann function $\Busemann$.
We say that $\Busemann$ (or $\Infty\Busemann$) is \emph{in general position} if it is not constant on any factor of $\Space$. Combining Observation~\ref{obs:horizontal_iff_horizontal_on_factors} and Lemma~\ref{lem:cohorizontal_faces_meet} we see:

\begin{obs}
\label{obs:general_position_cells_meet}
Assume that $\Busemann$ is in general position. If $\BigCell \horizontal \Cell_1$ and $\BigCell \horizontal \Cell_2$ then $\Cell_1 \intersect \Cell_2 \ne \emptyset$.\qed
\end{obs}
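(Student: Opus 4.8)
The plan is to reduce the product statement to the single-factor statement via the join decomposition of horizontal links, exactly as foreshadowed by the lead-in ``Combining Observation~\ref{obs:horizontal_iff_horizontal_on_factors} and Lemma~\ref{lem:cohorizontal_faces_meet}''. First I would write $\Space = \prod_i \EuclBuilding_i$ and the cells as $\BigCell = \prod_i \BigCell_i$, $\Cell_1 = \prod_i (\Cell_1)_i$, $\Cell_2 = \prod_i (\Cell_2)_i$. The general position hypothesis says $\Busemann$ is not constant on any factor, which (by Observation~\ref{obs:horizontal_perpendicular}) means $d(\PointAtInfty,\Infty\EuclBuilding_i) \ne \pi/2$ for \emph{every} $i$, i.e.\ the index set $I$ of Observation~\ref{obs:horizontal_iff_horizontal_on_factors} is the full index set. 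Hence that observation gives $\BigCell \horizontal \Cell_k \iff \BigCell_i \horizontal_{\Proj[i]\PointAtInfty} (\Cell_k)_i$ for every $i$, for $k = 1,2$.

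Next I would apply Lemma~\ref{lem:cohorizontal_faces_meet} factorwise: for each $i$, since $\BigCell_i \horizontal_{\Proj[i]\PointAtInfty} (\Cell_1)_i$ and $\BigCell_i \horizontal_{\Proj[i]\PointAtInfty} (\Cell_2)_i$, and $\EuclBuilding_i$ is irreducible, we get $(\Cell_1)_i \intersect (\Cell_2)_i \ne \emptyset$. Then I would invoke the elementary fact that in a product of cell complexes the intersection of two cells is the product of the factorwise intersections, $\Cell_1 \intersect \Cell_2 = \prod_i ((\Cell_1)_i \intersect (\Cell_2)_i)$, so a product of nonempty sets is nonempty, giving $\Cell_1 \intersect \Cell_2 \ne \emptyset$. (Strictly, one wants to know the common face $(\Cell_1)_i \intersect (\Cell_2)_i$ is itself a cell, which holds in each polysimplicial factor; this is the same bookkeeping used implicitly in Observation~\ref{obs:polyflag}.)

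There is essentially no hard step here — the work was already done in setting up Observation~\ref{obs:horizontal_iff_horizontal_on_factors} and Lemma~\ref{lem:cohorizontal_faces_meet}. The only point requiring a moment's care is checking that ``in general position'' really forces $I$ to be everything, i.e.\ that $\Busemann$ being non-constant on $\EuclBuilding_i$ is equivalent to $d(\PointAtInfty,\Infty\EuclBuilding_i) \ne \pi/2$; this is precisely the equivalence of items (iii) and (iv) in Observation~\ref{obs:horizontal_perpendicular} (negated). With that in hand the proof is a two-line combination, and I would write it as such: apply Observation~\ref{obs:horizontal_iff_horizontal_on_factors} with $I$ full, apply Lemma~\ref{lem:cohorizontal_faces_meet} in each factor, and take products.

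\begin{proof}
Write $\Space = \prod_i \EuclBuilding_i$ with each $\EuclBuilding_i$ irreducible, and decompose $\BigCell = \prod_i \BigCell_i$, $\Cell_1 = \prod_i (\Cell_1)_i$, $\Cell_2 = \prod_i (\Cell_2)_i$. Let $\PointAtInfty = \Infty\Busemann$. Since $\Busemann$ is in general position it is not constant on any factor $\EuclBuilding_i$, so by Observation~\ref{obs:horizontal_perpendicular} (items (iii) and (iv)) we have $d(\PointAtInfty,\Infty\EuclBuilding_i) \ne \pi/2$ for every $i$; that is, the index set $I$ of Observation~\ref{obs:horizontal_iff_horizontal_on_factors} is the set of all indices. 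Applying that observation, $\BigCell \horizontal \Cell_k$ implies $\BigCell_i \horizontal_{\Proj[i]\PointAtInfty} (\Cell_k)_i$ for every $i$ and every $k \in \{1,2\}$. Fixing $i$ and applying Lemma~\ref{lem:cohorizontal_faces_meet} in the irreducible building $\EuclBuilding_i$ yields $(\Cell_1)_i \intersect (\Cell_2)_i \ne \emptyset$. Since the intersection of two cells in the product is the product of the factorwise intersections, $\Cell_1 \intersect \Cell_2 = \prod_i \bigl((\Cell_1)_i \intersect (\Cell_2)_i\bigr) \ne \emptyset$.
\end{proof}
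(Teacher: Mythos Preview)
Your proof is correct and follows exactly the approach the paper intends: the paper's own proof is simply the lead-in sentence ``Combining Observation~\ref{obs:horizontal_iff_horizontal_on_factors} and Lemma~\ref{lem:cohorizontal_faces_meet} we see:'' followed by \qed, and your argument is the natural unpacking of that combination.
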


The assumption that $\Busemann$ be in general position is crucial as can be seen in the most elementary case:

\begin{exmpl}
\label{exmpl:cohorizontal_faces_dont_meet}
Consider the product $\EuclBuilding_1 \times \EuclBuilding_2$ of two buildings of type $\tilde{A}_1$. Let $\Busemann$ be such that $\Infty\Busemann \in \Infty\EuclBuilding_1$. Let $\Vertex_1 \in \EuclBuilding_1$ be a vertex and $\Chamber_2 \subseteq \EuclBuilding_2$ be a chamber with vertices $\Vertex_2$ and $\AltVertex_2$. The links of $\Vertex_1 \times \Vertex_2$ and of $\Vertex_1 \times \AltVertex_2$ are of type $A_1 * A_1$ and $\Vertex_1 \times \Chamber_2$ lies in the horizontal link of both.
\end{exmpl}

We are now ready to state a technical tool that we will use throughout the section. We will give two proofs at the end of the section.

\begin{prop}
\label{prop:horizontal_properties}
The relation $\horizontal$ (that is $\horizontal_{\Infty\Busemann}$) has the following properties:
\begin{enumerate}
\item If $\BigCell \horizontal \Cell$ and $\BigCell \ge \Another\BigCell \ge \Cell$ then $\Another\BigCell \horizontal \Cell$.\label{item:faces}
\item If $\BigCell \horizontal \Cell$ and $\BigCell \vee \Another\Cell$ exists and is horizontal then $\BigCell \vee \Another\Cell \horizontal \Cell \vee \Another\Cell$. In particular if $\BigCell \horizontal \Cell$ and $\BigCell \ge \Another\Cell \ge \Cell$ then $\BigCell \horizontal \Another\Cell$. \label{item:join}
\item If $\BigCell \horizontal \Another\Cell$ and $\Another\Cell \horizontal \Cell$ then $\BigCell \horizontal \Cell$, i.e.\ $\horizontal$ is transitive.\label{item:transitivity}
\item If $\BigCell \horizontal \Cell_1$ and $\BigCell \horizontal \Cell_2$ and $\Cell_1 \intersect \Cell_2 \ne \emptyset$ then $\BigCell \horizontal \Cell_1 \intersect \Cell_2$.\label{item:non_empty_meet}
\end{enumerate}
\end{prop}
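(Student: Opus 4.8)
The plan is to reduce the statement to a question about reachability in the Coxeter diagram. First I would invoke Observation~\ref{obs:horizontal_iff_horizontal_on_factors}: writing cells as products $\Cell=\prod_i\Cell_i$, $\BigCell=\prod_i\BigCell_i$ over the irreducible factors $\EuclBuilding_i$ of $\Space$, the relation $\BigCell\horizontal\Cell$ (with respect to $\PointAtInfty=\Infty\Busemann$) holds iff $\BigCell_i\horizontal_{\Proj[i]\PointAtInfty}\Cell_i$ holds in $\EuclBuilding_i$ for every $i$ with $d(\PointAtInfty,\Infty\EuclBuilding_i)\ne\pi/2$; for the other indices the whole link $\Link\Cell_i$ is horizontal and the relation degenerates to ``$\Cell_i$ is a face of $\BigCell_i$''. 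Since cofaces, joins $\vee$, intersections, and horizontality of a cell all decompose coordinatewise (and $\typ(\Cell_1\intersect\Cell_2)=\typ\Cell_1\intersect\typ\Cell_2$), each of (i)--(iv) for $\Space$ follows from the same assertion for each irreducible factor, vacuously so for the factors where the relation is the face relation. Hence I may assume $\Space=\EuclBuilding$ is irreducible and $\PointAtInfty$ a fixed point at infinity defining $\Busemann$; in particular each $\Link\Cell$ is a genuine simplicial spherical building.

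Next I would set up a combinatorial reformulation. For the assertion at hand, fix a chamber $\Chamber$ of $\EuclBuilding$ containing the largest cell occurring (namely $\BigCell$, resp.\ $\BigCell\vee\Another\Cell$ in~(ii)), let $\Diagram$ be the affine Coxeter diagram of $\EuclBuilding$ (its nodes are the types of the vertices of $\Chamber$), and write $v_p$ for the vertex of $\Chamber$ of type $p$. A Busemann function is affine on each apartment, hence on each cell, so a face $\BigCell'\le\Chamber$ is horizontal exactly when its vertices share a single $\Busemann$-value; and by Observation~\ref{obs:busemann_angle_criterion} a direction $v_p\direction\Cell$ (with $p\nin\typ\Cell$, $\Cell\le\Chamber$ horizontal, $\Busemann(\Cell)=h$) is equatorial in $\Link\Cell$ with respect to the north pole $\PointAtInfty_\Point$ ($\Point$ interior to $\Cell$) precisely when $\Busemann(v_p)=h$. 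Identifying $\typ\Link\Cell$ with $\Diagram\setminus\typ\Cell$ and $\typ(\BigCell'\direction\Cell)$ with $\typ\BigCell'\setminus\typ\Cell$, Lemma~\ref{lem:diagram_criterion} applied in $\Link\Cell$ reads: for nested horizontal cells $\Cell\le\BigCell'\le\Chamber$ with common value $h$,
\[
\BigCell'\horizontal\Cell\quad\Longleftrightarrow\quad\calR(\typ\BigCell',\typ\Cell)\subseteq Z_h\text{,}
\]
where $Z_h=\{p:\Busemann(v_p)=h\}$ and $\calR(A,B)$ is the set of nodes of $\Diagram$ joined to a node of $A\setminus B$ by a path inside $\Diagram\setminus B$. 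Note that $\Cell\le\BigCell'$ horizontal at height $h$ forces $\typ\Cell\subseteq\typ\BigCell'\subseteq Z_h$.

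Properties (i)--(iii) then become elementary facts about $\calR$. For~(i), $\typ\Another\BigCell\subseteq\typ\BigCell$ gives $\calR(\typ\Another\BigCell,\typ\Cell)\subseteq\calR(\typ\BigCell,\typ\Cell)\subseteq Z_h$. For~(ii), $\typ(\BigCell\vee\Another\Cell)\setminus\typ(\Cell\vee\Another\Cell)\subseteq\typ\BigCell\setminus\typ\Cell$ and $\Diagram\setminus\typ(\Cell\vee\Another\Cell)\subseteq\Diagram\setminus\typ\Cell$, so every path witnessing $\calR(\typ(\BigCell\vee\Another\Cell),\typ(\Cell\vee\Another\Cell))$ already witnesses $\calR(\typ\BigCell,\typ\Cell)$; hence the former set lies in $Z_h$, which is $\BigCell\vee\Another\Cell\horizontal\Cell\vee\Another\Cell$. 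For~(iii), given $\Cell\le\Another\Cell\le\BigCell$ all horizontal at $h$, take a path $\pi$ in $\Diagram\setminus\typ\Cell$ from a node of $\typ\BigCell\setminus\typ\Cell$ to a node $p$: if $\pi$ meets $\typ\Another\Cell$, its first such node $q$ lies in $\typ\Another\Cell\setminus\typ\Cell$ and the tail of $\pi$ from $q$ gives $p\in\calR(\typ\Another\Cell,\typ\Cell)\subseteq Z_h$; otherwise $\pi$ lies in $\Diagram\setminus\typ\Another\Cell$ and starts in $\typ\BigCell\setminus\typ\Another\Cell$, so $p\in\calR(\typ\BigCell,\typ\Another\Cell)\subseteq Z_h$. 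Thus $\BigCell\horizontal\Cell$.

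Finally, for~(iv) — the step I expect to be the real content — put $\Cell_0=\Cell_1\intersect\Cell_2$, so $\typ\Cell_0=\typ\Cell_1\intersect\typ\Cell_2$ and $\Cell_0\le\Cell_1,\Cell_2\le\BigCell$, all horizontal at a common $h$; I take a chamber $\Chamber\ge\BigCell$. Let $\pi$ be a path in $\Diagram\setminus\typ\Cell_0$ from $p_0\in\typ\BigCell\setminus\typ\Cell_0$ to a node $p$; I claim every node of $\pi$ lies in $Z_h$. A node of $\pi$ lying in $\typ\Cell_1\union\typ\Cell_2$ lies in $\typ\BigCell\subseteq Z_h$; call the other nodes \emph{free}. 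Take a maximal run $p_a,\dots,p_b$ of consecutive free nodes. If $a=0$ then $p_0$ is free, so $p_0\in\typ\BigCell\setminus\typ\Cell_1$, and the whole run avoids $\typ\Cell_1$, whence it lies in $\Diagram\setminus\typ\Cell_1$ and $p_a,\dots,p_b\in\calR(\typ\BigCell,\typ\Cell_1)\subseteq Z_h$. If $a\ge1$, the anchor $p_{a-1}$ lies in $\typ\Cell_1\union\typ\Cell_2$ but not in $\typ\Cell_0$, so after possibly swapping $\Cell_1$ and $\Cell_2$ we have $p_{a-1}\in\typ\Cell_1\setminus\typ\Cell_2\subseteq\typ\BigCell\setminus\typ\Cell_2$; then $p_{a-1},p_a,\dots,p_b$ all avoid $\typ\Cell_2$, so this subpath lies in $\Diagram\setminus\typ\Cell_2$ and $p_a,\dots,p_b\in\calR(\typ\BigCell,\typ\Cell_2)\subseteq Z_h$. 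Hence every node of $\pi$ is in $Z_h$, so $\calR(\typ\BigCell,\typ\Cell_0)\subseteq Z_h$, i.e.\ $\BigCell\horizontal\Cell_0$. The main obstacle is really the reformulation step — justifying affineness of $\Busemann$ on apartments, the identification of the diagram and chamber of $\Link\Cell$, and the type bookkeeping in~(iv) — rather than the four assertions themselves; and a second, geometric proof can be given by running the same arguments with the metric criterion Lemma~\ref{lem:geometric_criterion}, using the angle-$\tfrac{\pi}{2}$ conditions supplied by the relevant projections, in the spirit of the proof of Lemma~\ref{lem:cohorizontal_faces_meet}.
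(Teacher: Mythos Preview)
Your proof is correct and follows essentially the same approach as the paper's second proof (the one using Coxeter diagrams and Lemma~\ref{lem:diagram_criterion}): you translate $\BigCell\horizontal\Cell$ into a reachability condition in $\typ\Link\Cell$ and then verify (i)--(iv) by elementary path arguments, exactly as the paper does. Your preliminary reduction to irreducible factors via Observation~\ref{obs:horizontal_iff_horizontal_on_factors} and your explicit notation $\calR(A,B)$, $Z_h$ are additions the paper does not make (it works directly in the link), and your case analysis for (iv) via maximal free runs is slightly more elaborate than the paper's single ``first vertex hitting $\typ\Cell_1\cup\typ\Cell_2$'' step, but the substance is identical.
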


A key observation in \cite{buxwor08} is that for every horizontal cell $\BigCell$, among its faces $\Cell$ with $\BigCell \horizontal \Cell$ there is a minimal one provided $\Space$ is irreducible. Observation~\ref{obs:general_position_cells_meet} allows us to replace the irreducibility assumption by the assumption that $\Busemann$ be in general position:

\begin{lem}
\label{lem:tau_min}
Assume that $\Busemann$ is in general position. Let $\BigCell$ be a horizontal cell of $\Space$. The set of $\Cell \le \BigCell$ such that $\BigCell \horizontal \Cell$ is an interval, i.e.\ it contains a minimal element $\BigCell\Min$ and
\[
\BigCell \horizontal \Cell \quad \text{if and only if} \quad \BigCell\Min \le \Cell \le \BigCell \text{ .}
\]
\end{lem}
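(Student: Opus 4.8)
The statement to be proven is that, when $\Busemann$ is in general position, the set $\{\Cell \le \BigCell : \BigCell \horizontal \Cell\}$ is an interval with a least element $\BigCell\Min$. The plan is to establish this using the four properties collected in Proposition~\ref{prop:horizontal_properties}. First I would observe that the set is non-empty: since $\BigCell \horizontal \BigCell$ always holds (the direction set $\BigCell \direction \BigCell$ is empty, hence trivially contained in the horizontal link), $\BigCell$ itself is in the set. Next I would verify that the set is downward-closed towards its minimum in the following sense — this is exactly what part~(\ref{item:join}) of Proposition~\ref{prop:horizontal_properties} gives in its second form: if $\BigCell \horizontal \Cell$ and $\Cell \le \Another\Cell \le \BigCell$, then $\BigCell \horizontal \Another\Cell$. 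So the set is ``upward-closed'' inside the face-interval $[\emptyset,\BigCell]$ (closed under passing to larger faces still below $\BigCell$). It remains to produce the minimal element and show everything above it (and below $\BigCell$) lies in the set.

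For the existence of a minimum, I would argue as follows. The set $\{\Cell \le \BigCell : \BigCell \horizontal \Cell\}$ is a non-empty finite collection of faces of $\BigCell$, so it certainly has minimal elements; the content is that there is a \emph{unique} one. Suppose $\Cell_1$ and $\Cell_2$ are both minimal elements with $\BigCell \horizontal \Cell_1$ and $\BigCell \horizontal \Cell_2$. By Observation~\ref{obs:general_position_cells_meet} (which is where general position is used), $\Cell_1 \intersect \Cell_2 \ne \emptyset$. Then part~(\ref{item:non_empty_meet}) of Proposition~\ref{prop:horizontal_properties} yields $\BigCell \horizontal \Cell_1 \intersect \Cell_2$. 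Since $\Cell_1 \intersect \Cell_2$ is a face of both $\Cell_1$ and $\Cell_2$ and lies in the set, minimality of $\Cell_1$ and $\Cell_2$ forces $\Cell_1 \intersect \Cell_2 = \Cell_1 = \Cell_2$. Hence the minimum is unique; call it $\BigCell\Min$.

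Finally I would assemble the two directions of the ``if and only if''. If $\BigCell\Min \le \Cell \le \BigCell$, then $\BigCell \horizontal \BigCell\Min$ together with part~(\ref{item:join}) gives $\BigCell \horizontal \Cell$. Conversely if $\BigCell \horizontal \Cell$ with $\Cell \le \BigCell$, then $\Cell$ lies in the set; since $\BigCell\Min$ is the least element of that set we get $\BigCell\Min \le \Cell$, and of course $\Cell \le \BigCell$. This proves the interval description. I would close by noting that $\BigCell\Min$ is the carrier-type object that replaces the ``$\BigCell\Min$'' of Bux--Wortman in the irreducible case, and that by Observation~\ref{obs:horizontal_iff_horizontal_on_factors} it can equally be computed factor by factor (with $(\BigCell\Min)_i = (\BigCell_i)\Min$ on the factors where $\PointAtInfty$ projects nontrivially, and $(\BigCell\Min)_i = \BigCell_i$ elsewhere), though this remark is not needed for the statement.

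The main obstacle is really hidden in the pieces being imported: the whole argument is light once Proposition~\ref{prop:horizontal_properties} and Observation~\ref{obs:general_position_cells_meet} are in hand, so the genuinely delicate point is the uniqueness-of-minimum step, where one must be sure that general position is exactly the hypothesis that makes cohorizontal faces of a common horizontal cell intersect (Example~\ref{exmpl:cohorizontal_faces_dont_meet} shows the statement fails without it). I would therefore be careful to invoke general position precisely at the application of Observation~\ref{obs:general_position_cells_meet} and nowhere pretend that $\Space$ is irreducible.
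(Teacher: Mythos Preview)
Your proof is correct and follows essentially the same route as the paper: both invoke Observation~\ref{obs:general_position_cells_meet} (using general position) together with Proposition~\ref{prop:horizontal_properties}\eqref{item:non_empty_meet} to handle intersections, and Proposition~\ref{prop:horizontal_properties}\eqref{item:join} for the upward direction. The only cosmetic difference is that the paper shows the set is closed under pairwise intersection and then takes $\BigCell\Min$ to be the intersection of all its elements, whereas you argue that any two minimal elements coincide; these are equivalent finite-poset arguments.
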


At the risk of repetition note that in particular $\BigCell \horizontal \BigCell\Min$.

\begin{proof}
Let $\SomeSet \defeq \{\Cell \le \BigCell \mid \BigCell \horizontal \Cell\}$, which is finite. If $\Cell_1$ and $\Cell_2$ are in $\SomeSet$, then since $\Busemann$ is in general position, Observation~\ref{obs:general_position_cells_meet} implies that $\Cell_1 \intersect \Cell_2 \ne \emptyset$. So by Proposition~\ref{prop:horizontal_properties}~\eqref{item:non_empty_meet} $\Cell_1 \intersect \Cell_2 \in \SomeSet$. Hence there is a minimal element $\BigCell\Min$, namely the intersection of all elements of $\SomeSet$. If $\Another\Cell$ satisfies $\BigCell\Min \le \Another\Cell \le \BigCell$, then $\Another\Cell \in \SomeSet$ by Proposition~\ref{prop:horizontal_properties}~\eqref{item:join}.
\end{proof}

To see what can go amiss if $\Busemann$ is not in general position we need a slightly bigger example than Example~\ref{exmpl:cohorizontal_faces_dont_meet}:
\begin{exmpl}
Let $\EuclBuilding = \EuclBuilding_1 \times \EuclBuilding_2$ where the first factor is of type $\tilde{A}_1$ and the second is of type $\tilde{A_2}$ (both thin if you want). The factor $X_2$ has three parallelity classes of edges. Let $\Busemann$ be a Busemann function that is perpendicular to $\EuclBuilding_1$ and to one class of edges in $\EuclBuilding_2$. Consider a square $\BigCell$ that is horizontal. Its two edges in the $\EuclBuilding_2$-factor have a link of type $A_1 * A_1$ and the square lies in the horizontal link of each of them. Hence if there were to be a $\BigCell\Min$ it would have to be the empty simplex. However the vertices have a link of type $A_1 * A_2$ and the square does not lie in their horizontal link.
\end{exmpl}

To understand this example note that if a Busemann function is constant on some factor, then in this factor $\BigCell\Min = \emptyset$ for all cells $\BigCell$. But being empty does not behave well with respect to taking products: a product is empty if one of the factors is empty, not if all of the factors are empty. In other words the face lattice of a product of simplices is not the product of the face lattices of the simplices. But the face lattice of a product of simplices without the bottom element is the product of the face lattices of the simplices without the bottom elements: $\FaceLattice(\prod_i \Cell_i)_{> \emptyset} = \prod_i \FaceLattice(\Cell_i)_{> \emptyset}$.

Lemma~\ref{lem:tau_min} generalizes \cite[Lemma~5.2]{buxwor08} except for the explicit description in terms of orthogonal projections. We will see that transitivity of $\horizontal$ suffices to replace the explicit description. For the rest of the section we assume that $\Busemann$ is in general position.

We define \emph{going up} by
\[
\Cell \Up \BigCell \quad \text{if} \quad \BigCell\Min = \Cell \ne \BigCell
\]
and \emph{going down} by
\[
\BigCell \Down \Cell \quad \text{if} \quad \Cell \lneq \BigCell \text{ but not }\BigCell \horizontal \Cell\text{ .}
\]
A \emph{move} is either going up or going down. The main result of this section is:

\begin{prop}
\label{prop:bound_on_moves}
There is a bound on the length of sequences of moves that only depends on the types of the $\EuclBuilding_i$. In particular no sequence of moves enters a cycle.
\end{prop}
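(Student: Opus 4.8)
The bound asked for depends only on the finite Coxeter diagrams $\typ\EuclBuilding_i$, so the assertion is a purely combinatorial finiteness statement; the ``no cycle'' clause is immediate once a finite bound is known, since a cycle could be traversed arbitrarily often. First I would reduce to \emph{horizontal} cells: a non-horizontal cell admits no move at all (both going up and going down presuppose that the cell is horizontal, because $\BigCell\Min$ is only defined for horizontal $\BigCell$ and the relation $\horizontal$ forces both of its arguments to be horizontal), so a non-horizontal cell can occur only as the last entry of a move sequence. From now on all cells are horizontal. Next I would record two structural facts. (a) The relation $\horizontal$ and the operation $\BigCell\mapsto\BigCell\Min$ are compatible with the product decomposition: writing $\BigCell=\prod_i\BigCell_i$ and $\Cell=\prod_i\Cell_i$, Observation~\ref{obs:horizontal_iff_horizontal_on_factors} together with general position (which forces the index set $I$ there to be all of the indices) gives that $\BigCell\horizontal\Cell$ holds if and only if $\BigCell_i\horizontal\Cell_i$ holds for every $i$, whence by Lemma~\ref{lem:tau_min} $\BigCell\Min=\prod_i(\BigCell_i)\Min$. (b) Going up strictly enlarges the type $\typ\Cell\subseteq\typ\Space$ and going down strictly shrinks it (both moves change the dimension monotonically in the respective direction).

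From (b) it follows that any maximal block of consecutive ``up'' moves, or of consecutive ``down'' moves, has length at most $\lvert\typ\Space\rvert$, the number of nodes of the diagram (the sum of the ranks of the $\typ\EuclBuilding_i$). Thus it suffices to bound the number of blocks, equivalently the number of times the sequence switches between going up and going down; this is the crux. The aim is to exhibit a ``potential'' taking values in a fixed finite set depending only on the $\typ\EuclBuilding_i$, which is non-increasing along every move and which \emph{strictly} decreases at each up-to-down (or down-to-up) turn. At an up-then-down turn $\Cell\Up\BigCell\Down\Cell'$ one has $\BigCell\Min=\Cell$ and, by Lemma~\ref{lem:tau_min}, $\Cell'\not\ge\Cell$; at a down-then-up turn $\BigCell\Down\Cell\Up\BigCell'$ one has $(\BigCell')\Min=\Cell\ne\BigCell'$ while $\BigCell\horizontal\Cell$ fails. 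Using transitivity of $\horizontal$ (Proposition~\ref{prop:horizontal_properties}\eqref{item:transitivity}) together with the face/join compatibilities \eqref{item:faces} and \eqref{item:join}, and working factorwise via (a), I would track in each $\EuclBuilding_i$ the type of the current cell relative to the type of its $\Min$: the point is that at each turn some node type that was ``available in the horizontal links'' along the preceding block becomes permanently unavailable, so this per-factor datum can change only boundedly often before forcing termination. Combining the per-factor bounds yields a global bound of the shape (number of cell types)$^{O(1)}$.

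\textbf{Main obstacle.} The whole difficulty is the turn count. Individual up- and down-blocks are bounded trivially by $\lvert\typ\Space\rvert$, but a move sequence need not stay inside a single cell or a bounded subcomplex: after $\Cell\Up\BigCell\Down\Cell'$ with $\Cell'\lneq\BigCell$, the next up-move may leave a coface of $\Cell'$ that is not a face of $\BigCell$, so there is no naive monotone metric quantity and one must isolate the correct diagram-theoretic invariant — the analogue, for a genuine product of Euclidean buildings, of the orthogonal-projection bookkeeping used in \cite[Section~5]{buxwor08} for the irreducible case, cf.\ also \cite{buxgrawit10}. Checking that this invariant really is non-increasing along moves and strictly decreasing at turns is where the work lies, and it draws on the interplay of Lemma~\ref{lem:tau_min}, Proposition~\ref{prop:horizontal_properties}, the diagram criterion Lemma~\ref{lem:diagram_criterion}, and the factorwise description in Observation~\ref{obs:horizontal_iff_horizontal_on_factors}.
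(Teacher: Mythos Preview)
Your plan correctly isolates the real issue (bounding the number of turns) and correctly handles the easy parts (block lengths, the ``no cycle'' clause as a consequence). But the heart of the argument---the potential that is ``non-increasing along every move and strictly decreases at each turn''---is never constructed; you only describe a heuristic (``some node type becomes permanently unavailable'') and defer to the irreducible case in \cite{buxwor08}. That heuristic is not obviously true: after $\BigCell\Down\Cell$ the cell $\Cell$ can have any type, and $\Cell\Min$ is in no way controlled by $\BigCell\Min$, so a purely type-theoretic datum on single cells does not seem to decrease monotonically. In the irreducible case the argument of \cite{buxwor08} is not a type potential either; it uses the explicit orthogonal-projection description of $\BigCell\Min$, which is exactly what the present paper avoids.

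The paper's argument is different and does not use a potential on single cells. The key geometric fact is Lemma~\ref{lem:vee_of_lower_terms_exists}: in any alternating sequence $\Cell_1\Up\BigCell_1\Down\Cell_2\Up\cdots\Down\Cell_k$, the join $\Cell_1\vee\cdots\vee\Cell_k$ exists. This is proved via a shortening lemma (Lemma~\ref{lem:shortening}): any pattern $\Cell_1\Up\BigCell_1\Down\Cell_2\Up\BigCell_2$ can be replaced by a shorter pattern with the same endpoints, so by induction every pair $\Cell_i,\Cell_j$ has a join, and then Observation~\ref{obs:polyflag} gives the full join. Since the $\Cell_j$ are pairwise distinct faces of one cell (no cycles, Corollary~\ref{cor:no_cycles}), their number is at most $N=\prod_i(2^{\dim\EuclBuilding_i+1}-1)$. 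One also needs that there are never two consecutive up-moves (Lemma~\ref{lem:up_transitive}), not merely boundedly many; combined with the trivial bound $\dim\Space$ on down-blocks this gives the final estimate. Your outline misses both the ``joins exist'' mechanism and the fact that $\Up$ never occurs twice in a row, and without a concrete replacement for these your potential idea remains a gap.
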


The results in \cite{buxwor08} for which the arguments do not apply analogously are Observation~5.3 and the Lemmas~5.10 and 5.13. They correspond to Observation~\ref{obs:min_min}, Lemma~\ref{lem:min_in_vertical_link}, and Lemma~\ref{lem:cell1_bigcell2_min} below. For the convenience of the reader we also give proofs of the statements that can be easily adapted from those in \cite{buxwor08}.

A good starting point is of course:

\begin{obs}
\label{obs:no_up_down_cycle}
There do not exist cells $\Cell$, $\BigCell$ such that $\Cell \Up \BigCell$ and $\BigCell \Down \Cell$.
\end{obs}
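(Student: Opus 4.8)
The plan is to argue by contradiction, and the entire argument is a one-line consequence of Lemma~\ref{lem:tau_min}. Suppose that $\Cell$ and $\BigCell$ are cells of $\Space$ with $\Cell \Up \BigCell$ and $\BigCell \Down \Cell$. Unwinding the definition of going up, the first relation says precisely that $\BigCell\Min = \Cell$ and $\Cell \ne \BigCell$; in particular $\BigCell$ is a horizontal cell (it has a well-defined $\BigCell\Min$). Since $\Busemann$ is assumed to be in general position throughout this part of the section, Lemma~\ref{lem:tau_min} applies to $\BigCell$ and tells us that $\BigCell \horizontal \Cell'$ holds for every face $\Cell'$ with $\BigCell\Min \le \Cell' \le \BigCell$. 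Taking $\Cell' = \BigCell\Min = \Cell$, we conclude $\BigCell \horizontal \Cell$ (this is exactly the remark recorded immediately after that lemma).

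On the other hand, unwinding the definition of going down, the relation $\BigCell \Down \Cell$ asserts that $\Cell \lneq \BigCell$ and that it is \emph{not} the case that $\BigCell \horizontal \Cell$. This directly contradicts the conclusion of the previous paragraph, so no pair $(\Cell,\BigCell)$ can simultaneously satisfy $\Cell \Up \BigCell$ and $\BigCell \Down \Cell$.

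I do not expect any real obstacle here: the observation is a formal sanity check that ``going up'' and ``going down'' are genuinely opposite in the only configuration in which they could overlap, and it falls out immediately once Lemma~\ref{lem:tau_min} (and hence Proposition~\ref{prop:horizontal_properties}) is available. The only point that deserves a word of care is that Lemma~\ref{lem:tau_min} requires $\Busemann$ to be in general position; this hypothesis is in force by the standing assumption made just before the definitions of going up and going down, and without it $\BigCell\Min$ need not even be defined, as the preceding examples illustrate.
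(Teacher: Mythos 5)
Your proof is correct and takes essentially the same route as the paper's: unwind $\Cell \Up \BigCell$ to conclude $\BigCell \horizontal \Cell$ (via $\BigCell\Min = \Cell$ and the remark after Lemma~\ref{lem:tau_min}), which directly contradicts the condition built into $\BigCell \Down \Cell$. The paper states this in one sentence; you spell out the appeal to Lemma~\ref{lem:tau_min} and the general-position hypothesis a bit more explicitly, but there is no substantive difference.
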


\begin{proof}
If $\Cell \Up \BigCell$, then in particular $\BigCell \horizontal \Cell$ which contradicts $\BigCell \Down \Cell$.
\end{proof}

We come to the first example of how transitivity of $\horizontal$ replaces the explicit description of $\BigCell\Min$:

\begin{obs}
\label{obs:min_min}
If $\BigCell \horizontal \Cell$, then $\Cell\Min = \BigCell\Min$. In particular $(\BigCell\Min)\Min = \BigCell\Min$.
\end{obs}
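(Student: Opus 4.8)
The plan is to derive this purely formally from Lemma~\ref{lem:tau_min} together with the list of properties of $\horizontal$ in Proposition~\ref{prop:horizontal_properties}; no geometry beyond those is needed, and the general-position hypothesis in force throughout this part of the section enters only insofar as it guarantees that $\cdot\Min$ is well defined. Fix a horizontal cell $\BigCell$ and a face $\Cell \le \BigCell$ with $\BigCell \horizontal \Cell$; in particular $\Cell$ is horizontal, so $\Cell\Min$ is defined by Lemma~\ref{lem:tau_min} applied to $\Cell$, and by that same lemma (applied to $\BigCell$) we have $\BigCell\Min \le \Cell \le \BigCell$ and $\BigCell \horizontal \BigCell\Min$.

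First I would prove $\Cell\Min \le \BigCell\Min$. From $\BigCell \horizontal \BigCell\Min$ and the chain $\BigCell \ge \Cell \ge \BigCell\Min$, Proposition~\ref{prop:horizontal_properties}~\eqref{item:faces} gives $\Cell \horizontal \BigCell\Min$. Thus $\BigCell\Min$ belongs to the set of faces $\Another\Cell \le \Cell$ with $\Cell \horizontal \Another\Cell$, and Lemma~\ref{lem:tau_min} (for $\Cell$) then yields $\Cell\Min \le \BigCell\Min$.

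Conversely I would prove $\BigCell\Min \le \Cell\Min$. By Lemma~\ref{lem:tau_min} we have $\Cell \horizontal \Cell\Min$, and by hypothesis $\BigCell \horizontal \Cell$, so transitivity (Proposition~\ref{prop:horizontal_properties}~\eqref{item:transitivity}) gives $\BigCell \horizontal \Cell\Min$. Since $\Cell\Min \le \Cell \le \BigCell$, Lemma~\ref{lem:tau_min} (for $\BigCell$) forces $\BigCell\Min \le \Cell\Min$. Combining the two inequalities gives $\Cell\Min = \BigCell\Min$, and the displayed ``in particular'' is the special case $\Cell = \BigCell\Min$, which is permitted because $\BigCell \horizontal \BigCell\Min$ always holds.

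I do not expect a genuine obstacle here; the only thing to watch is to invoke Lemma~\ref{lem:tau_min} for the correct cell at each step — once for $\Cell$ (to produce $\Cell\Min$ and the inequality $\Cell\Min \le \BigCell\Min$) and once for $\BigCell$ (to produce $\BigCell\Min \le \Cell\Min$) — and to note along the way that every cell on which we write a $\Min$ is horizontal, so that the notation is meaningful.
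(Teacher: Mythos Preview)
Your proof is correct and follows exactly the same approach as the paper: first use Proposition~\ref{prop:horizontal_properties}~\eqref{item:faces} with the chain $\BigCell \ge \Cell \ge \BigCell\Min$ to get $\Cell \horizontal \BigCell\Min$ and hence $\Cell\Min \le \BigCell\Min$, then use transitivity (Proposition~\ref{prop:horizontal_properties}~\eqref{item:transitivity}) on $\BigCell \horizontal \Cell \horizontal \Cell\Min$ to get $\BigCell\Min \le \Cell\Min$. The paper's proof is just a terser version of yours.
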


\begin{proof}
We have $\BigCell \horizontal \BigCell\Min$ and $\BigCell \ge \Cell \ge \BigCell\Min$ so by Proposition~\ref{prop:horizontal_properties}~\eqref{item:faces} $\Cell \horizontal \BigCell\Min$, i.e.\ $\Cell\Min \le \BigCell\Min$. Conversely $\BigCell \horizontal \Cell \horizontal \Cell\Min$ so by Proposition~\ref{prop:horizontal_properties}~\eqref{item:transitivity} $\BigCell \horizontal \Cell\Min$, i.e.\ $\BigCell\Min \le \Cell\Min$.
\end{proof}

We call a cell $\Cell$ \emph{essential} if $\Cell\Min = \Cell$.

\begin{obs}
\label{obs:essential_either_or}
If $\Cell$ is essential and $\BigCell \gneq \Cell$ is a proper horizontal coface, then either $\Cell \Up \BigCell$ or $\BigCell \Down \Cell$.
\end{obs}

\begin{proof}
If $\BigCell \horizontal \Cell$ then $\BigCell\Min = \Cell\Min = \Cell$ by Observation~\ref{obs:min_min} so $\Cell \Up \BigCell$. Otherwise $\BigCell \Down \Cell$.
\end{proof}

The next two lemmas show transitivity of $\Up$ and $\Down$ so that we can restrict our attention to alternating sequences of moves.

\begin{lem}
\label{lem:up_transitive}
It never happens that $\Cell_1 \Up \Cell_2 \Up \Cell_3$. In particular $\Up$ is transitive.
\end{lem}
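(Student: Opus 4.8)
Suppose for contradiction that $\Cell_1 \Up \Cell_2 \Up \Cell_3$. Unpacking the definition of going up, this means $\Cell_2\Min = \Cell_1 \ne \Cell_2$ and $\Cell_3\Min = \Cell_2 \ne \Cell_3$. The second equation says in particular that $\Cell_2$ is essential (it equals its own $\Min$, being equal to $\Cell_3\Min$ which by Observation~\ref{obs:min_min} satisfies $(\Cell_3\Min)\Min = \Cell_3\Min$). On the other hand the first equation $\Cell_2\Min = \Cell_1$ with $\Cell_1 \ne \Cell_2$ says that $\Cell_2$ is \emph{not} essential, since $\Cell_2\Min = \Cell_1 \ne \Cell_2$. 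These two facts contradict each other directly, so no such sequence exists.

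**Key steps, in order.** First I would write out the hypothesis in terms of $\Min$: from $\Cell_1 \Up \Cell_2$ we get $\Cell_2\Min = \Cell_1$, and from $\Cell_2 \Up \Cell_3$ we get $\Cell_3\Min = \Cell_2$. Second, I would invoke Observation~\ref{obs:min_min} (or the remark immediately following Lemma~\ref{lem:tau_min}, namely that $\BigCell \horizontal \BigCell\Min$ always, together with $(\BigCell\Min)\Min = \BigCell\Min$) to conclude that $\Cell_3\Min$ is essential, hence $\Cell_2 = \Cell_3\Min$ is essential, i.e.\ $\Cell_2\Min = \Cell_2$. Third, I would combine this with $\Cell_2\Min = \Cell_1$ to get $\Cell_1 = \Cell_2$, contradicting the requirement $\Cell_1 \ne \Cell_2$ built into $\Cell_1 \Up \Cell_2$. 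The ``in particular'' clause about transitivity of $\Up$ then follows: if $\Cell_1 \Up \Cell_2$ and $\Cell_2 \Up \Cell_3$ cannot coexist, then a fortiori one cannot chain two going-up moves, and there is nothing further to prove about transitivity beyond noting that there are never two consecutive going-up moves at all (so the relation composed with itself is empty, hence trivially transitive — or rather, the statement is that $\Up$ never has length-two chains, which is the substantive content).

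**Main obstacle.** There is essentially no obstacle here; the only thing to be careful about is making sure the right instance of Observation~\ref{obs:min_min} is cited. The observation is stated as: if $\BigCell \horizontal \Cell$ then $\Cell\Min = \BigCell\Min$, with the consequence $(\BigCell\Min)\Min = \BigCell\Min$. I want exactly that consequence applied to $\BigCell = \Cell_3$: it gives $(\Cell_3\Min)\Min = \Cell_3\Min$, i.e.\ $\Cell_2\Min = \Cell_2$, which is the essentiality of $\Cell_2$. The rest is a one-line contradiction with the defining inequality $\Cell_1 \ne \Cell_2$. So the proof is genuinely short and the only ``work'' is bookkeeping of which cell plays which role; I would present it in three or four sentences.
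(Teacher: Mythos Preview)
Your proof is correct and is essentially identical to the paper's: the paper writes the same chain of equalities $\Cell_1 = \Cell_2\Min = (\Cell_3\Min)\Min = \Cell_3\Min = \Cell_2$ via Observation~\ref{obs:min_min}, yielding the contradiction $\Cell_1 = \Cell_2$.
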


\begin{proof}
Suppose $\Cell_1 \Up \Cell_2 \Up \Cell_3$. Then by Observation~\ref{obs:min_min} $\Cell_1 = \Cell_2\Min = (\Cell_3\Min)\Min = \Cell_3\Min = \Cell_2$ contradicting $\Cell_1 \ne \Cell_2$.
\end{proof}

\begin{lem}
\label{lem:down_transitive}
The relation $\Down$ is transitive.
\end{lem}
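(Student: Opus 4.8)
The plan is to argue by contradiction, using only property~\eqref{item:join} of Proposition~\ref{prop:horizontal_properties}. Suppose $\Cell_1 \Down \Cell_2$ and $\Cell_2 \Down \Cell_3$. Unwinding the definition of going down, this means $\Cell_3 \lneq \Cell_2 \lneq \Cell_1$, that $\Cell_1$ does not lie in the horizontal link of $\Cell_2$, and that $\Cell_2$ does not lie in the horizontal link of $\Cell_3$. I want to conclude $\Cell_1 \Down \Cell_3$, which requires $\Cell_3 \lneq \Cell_1$ — immediate from the chain of proper faces — together with the statement that $\Cell_1$ does not lie in the horizontal link of $\Cell_3$.

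First I would assume for contradiction that $\Cell_1 \horizontal \Cell_3$. Since $\Cell_1 \ge \Cell_2 \ge \Cell_3$, I can apply the ``in particular'' clause of Proposition~\ref{prop:horizontal_properties}~\eqref{item:join} — namely that $\BigCell \horizontal \Cell$ together with $\BigCell \ge \Another\Cell \ge \Cell$ implies $\BigCell \horizontal \Another\Cell$ — with $\BigCell = \Cell_1$, $\Another\Cell = \Cell_2$, and $\Cell = \Cell_3$. This yields $\Cell_1 \horizontal \Cell_2$, contradicting the hypothesis $\Cell_1 \Down \Cell_2$. Hence $\Cell_1$ does not lie in the horizontal link of $\Cell_3$, and therefore $\Cell_1 \Down \Cell_3$, as desired.

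There is essentially no obstacle beyond invoking the correct structural fact from Proposition~\ref{prop:horizontal_properties}; it is worth noting that the second hypothesis (that $\Cell_2$ does not lie in the horizontal link of $\Cell_3$) is not even used, so transitivity of $\Down$ rests solely on property~\eqref{item:join}. The only point to be careful about is that the relevant consequence of \eqref{item:join} demands the middle cell $\Another\Cell$ to sit between $\Cell$ and $\BigCell$, which is precisely what the chain $\Cell_3 \le \Cell_2 \le \Cell_1$ coming from the two moves provides.
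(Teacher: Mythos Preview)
Your proof is correct and follows essentially the same contradiction argument as the paper. The only difference is a symmetric choice: the paper invokes Proposition~\ref{prop:horizontal_properties}~\eqref{item:faces} to deduce $\Cell_2 \horizontal \Cell_3$ and contradict $\Cell_2 \Down \Cell_3$, whereas you invoke~\eqref{item:join} to deduce $\Cell_1 \horizontal \Cell_2$ and contradict $\Cell_1 \Down \Cell_2$; correspondingly, each argument leaves the \emph{other} ``not horizontal'' hypothesis unused.
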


\begin{proof}
Assume $\Cell_1 \Down \Cell_2 \Down \Cell_3$. Clearly $\Cell_1 \gneq \Cell_3$. Suppose $\Cell_1 \horizontal \Cell_3$, then by Proposition~\ref{prop:horizontal_properties}~\eqref{item:faces} $\Cell_2 \horizontal \Cell_3$ contradicting $\Cell_2 \Down \Cell_3$.
\end{proof}

Now we approach the proof that alternating sequences of moves have bounded length.

\begin{lem}
If
\[
\Cell_1 \Up \BigCell_1 \Down \Cell_2
\]
then
\[
\Cell_1 = (\Cell_1 \vee \Cell_2)\Min \quad \text{and} \quad \Cell_1 \vee \Cell_2 \Down \Cell_2 \text{ .}
\]
In particular $\Cell_1 \Up \Cell_1 \vee \Cell_2 \Down \Cell_2$ unless $\Cell_1 \Down \Cell_2$.
\end{lem}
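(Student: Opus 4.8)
The plan is to push everything through the formal properties of $\horizontal$ collected in Proposition~\ref{prop:horizontal_properties}, the existence of $\BigCell\Min$ from Lemma~\ref{lem:tau_min}, and the idempotence $(\BigCell\Min)\Min=\BigCell\Min$ of Observation~\ref{obs:min_min}, which makes $\Cell_1=\BigCell_1\Min$ an \emph{essential} cell. First I would record the basic facts. Since $\Cell_1\Up\BigCell_1$, the cell $\BigCell_1$ is horizontal (so $\BigCell_1\Min$ is defined) and $\BigCell_1\horizontal\Cell_1$; since $\Cell_1\Down\BigCell_1$ is false but rather $\BigCell_1\Down\Cell_2$ with $\Cell_2\lneq\BigCell_1$, both $\Cell_1$ and $\Cell_2$ are faces of $\BigCell_1$, so $\Cell_1\vee\Cell_2$ exists, is a face of $\BigCell_1$, and in particular is horizontal. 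Applying Proposition~\ref{prop:horizontal_properties}~\eqref{item:join} to $\BigCell_1\horizontal\Cell_1$ with $\Cell_1\le\Cell_1\vee\Cell_2\le\BigCell_1$ gives $\BigCell_1\horizontal\Cell_1\vee\Cell_2$, and applying \eqref{item:faces} to the same data gives $(\Cell_1\vee\Cell_2)\horizontal\Cell_1$, hence $(\Cell_1\vee\Cell_2)\Min\le\Cell_1$.

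For the reverse inequality I would argue by contradiction. If $(\Cell_1\vee\Cell_2)\Min\lneq\Cell_1$, then since $\Cell_1\le\Cell_1\vee\Cell_2$, Proposition~\ref{prop:horizontal_properties}~\eqref{item:faces} yields $\Cell_1\horizontal(\Cell_1\vee\Cell_2)\Min$, so $\Cell_1=\Cell_1\Min\le(\Cell_1\vee\Cell_2)\Min$ by essentiality of $\Cell_1$ — a contradiction. Thus $(\Cell_1\vee\Cell_2)\Min=\Cell_1$. Next, $\Cell_2\le\Cell_1\vee\Cell_2$, and it is impossible that $\Cell_1\vee\Cell_2\horizontal\Cell_2$: otherwise, combining with $\BigCell_1\horizontal\Cell_1\vee\Cell_2$ and transitivity \eqref{item:transitivity} we would obtain $\BigCell_1\horizontal\Cell_2$, contradicting $\BigCell_1\Down\Cell_2$. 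So whenever $\Cell_2\lneq\Cell_1\vee\Cell_2$ we get $\Cell_1\vee\Cell_2\Down\Cell_2$.

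It remains to sort out the final case distinction. One checks $\Cell_1\ne\Cell_2$ (since $\BigCell_1\horizontal\Cell_1$ but not $\BigCell_1\horizontal\Cell_2$) and $\Cell_1\not\le\Cell_2$ (if $\Cell_1\le\Cell_2$, then $\BigCell_1\ge\Cell_2\ge\Cell_1$ and \eqref{item:join} would force $\BigCell_1\horizontal\Cell_2$, again a contradiction), so $\Cell_2\lneq\Cell_1\vee\Cell_2$ in all cases and $\Cell_1\vee\Cell_2\Down\Cell_2$ always holds. If $\Cell_2\not\le\Cell_1$, then $\Cell_1\lneq\Cell_1\vee\Cell_2$, so $(\Cell_1\vee\Cell_2)\Min=\Cell_1\ne\Cell_1\vee\Cell_2$ reads $\Cell_1\Up\Cell_1\vee\Cell_2$, giving the chain $\Cell_1\Up\Cell_1\vee\Cell_2\Down\Cell_2$. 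If instead $\Cell_2\le\Cell_1$, then $\Cell_2\lneq\Cell_1$, so $\Cell_1\vee\Cell_2=\Cell_1$; here essentiality of $\Cell_1$ shows $\Cell_1\horizontal\Cell_2$ is impossible, hence $\Cell_1\Down\Cell_2$ — the exceptional case, in which ``$\Cell_1\vee\Cell_2\Down\Cell_2$'' is literally ``$\Cell_1\Down\Cell_2$''. The only point that needs care — and what I view as the crux — is this interplay between $\horizontal$ and the face order in the case analysis, together with the observation that the degenerate situation $\Cell_1\vee\Cell_2=\Cell_1$ is exactly the announced exception $\Cell_1\Down\Cell_2$; the rest is bookkeeping with Proposition~\ref{prop:horizontal_properties}.
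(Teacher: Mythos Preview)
Your proof is correct and follows essentially the same approach as the paper's: use Proposition~\ref{prop:horizontal_properties}~\eqref{item:faces} to get $(\Cell_1\vee\Cell_2)\horizontal\Cell_1$, then invoke essentiality of $\Cell_1$ to conclude $(\Cell_1\vee\Cell_2)\Min=\Cell_1$, and observe $\Cell_1\not\le\Cell_2$ to obtain $\Cell_1\vee\Cell_2\Down\Cell_2$. The paper compresses the equality $(\Cell_1\vee\Cell_2)\Min=\Cell_1$ into a single appeal to Observation~\ref{obs:min_min} (rather than your contradiction argument, which effectively re-proves part of it), and derives $\Cell_1\vee\Cell_2\not\horizontal\Cell_2$ directly from $(\Cell_1\vee\Cell_2)\Min=\Cell_1\not\le\Cell_2$ instead of via transitivity through $\BigCell_1$; but these are minor variations, not a different route.
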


\begin{proof}
By Proposition~\ref{prop:horizontal_properties}~\eqref{item:faces} $\Cell_1 \vee \Cell_2 \horizontal \Cell_1$ so $\Cell_1 = (\Cell_1 \vee \Cell_2)\Min$ by Observation~\ref{obs:min_min}. And $\Cell_1 = \BigCell_1\Min \not\le \Cell_2$ whence $\Cell_2 \lneq \Cell_1 \vee \Cell_2$ so that $\Cell_1 \vee \Cell_2 \Down \Cell_2$.
\end{proof}

\begin{lem}
\label{lem:min_in_vertical_link}
If $\BigCell \ge \Cell$ are horizontal cells then $(\BigCell\Min \vee \Cell) \direction \Cell \subseteq \Vertical\Link \Cell$.
\end{lem}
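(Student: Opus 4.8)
Write $\PointAtInfty := \Infty\Busemann$ (all horizontal links in this section are taken with respect to it) and put $\BigCell' := \BigCell\Min \vee \Cell$, so that $\Cell \le \BigCell' \le \BigCell$; in particular $\Cell$, $\BigCell'$, and every cell between them are horizontal. I would first reduce to adding one vertex at a time. The simplex $\BigCell' \direction \Cell$ of $\Link\Cell$ fails to be contained in $\Vertical\Link\Cell$ precisely when it has a vertex lying in $\Horizontal\Link\Cell$; such a vertex is $\Vertex \direction \Cell$ for an atom $\Cell \vee \Vertex$ of the interval $[\Cell,\BigCell']$, and by the definition of the relation $\horizontal$ it lies in $\Horizontal\Link\Cell$ if and only if $\Cell \vee \Vertex \horizontal \Cell$. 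Hence it suffices to show that $\Cell \vee \Vertex \not\horizontal \Cell$ for every atom $\Cell \vee \Vertex$ of $[\Cell,\BigCell']$.

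Next I would descend to the irreducible factors of $\Space = \prod_i \EuclBuilding_i$, writing $\BigCell = \prod_i \BigCell_i$ and $\Cell = \prod_i \Cell_i$. Since $\Busemann$ is in general position, $d(\PointAtInfty, \Infty\EuclBuilding_i) < \pi/2$ for every $i$ by Observation~\ref{obs:horizontal_perpendicular}, so Observation~\ref{obs:horizontal_iff_horizontal_on_factors} applies with $I$ the full index set: it gives $\Vertical\Link\Cell = \Join_i \Vertical\Link\Cell_i$ (the north pole of $\Link\Cell_i$ being $\Proj[i]\PointAtInfty$) and says that $\horizontal$ on cofaces of $\Cell$ is the factorwise product of the relations $\horizontal_{\Proj[i]\PointAtInfty}$ on cofaces of the $\Cell_i$. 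Applying Lemma~\ref{lem:tau_min} inside each (irreducible) $\EuclBuilding_i$ with the Busemann function defining $\Proj[i]\PointAtInfty$ — which is again in general position there — each $(\BigCell_i)\Min$ exists, and as the minimum of a product of finite posets is the product of the minima we get $\BigCell\Min = \prod_i (\BigCell_i)\Min$, hence $\BigCell' = \prod_i \bigl( (\BigCell_i)\Min \vee \Cell_i \bigr)$. An atom of $[\Cell,\BigCell']$ is thus given by a choice of index $i_0$ and a vertex $\Vertex$ of the simplex $(\BigCell_{i_0})\Min$ not lying in $\Cell_{i_0}$, and $\Cell \vee \Vertex \horizontal \Cell$ holds if and only if $\Cell_{i_0} \vee \Vertex \horizontal_{\Proj[i_0]\PointAtInfty} \Cell_{i_0}$ in $\EuclBuilding_{i_0}$.

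This reduces the lemma to its irreducible case: in an irreducible Euclidean building with a Busemann function in general position, i.e.\ with a genuine north pole $\NorthPole$, if $\Vertex$ is a vertex of $\BigCell\Min$ not lying in $\Cell$ then $\Cell \vee \Vertex \not\horizontal \Cell$, equivalently the vertex $\Vertex \direction \Cell$ of $\Link\Cell$ lies in $\Vertical\Link\Cell$. This is \cite[Lemma~5.10]{buxwor08}, specialized to adding a single vertex; for a self-contained treatment one uses the explicit orthogonal-projection description of $\BigCell\Min$ available in an irreducible building to locate $\Vertex$ with respect to the ray defining $\Busemann$, and then applies Lemma~\ref{lem:geometric_criterion} together with Observation~\ref{obs:busemann_angle_criterion} to conclude that the irreducible factor of $\Link\Cell$ containing $\Vertex \direction \Cell$ carries a non-equatorial vertex, so that $\Vertex \direction \Cell \in \Vertical\Link\Cell$.

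The hard part is this irreducible base case. The purely formal properties collected in Proposition~\ref{prop:horizontal_properties} do not suffice on their own: any attempt to exploit that an atom $\Cell \vee \Vertex \le \BigCell'$ satisfies $\Cell \vee \Vertex \horizontal \Cell$ by joining it with $\BigCell\Min$ hits the identity $(\Cell \vee \Vertex) \vee \BigCell\Min = \BigCell'$ (as $\Vertex$ is a vertex of $\BigCell\Min$), and via Proposition~\ref{prop:horizontal_properties}~\eqref{item:join} only reproduces the tautology $\BigCell' \horizontal \BigCell'$. So one genuinely needs the geometry of the minimal horizontal face inside an irreducible building — equivalently the projection description from \cite{buxwor08}. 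By comparison, the two reductions above are routine bookkeeping, enabled by the factorwise Observation~\ref{obs:horizontal_iff_horizontal_on_factors} and by Lemma~\ref{lem:tau_min}.
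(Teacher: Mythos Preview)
Your central claim—that ``the purely formal properties collected in Proposition~\ref{prop:horizontal_properties} do not suffice on their own''—is false, and this is precisely the point of the paper's treatment of this lemma. The paper proves it using only Proposition~\ref{prop:horizontal_properties}, with no reduction to irreducible factors and no appeal to the projection description of $\BigCell\Min$ from \cite{buxwor08}.

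The move you did not find is this. Suppose $(\BigCell\Min \vee \Cell)\direction\Cell$ is not entirely vertical, and split it along the join decomposition $\Link\Cell = \Horizontal\Link\Cell * \Vertical\Link\Cell$: let $\Another\Cell \ge \Cell$ be the face with $\Another\Cell\direction\Cell$ the vertical part and $\YetAnother\Cell \ge \Cell$ the face with $\YetAnother\Cell\direction\Cell$ the (nonempty) horizontal part. By definition $\YetAnother\Cell \horizontal \Cell$, so \eqref{item:join} gives $\BigCell\Min \vee \Cell = \YetAnother\Cell \vee \Another\Cell \horizontal \Cell \vee \Another\Cell = \Another\Cell$. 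Since also $\BigCell \horizontal \BigCell\Min \vee \Cell$ (apply \eqref{item:join} to $\BigCell \horizontal \BigCell\Min$ joined with $\Cell$), transitivity \eqref{item:transitivity} yields $\BigCell \horizontal \Another\Cell$. Now $\BigCell \horizontal \Another\Cell$ and $\BigCell \horizontal \BigCell\Min$, so by Observation~\ref{obs:general_position_cells_meet} their intersection is nonempty and \eqref{item:non_empty_meet} gives $\BigCell \horizontal \Another\Cell \cap \BigCell\Min$. But $\Another\Cell \lneq \BigCell\Min \vee \Cell$ forces $\Another\Cell \cap \BigCell\Min \lneq \BigCell\Min$, contradicting minimality of $\BigCell\Min$. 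The trick is not to join a single offending atom with $\BigCell\Min$ (which, as you observed, collapses), but to join the \emph{whole} horizontal part $\YetAnother\Cell$ with the vertical part $\Another\Cell$, thereby sliding the base of the $\horizontal$-relation up to $\Another\Cell$, and then to use \eqref{item:non_empty_meet} to intersect $\Another\Cell$ back down against $\BigCell\Min$.

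Your reduce-to-irreducible-and-cite-\cite{buxwor08} route is not itself erroneous (the reductions via Observation~\ref{obs:horizontal_iff_horizontal_on_factors} and Lemma~\ref{lem:tau_min} are sound), but it reimports exactly the dependence on the explicit projection description that the paper set out to eliminate; see the paragraph preceding Observation~\ref{obs:min_min}.
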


\begin{proof}
Suppose not and let $\Another\Cell$ be the maximal face of $\BigCell\Min \vee \Cell$ such that $\Another\Cell \direction \Cell \subseteq \Vertical\Link \Cell$. Let $\YetAnother\Cell$ be the face of $\BigCell\Min \vee \Cell$ such that $\YetAnother\Cell \direction \Cell = ((\BigCell\Min \vee \Cell) \direction \Cell) \setminus (\Another\Cell \direction \Cell)$. Then $\YetAnother\Cell \horizontal \Cell$ so by Proposition~\ref{prop:horizontal_properties}~\eqref{item:join} $\BigCell\Min \vee \Cell = \YetAnother\Cell \vee \Another\Cell \horizontal \Cell \vee \Another\Cell = \Another\Cell$. But $\BigCell \horizontal \BigCell\Min \vee \Cell$ by so Proposition~\ref{prop:horizontal_properties}~\eqref{item:join}, so Proposition~\ref{prop:horizontal_properties}~\eqref{item:transitivity} shows that $\BigCell \horizontal \Another\Cell$.
Finally Proposition~\ref{prop:horizontal_properties}~\eqref{item:non_empty_meet} implies $\BigCell \horizontal \Another\Cell \intersect \BigCell\Min$ which contradicts minimality of $\BigCell\Min$ because $\Another\Cell \intersect \BigCell\Min$ is a proper face.
\end{proof}

\begin{cor}
If
\[
\Cell_1 \Up \BigCell_1 \Down \Cell_2 \Up \BigCell_2
\]
then $\BigCell_2 \vee \Cell_1$ exits.
\end{cor}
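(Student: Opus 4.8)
The plan is to view $\BigCell_2$ and $\Cell_1$ as sitting above the common cell $\Cell_2$, pass to the spherical building $\Link\Cell_2$, and exploit that in its join decomposition $\Link\Cell_2 = \Horizontal\Link\Cell_2 * \Vertical\Link\Cell_2$ a simplex of one factor together with a simplex of the other always span a simplex. First I would do the bookkeeping. From $\Cell_1 \Up \BigCell_1$ we get $\BigCell_1 \horizontal \Cell_1$, so $\BigCell_1$ and all of its faces are horizontal; since $\Cell_1 = \BigCell_1\Min$ and $\Cell_2$ are faces of $\BigCell_1$, the join $\Cell_1 \vee \Cell_2$ exists, is horizontal, and has $\Cell_2$ as a face. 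As $\BigCell_1 \ge \Cell_1 \vee \Cell_2 \ge \Cell_1$, Proposition~\ref{prop:horizontal_properties}~\eqref{item:join} gives $\BigCell_1 \horizontal \Cell_1 \vee \Cell_2$, whence $(\Cell_1 \vee \Cell_2)\Min = \BigCell_1\Min = \Cell_1$ by Observation~\ref{obs:min_min} (equivalently, this is part of the lemma immediately preceding this corollary). Similarly, $\Cell_2 \Up \BigCell_2$ means $\Cell_2 = \BigCell_2\Min$, so $\Cell_2 \le \BigCell_2$ and $\BigCell_2 \horizontal \Cell_2$ by Lemma~\ref{lem:tau_min}.

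Next I would locate the two halves of the argument inside $\Link\Cell_2$, taken with respect to the north pole of $\Link\Cell_2$ determined by $\Busemann$ (which is available since $\Cell_2$ is horizontal). On the one hand, $\BigCell_2 \horizontal \Cell_2$ means by definition that the simplex $\BigCell_2 \direction \Cell_2$ lies in $\Horizontal\Link\Cell_2$. On the other hand, applying Lemma~\ref{lem:min_in_vertical_link} to the horizontal pair $\Cell_1 \vee \Cell_2 \ge \Cell_2$ and using $(\Cell_1 \vee \Cell_2)\Min \vee \Cell_2 = \Cell_1 \vee \Cell_2$ from the previous paragraph, I obtain that the simplex $(\Cell_1 \vee \Cell_2) \direction \Cell_2$ lies in $\Vertical\Link\Cell_2$.

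The conclusion is then immediate. Since $\Link\Cell_2 = \Horizontal\Link\Cell_2 * \Vertical\Link\Cell_2$ is a spherical join, the simplices $\BigCell_2 \direction \Cell_2 \subseteq \Horizontal\Link\Cell_2$ and $(\Cell_1 \vee \Cell_2) \direction \Cell_2 \subseteq \Vertical\Link\Cell_2$ together span a simplex of $\Link\Cell_2$. Under the order isomorphism between the simplices of $\Link\Cell_2$ and the cofaces of $\Cell_2$, that simplex corresponds to a cell $\upsilon \ge \Cell_2$ satisfying $\upsilon \ge \BigCell_2$ and $\upsilon \ge \Cell_1 \vee \Cell_2 \ge \Cell_1$. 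Hence $\BigCell_2$ and $\Cell_1$ have a common coface, and therefore $\BigCell_2 \vee \Cell_1$ exists.

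I do not expect a genuine obstacle here; the real work has already been done in Lemma~\ref{lem:min_in_vertical_link}. The points that require attention are organizational: one must make sure that $\Cell_1 \vee \Cell_2$ is actually horizontal (it is, being a face of the horizontal cell $\BigCell_1$), which is needed to apply Lemma~\ref{lem:min_in_vertical_link}, and that $(\Cell_1 \vee \Cell_2)\Min = \Cell_1$, so that Lemma~\ref{lem:min_in_vertical_link} delivers precisely the simplex $(\Cell_1 \vee \Cell_2) \direction \Cell_2$ in $\Vertical\Link\Cell_2$ and not a proper face of it. The remaining ingredient — that cofaces of $\Cell_2$ correspond order-preservingly to simplices of $\Link\Cell_2$, and that two simplices lying in the two factors of a spherical join span a simplex — is routine.
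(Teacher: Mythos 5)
Your proof is correct and follows essentially the same route as the paper's: identify $\BigCell_2 \direction \Cell_2$ as lying in $\Horizontal\Link\Cell_2$, use Lemma~\ref{lem:min_in_vertical_link} to place $(\Cell_1 \vee \Cell_2) \direction \Cell_2$ in $\Vertical\Link\Cell_2$, and conclude via the join decomposition of $\Link\Cell_2$. The only (harmless) detour is that you first compute $(\Cell_1 \vee \Cell_2)\Min = \Cell_1$ and apply the lemma to the pair $\Cell_1 \vee \Cell_2 \ge \Cell_2$, whereas the paper applies it directly to $\BigCell_1 \ge \Cell_2$, which already gives $(\BigCell_1\Min \vee \Cell_2) \direction \Cell_2 = (\Cell_1 \vee \Cell_2) \direction \Cell_2 \subseteq \Vertical\Link\Cell_2$ without the intermediate computation.
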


\begin{proof}
By assumption we have $\BigCell_2 \direction \Cell_2 \subseteq \Horizontal\Link \Cell_2$. Since $\Cell_1 = \BigCell_1\Min$, Lemma~\ref{lem:min_in_vertical_link} shows that $(\Cell_1 \vee \Cell_2) \direction \Cell_2 \subseteq \Vertical\Link \Cell_2$. Hence $(\BigCell_2 \direction \Cell_2) \vee ((\Cell_1 \vee \Cell_2) \direction \Cell_2)$ exists and so does $\Cell_1 \vee \BigCell_2$.
\end{proof}

\begin{lem}
\label{lem:cell1_bigcell2_min}
If
\[
\Cell_1 \Up \BigCell_1 \Down \Cell_2 \Up \BigCell_2
\]
then $(\BigCell_2 \vee \Cell_1)\Min = \Cell_1$.
\end{lem}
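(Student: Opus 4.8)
The plan is to deduce the claim from the already-proved identity $(\Cell_1 \vee \Cell_2)\Min = \Cell_1$ (the lemma immediately before Lemma~\ref{lem:min_in_vertical_link}), by showing that $\BigCell_2 \vee \Cell_1$ and $\Cell_1 \vee \Cell_2$ have the same $(\cdot)\Min$. The link between the two will be the relation $\BigCell_2 \vee \Cell_1 \horizontal \Cell_1 \vee \Cell_2$ together with Observation~\ref{obs:min_min}.

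First I would collect the ingredients that come for free. By the Corollary immediately preceding this lemma the join $\BigCell_2 \vee \Cell_1$ exists. Since $\Cell_2 \Up \BigCell_2$ we have $\BigCell_2\Min = \Cell_2$, hence $\BigCell_2 \horizontal \Cell_2$ and in particular $\BigCell_2$ is horizontal; since $\Cell_1 \Up \BigCell_1$ we have $\Cell_1 = \BigCell_1\Min \le \BigCell_1$ with $\BigCell_1$ horizontal (as $\BigCell_1 \horizontal \Cell_1$), and likewise $\Cell_2 \le \BigCell_1$, so the cells $\Cell_1$, $\Cell_2$ and $\Cell_1 \vee \Cell_2$ are all faces of the horizontal cell $\BigCell_1$ and hence horizontal. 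Finally, since $\Cell_2 \le \BigCell_2$ and $\Cell_2 \le \Cell_1 \vee \Cell_2$, the join $\BigCell_2 \vee \Cell_1 = \BigCell_2 \vee (\Cell_1 \vee \Cell_2)$ is the join of two horizontal cells that share the face $\Cell_2$.

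The one step that genuinely needs the Euclidean geometry, rather than the formal calculus of $\horizontal$, is the assertion that $\BigCell_2 \vee \Cell_1$ is horizontal. I would argue this directly: pick a chamber containing $\BigCell_2 \vee \Cell_1$ and an apartment $\Apartment$ through it; on $\Apartment$ the Busemann function $\Busemann$ is affine. It is constant on $\BigCell_2$ and on $\Cell_1 \vee \Cell_2$, and these meet in $\Cell_2 \neq \emptyset$, so its gradient is orthogonal to the direction spaces of both affine hulls, hence to their span, which is the direction space of $\operatorname{aff}(\BigCell_2 \vee \Cell_1)$. Thus $\Busemann$ is constant on $\BigCell_2 \vee \Cell_1$, i.e.\ this cell is horizontal. (One can also see it inside the spherical building $\Link \Cell_2$: $\BigCell_2 \direction \Cell_2 \subseteq \Horizontal\Link \Cell_2$ is equatorial, and $(\Cell_1 \vee \Cell_2) \direction \Cell_2$ is equatorial because $\Cell_1 \vee \Cell_2$ is horizontal, so by Observation~\ref{obs:equator_decomposition} their join is equatorial.)

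Now the proof concludes quickly. Applying Proposition~\ref{prop:horizontal_properties}~\eqref{item:join} with $\BigCell = \BigCell_2$, $\Cell = \Cell_2$ and $\Another\Cell = \Cell_1$ — using $\BigCell_2 \horizontal \Cell_2$ and the fact just established that $\BigCell_2 \vee \Cell_1$ exists and is horizontal — gives $\BigCell_2 \vee \Cell_1 \horizontal \Cell_1 \vee \Cell_2$. By Observation~\ref{obs:min_min} it follows that $(\BigCell_2 \vee \Cell_1)\Min = (\Cell_1 \vee \Cell_2)\Min$. Finally, the hypotheses $\Cell_1 \Up \BigCell_1 \Down \Cell_2$ are exactly those of the lemma preceding Lemma~\ref{lem:min_in_vertical_link}, which yields $(\Cell_1 \vee \Cell_2)\Min = \Cell_1$. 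Combining the two equalities gives $(\BigCell_2 \vee \Cell_1)\Min = \Cell_1$, as desired. I expect the verification that $\BigCell_2 \vee \Cell_1$ is horizontal to be the only point that requires real care; everything else is bookkeeping with Proposition~\ref{prop:horizontal_properties} and Observation~\ref{obs:min_min}.
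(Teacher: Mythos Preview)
Your proof is correct and follows essentially the same route as the paper: apply Proposition~\ref{prop:horizontal_properties}~\eqref{item:join} to obtain $\BigCell_2 \vee \Cell_1 \horizontal \Cell_1 \vee \Cell_2$, then use Observation~\ref{obs:min_min} together with $(\Cell_1 \vee \Cell_2)\Min = \Cell_1$. You are in fact more careful than the paper in explicitly verifying that $\BigCell_2 \vee \Cell_1$ is horizontal (a hypothesis of Proposition~\ref{prop:horizontal_properties}~\eqref{item:join} that the paper leaves implicit); your affine-Busemann argument for this is fine, though the citation of Observation~\ref{obs:equator_decomposition} in the parenthetical alternative is not quite the right reference---the fact you need there is simply that the equator is a totally geodesic subsphere, so the join of two equatorial simplices is equatorial.
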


\begin{proof}
By assumption $\BigCell_2 \horizontal \Cell_2$ so Proposition~\ref{prop:horizontal_properties}~\eqref{item:join} implies $\BigCell_2 \vee \Cell_1 \horizontal \Cell_2 \vee \Cell_1$. Moreover $\BigCell_1 \horizontal \Cell_2 \vee \Cell_1$ so $(\Cell_2 \vee \Cell_1)\Min = \BigCell\Min = \Cell_1$ by Observation~\ref{obs:min_min}. Thus $(\BigCell_2 \vee \Cell_1)\Min = \Cell_1$ again by Observation~\ref{obs:min_min}.
\end{proof}

\begin{lem}
\label{lem:shortening}
Any alternating chain
\[
\Cell_1 \Up \BigCell_1 \Down \Cell_2 \Up \BigCell_2
\]
can be shortened to either
\[
\Cell_1 \Up \Cell_1 \vee \BigCell_2 \Down \BigCell_2 \quad \text{or} \quad \Cell_1 \Up \BigCell_1 \Down \BigCell_2 \text{ .}
\]
\end{lem}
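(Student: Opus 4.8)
The plan is to put $\BigCell \defeq \Cell_1 \vee \BigCell_2$. This join exists by the corollary preceding Lemma~\ref{lem:cell1_bigcell2_min}, and by Lemma~\ref{lem:cell1_bigcell2_min} it satisfies $\BigCell\Min = \Cell_1$; in particular $\BigCell$ is horizontal, so the relations $\Cell_1 \Up \BigCell$ and $\BigCell \Down \BigCell_2$ are at least meaningful to assert. Before the case distinction I would record two auxiliary facts. First, $\Cell_1$ is essential: by Observation~\ref{obs:min_min}, $\Cell_1\Min = (\BigCell_1\Min)\Min = \BigCell_1\Min = \Cell_1$. Second, $\Cell_1 \not\le \Cell_2$: since $\Cell_2 \lneq \BigCell_1$, Lemma~\ref{lem:tau_min} says that $\BigCell_1 \horizontal \Cell_2$ is equivalent to $\Cell_1 = \BigCell_1\Min \le \Cell_2$, and $\BigCell_1 \Down \Cell_2$ precisely negates this.

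Then I would split on whether $\BigCell_2 \le \Cell_1$ or not, equivalently on whether $\BigCell = \Cell_1$ or $\BigCell \gneq \Cell_1$. Suppose first $\BigCell_2 \not\le \Cell_1$, so $\Cell_1 \lneq \BigCell$. Since $\BigCell\Min = \Cell_1 \ne \BigCell$ this says $\Cell_1 \Up \BigCell$. Next I claim $\Cell_1 \not\le \BigCell_2$: otherwise $\BigCell = \Cell_1 \vee \BigCell_2 = \BigCell_2$, whence $\Cell_2 = \BigCell_2\Min = \BigCell\Min = \Cell_1$, contradicting $\Cell_1 \not\le \Cell_2$. From $\Cell_1 \not\le \BigCell_2$ we get $\BigCell_2 \lneq \BigCell$; and $\BigCell \horizontal \BigCell_2$ would force $\Cell_1 = \BigCell\Min \le \BigCell_2$ by Lemma~\ref{lem:tau_min} (applicable since $\BigCell_2 \le \BigCell$), again a contradiction, so $\BigCell \Down \BigCell_2$. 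Hence $\Cell_1 \Up \BigCell \Down \BigCell_2$, which is the first of the two shortenings since $\BigCell = \Cell_1 \vee \BigCell_2$.

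In the remaining case $\BigCell_2 \le \Cell_1$ (so $\BigCell = \Cell_1$) I would first observe $\BigCell_2 \ne \Cell_1$: otherwise $\Cell_2 = \BigCell_2\Min = \Cell_1\Min = \Cell_1$, using that $\Cell_1$ is essential, contradicting $\Cell_1 \not\le \Cell_2$; so $\BigCell_2 \lneq \Cell_1$. Combining with $\Cell_1 \lneq \BigCell_1$, which holds because $\Cell_1 \Up \BigCell_1$, gives $\BigCell_2 \lneq \BigCell_1$. Finally $\BigCell_1 \horizontal \BigCell_2$ would mean $\Cell_1 = \BigCell_1\Min \le \BigCell_2$ by Lemma~\ref{lem:tau_min}, contradicting $\BigCell_2 \lneq \Cell_1$; so $\BigCell_1 \Down \BigCell_2$. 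Together with $\Cell_1 \Up \BigCell_1$ this is the second shortening.

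I expect no real obstacle: once Lemma~\ref{lem:cell1_bigcell2_min} and the corollary before it are in place, the statement is pure poset bookkeeping built on Lemma~\ref{lem:tau_min} and Observation~\ref{obs:min_min}, with no geometry entering. The only point that needs care is keeping the strict versus non-strict inequalities straight, so that the ``$\Cell \ne \BigCell$'' clauses in the definitions of $\Up$ and $\Down$ are genuinely verified in each branch; that is exactly the role of the two auxiliary facts ``$\Cell_1$ essential'' and ``$\Cell_1 \not\le \Cell_2$''.
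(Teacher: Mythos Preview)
Your proof is correct and follows essentially the same route as the paper's: both set $\BigCell=\Cell_1\vee\BigCell_2$, invoke Lemma~\ref{lem:cell1_bigcell2_min} to get $\BigCell\Min=\Cell_1$, and split on whether $\Cell_1=\Cell_1\vee\BigCell_2$. The only cosmetic differences are that the paper gets $\Cell_1\ne\Cell_2$ from Observation~\ref{obs:no_up_down_cycle} rather than your stronger $\Cell_1\not\le\Cell_2$ via Lemma~\ref{lem:tau_min}, and it establishes $\Cell_1\vee\BigCell_2\Down\BigCell_2$ once before the case split; your version is in fact a bit more explicit about verifying the strictness and the ``not $\horizontal$'' clauses.
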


\begin{proof}
We know by assumption that $\BigCell_2\Min = \Cell_2$ and Lemma~\ref{lem:cell1_bigcell2_min} implies $(\Cell_1 \vee \BigCell_2)\Min = \Cell_1$. Since by Observation~\ref{obs:no_up_down_cycle} $\Cell_1 \ne \Cell_2$ this implies $\BigCell_2 \ne \Cell_1 \vee \BigCell_2$ so that $\Cell_1 \vee \BigCell_2 \Down \BigCell_2$.

If $\Cell_1 \ne \Cell_1 \vee \BigCell_2$, then $\Cell_1 \Up \Cell_1 \vee \BigCell_2$. If $\Cell_1 = \Cell_1 \vee \BigCell_2$, then $\BigCell_1 \gneq \Cell_1 \gneq \BigCell_2$ so that $\BigCell_1 \Down \BigCell_2$.
\end{proof}

\begin{cor}
\label{cor:no_cycles}
No sequence of moves enters a cycle.
\end{cor}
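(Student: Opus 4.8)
The statement is a short formal consequence of Lemma~\ref{lem:shortening} together with the transitivity results for $\Up$ and $\Down$, so the plan is to argue by contradiction via a shortest cycle. Suppose some sequence of moves revisits a cell; then there is a closed loop $\Cell_0 \Move \Cell_1 \Move \cdots \Move \Cell_N = \Cell_0$ consisting of $N \ge 1$ moves, and I would fix one for which $N$ is as small as possible.

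The first step is to see that such a minimal loop alternates strictly between $\Up$ and $\Down$: two consecutive $\Up$-moves cannot occur at all by Lemma~\ref{lem:up_transitive}, and two consecutive $\Down$-moves could be fused into one by Lemma~\ref{lem:down_transitive}, contradicting minimality of $N$. Consequently $N$ is even. It cannot be $2$, since a loop $\Cell_0 \Up \Cell_1 \Down \Cell_0$ is forbidden by Observation~\ref{obs:no_up_down_cycle} applied to the pair $(\Cell_0,\Cell_1)$, and a loop $\Cell_0 \Down \Cell_1 \Up \Cell_0$ is forbidden by the same observation applied to $(\Cell_1,\Cell_0)$. Hence $N \ge 4$, and after a cyclic reindexing starting at an $\Up$-move the loop contains a subchain of the shape $\Cell_0 \Up \Cell_1 \Down \Cell_2 \Up \Cell_3$. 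Applying Lemma~\ref{lem:shortening} to this subchain (it plays the role of $\Cell_1 \Up \BigCell_1 \Down \Cell_2 \Up \BigCell_2$ in the notation there, with $\BigCell_1 = \Cell_1$ and $\BigCell_2 = \Cell_3$) replaces the three moves from $\Cell_0$ to $\Cell_3$ by two moves, namely $\Cell_0 \Up \Cell_0 \vee \Cell_3 \Down \Cell_3$ or $\Cell_0 \Up \Cell_1 \Down \Cell_3$. Splicing this back into the loop yields a closed loop of length $N-1$, contradicting minimality; so no cycle exists.

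I do not expect a genuine obstacle here: all the substance is already packed into Lemma~\ref{lem:shortening}, and what remains is bookkeeping. The only points that need a modicum of care are the reduction to the alternating case (so that a $\Up$--$\Down$--$\Up$ pattern is actually available to feed into Lemma~\ref{lem:shortening}) and the verification that the spliced-in two-move segment really does produce a shorter \emph{closed} loop with the same endpoints, which is immediate from the statement of that lemma.
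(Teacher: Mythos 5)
Your proof is correct and follows essentially the same route as the paper: pass to a minimal cycle, use transitivity of $\Up$ and $\Down$ to force it to alternate, then use Lemma~\ref{lem:shortening} and Observation~\ref{obs:no_up_down_cycle} to derive a contradiction. The only cosmetic difference is that you first rule out short cycles via Observation~\ref{obs:no_up_down_cycle} and then shorten a length-$\ge 4$ cycle, whereas the paper first shortens to conclude the cycle has at most one $\Up$-move and then invokes Observation~\ref{obs:no_up_down_cycle}; this is a reordering, not a different argument.
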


\begin{proof}
Since $\Up$ and $\Down$ are transitive by Lemma~\ref{lem:up_transitive} respectively \ref{lem:down_transitive}, a cycle of minimal length must be alternating. Thus by Lemma~\ref{lem:shortening} it can go up at most once. But then it would have to be of the form ruled out by Observation~\ref{obs:no_up_down_cycle}.
\end{proof}

\begin{lem}
\label{lem:vee_of_lower_terms_exists}
If
\[
\Cell_1 \Up \BigCell_1 \Down \cdots \Down \Cell_{k-1} \Up \BigCell_{k-1} \Down \Cell_k
\]
then $\Cell_1 \vee \cdots \vee \Cell_k$ exists.
\end{lem}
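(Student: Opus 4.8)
The claim is that in an alternating chain $\Cell_1 \Up \BigCell_1 \Down \cdots \Down \Cell_k$, the join $\Cell_1 \vee \cdots \vee \Cell_k$ exists. Given Observation~\ref{obs:polyflag}, it suffices to show that each pairwise join $\Cell_l \vee \Cell_m$ exists for $1 \le l < m \le k$. So the real work is to establish pairwise compatibility of the cells $\Cell_1, \ldots, \Cell_k$, after which the polyflag property upgrades this to the existence of the full join. The plan is therefore to prove the pairwise statement by an induction argument, using the shortening lemma (Lemma~\ref{lem:shortening}) and the corollaries leading up to it to control how the cells sit relative to one another.

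First I would treat adjacent pairs: the chain already gives us relations like $\Cell_l \le \BigCell_l$ and $\Cell_{l+1} \le \BigCell_l$ (from $\BigCell_l \Down \Cell_{l+1}$), so $\Cell_l \vee \Cell_{l+1}$ exists as a face of $\BigCell_l$. For non-adjacent pairs I would argue by induction on $m - l$. The key input is the Corollary preceding Lemma~\ref{lem:cell1_bigcell2_min}, which tells us that if $\Cell_1 \Up \BigCell_1 \Down \Cell_2 \Up \BigCell_2$ then $\BigCell_2 \vee \Cell_1$ exists, and Lemma~\ref{lem:cell1_bigcell2_min}, which says $(\BigCell_2 \vee \Cell_1)\Min = \Cell_1$. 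Combined with Lemma~\ref{lem:shortening}, any length-four alternating segment $\Cell_l \Up \BigCell_l \Down \Cell_{l+1} \Up \BigCell_{l+1}$ can be replaced by a shorter chain whose terms dominate the relevant $\Cell_l$ and $\Cell_{l+1}$; repeatedly shortening from the left lets me reduce the question ``does $\Cell_l \vee \Cell_m$ exist?'' to a question about a short chain starting at $\Cell_l$ and ending at $\BigCell_{m-1}$ or $\Cell_m$, where the Corollary applies directly. Here I would need to be careful that the shortening process preserves $\Cell_l$ (or replaces it by a coface $\Cell_l \vee \text{(something)}$, which is fine since a join with such a coface still exists if and only if the original does, by the polysimplicial flag-type reasoning of Observation~\ref{obs:polyflag} applied factorwise).

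Concretely, I expect the cleanest route is: fix $l$, and prove by downward/upward induction that $\Cell_l \vee \Cell_{l+1} \vee \cdots \vee \Cell_j$ exists for all $j \ge l$, which is the statement of the lemma with indices shifted. For the inductive step, assume $\Cell' \defeq \Cell_l \vee \cdots \vee \Cell_j$ exists; I want $\Cell' \vee \Cell_{j+1}$ to exist. Using $\Cell_j \Up \BigCell_j \Down \Cell_{j+1}$ and the fact that $\Cell_j \le \Cell'$ with $\Cell_j = \BigCell_j\Min$, Lemma~\ref{lem:min_in_vertical_link} (applied as in the Corollary before Lemma~\ref{lem:cell1_bigcell2_min}) shows the relevant directions in $\Link \Cell_j$ are vertical while $\BigCell_{j+1} \direction \Cell_{j+1}$-type directions are horizontal, so the joins are forced to exist in the link, hence downstairs. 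The main obstacle I anticipate is bookkeeping: making sure at each step that the face of which I am taking a join is still ``anchored'' at a $\Min$-cell so that Lemma~\ref{lem:min_in_vertical_link} and Observation~\ref{obs:min_min} keep applying, rather than some genuinely new geometric difficulty. Once the pairwise (equivalently, chained) joins are in hand, Observation~\ref{obs:polyflag} finishes the proof immediately.
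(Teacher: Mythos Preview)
Your first two paragraphs are exactly the paper's proof: show $\Cell_l \vee \Cell_m$ exists for each pair by applying Lemma~\ref{lem:shortening} repeatedly to the sub-chain from $\Cell_l$ to $\Cell_m$ (each application removes one $\Up$-move while keeping $\Cell_l$ as the first term and $\Cell_m$ as the last, using transitivity of $\Down$), then invoke Observation~\ref{obs:polyflag}. Your worry that shortening might replace $\Cell_l$ by a coface is unfounded---both alternatives in Lemma~\ref{lem:shortening} leave the initial cell unchanged---so the induction is clean.

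Your third paragraph, however, is not the ``cleanest route'' but a detour with a real gap. To run that induction you would need $(\Cell_l \vee \cdots \vee \Cell_j) \direction \Cell_j \subseteq \Vertical\Link \Cell_j$, but Lemma~\ref{lem:min_in_vertical_link} only gives you $(\Cell_{j-1} \vee \Cell_j) \direction \Cell_j \subseteq \Vertical\Link \Cell_j$ (via $\Cell_{j-1} = \BigCell_{j-1}\Min$). The accumulated join $\Cell'$ is not of the form $\BigCell\Min \vee \Cell_j$ for any single $\BigCell$, so the lemma does not apply to it, and there is no evident reason the extra pieces $\Cell_l, \ldots, \Cell_{j-2}$ should contribute only vertical directions at $\Cell_j$. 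Drop this approach and stick with the pairwise argument.
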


\begin{proof}
First we show by induction that $\Cell_1 \vee \Cell_k$ exists. If $k=2$ this is obvious. Longer chains can be shortened using Lemma~\ref{lem:shortening}.

Applying this argument to subsequences we see that $\Cell_i \vee \Cell_j$ exists for any two indices $i$ and $j$. So by Observation~\ref{obs:polyflag} $\Cell_1 \vee \cdots \vee \Cell_k$ exists.
\end{proof}

\begin{proof}[Proof of Proposition \ref{prop:bound_on_moves}]
We first consider the case of alternating sequences. By Lemma~\ref{lem:vee_of_lower_terms_exists} for any alternating sequence of moves, the join of the lower elements (to which there is a move going down or from which there is a move going up) exists. This cell has at most $N \defeq \prod_i (2^{\dim(\EuclBuilding_i)+1}-1)$ non-empty faces. Since by Corollary~\ref{cor:no_cycles} the alternating sequence cannot contain any cycles $N$ is also the maximal number of moves.

A sequence of moves going down can at most have length $\dim(\Space) = \sum_{i} \dim(\EuclBuilding_i)$ while there are no two consecutive moves up by Lemma~\ref{lem:up_transitive}.

So an arbitrary sequence of moves can contain go up at most $N$ times and can go down at most $\dim(\Space) \cdot N$ times making $(\dim(\Space)+1) \cdot N$ a bound on its length (counting moves, not cells).
\end{proof}

\subsection*{Proof of Proposition \ref{prop:horizontal_properties} using spherical geometry}

In this part we prove Proposition~\ref{prop:horizontal_properties} using spherical geometry. This proof has the advantage of being elementary but on the other hand it is quite technical. We start by collecting some properties of spherical triangles (we only consider unit-spheres, i.e.\ spheres of curvature $1$):

\begin{obs}
\label{obs:spherical_triangles}
Consider a spherical triangle (with edges of length $< \pi$) with vertices $a$, $b$, and $c$ and angles $\alpha = \angle_a(b,c)$, $\beta = \angle_b(a,c)$ and $\gamma = \angle_c(a,b)$.
\begin{enumerate}
\item If $d(a,b) = \pi/2$ and $d(b,c),d(a,c) \le \pi/2$, then $\gamma \ge \pi/2$. \label{item:edge_gives_angle}
\item If $d(a,b) = \pi/2$ and $\beta = \pi/2$, then $d(a,c) = \gamma = \pi/2$.\\
If $d(a,b) = \pi/2$ and $\beta < \pi/2$, then $d(a,c) < \pi/2$. \label{item:edge_and_angle_give_edge_and_angle}
\item If $d(a,b) = d(a,c)= \pi/2$ and $b \ne c$, then $\beta = \gamma = \pi/2$. \label{item:edges_give_angles}
\item If $\beta = \gamma = \pi/2$ and $b \ne c$, then $d(a,b) = d(a,c) = \pi/2$. \label{item:angles_give_edges}
\end{enumerate}
\end{obs}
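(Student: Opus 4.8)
The plan is to deduce all four statements from the spherical law of cosines. I would realize the triangle on the unit sphere $S^2 \subseteq \mathbb{R}^3$, with vertices unit vectors $a,b,c$ so that $\langle x,y\rangle = \cos d(x,y)$ for $x,y \in \{a,b,c\}$, and abbreviate $\ell_{xy} \defeq d(x,y)$. The law of cosines at a vertex $x$, with the two remaining vertices $y,z$, then reads
\begin{equation*}
\sin\ell_{xy}\,\sin\ell_{xz}\,\cos\angle_x(y,z) = \langle y,z\rangle - \langle x,y\rangle\langle x,z\rangle = \cos\ell_{yz} - \cos\ell_{xy}\cos\ell_{xz} \text{ .}\tag{$\star$}
\end{equation*}
Before using it I would record the single non-degeneracy input: the three vertices are pairwise distinct (each named angle forces its base vertex to differ from the other two, and $d(a,b) = \pi/2$ rules out $a=b$), and all edge lengths lie in $(0,\pi)$; hence every factor $\sin\ell_{xy}$ is strictly positive, so in $(\star)$ the sign of $\cos\angle_x(y,z)$ is that of $\langle y,z\rangle - \langle x,y\rangle\langle x,z\rangle$, and the angle is a right angle exactly when this difference vanishes.

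Then each item becomes a one-line substitution. For \eqref{item:edge_gives_angle} I would apply $(\star)$ at $c$: its right-hand side is $\langle a,b\rangle - \langle a,c\rangle\langle b,c\rangle = -\langle a,c\rangle\langle b,c\rangle \le 0$, using $\langle a,b\rangle = 0$ and $\langle a,c\rangle,\langle b,c\rangle \ge 0$, so $\gamma \ge \pi/2$. For \eqref{item:edge_and_angle_give_edge_and_angle}, $(\star)$ at $b$ gives $\sin\ell_{ab}\sin\ell_{bc}\cos\beta = \langle a,c\rangle - \langle a,b\rangle\langle b,c\rangle = \langle a,c\rangle$; if $\beta = \pi/2$ this forces $\langle a,c\rangle = 0$, i.e.\ $d(a,c) = \pi/2$, and then $(\star)$ at $c$ has right-hand side $\langle a,b\rangle - \langle a,c\rangle\langle b,c\rangle = 0$, so $\gamma = \pi/2$; if instead $\beta < \pi/2$ then $\langle a,c\rangle > 0$, i.e.\ $d(a,c) < \pi/2$. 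For \eqref{item:edges_give_angles}, $(\star)$ at $b$ has right-hand side $\langle a,c\rangle - \langle a,b\rangle\langle b,c\rangle = 0$ (both terms vanish), so $\beta = \pi/2$, and symmetrically $\gamma = \pi/2$. For \eqref{item:angles_give_edges}, $(\star)$ at $b$ and at $c$ yield $\langle a,c\rangle = \langle a,b\rangle\langle b,c\rangle$ and $\langle a,b\rangle = \langle a,c\rangle\langle b,c\rangle$; substituting one into the other gives $\langle a,c\rangle\,(1 - \langle b,c\rangle^2) = 0$, and $\langle b,c\rangle^2 < 1$ because $\ell_{bc} \in (0,\pi)$, so $\langle a,c\rangle = 0$ and hence also $\langle a,b\rangle = 0$, i.e.\ $d(a,b) = d(a,c) = \pi/2$.

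I do not expect any genuine obstacle here: the statement is elementary spherical trigonometry, and the only point that needs discipline is the non-degeneracy bookkeeping above — precisely where the hypotheses $b \ne c$ and ``edges of length $< \pi$'' are used. As a variant that avoids quoting the law of cosines, one may obtain $(\star)$ directly by expanding the inner product $\bigl\langle y - \langle x,y\rangle x,\, z - \langle x,z\rangle x\bigr\rangle$ of the orthogonal projections of $y$ and $z$ to $x^{\perp}$ in $\mathbb{R}^3$, which is how I would present it if a fully self-contained argument were preferred.
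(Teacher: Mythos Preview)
Your proof is correct and takes essentially the same approach as the paper: the paper's proof simply asserts that all four properties can be deduced from the spherical law of cosines (citing \cite[I.2.2]{brihae}) and illustrates part~\eqref{item:edges_give_angles} with a direct geometric picture, whereas you actually carry out the law-of-cosines computations for all four parts with careful attention to non-degeneracy. Your write-up is more detailed than the paper's sketch, but the method is the same.
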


\begin{proof}
All properties can be deduced from the spherical law of cosines, see \cite[I.2.2]{brihae}. But they can also easily be verified geometrically. We illustrate this for the third statement. Put $b$ and $c$ on the equator of a $2$-sphere. The two great circles that meet the equator perpendicularly in $b$ and $c$ only meet at the poles, which have distance $\pi/2$ from the equator.
\end{proof}

\begin{figure}[!ht]
\begin{center}
\includegraphics{sphere_link_identification}
\end{center}
\caption{How $\Link\BigCell$ is identified with $\Link \BigCell \direction \Cell$.}
\label{fig:sphere_link_identification}
\end{figure}

A recurrent motif in the proof we are approaching will be the following: Let $\SpherBuilding$ be a spherical building, let $\Cell \subseteq \SpherBuilding$ be a cell and let $\BigCell$ be a coface of $\Cell$. Then the link of $\BigCell$ in $\SpherBuilding$ can be identified with the link of $\BigCell \direction \Cell$ in $\Link \Cell$. For example assume that $\Cell$ is a vertex and that $\BigCell$ is an edge, and let $\Point$ be a vertex adjacent to $\BigCell$. Then the direction that $\Point$ defines in the link of $\BigCell$ is identified with the direction from $\BigCell \direction \Cell$ toward $[\Cell,x] \direction \Cell$ (see Figure~\ref{fig:sphere_link_identification}).

Recall from Lemma~\ref{lem:spherical_projection} that if $\Cell$ is convex and $\Point$ has distance $< \pi/2$ to $\Cell$, then the projection point $\SpherPoint_\Point$ of $\Point$ to $\Cell$ is the unique point in $\Cell$ that is closest to $\Point$ and that the direction $[\SpherPoint_\Point,\Point]_\Point$ is perpendicular to $\Cell$, so that it defines a direction of its link.

A problem we will be facing is how to compute the distance of two points in the link of a cell that is not a vertex. For example assume that $\Cell$ is an edge and let $\Point$ and $\AltPoint$ be vertices adjacent to $\Cell$. Then we want to determine the distance of $(\Point \vee \Cell) \direction \Cell$ and $(\AltPoint \vee \Cell) \direction \Cell$ in the link of $\Cell$. This is of course the angle at $\Cell$ between the triangles $\Point \vee \Cell$ and $\AltPoint \vee \Cell$. Let us denote it by $\angle_\Cell(\Point,\AltPoint)$.

If $\Point$ and $\AltPoint$ project to the same point in $\Cell$, say $\SpherPoint$, then there is no problem: the desired value is just $\angle_\SpherPoint(\Point,\AltPoint)$. If either of the points $\Point$ and $\AltPoint$ has distance $\pi/2$ to $\Cell$ (so that it has no projection point), then there is still no problem: the direction from any point of $\Cell$ is perpendicular to $\Cell$ by Observation~\ref{obs:spherical_triangles}~\eqref{item:edges_give_angles}. In general however, $\Point$ and $\AltPoint$ will both have unique projection points $\SpherPoint_\Point$ and $\SpherPoint_\AltPoint$ in $\Cell$ and they will be distinct. Luckily we can avoid to deal with these cases, see Remark~\ref{remnum:induction_vs_spherical_geometry} below.

Let $\SpherBuilding$ be a spherical building with chosen north pole. We will repeatedly use the criterion from Lemma~\ref{lem:geometric_criterion}:
$\Cell \subseteq \Horizontal\SpherBuilding$ if and only if $\SpherDistance(\AltVertex,\Vertex) = \pi/2$ for every vertex $\AltVertex$ of $\Cell$ and every non-equatorial vertex $\Vertex$ adjacent to $\Cell$. Note that there is always an apartment that contains $\Cell$ and $\Vertex$ so the branching of the building is not an issue. Note also that since apartments are Coxeter complexes, all cells have diameter at most $\pi/2$. If $\Cell$ is a cell and $\Vertex$ and $\AltVertex$ are vertices adjacent to $\Cell$ and to each other, then $\angle_\Cell(\Vertex,\AltVertex) \le \pi/2$: this is just the diameter of an edge in $\Link \Cell$.

Before we start with the actual proof, we need to record one more fact:

\begin{obs}
Let $\Busemann$ be a Busemann function on $\Space$ and let $\Cell \le \BigCell$ be cells that are horizontal with respect to $\Busemann$. Let $\NorthPole_\Cell$ and $\NorthPole_\BigCell$ be the north poles determined by $\Busemann$ in $\Link \Cell$ and $\Link \BigCell$ respectively. The identification of $\Link \BigCell$ with $\Link (\BigCell \direction \Cell)$ identifies $\NorthPole_\BigCell$ with the direction toward $\NorthPole_\Cell$.

Moreover a vertex $\Vertex$ adjacent to $\BigCell \direction \Cell$ in $\Link \Cell$ is equatorial in $\Link \Cell$ if and only if it is equatorial as a vertex of $\Link (\BigCell \direction \Cell)$.
\end{obs}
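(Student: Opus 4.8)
The plan is to establish the two assertions separately: the first by passing to a single apartment, where the link identification is pure linear algebra, and the second by a one-line spherical trigonometry computation inside $\Link\Cell$ that uses the first.

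For the first assertion I would set $\PointAtInfty \defeq \Infty\Busemann$ and recall that $\NorthPole_\Cell$ (resp.\ $\NorthPole_\BigCell$) is the direction, at an interior point of $\Cell$ (resp.\ $\BigCell$), pointing toward $\PointAtInfty$; by Observation~\ref{obs:busemann_angle_criterion} horizontality makes the defining ray perpendicular to the cell, so these directions genuinely lie in $\Link\Cell$ and $\Link\BigCell$. Then I would pick an interior point $\AltPoint$ of $\BigCell$, let $\Ray$ be the ray from $\AltPoint$ to $\PointAtInfty$ (perpendicular to $\BigCell$), and choose an apartment $\Apartment$ of $\Space$ containing $\Ray$; being a subcomplex that meets the interior of $\BigCell$, such an $\Apartment$ contains $\BigCell$ and hence $\Cell$, and since $\PointAtInfty \in \Infty\Apartment$ it contains the rays toward $\PointAtInfty$ from all of its points, so both north poles lie in $\Apartment$. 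Inside the flat $\Apartment$, writing $L_\Cell \subseteq L_\BigCell$ for the affine hulls of $\Cell$ and $\BigCell$ and $u$ for the direction of $\Ray$, parallelism of rays toward $\PointAtInfty$ shows that $u$ represents both $\NorthPole_\Cell$ and $\NorthPole_\BigCell$, and perpendicularity gives $u \in L_\BigCell^\perp \subseteq L_\Cell^\perp$. Now $\Link\BigCell \intersect \Apartment$ is the unit sphere of $L_\BigCell^\perp$, while $\BigCell\direction\Cell$ spans the subsphere of $L_\BigCell \ominus L_\Cell$ in $\Link\Cell \intersect \Apartment$, so $\Link(\BigCell\direction\Cell) \intersect \Apartment$ is again the unit sphere of $L_\BigCell^\perp$ — this is the restriction to $\Apartment$ of the canonical identification $\Link\BigCell \cong \Link(\BigCell\direction\Cell)$, and under it $u$ goes to $u$. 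Finally $u \perp L_\BigCell$ shows that $\NorthPole_\Cell = u$ is perpendicular to all of $\BigCell\direction\Cell$ (in particular $\NorthPole_\Cell$ is at distance $\pi/2$ from it), so ``the direction toward $\NorthPole_\Cell$'' is represented by $u = \NorthPole_\BigCell$, which is the first assertion.

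For the second assertion I would fix an interior point $\mu$ of $\BigCell\direction\Cell$ in $\Link\Cell$ and write $\bar\Vertex$, $\bar\NorthPole_\Cell$ for the images of $\Vertex$, $\NorthPole_\Cell$ in $\Link(\BigCell\direction\Cell) = \Link\BigCell$; concretely $\bar\Vertex = [\mu,\Vertex]_\mu$ and, by the first assertion, $\bar\NorthPole_\Cell = [\mu,\NorthPole_\Cell]_\mu = \NorthPole_\BigCell$. Both lie in the join factor $\Link(\BigCell\direction\Cell)$ of the link of $\mu$ in $\Link\Cell$, which decomposes as $\Boundary_\mu(\BigCell\direction\Cell) * \Link(\BigCell\direction\Cell)$, so $d_{\Link\BigCell}(\NorthPole_\BigCell,\bar\Vertex) = \angle_\mu(\NorthPole_\Cell,\Vertex)$. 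In the spherical triangle $\mu,\NorthPole_\Cell,\Vertex$ of $\Link\Cell$ the side $\mu\NorthPole_\Cell$ has length $\pi/2$ (just shown) and the side $\mu\Vertex$ has length in $(0,\pi/2]$ — because $\Vertex$ is adjacent to, but not contained in, $\BigCell\direction\Cell$ and cells have diameter at most $\pi/2$ — so the spherical law of cosines (see \cite[I.2.2]{brihae}) gives
\[
\cos d_{\Link\Cell}(\NorthPole_\Cell,\Vertex) = \sin d_{\Link\Cell}(\mu,\Vertex)\cdot\cos\angle_\mu(\NorthPole_\Cell,\Vertex)\text{ .}
\]
Since $\sin d_{\Link\Cell}(\mu,\Vertex)\ne 0$, the left-hand side vanishes exactly when $\angle_\mu(\NorthPole_\Cell,\Vertex) = \pi/2$; equivalently, $\Vertex$ is equatorial in $\Link\Cell$ (with respect to $\NorthPole_\Cell$) if and only if $\bar\Vertex$ is equatorial in $\Link(\BigCell\direction\Cell)$ (with respect to $\NorthPole_\BigCell$).

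I do not expect a genuine obstacle; the one point deserving care is, in the first assertion, that the canonical identification $\Link\BigCell \cong \Link(\BigCell\direction\Cell)$ is realized apartment by apartment, so that the computation carried out in $\Apartment$ legitimately transports the north pole. That is built into the definition of the identification, but it is the hinge of the argument. (Existence of an apartment containing a prescribed geodesic ray is standard for Euclidean buildings.)
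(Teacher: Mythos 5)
Your treatment of the first assertion is fine and, if anything, more explicit than the paper's (which simply declares it clear). The second part, however, has a genuine gap, and it is exactly at the point you yourself flagged as the hinge of the argument.

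You take $\mu$ to be an \emph{arbitrary} interior point of $\BigCell\direction\Cell$ and assert that $\bar\Vertex = [\mu,\Vertex]_\mu$ lies in the join factor $\Link(\BigCell\direction\Cell)$ of $\Link\mu = \Boundary_\mu(\BigCell\direction\Cell)*\Link(\BigCell\direction\Cell)$, so that $d_{\Link\BigCell}(\NorthPole_\BigCell,\bar\Vertex) = \angle_\mu(\NorthPole_\Cell,\Vertex)$. That is false in general: the direction $[\mu,\Vertex]_\mu$ is perpendicular to $\BigCell\direction\Cell$ --- and only then does it lie in the join factor $\Link(\BigCell\direction\Cell)$ and only then does its angular distance to $\NorthPole_\BigCell$ at $\mu$ equal the link distance --- precisely when $\mu$ is the closest-point projection of $\Vertex$ to $\BigCell\direction\Cell$ (which exists and is unique when $\SpherDistance(\Vertex,\BigCell\direction\Cell) < \pi/2$ by Lemma~\ref{lem:spherical_projection}), or when $\SpherDistance(\Vertex,\BigCell\direction\Cell) = \pi/2$, in which case every interior point works by Observation~\ref{obs:spherical_triangles}~\eqref{item:edges_give_angles}. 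For any other $\mu$ the vertex $\bar\Vertex$ of $\Link(\BigCell\direction\Cell)$ is \emph{not} represented by $[\mu,\Vertex]_\mu$, so your law-of-cosines identity relates $d_{\Link\Cell}(\NorthPole_\Cell,\Vertex)$ to $\angle_\mu(\NorthPole_\Cell,\Vertex)$ but not to the quantity you actually need, $d_{\Link(\BigCell\direction\Cell)}(\NorthPole_\BigCell,\bar\Vertex)$.

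The repair is the choice the paper makes: let $\mu$ be a closest point of $\BigCell\direction\Cell$ to $\Vertex$ (the projection point if $\SpherDistance(\Vertex,\BigCell\direction\Cell) < \pi/2$, otherwise any point). Then $\angle_{\BigCell\direction\Cell}(\Vertex,\NorthPole_\Cell) = \angle_\mu(\Vertex,\NorthPole_\Cell)$ genuinely, and your law-of-cosines computation --- which is fine as spherical trigonometry, and is exactly the content of Observation~\ref{obs:spherical_triangles}~\eqref{item:edge_and_angle_give_edge_and_angle} and \eqref{item:edges_give_angles} that the paper invokes --- finishes the argument. So the route is the right one; you just need to pin $\mu$ down rather than leave it free.
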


\begin{proof}
The first statement is clear. For the second note that $\BigCell \direction \Cell$ is equatorial in $\Link \Cell$, i.e.\ that $\SpherDistance(\NorthPole_\Cell,\BigCell \direction \Cell) = \pi/2$. Let $\SpherPoint$ be a point in $\BigCell \direction \Cell$ that is closest to $\Vertex$ (if $\SpherDistance(\Vertex,\BigCell) < \pi/2$ it is the unique projection point, otherwise any point). Then $\angle_{\BigCell \direction \Cell}(\Vertex,\NorthPole_\Cell) = \angle_\SpherPoint(\Vertex,\NorthPole_\Cell)$. Now Observation~\ref{obs:spherical_triangles}~\eqref{item:edge_and_angle_give_edge_and_angle} and \eqref{item:edges_give_angles} imply that $\Vertex$ is equatorial if and only if $\angle_\SpherPoint(\Vertex,\NorthPole_\Cell) = \pi/2$.
\end{proof}

\begin{rem}
It is helpful for the understanding of the proof above to first think about the case where $\Cell$ is a facet (face of codimension $1$) of $\BigCell$, so that $\BigCell \direction \Cell$ is a vertex. Similarly in the proof of Proposition~\ref{prop:horizontal_properties}~\eqref{item:transitivity} below, it is helpful to first think of the case where $\Cell$ is a facet of $\Another\Cell$.
\end{rem}

Now we start with the proof of Proposition~\ref{prop:horizontal_properties}. The first statement is trivial.

\begin{proof}[Proof of Proposition~\ref{prop:horizontal_properties}~\eqref{item:join}]
Let $\SpherBuilding$ be the link of $\Cell$ with a fixed north pole. Assume first that $\Cell$ is a facet of $\Another\Cell \vee \Cell$. Let $\overline{\Another\Cell} = (\Cell \vee \Another\Cell) \direction \Cell$ (note that this is a vertex by the assumption we just made), let $\Vertex$ be a non-equatorial vertex adjacent to $\BigCell \direction \Cell$ and let $\AltVertex$ be a vertex of $\BigCell \direction \Cell$ other than $\overline{\Another\Cell}$. Note that $\AltVertex$ corresponds to a vertex of $(\BigCell \vee \Another\Cell) \direction (\Cell \vee \Another\Cell)$ and each vertex of $(\BigCell \vee \Another\Cell) \direction (\Cell \vee \Another\Cell)$ is obtained in this way.

Using the criterion Lemma~\ref{lem:geometric_criterion} and the identification of $\Link \Cell \vee \Another\Cell$ with $\Link \overline{\Another\Cell}$ we know that $\SpherDistance(\Vertex,\AltVertex) = \pi/2$ and we have to show that  $\angle_{\overline{\Another\Cell}}(\Vertex,\AltVertex) = \pi/2$.

The triangle with vertices $\Vertex$, $\AltVertex$, and $\overline{\Another\Cell}$ satisfies $\SpherDistance(\Vertex,\AltVertex) = \pi/2$ and the angle at $\overline{\Another\Cell}$ can be at most $\pi/2$ because we are considering cells in a Coxeter complex. Hence by Observation~\ref{obs:spherical_triangles}~\eqref{item:edge_gives_angle} it has to be precisely $\pi/2$ as desired.

For the general case set $\Another\Cell_0 \defeq \Another\Cell \vee \Cell$ and inductively take $\Another\Cell_{i+1}$ to be a facet of $\Another\Cell_i$ that contains $\Cell$ until $\Another\Cell_n = \Cell$ for some $n$.

By assumption $\BigCell = \BigCell \vee \Another\Cell_n \horizontal \Cell \vee \Another\Cell_n = \Cell$ and the above argument applied to $\Another\Cell_{n-1}$ shows that $\BigCell \vee \Another\Cell_{n-1} \horizontal \Cell \vee \Another\Cell_{n-1}$. Proceeding inductively we eventually obtain $\BigCell \vee \Another\Cell_0 \horizontal \BigCell \vee \Another\Cell_0$ which is what we want.
\end{proof}

\begin{proof}[Proof of Proposition~\ref{prop:horizontal_properties}~\eqref{item:transitivity}]
Let $\SpherBuilding$ be the link of $\Cell$, let $\overline{\Another\Cell} = \Another\Cell \direction \Cell$, let $\Vertex$ be a non-equatorial vertex in $\SpherBuilding$ adjacent to $\overline{\Another\Cell}$ and let $\AltVertex$ be any vertex of $\BigCell \direction \Cell$ not contained in $\overline{\Another\Cell}$.

According to our criterion Lemma~\ref{lem:geometric_criterion} we know that $\SpherDistance(\Vertex,\overline{\Another\Cell}) = \pi/2$ (because $\Another\Cell \horizontal \Cell$) and that $\angle_{\overline{\Another\Cell}}(\Vertex,\AltVertex) = \pi/2$ (because $\BigCell \horizontal \Another\Cell$) and we have to show that $\SpherDistance(\Vertex,\AltVertex) = \pi/2$.

Let $\SpherPoint$ be the projection of $\AltVertex$ to $\overline{\Another\Cell}$ if it exists or otherwise take any point of $\overline{\Another\Cell}$. Then $[\SpherPoint,\AltVertex]$ is perpendicular to $\overline{\Another\Cell}$ by the choice of $\SpherPoint$ and $[\SpherPoint,\Vertex]$ is perpendicular to $\overline{\Another\Cell}$ because $\SpherDistance(\Vertex,\overline{\Another\Cell}) = \pi/2$. Thus $\angle_{\overline{\Another\Cell}}(\Vertex,\AltVertex) = \angle_\SpherPoint(\Vertex,\AltVertex)$.

We consider the triangle with vertices $\Vertex$, $\AltVertex$, and $\SpherPoint$. We know that $\SpherDistance(\Vertex,\SpherPoint) = \pi/2$ and that $\angle_\SpherPoint(\Vertex,\AltVertex) = \pi/2$. From this we deduce that $\SpherDistance(\Vertex,\AltVertex) = \pi/2$ using Observation~\ref{obs:spherical_triangles}~\eqref{item:edge_and_angle_give_edge_and_angle}.
\end{proof}

\begin{proof}[Proof of Proposition~\ref{prop:horizontal_properties}~\eqref{item:non_empty_meet}]
Because we have already proven transitivity of $\horizontal$, it suffices to show that $\Cell_1 \horizontal \Cell_1 \intersect \Cell_2$. We assume first that $\Cell_1 \intersect \Cell_2$ is a facet of both, $\Cell_1$ and $\Cell_2$.

Let $\SpherBuilding$ be the link of $\Cell_1 \intersect \Cell_2$ and let $\overline{\Cell_i} = \Cell_i \direction (\Cell_1 \intersect \Cell_2)$. Note that $\overline{\Cell_1}$ and $\overline{\Cell_2}$ are distinct vertices. Let $\Vertex$ be a non-equatorial vertex in $\SpherBuilding$ adjacent to $\overline{\Cell_1}$.

By the criterion in Lemma~\ref{lem:geometric_criterion} we know that $\angle_{\overline{\Cell_2}}(\Vertex,\overline{\Cell_1}) = \pi/2 = \angle_{\overline{\Cell_1}}(\Vertex,\overline{\Cell_2})$ and we have to show that $\SpherDistance(\Vertex,\overline{\Cell_1}) = \pi/2$. But this follows at once from Observation~\ref{obs:spherical_triangles}~\eqref{item:angles_give_edges}.

%
Now we consider the more general case where $\Cell_1 \intersect \Cell_2$ is a facet of $\Cell_2$ but need not be a facet of $\Cell_1$. Set $\Cell_2^0 \defeq \Cell_1 \vee \Cell_2$ and let $\Cell_2^{i+1}$ be a facet of $\Cell_2^i$ that contains $\Cell_2$ until $\Cell_2^n = \Cell_2$ for some $n$. Also let $\Cell_1^{i+1} = \Cell_1^i \intersect \Cell_2^i$ for $0 \le i \le n-1$. Then $\Cell_1^i \intersect \Cell_2^i$ is a facet of both, $\Cell_1^i$ and $\Cell_2^i$ for $1 \le i \le n$. By assumption $\BigCell \horizontal \Cell_1$ and Proposition~\ref{prop:horizontal_properties}~\eqref{item:join} shows that $\BigCell \horizontal \Cell_2^i$ for $0 \le i \le n$. Thus the argument above implied inductively shows that $\Cell_1 \horizontal \Cell_1 \intersect \Cell_2$.

An analogous induction allows to drop the assumption that $\Cell_1 \intersect \Cell_2$ be a facet of $\Cell_2$.
\end{proof}

\begin{remnum}
\label{remnum:induction_vs_spherical_geometry}
We claimed at the beginning of Section~\ref{sec:horizontal_links} that the entire argument would be carried out in the links. Now the reader has found in the proofs of Proposition~\ref{prop:horizontal_properties}~\eqref{item:join} and \eqref{item:non_empty_meet} inductions that involve cells of the Euclidean building. It is possible to avoid this by investing more spherical geometry. At this point however we favored simplicity over purity.

The alternative proof below is truly carried out in the links.
\end{remnum}

\subsection*{Proof of Proposition \ref{prop:horizontal_properties} using Coxeter diagrams}

In this part we prove Proposition \ref{prop:horizontal_properties} using Coxeter diagrams. We use the criterion Lemma~\ref{lem:diagram_criterion}: Let $\SpherBuilding$ be a spherical building with chosen north pole, let $\Cell \subseteq \SpherBuilding$ be a simplex and let $\Chamber$ be a chamber that contains $\Cell$. Then $\Cell \subseteq \Horizontal\SpherBuilding$ if and only if the following holds: for every vertex $\AltVertex$ of $\Cell$ and every vertex $\Vertex$ of $\Chamber$, if there is a path in $\typ\SpherBuilding$ from $\typ\Vertex$ to $\typ \AltVertex$, then $\Vertex$ is equatorial.

Let $\Busemann$ be a Busemann function on $\Space$, let $\Another\Cell \ge \Cell$ be horizontal cells (with respect to $\Busemann$). Then $\Busemann$ defines a north pole in $\Link \Another\Cell$ as well as in $\Link\Cell$. If $\BigCell$ is a coface of $\Another\Cell$, then $\BigCell \direction \Cell$ is equatorial if and only if $\BigCell$ is horizontal if and only if $\BigCell \direction \Another\Cell$ is equatorial.

In the above situation $\typ \Link \Another\Cell$ can be considered as the sub-diagram obtained from $\typ \Link \Cell$ by removing $\typ \Another\Cell$. What we have just seen is:
\begin{obs}
Let $\Chamber$ be a chamber that contains horizontal cells $\Another\Cell \ge \Cell$. If $\typ \Vertex = \typ \Another\Vertex$ for vertices $\Vertex$ of $\Chamber \direction \Cell$ and $\Another\Vertex$ of $\Chamber \direction \Another\Cell$, then $\Vertex$ is equatorial if and only if $\Another\Vertex$ is.\qed
\end{obs}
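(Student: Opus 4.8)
The plan is to deduce the statement from the fact recorded just before it: for a horizontal cell $\BigCell'$ and a coface $\BigCell$ of it, the simplex $\BigCell \direction \BigCell'$ is equatorial in $\Link \BigCell'$ if and only if $\BigCell$ is horizontal. So first I would unwind the hypothesis $\typ \Vertex = \typ \Another\Vertex$. The type of a vertex of $\Link \Cell$ (resp.\ of $\Link \Another\Cell$) is its type in $\Space$, and under this identification $\typ \Link \Another\Cell$ is the subdiagram of $\typ \Link \Cell$ obtained by deleting $\typ \Another\Cell$; hence the common value $\typ \Vertex = \typ \Another\Vertex$ is a single node $t$ of $\typ \Space$ lying in neither $\typ \Cell$ nor $\typ \Another\Cell$. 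Distinct vertices of a chamber have distinct types, so $\Chamber$ has exactly one vertex $\AltVertex$ of type $t$, and $\AltVertex$ belongs neither to $\Cell$ nor to $\Another\Cell$. By construction $\Vertex$ is precisely the vertex $(\Cell \vee \AltVertex) \direction \Cell$ of the chamber $\Chamber \direction \Cell$, while $\Another\Vertex$ is the vertex $(\Another\Cell \vee \AltVertex) \direction \Another\Cell$ of $\Chamber \direction \Another\Cell$; in both cases it is ``the direction determined by $\AltVertex$''.

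Having pinned down this common witness $\AltVertex$, I would set $\BigCell' \defeq \Cell \vee \AltVertex$ and $\BigCell \defeq \Another\Cell \vee \AltVertex$, faces of $\Chamber$ with $\BigCell' \le \BigCell$; here $\BigCell'$ is a coface of $\Cell$ and $\BigCell$ is a coface of $\Another\Cell$. Since $\BigCell' \direction \Cell$ is the single vertex $\Vertex$, the recorded fact (for the horizontal cell $\Cell$ and its coface $\BigCell'$) says that $\Vertex$ is equatorial if and only if $\BigCell'$ is horizontal. Likewise $\BigCell \direction \Another\Cell$ is the single vertex $\Another\Vertex$, so $\Another\Vertex$ is equatorial if and only if $\BigCell$ is horizontal. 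It remains to see that $\BigCell'$ is horizontal if and only if $\BigCell$ is. One direction is immediate since a face of a horizontal cell is horizontal. For the other, recall that $\Busemann$ is affine on every cell and takes a constant value, say $c$, on the horizontal cell $\Another\Cell$; as $\Cell \le \Another\Cell$, the function $\Busemann$ also takes the value $c$ throughout $\Cell$, so if $\BigCell' = \Cell \vee \AltVertex$ is horizontal then $\Busemann(\AltVertex) = c$. Then $\Busemann$ takes the value $c$ on every vertex of $\BigCell = \Another\Cell \vee \AltVertex$ and, being affine, is constant on $\BigCell$; that is, $\BigCell$ is horizontal. Chaining the three equivalences yields ``$\Vertex$ equatorial $\iff$ $\Another\Vertex$ equatorial''.

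The recorded fact ``$\BigCell' \direction \Cell$ equatorial $\iff$ $\BigCell'$ horizontal'' carries all the geometry here (it rests on Observation~\ref{obs:busemann_angle_criterion} applied to the directions at an interior point of $\Cell$ pointing into $\BigCell'$, together with affineness of $\Busemann$ on the star-shaped cell $\BigCell'$), and it is already available. Thus the only step demanding attention is the type argument of the first paragraph, i.e.\ making precise that $\typ \Vertex = \typ \Another\Vertex$ forces $\Vertex$ and $\Another\Vertex$ to be the directions of one and the same vertex of $\Chamber$, so that a single coface witnesses both equatorialities. This is a mild bookkeeping point rather than a genuine obstacle.
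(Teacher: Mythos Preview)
Your argument is correct and is exactly the unpacking the paper has in mind: the observation is stated with a \qed\ immediately after the sentence ``What we have just seen is:'', so the paper offers no separate proof beyond the preceding fact that for a coface $\BigCell$ of a horizontal cell, $\BigCell \direction \Cell$ is equatorial iff $\BigCell$ is horizontal. Your identification of the common vertex $\AltVertex$ of $\Chamber$ and the reduction to ``$\Busemann(\AltVertex)=c$'' is precisely how that fact yields the observation.
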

So once we have chosen a chamber $\Chamber$ and a non-empty cell $\Cell$, we may think of being horizontal as a property of the nodes of $\typ\Link\Cell$.

We come to the proof of Proposition \ref{prop:horizontal_properties}. The first statement is again trivial.

\begin{proof}[Proof of Proposition~\ref{prop:horizontal_properties}~\eqref{item:join}]
Let $\Chamber$ be a chamber that contains $\BigCell \vee \Another\Cell$. Let $\Vertex$ be a non-equatorial vertex of $\Chamber \direction \Cell \vee \Another\Cell$ and let $\AltVertex$ be a vertex of $(\BigCell \vee \Another\Cell) \direction (\Cell \vee \Another\Cell)$. Note that $\typ (\Link \Cell \vee \Another\Cell)$ is obtained from $\typ \Link\Cell$ by removing $\typ \Another\Cell$.

Assume that there were a path in $\typ \Link \Cell \vee \Another\Cell$ that connects $\typ\Vertex$ to $\typ\AltVertex$. Then this would in particular be a path in $\typ \Link \Cell$ contradicting $\BigCell \horizontal \Cell$ by Lemma~\ref{lem:diagram_criterion}.
\end{proof}

\begin{proof}[Proof of Proposition~\ref{prop:horizontal_properties}~\eqref{item:transitivity}]
Let $\Chamber$ be a chamber that contains $\BigCell$, let $\Vertex$ be a non-equatorial vertex of $\Chamber \direction \Cell$ and let $\AltVertex$ be a vertex of $\BigCell \direction \Cell$. Note that $\typ \Link \Another\Cell$ is obtained from $\typ \Link\Cell$ by removing $\typ \Another\Cell$.

Assume that there were a path in $\typ \Link \Cell$ from $\typ \Vertex$ to $\typ \AltVertex$. Then either this path does not meet $\typ \Another\Cell$, thus lying entirely in $\typ \Link \Another\Cell$ and therefore contradicting $\BigCell \horizontal \Another\Cell$ by Lemma~\ref{lem:diagram_criterion}. Or there would be a first vertex in the path that lies in $\typ \Another\Cell$, say $\typ \Another\AltVertex$.  Then the subpath from $\typ\Vertex$ to $\typ\Another\AltVertex$ would lie in $\typ\Link\Cell$ and thus contradict $\Another\Cell \horizontal \Cell$ by Lemma~\ref{lem:diagram_criterion}.
\end{proof}

\begin{proof}[Proof of Proposition~\ref{prop:horizontal_properties}~\eqref{item:non_empty_meet}]
Let $\Chamber$ be a chamber that contains $\BigCell$, let $\Vertex$ be a non-equatorial vertex of $\Chamber \direction (\Cell_1 \intersect \Cell_2)$ and let $\AltVertex$ be a vertex of $\BigCell \direction (\Cell_1 \intersect \Cell_2)$. Again $\typ \Link \Cell_i$ is obtained from $\typ \Link (\Cell_1 \intersect \Cell_2)$ by removing $\typ \Cell_i$. Note also that $\typ (\Cell_1 \direction (\Cell_1 \intersect \Cell_2))$ and $\typ (\Cell_2 \direction (\Cell_1 \intersect \Cell_2))$ are disjoint.

Assume that there were a path in $\typ \Link (\Cell_1 \intersect \Cell_2)$ from $\typ \Vertex$ to $\typ \AltVertex$. Let $\typ \Another\AltVertex$ be the first vertex in $\typ (\Cell_1 \direction (\Cell_1 \intersect \Cell_2))\union \typ (\Cell_2 \direction (\Cell_1 \intersect \Cell_2)))$ that the path meets and assume without loss of generality that $\typ \Another\AltVertex \in \typ (\Cell_1 \direction (\Cell_1 \intersect \Cell_2))$. Then the subpath from $\typ \Vertex$ to $\typ \Another\AltVertex$ lies entirely in $\typ \Link \Cell_2$ contradicting $\BigCell \horizontal \Cell_2$ by Lemma~\ref{lem:diagram_criterion}.
\end{proof}

\section{Zonotopes}
\label{sec:zonotopes}

In this section we reproduce and extend the contents of \cite[Section 2]{buxgrawit10}. Let $\EuclSpace$ be a Euclidean vector space with scalar product $\scaprod{\cdot}{\cdot}$ and metric $\EuclDistance$. The relative interior $\relint \Face$ of a polyhedron $\Face$ in $\EuclSpace$ is the interior of $\Face$ in its affine span. It is obtained from $\Face$ by removing all proper faces.

Let $\Zonotope \subseteq \EuclSpace$ be a convex polytope. We denote by $\Proj[\Zonotope]$ the closest point-projection, i.e.\ $\Proj[\Zonotope] \Point = \AltPoint$ if $y$ is the point in $\Zonotope$ closest to $\Point$. The \emph{normal cone} of a non-empty face $\Face$ of $\Zonotope$ is the set
\[
\NormalCone\Face \defeq \{\Vector \in \EuclSpace \mid \scaprod{\Vector}{\Point} = \max_{\AltPoint \in \Zonotope} \scaprod{\Vector}{\AltPoint} \text{ for every }\Point \in \Face \} \text{ .}
\]
The significance of this notion for us is:
\begin{obs}
\label{obs:space_decomposition}
The space $\EuclSpace$ decomposes as a disjoint union
\[
\EuclSpace = \Union_{\emptyset \ne \Face \le \Zonotope} \relint \Face + \NormalCone\Face
\]
with $(\Face - \Face) \intersect(\NormalCone\Face - \NormalCone\Face) = \{0\}$
and if $\Point$ is written in the unique way as $\FaceVector + \NormalVector$ according to this decomposition, then $\Proj[\Zonotope] \Point = \FaceVector$.\qed
\end{obs}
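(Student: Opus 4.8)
The plan is to prove the decomposition claimed in Observation~\ref{obs:space_decomposition} by analyzing the closest-point projection $\Proj[\Zonotope]$ cell by cell according to the face structure of $\Zonotope$. The starting point is the standard characterization of the metric projection onto a convex set: for $\Point \in \EuclSpace$ and $\AltPoint \in \Zonotope$, one has $\Proj[\Zonotope]\Point = \AltPoint$ if and only if $\scaprod{\Point - \AltPoint}{\Point' - \AltPoint} \le 0$ for every $\Point' \in \Zonotope$. First I would fix $\Point \in \EuclSpace$, set $\AltPoint \defeq \Proj[\Zonotope]\Point$, and let $\Face$ be the unique face of $\Zonotope$ with $\AltPoint \in \relint\Face$ (uniqueness of $\Face$ being the elementary fact that the relative interiors of the faces partition $\Zonotope$). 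The claim will then be that $\Point - \AltPoint \in \NormalCone\Face$ and that $\AltPoint$ is determined as the $\Face$-component of $\Point$.

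The key step is to show $\Vector \defeq \Point - \AltPoint \in \NormalCone\Face$, i.e.\ that $\scaprod{\Vector}{\Point'}$ is maximized over $\Zonotope$ at every point of $\Face$. From the projection inequality we have $\scaprod{\Vector}{\Point' - \AltPoint} \le 0$ for all $\Point' \in \Zonotope$, so $\scaprod{\Vector}{\AltPoint} = \max_{\Point' \in \Zonotope}\scaprod{\Vector}{\Point'}$. It remains to upgrade this from the single point $\AltPoint$ to all of $\Face$: since $\AltPoint \in \relint\Face$, for any $\ConvexSetPoint \in \Face$ the segment from $\AltPoint$ slightly \emph{past} $\AltPoint$ away from $\ConvexSetPoint$ still lies in $\Face \subseteq \Zonotope$, so plugging $\Point' = \AltPoint + \varepsilon(\AltPoint - \ConvexSetPoint)$ into the inequality for small $\varepsilon > 0$ forces $\scaprod{\Vector}{\AltPoint - \ConvexSetPoint} \le 0$; combined with $\scaprod{\Vector}{\ConvexSetPoint} \le \scaprod{\Vector}{\AltPoint}$ this yields $\scaprod{\Vector}{\ConvexSetPoint} = \scaprod{\Vector}{\AltPoint}$ for every vertex, hence every point, of $\Face$. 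Thus $\Vector \in \NormalCone\Face$ and $\Point = \AltPoint + \Vector$ with $\AltPoint \in \relint\Face$, $\Vector \in \NormalCone\Face$, which establishes that $\EuclSpace$ is covered by the sets $\relint\Face + \NormalCone\Face$.

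Next I would check disjointness and the direction-space condition. For a fixed face $\Face$, suppose $\FaceVector_1 + \NormalVector_1 = \FaceVector_2 + \NormalVector_2$ with $\FaceVector_j \in \relint\Face$ and $\NormalVector_j \in \NormalCone\Face$; then $\FaceVector_1 - \FaceVector_2 = \NormalVector_2 - \NormalVector_1$ lies in $(\Face - \Face) \intersect (\NormalCone\Face - \NormalCone\Face)$, so it suffices to prove this intersection is $\{0\}$. If $\Vector = \NormalVector_2 - \NormalVector_1 \in \NormalCone\Face - \NormalCone\Face$ also equals $\ConvexSetPoint_1 - \ConvexSetPoint_2$ with $\ConvexSetPoint_j \in \Face$, then using that both $\NormalVector_1,\NormalVector_2$ are constant (and equal to their common maximum) on $\Face$ gives $\scaprod{\NormalVector_j}{\ConvexSetPoint_1} = \scaprod{\NormalVector_j}{\ConvexSetPoint_2}$, hence $\scaprod{\Vector}{\Vector} = \scaprod{\NormalVector_2 - \NormalVector_1}{\ConvexSetPoint_1 - \ConvexSetPoint_2} = 0$, so $\Vector = 0$. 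To separate \emph{different} faces $\Face \ne \Face'$, note that if $\Point \in (\relint\Face + \NormalCone\Face) \intersect (\relint\Face' + \NormalCone\Face')$, then by the covering argument above the point $\Proj[\Zonotope]\Point$ lies simultaneously in $\relint\Face$ and $\relint\Face'$ — indeed, one shows $\Proj[\Zonotope](\FaceVector + \NormalVector) = \FaceVector$ whenever $\FaceVector \in \relint\Face$, $\NormalVector \in \NormalCone\Face$, by verifying the projection inequality directly using $\scaprod{\NormalVector}{\Point' - \FaceVector} \le 0$ for $\Point' \in \Zonotope$ — contradicting uniqueness of the face whose relative interior contains a given point. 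That same computation, $\Proj[\Zonotope](\FaceVector + \NormalVector) = \FaceVector$, is exactly the final assertion of the Observation. The main obstacle, such as it is, is the relative-interior argument in the second paragraph: one must be careful that moving \emph{past} $\AltPoint$ stays in $\Face$, which is precisely where $\AltPoint \in \relint\Face$ (rather than merely $\AltPoint \in \Face$) is used, and this is what pins down $\Face$ as the \emph{normal} face rather than some larger face containing $\AltPoint$ on its boundary.
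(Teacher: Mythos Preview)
Your proof is correct. The paper gives no proof at all: the observation is stated and immediately closed with a \qed, treating it as a standard fact from convex geometry (indeed the paper refers back to \cite[Section~2]{buxgrawit10} for this material). Your argument supplies exactly the details one would expect---the variational characterization of the metric projection, the relative-interior trick to pass from ``$\scaprod{\Vector}{\cdot}$ is maximized at $\AltPoint$'' to ``maximized on all of $\Face$'', and the orthogonality $\scaprod{\NormalCone\Face - \NormalCone\Face}{\Face - \Face} = 0$ for uniqueness within a stratum---and they are all sound. One small remark: in your disjointness step for different faces you first assert the covering and only afterwards verify $\Proj[\Zonotope](\FaceVector + \NormalVector) = \FaceVector$; logically it is cleaner to prove that identity first (it needs only $\FaceVector \in \Face$, not $\FaceVector \in \relint\Face$) and then both the covering direction and the disjointness of different strata follow immediately from uniqueness of $\Proj[\Zonotope]\Point$ and of the carrier face of $\Proj[\Zonotope]\Point$.
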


\begin{figure}[!ht]
\begin{center}
\includegraphics{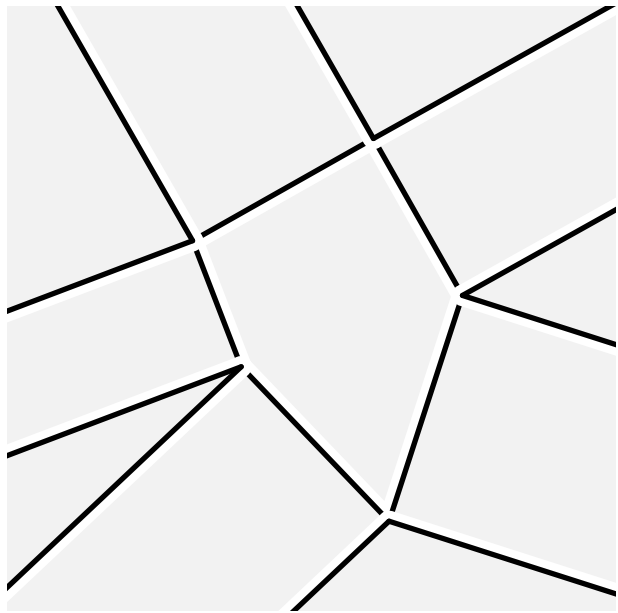}
\end{center}
\caption{The decomposition given by Observation~\ref{obs:space_decomposition}: The shaded regions are the classes of the partition. The white strips are to emphasize to which class the boundary points (black) belong.}
\label{fig:projection_decomposition}
\end{figure}

We are interested in the situation where $\Zonotope$ is a zonotope.
For our purpose a \emph{zonotope} is described by a finite set $\Directions \subseteq \EuclSpace$ and defined to be
\[
\Zonotope[\Directions] = \sum_{\Direction \in \Directions} [0,\Direction]
\]
where the sum is the Minkowski sum ($\ConvexSet_1 + \ConvexSet_2 = \{\Vector_1 + \Vector_2 \mid \Vector_1 \in \ConvexSet_1, \Vector_2 \in \ConvexSet_2\}$). The faces of zonotopes are themselves translates of zonotopes, they have the following nice description (\cite[Lemma~2.3]{buxgrawit10}):

\begin{lem}
If $\Face$ is a face of $\Zonotope[\Directions]$ and $\Vector \in \relint \NormalCone\Face$, then
\[
\Face = \Zonotope[\Directions_\Vector] + \sum_{\substack{\Direction \in \Directions \\ \scaprod{\Vector}{\Direction} > 0}} \Direction \text{ .}
\]
where $\Directions_\Vector \defeq \{\Direction \in \Directions \mid \scaprod{\Vector}{\Direction} = 0\}$.
\end{lem}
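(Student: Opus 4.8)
The statement is purely combinatorial-geometric: it describes the face $\Face$ of the zonotope $\Zonotope[\Directions]$ corresponding to a given vector $\Vector$ in the relative interior of its normal cone. I would prove it by the standard linear-functional characterization of faces of a polytope together with the Minkowski-sum description of $\Zonotope[\Directions]$. The key identity to exploit is that the face of a Minkowski sum in a given normal direction is the Minkowski sum of the faces of the summands in that direction; applied to $\Zonotope[\Directions] = \sum_{\Direction \in \Directions}[0,\Direction]$ this reduces the whole computation to the trivial case of a single segment $[0,\Direction]$.

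\textbf{Step 1: reduce to segments.} First I would recall (or prove in one line) that for convex polytopes $\ConvexSet_1,\ConvexSet_2$ and any $\Vector$,
\[
\operatorname{face}_{\Vector}(\ConvexSet_1 + \ConvexSet_2) = \operatorname{face}_{\Vector}(\ConvexSet_1) + \operatorname{face}_{\Vector}(\ConvexSet_2),
\]
where $\operatorname{face}_\Vector(\ConvexSet) = \{\Point \in \ConvexSet \mid \scaprod{\Vector}{\Point} = \max_{\AltPoint \in \ConvexSet}\scaprod{\Vector}{\AltPoint}\}$; this follows because $\max_{\Vector}$ over a sum is the sum of the maxima, and a point of $\ConvexSet_1+\ConvexSet_2$ attains the sum-maximum if and only if each of its summands attains its own maximum. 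Iterating over the finite set $\Directions$ gives $\operatorname{face}_{\Vector}(\Zonotope[\Directions]) = \sum_{\Direction \in \Directions}\operatorname{face}_{\Vector}([0,\Direction])$.

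\textbf{Step 2: evaluate on each segment and assemble.} For a single segment $[0,\Direction]$ the functional $\scaprod{\Vector}{\cdot}$ is maximized at $\Direction$ if $\scaprod{\Vector}{\Direction} > 0$, at $0$ if $\scaprod{\Vector}{\Direction} < 0$, and on all of $[0,\Direction]$ if $\scaprod{\Vector}{\Direction} = 0$. Splitting $\Directions$ into the three sets $\{\scaprod{\Vector}{\Direction}>0\}$, $\{\scaprod{\Vector}{\Direction}<0\}$, $\Directions_\Vector = \{\scaprod{\Vector}{\Direction}=0\}$ and summing, the contributions from the negative set are all $\{0\}$, the positive set contributes the fixed vector $\sum_{\scaprod{\Vector}{\Direction}>0}\Direction$, and the zero set contributes $\sum_{\Direction \in \Directions_\Vector}[0,\Direction] = \Zonotope[\Directions_\Vector]$, giving exactly $\operatorname{face}_{\Vector}(\Zonotope[\Directions]) = \Zonotope[\Directions_\Vector] + \sum_{\scaprod{\Vector}{\Direction}>0}\Direction$. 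Finally I would note that, by definition of the normal cone, $\operatorname{face}_{\Vector}(\Zonotope[\Directions]) = \Face$ precisely when $\Vector \in \NormalCone\Face$, and taking $\Vector$ in the \emph{relative interior} of $\NormalCone\Face$ guarantees that the partition of $\Directions$ by the sign of $\scaprod{\Vector}{\Direction}$ is exactly the one determined by $\Face$ (it is independent of the choice of $\Vector$ within $\relint\NormalCone\Face$); this is the point where relative-interiority is actually used, and is the only part requiring a small argument, namely that for $\Vector$ on a proper face of $\NormalCone\Face$ the set $\Directions_\Vector$ could be strictly larger, whereas on the relative interior it is constant. I do not expect any serious obstacle here — the mild care needed is just in justifying the independence-of-$\Vector$ claim, which one can see from the facet structure of the normal fan of a zonotope, or simply cite \cite[Lemma~2.3]{buxgrawit10} since this is stated to be a reproduction of that result.
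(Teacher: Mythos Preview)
Your proposal is correct and follows the standard argument for describing faces of a zonotope via the Minkowski-sum-of-faces identity. The paper itself does not give a proof of this lemma at all: it simply cites \cite[Lemma~2.3]{buxgrawit10}, so there is nothing to compare against beyond noting that your sketch supplies exactly the kind of elementary argument one would expect behind that citation.
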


It is a basic fact from linear optimization that the relative interiors of normal cones of non-empty faces of $\Zonotope$ partition $\EuclSpace$, so for every non-empty face $\Face$ such a vector $\Vector$ exists and vice versa.

The zonotopes we will be concerned with have the following interesting property:

\begin{prop}
\label{prop:zonotope_contains_parallel_translate}
Let let $\Cell$ be a polytope and let $\Directions$ be a finite set of vectors that has the property that $\AltVertex - \Vertex \in \Directions$ for any two vertices $\Vertex, \AltVertex$ of $\Cell$. For a vertex $\Vertex$ of $\Cell$ let $E_\Vertex$ be the set of vectors $\AltVertex - \Vertex$ for vertices $\AltVertex \ne \Vertex$ of $\Cell$. Then for every point $\Point$ of $\Zonotope[\Directions]$ there is a vertex $\Vertex$ of $\Cell$ such that $\Point + \Zonotope[E_v] \subseteq \Zonotope[\Directions]$. In particular $\Zonotope[\Directions]$ contains a parallel translate of $\Cell$ that contains $\Point$.
\end{prop}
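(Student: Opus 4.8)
Let $\Cell$ be a polytope and $\Directions$ a finite set of vectors such that $\AltVertex - \Vertex \in \Directions$ for any two vertices $\Vertex,\AltVertex$ of $\Cell$. For a vertex $\Vertex$ of $\Cell$ write $E_\Vertex = \{\AltVertex - \Vertex \mid \AltVertex \text{ a vertex of }\Cell,\ \AltVertex \ne \Vertex\}$. Then for every $\Point \in \Zonotope[\Directions]$ there is a vertex $\Vertex$ of $\Cell$ with $\Point + \Zonotope[E_\Vertex] \subseteq \Zonotope[\Directions]$; in particular $\Zonotope[\Directions]$ contains a parallel translate of $\Cell$ through $\Point$.

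**Approach.** The plan is to use the decomposition of $\EuclSpace$ from Observation~\ref{obs:space_decomposition} to locate $\Point$ inside a face of $\Zonotope[\Directions]$, and then produce the desired vertex $\Vertex$ by a direction-counting argument on $\Directions$. Concretely, pick a vector $\Vector \in \EuclSpace$ with $\Point \in \relint \Face + \NormalCone\Face$ where $\Face$ is the face of $\Zonotope[\Directions]$ with $\Vector \in \relint\NormalCone\Face$; so $\Point = \FaceVector + \NormalVector$ with $\FaceVector \in \relint\Face$ and $\NormalVector \in \NormalCone\Face$, and $\FaceVector = \Proj[\Zonotope[\Directions]]\Point$. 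Using the face description from the lemma preceding the statement, $\Face = \Zonotope[\Directions_\Vector] + \sum_{\Direction\in\Directions,\ \scaprod{\Vector}{\Direction}>0}\Direction$. Now restrict $\Vector$ to the vertex set of $\Cell$: since $\Cell$ is a polytope, the linear functional $\scaprod{\Vector}{\cdot}$ attains its maximum over the vertices of $\Cell$ at some vertex $\Vertex$ (choosing $\Vertex$ to be a maximizer, and among maximizers arbitrary if there are ties). Then for every other vertex $\AltVertex$ of $\Cell$ the direction $\Direction = \AltVertex - \Vertex \in \Directions$ satisfies $\scaprod{\Vector}{\Direction} = \scaprod{\Vector}{\AltVertex} - \scaprod{\Vector}{\Vertex} \le 0$, i.e.\ $E_\Vertex \subseteq \{\Direction\in\Directions \mid \scaprod{\Vector}{\Direction}\le 0\}$.

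**Key steps.** (1) Write $\Zonotope[\Directions] = \Zonotope[\Directions^0_\Vector] + \Zonotope[\Directions^{>}_\Vector] + \Zonotope[\Directions^{<}_\Vector]$ where $\Directions^0_\Vector,\Directions^{>}_\Vector,\Directions^{<}_\Vector$ are the directions on, strictly above, and strictly below the hyperplane $\scaprod{\Vector}{\cdot}=0$; this is just rearranging the Minkowski sum. Observe $\Zonotope[\Directions^{>}_\Vector] = \sum_{\scaprod{\Vector}{\Direction}>0}[0,\Direction]$ has $\sum_{\scaprod{\Vector}{\Direction}>0}\Direction$ as a vertex and $0$ as the opposite vertex, and similarly for $\Zonotope[\Directions^{<}_\Vector]$ with roles of $0$ and the vertex-sum reversed. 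Comparing with the face formula, $\Face = \Zonotope[\Directions^0_\Vector] + (\text{the top vertex of }\Zonotope[\Directions^{>}_\Vector]) + (\text{the bottom vertex }0\text{ of }\Zonotope[\Directions^{<}_\Vector])$. (2) For $\FaceVector \in \relint\Face$, every direction $\Direction$ with $\scaprod{\Vector}{\Direction}\le 0$ can be ``pushed into'' $\Zonotope[\Directions]$ from $\FaceVector$: adding $[0,\Direction]$ stays inside $\Zonotope[\Directions]$ because either $\Direction\in\Directions^0_\Vector$ and the $\relint\Zonotope[\Directions^0_\Vector]$-coordinate has room in all directions of $\Directions^0_\Vector$, or $\Direction\in\Directions^{<}_\Vector$ and the $\Zonotope[\Directions^{<}_\Vector]$-coordinate of $\FaceVector$ is at the $0$-vertex, leaving the whole segment $[0,\Direction]$ available. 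Since a Minkowski sum of segments is taken coordinatewise, we get $\FaceVector + \Zonotope[\{\Direction\}] \subseteq \Zonotope[\Directions]$ for each such $\Direction$ individually, hence $\FaceVector + \Zonotope[E_\Vertex] \subseteq \Zonotope[\Directions]$ by adding the segments one at a time (each step independently remains within the box $\Zonotope[\Directions^0_\Vector] + \Zonotope[\Directions^{<}_\Vector]$ in the relevant coordinates). (3) Finally handle $\Point = \FaceVector + \NormalVector$: since $\NormalVector \in \NormalCone\Face$ and $(\Face-\Face)\cap(\NormalCone\Face-\NormalCone\Face)=\{0\}$, translating by $\NormalVector$ moves $\FaceVector + \Zonotope[E_\Vertex]$ (which lies in the affine hull of $\Face$, as all of $E_\Vertex\subseteq\Directions^0_\Vector\cup\Directions^{<}_\Vector$ are parallel to $\Face$) within the slab $\relint\Face + \NormalCone\Face \subseteq \Zonotope[\Directions]$; so $\Point + \Zonotope[E_\Vertex] \subseteq \Zonotope[\Directions]$. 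The ``in particular'' follows since $\conv(E_\Vertex \cup \{0\})$ — a face of $\Zonotope[E_\Vertex]$ or at least a subset of it — is exactly $\Cell - \Vertex$, so $\Point + (\Cell - \Vertex) \subseteq \Point + \Zonotope[E_\Vertex]\subseteq \Zonotope[\Directions]$ is a parallel translate of $\Cell$ containing $\Point$.

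**Main obstacle.** The delicate point is step (2)–(3): being careful that the segments in $E_\Vertex$ really can all be added simultaneously starting from $\FaceVector$, and that the translation by $\NormalVector$ does not leave $\Zonotope[\Directions]$. The cleanest way to phrase this rigorously is probably to drop the use of $\FaceVector$ altogether and argue directly: for the chosen $\Vector$ and $\Vertex$, decompose $\Point\in\Zonotope[\Directions]$ along $\Zonotope[\Directions^0_\Vector]+\Zonotope[\Directions^{>}_\Vector]+\Zonotope[\Directions^{<}_\Vector]$ as $\Point = \Point^0 + \Point^> + \Point^<$; the fact that $\FaceVector=\Proj[\Zonotope[\Directions]]\Point\in\relint\Face$ forces $\Point^> $ to be the top vertex and $\Point^<$ the bottom vertex $0$ of their respective zonotopes (this is exactly the content of Observation~\ref{obs:space_decomposition} combined with the face lemma). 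Then $\Point^<=0$ and the freedom in $\Point^0$ give room for all of $E_\Vertex \subseteq \Directions^0_\Vector \cup \Directions^{<}_\Vector$, and one checks $\Point + \Zonotope[E_\Vertex]\subseteq \Point^0 + \Zonotope[\Directions^0_\Vector] + \Point^> + \Zonotope[\Directions^{<}_\Vector] \subseteq \Zonotope[\Directions]$ directly. I expect that once this bookkeeping is set up, everything else is routine; I would double-check the edge case where $\scaprod{\Vector}{\cdot}$ is constant on the vertices of $\Cell$ (then $E_\Vertex\subseteq\Directions^0_\Vector$ and $\Face$ itself is a translate of $\Zonotope[E_\Vertex]$-containing zonotope, which is even easier).
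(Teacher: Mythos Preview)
Your argument has a genuine gap in the handling of the directions perpendicular to your chosen functional $\Vector$. Concretely: since $\Point \in \Zonotope[\Directions]$ by hypothesis, the projection $\FaceVector = \Proj[{\Zonotope[\Directions]}]\Point$ equals $\Point$ and your normal component $\NormalVector$ is zero, so $\Point \in \relint \Face$ for some face $\Face$, and $\Vector \in \relint \NormalCone\Face$. In your decomposition $\Point = \Point^0 + \Point^> + \Point^<$ with $\Point^< = 0$ and $\Point^> = \sum_{\scaprod{\Vector}{\Direction}>0}\Direction$, you still need $\Point^0 + \Zonotope[E_\Vertex \cap \Directions^0_\Vector] \subseteq \Zonotope[\Directions^0_\Vector]$. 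Being in the relative interior of $\Zonotope[\Directions^0_\Vector]$ does not give this: for instance if $\Directions^0_\Vector = \{e,-e\}$ and $E_\Vertex \cap \Directions^0_\Vector = \{e\}$, then $\Zonotope[\Directions^0_\Vector] = [-e,e]$ and $\Point^0 = \tfrac12 e$ gives $\Point^0 + [0,e] \not\subseteq [-e,e]$. The extreme case is when $\Point$ lies in the interior of $\Zonotope[\Directions]$: then $\Face = \Zonotope[\Directions]$, $\NormalCone\Face = (\operatorname{span}\Directions)^\perp$, so $\Vector = 0$ (if $\Directions$ spans), every vertex of $\Cell$ maximizes $\scaprod{\Vector}{\cdot}$, and your choice of $\Vertex$ is entirely unconstrained. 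This is precisely the case you dismissed as ``even easier'', but it is the heart of the problem.

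Your approach could be repaired by induction on $\lvert\Directions\rvert$ (the $\Directions^0_\Vector$-part is the same statement for the face of $\Cell$ where $\scaprod{\Vector}{\cdot}$ is maximal, with $\Directions^0_\Vector$ in place of $\Directions$), but you would still need a separate argument for interior points, where $\Vector=0$ gives no reduction. The paper's proof avoids all of this with a direct combinatorial argument: write $\Point = \sum_{\Direction \in \Directions} \alpha_\Direction \Direction$ with $\alpha_\Direction \in [0,1]$, view the $\alpha_{\AltVertex-\Vertex}$ as weights on the edges of the complete directed graph on the vertices of $\Cell$, and repeatedly subtract the minimum weight around any positively-weighted directed cycle (which leaves $\Point$ unchanged). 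When no positive cycles remain, some vertex $\Vertex$ has all outgoing weights zero, i.e.\ $\alpha_\Direction = 0$ for every $\Direction \in E_\Vertex$, whence $\Point + \Zonotope[E_\Vertex] \subseteq \Zonotope[\Directions]$ immediately.
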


\begin{proof}
It is not hard to see that we may assume $\Directions$ to contain precisely the vectors $\AltVertex - \Vertex$ with $\Vertex, \AltVertex$ vertices of $\Cell$ and we do so. Write
\begin{equation}
\label{eq:point_sum}
\Point = \sum_{z \in D} \alpha_z z
\end{equation}
with $0 \le \alpha_z \le 1$. We consider the complete directed graph whose vertices are the vertices of $\Cell$ and label the edge from $v$ to $w$ by $\alpha_{w-v}$.

If there is a cycle in this graph, all edges of which have a strictly positive label, then we may subtract the minimum of the labels from all $\alpha_{w-v}$ for which the cycle contains the edge from $v$ to $w$ and \eqref{eq:point_sum} remains true. Moreover at least one edge in the cycle is now labelled by $0$. Note that this is possible even if two edges are parallel, i.e.\ $w - v = w' - v'$. Iterating this procedure we eventually obtain a graph that does not contain any cycles with strictly positive labels. In particular there is a vertex whose outgoing edges are all labelled by $0$ because there are only finitely many vertices.

Let $v$ be such a vertex. Then $\alpha_z = 0$ for $z \in E_v$. Thus $\Point = \sum_{z \in D \setminus E_v} \alpha_{z} z$ and $\Point + \Zonotope(E_v) \subseteq \Zonotope(D)$.

For the last statement note that $x \in (x-v) + \Cell \subseteq x + \Zonotope(E_v)$ because $x + \Zonotope(E_v)$ is convex and contains all vertices of $(x-v) + \Cell$.
\end{proof}

We say that a finite set of vectors $\Directions$ is \emph{sufficiently rich} for a polytope $\Cell$ if it satisfies the hypothesis of Proposition~\ref{prop:zonotope_contains_parallel_translate}, i.e.\ for any two distinct vertices $\Vertex$ and $\AltVertex$ of $\Cell$ the vector $\AltVertex - \Vertex$ is in $\Directions$. Trivially if $\Directions$ is sufficiently rich for $\Cell$ then it is sufficiently rich for the convex hull of any set of vertices of $\Cell$. Note that the property in the conclusion of Proposition~\ref{prop:zonotope_contains_parallel_translate} is not hereditary in that way: a square contains for each of its points a parallel translate of itself that contains that point; but it does not contain for every point a parallel translate of a diagonal that contains that point.

\begin{prop}
\label{prop:sufficiently_rich_min_vertex}
If $\Directions$ is sufficiently rich for a polytope $\Cell$ then among the points of $\Cell$ closest to $\Zonotope[\Directions]$ there is a vertex. Moreover the points farthest from $\Zonotope[\Directions]$ form a face of $\Cell$.
\end{prop}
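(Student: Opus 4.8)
The plan is to use Proposition~\ref{prop:zonotope_contains_parallel_translate} together with the convexity of the distance function $\Point \mapsto \EuclDistance(\Point,\Zonotope[\Directions])$. Recall that the distance to a fixed convex set is a convex function on $\EuclSpace$, and in particular its restriction to the polytope $\Cell$ is convex. Therefore the set of points of $\Cell$ farthest from $\Zonotope[\Directions]$ (where this convex function attains its maximum over $\Cell$) is automatically a face of $\Cell$: the maximum of a convex function over a polytope is attained on a face, namely the smallest face containing all maximizers. This immediately gives the second assertion. For the first assertion, I want to show that the minimum of this same convex function over $\Cell$ is attained at a vertex.

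First I would fix a point $\Point_0 \in \Cell$ closest to $\Zonotope[\Directions]$ and let $\AltPoint_0 \defeq \Proj[{\Zonotope[\Directions]}]\Point_0$, so $\EuclDistance(\Point_0,\Zonotope[\Directions]) = \EuclDistance(\Point_0,\AltPoint_0)$. Now apply Proposition~\ref{prop:zonotope_contains_parallel_translate} to the point $\AltPoint_0 \in \Zonotope[\Directions]$: there is a vertex $\Vertex$ of $\Cell$ such that $\AltPoint_0 + \Zonotope[E_\Vertex] \subseteq \Zonotope[\Directions]$, and in fact the parallel translate $(\AltPoint_0 - \Vertex) + \Cell$ is contained in $\Zonotope[\Directions]$ and contains $\AltPoint_0$. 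Consider the translated copy $\Vertex' \defeq \Vertex + (\AltPoint_0 - \Vertex) = \AltPoint_0$ of the vertex $\Vertex$; wait — more usefully, the point of the translate $(\AltPoint_0 - \Vertex) + \Cell$ corresponding to the vertex $\Point_0$ is not what I want. The cleaner move: since $(\AltPoint_0 - \Vertex) + \Cell \subseteq \Zonotope[\Directions]$, the point $(\AltPoint_0 - \Vertex) + \Vertex'' \in \Zonotope[\Directions]$ for every vertex $\Vertex''$ of $\Cell$; choosing $\Vertex'' = \Vertex$ gives $\AltPoint_0 \in \Zonotope[\Directions]$ (which we knew). Instead I would compare $\EuclDistance(\Vertex,\Zonotope[\Directions])$ with $\EuclDistance(\Point_0,\Zonotope[\Directions])$ directly: the translate $(\AltPoint_0 - \Vertex) + \Cell$ lies inside $\Zonotope[\Directions]$ and contains the point $(\AltPoint_0 - \Vertex) + \Vertex = \AltPoint_0$; but it also contains $(\AltPoint_0 - \Vertex) + \Point_0$, and
\[
\EuclDistance\bigl(\Vertex,\, (\AltPoint_0 - \Vertex) + \Point_0\bigr) = \EuclDistance(\Point_0,\AltPoint_0) = \EuclDistance(\Point_0,\Zonotope[\Directions]),
\]
so $\EuclDistance(\Vertex,\Zonotope[\Directions]) \le \EuclDistance(\Point_0,\Zonotope[\Directions])$. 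Since $\Point_0$ was a closest point of $\Cell$, equality holds, and $\Vertex$ is a vertex of $\Cell$ realizing the minimal distance. This proves the first assertion.

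The main obstacle is purely bookkeeping: making sure the translate produced by Proposition~\ref{prop:zonotope_contains_parallel_translate} is the right one and that the vector $\AltPoint_0 - \Vertex$ is applied consistently so that a genuine point of $\Zonotope[\Directions]$ at distance $\EuclDistance(\Point_0,\Zonotope[\Directions])$ from the vertex $\Vertex$ is exhibited. One should be slightly careful that Proposition~\ref{prop:zonotope_contains_parallel_translate} is stated for points of $\Zonotope[\Directions]$ and we are feeding it $\AltPoint_0 = \Proj[{\Zonotope[\Directions]}]\Point_0$, which is indeed in $\Zonotope[\Directions]$, so this is fine. For the farthest-point statement, the only thing to verify is the standard fact that a convex function on a polytope attains its maximum on a face and that the set of all maximizers is exactly a (closed) face — this follows because if $\Point$ is a maximizer lying in the relative interior of a face $\Face$, then by convexity the function is constant on $\Face$, so $\Face$ is contained in the maximizer set, and conversely the maximizer set is the union of such faces, hence a single face (its own carrier). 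No estimates are needed beyond these.
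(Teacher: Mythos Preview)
Both halves of your argument have genuine gaps.

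\textbf{First assertion.} Your distance identity
\[
\EuclDistance\bigl(\Vertex,\,(\AltPoint_0-\Vertex)+\Point_0\bigr)=\EuclDistance(\Point_0,\AltPoint_0)
\]
is false: the left-hand side is $\lvert 2\Vertex-\AltPoint_0-\Point_0\rvert$, not $\lvert\Point_0-\AltPoint_0\rvert$. More seriously, the vertex $\Vertex$ produced by Proposition~\ref{prop:zonotope_contains_parallel_translate} from $\AltPoint_0$ need not be a distance minimizer at all. Take $\Cell=[10,11]$, $\Directions=\{\pm1\}$, so $\Zonotope[\Directions]=[-1,1]$. The unique closest point of $\Cell$ is $\Point_0=10$ with $\AltPoint_0=1$ and distance $9$. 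Applying Proposition~\ref{prop:zonotope_contains_parallel_translate} to $\AltPoint_0=1$ forces $\Vertex=11$ (since $(1-10)+[10,11]=[1,2]\not\subseteq[-1,1]$ but $(1-11)+[10,11]=[0,1]\subseteq[-1,1]$), and $\EuclDistance(11,\Zonotope[\Directions])=10>9$. So your candidate vertex is the \emph{wrong} one. The paper does not try to name the minimizing vertex directly; instead it shows inductively that any minimizer $\Point_0$ can be pushed to the boundary of $\Cell$: the translate $\Another\Cell=(\AltPoint_0-\Vertex)+\Cell\subseteq\Zonotope[\Directions]$ is shifted by $\Point_0-\AltPoint_0$, and every point of $\Cell\cap\bigl((\Point_0-\AltPoint_0)+\Another\Cell\bigr)$ is at distance $\le\lvert\Point_0-\AltPoint_0\rvert$ from $\Zonotope[\Directions]$; this intersection is non-empty (it contains $\Point_0$) and, being the intersection of $\Cell$ with a translate of itself, meets $\partial\Cell$.

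\textbf{Second assertion.} It is \emph{not} true that the maximizer set of an arbitrary convex function on a polytope is a face. On the triangle with vertices $(0,0),(1,0),(0,1)$ the convex function $(x+y-\tfrac12)^2$ attains its maximum exactly on $\{(0,0)\}$ together with the opposite edge, which is a union of faces but not a face. Your final step ``union of such faces, hence a single face (its own carrier)'' is the non-sequitur: the carrier of the maximizer set can strictly contain it. The paper's proof of this half genuinely uses the sufficiently-rich hypothesis via the first assertion: letting $V$ be the set of maximizing vertices, the first assertion applied to $\conv V$ shows that the minimum of $\EuclDistance(\cdot,\Zonotope[\Directions])$ over $\conv V$ is attained at some $v\in V$, hence equals the maximum, so the function is constant on $\conv V$; since $\conv V$ contains a relative interior point of its carrier face, convexity extends constancy to that face.
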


\begin{proof}
Let $\Point \in \Cell$ be a point that minimizes distance to $\Zonotope[\Directions]$. Proceeding inductively it suffices to find a point in a proper face of $\Cell$ that has the same distance. Let $\bar{\Point} = \Proj[{\Zonotope[\Directions]}] \Point$. By Proposition~\ref{prop:zonotope_contains_parallel_translate} $\Zonotope[\Directions]$ contains a translate $\Another\Cell$ of $\Cell$. All points in $\Cell \intersect (\Point - \bar{\Point} + \Another\Cell)$ have the same distance to $\Zonotope[\Directions]$ as $\Point$. And since this set is the non-empty intersection of $\Cell$ with a translate, it contains a boundary point of $\Cell$.

For the second statement note that if $\EuclDistance(\Zonotope[\Directions],\cdot)$ attains its maximum over $\conv V$ in a relatively interior point then it is in fact constant on $\conv V$ by convexity. Now if $V$ is a set of vertices of $\Cell$ on which $\EuclDistance(\Zonotope[\Directions],\cdot)$ is maximal, we apply the first statement to $\conv V$ and see that an element of $V$ is in fact a minimum and thus $\EuclDistance(\Zonotope[\Directions],\cdot)$ is constant on $\conv V$. Since $\conv V$ contains an interior point of the minimal face $\Another\Cell$ of $\Cell$ that contains $V$, this shows that $\EuclDistance(\Zonotope[\Directions],\cdot)$ is constant on $\Another\Cell$.
\end{proof}

We record an elementary fact so that we can later on apply the proposition more easily:

\begin{obs}
\label{obs:sum_sufficiently_rich}
Let $\Cell_1$ and $\Cell_2$ be polytopes and let $\Cell$ be the convex hull of some of the vertices of $\Cell_1 \times \Cell_2$. Define $\pi \colon \EuclSpace \times \EuclSpace \to \EuclSpace$ by $\pi(x,y) = x-y$. If $\Directions_1$ is sufficiently rich for $\Cell_1$ and $\Directions_2$ is sufficiently rich for $\Cell_2$ then
\[
(\Directions_1 + \Directions_2) \union \Directions_1 \union \Directions_2 = ((\Directions_1 \union \{0\}) + (\Directions_2 \union \{0\})) \setminus \{0\}
\]
is sufficiently rich for $\pi(\Cell)$. In particular it is sufficiently rich for $\Cell_1 - \Cell_2$.
\end{obs}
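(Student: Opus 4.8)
The plan is to verify the defining property of ``sufficiently rich'' directly: I must show that for any two distinct vertices $\Point,\AltPoint$ of $\pi(\Cell)$ the difference $\AltPoint-\Point$ lies in the indicated set of directions. First I would dispose of the displayed set equality, which is just the expansion of the Minkowski sum $(\Directions_1\cup\{0\})+(\Directions_2\cup\{0\}) = (\Directions_1+\Directions_2)\cup\Directions_1\cup\Directions_2\cup\{0\}$, the $\setminus\{0\}$ discarding the term coming from $0+0$. Since adjoining or removing the zero vector from a set of directions changes neither the associated zonotope $\Zonotope[\Directions]$ nor the property of being sufficiently rich, it is harmless to work with the form $((\Directions_1\cup\{0\})+(\Directions_2\cup\{0\}))\setminus\{0\}$ throughout.

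Next I would pin down the vertices of $\pi(\Cell)$. Write $\Cell=\conv V$ where $V$ is a set of vertices of $\Cell_1\times\Cell_2$; each element of $V$ is a pair $(\Vertex_1,\Vertex_2)$ with $\Vertex_i$ a vertex of $\Cell_i$. Since $\pi$ is linear, $\pi(\Cell)=\conv\pi(V)$, so the vertex set of $\pi(\Cell)$ is contained in $\pi(V)=\{\Vertex_1-\Vertex_2\mid(\Vertex_1,\Vertex_2)\in V\}$. In particular every vertex of $\pi(\Cell)$ has the form $\Vertex_1-\Vertex_2$ with $\Vertex_1$ a vertex of $\Cell_1$ and $\Vertex_2$ a vertex of $\Cell_2$.

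The heart of the argument is then a short computation. Let $\Point=\Vertex_1-\Vertex_2$ and $\AltPoint=\AltVertex_1-\AltVertex_2$ be distinct vertices of $\pi(\Cell)$, with $\Vertex_i,\AltVertex_i$ vertices of $\Cell_i$. Then
\[
\AltPoint-\Point=(\AltVertex_1-\Vertex_1)+(\Vertex_2-\AltVertex_2)\text{ .}
\]
Because $\Directions_1$ is sufficiently rich for $\Cell_1$, the vector $\AltVertex_1-\Vertex_1$ lies in $\Directions_1$ if $\AltVertex_1\ne\Vertex_1$ and equals $0$ otherwise; in either case it lies in $\Directions_1\cup\{0\}$, and symmetrically $\Vertex_2-\AltVertex_2\in\Directions_2\cup\{0\}$. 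Hence $\AltPoint-\Point\in(\Directions_1\cup\{0\})+(\Directions_2\cup\{0\})$, and $\AltPoint-\Point\ne 0$ since $\Point\ne\AltPoint$; so $\AltPoint-\Point$ lies in $((\Directions_1\cup\{0\})+(\Directions_2\cup\{0\}))\setminus\{0\}$, which is exactly what ``sufficiently rich for $\pi(\Cell)$'' demands. The final ``in particular'' then follows by applying this with $\Cell$ taken to be all of $\Cell_1\times\Cell_2$ (the convex hull of all its vertices), noting that $\pi(\Cell_1\times\Cell_2)=\Cell_1-\Cell_2$.

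I do not expect any genuine obstacle here; the argument is purely formal. The only place that needs a word of care is the bookkeeping around the zero vector — whether or not $0$ happens to lie in $\Directions_1$, $\Directions_2$, or $\Directions_1+\Directions_2$ — and, as observed in the first paragraph, this never affects either the zonotope or the ``sufficiently rich'' property, so it can be dealt with in a single sentence.
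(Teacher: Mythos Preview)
Your proposal is correct and follows essentially the same approach as the paper: observe that every vertex of $\pi(\Cell)$ has the form $\Vertex_1-\Vertex_2$ with $\Vertex_i$ a vertex of $\Cell_i$, then write the difference of two such vertices as $(\AltVertex_1-\Vertex_1)+(\Vertex_2-\AltVertex_2)$ and use sufficient richness on each summand. Your version is slightly more expansive about the set equality and the $\{0\}$ bookkeeping, but the argument is the same.
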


\begin{proof}
Every vertex of $\pi(\Cell)$ is of the form $\Vertex_1 - \Vertex_2$ for vertices $\Vertex_i$ of $\Cell_i$. So if $\Vertex$ and $\AltVertex$ are distinct vertices of $\Cell$, then $\Vertex - \AltVertex = (\Vertex_1 - \AltVertex_1) + (\AltVertex_2 - \Vertex_2)$ where $\Vertex_i$ and $\AltVertex_i$ may or may not be distinct. If they are distinct for $i=1,2$, then $\Vertex - \AltVertex \in \Directions_1 + \Directions_2$. If $\AltVertex_i = \Vertex_i$ for some $i$, then $\Vertex - \AltVertex \in \Directions_{3-i}$.
\end{proof}

Now let $\Weyl$ be a finite reflection group of $\EuclSpace$ and assume that $\Zonotope$ is a $\Weyl$-invariant polytope (for example $\Zonotope[\Directions]$ for a $\Weyl$-invariant set $\Directions$). Then:

\begin{lem}
\label{lem:n_f_lemma}
Let $\Vector \in \EuclSpace$ be arbitrary and let $\NormalVector = \Vector - \Proj[\Zonotope](\Vector)$. Every $\Weyl$-chamber that contains $\Vector$ also contains $\NormalVector$.
\end{lem}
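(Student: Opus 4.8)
The plan is to use the $\Weyl$-invariance together with the characterization of the closest-point projection onto a convex set in terms of angles, and the standard fact that a point lies in a given $\Weyl$-chamber precisely when it is fixed or moved ``away'' by all the walls bounding that chamber. Let $\Chamber$ be a $\Weyl$-chamber containing $\Vector$, and write $\Point = \Proj[\Zonotope](\Vector)$, so $\NormalVector = \Vector - \Point$. First I would recall that $\Point$ is characterized by the variational inequality $\scaprod{\Vector - \Point}{\AltPoint - \Point} \le 0$ for all $\AltPoint \in \Zonotope$; equivalently $\scaprod{\NormalVector}{\AltPoint} \le \scaprod{\NormalVector}{\Point}$ for all $\AltPoint \in \Zonotope$, i.e.\ $\NormalVector \in \NormalCone\Face$ where $\Face$ is the face of $\Zonotope$ whose relative interior contains $\Point$ (this is exactly the content of Observation~\ref{obs:space_decomposition}).

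The key step is to show that $\Point$ itself lies in $\Closure\Chamber$. Since $\Zonotope$ is $\Weyl$-invariant, for each reflection $\Weyl$ in $\Weyl$ with wall $\Wall$ bounding $\Chamber$, the reflected point $s(\Point)$ is again in $\Zonotope$, and by the nonexpansiveness of closest-point projections onto convex sets combined with $\Weyl$-invariance, $\EuclDistance(\Vector, s(\Point)) = \EuclDistance(s(\Vector), \Point)$ if $s$ also fixes $\Vector$ — but in general $\Vector$ is only in the \emph{closure} of $\Chamber$. The cleaner route: decompose $\EuclSpace$ along the wall $\Wall$, write $\Vector = \Vector' + t\NormalVector_\Wall$ and $\Point = \Point' + u\NormalVector_\Wall$ where $\NormalVector_\Wall$ is the unit normal to $\Wall$ pointing into $\Chamber$ and $\Vector', \Point'$ lie in $\Wall$ (after translating so that $\Wall$ passes through the origin, which is legitimate since all walls of a finite reflection group pass through a common point, which we take as origin). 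Since $\Vector \in \Closure\Chamber$ we have $t \ge 0$. Because $\Zonotope$ is invariant under the reflection in $\Wall$, the point $s(\Point) = \Point' - u\NormalVector_\Wall$ is in $\Zonotope$, and
\[
\EuclDistance(\Vector, s(\Point))^2 - \EuclDistance(\Vector, \Point)^2 = (t+u)^2 - (t-u)^2 = 4tu \text{ .}
\]
If $u < 0$ this would be negative, contradicting that $\Point$ is the \emph{closest} point of $\Zonotope$ to $\Vector$ (using $t \ge 0$); hence $u \ge 0$, i.e.\ $\Point$ is on the same side of $\Wall$ as $\Chamber$. Running over all walls bounding $\Chamber$ gives $\Point \in \Closure\Chamber$.

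Finally I would deduce that $\NormalVector = \Vector - \Point$ lies in $\Closure\Chamber$ as well. For each bounding wall $\Wall$ with inward normal $\NormalVector_\Wall$, write as above $\scaprod{\NormalVector}{\NormalVector_\Wall} = t - u$. From $t \ge 0$ we cannot immediately conclude, so instead observe: $\NormalVector \in \NormalCone\Face$ where $\Point \in \relint\Face$, and $\NormalCone\Face$ is the outward normal cone. Since $\Zonotope$ is $\Weyl$-invariant and $\Point \in \Closure\Chamber$, the chamber $\Chamber$ meets the normal cone $\NormalCone\Face$ appropriately — concretely, for $\AltPoint \in \Zonotope$ take $\AltPoint = s(\Point)$ with $s$ the reflection in $\Wall$; then $\scaprod{\NormalVector}{s(\Point)} \le \scaprod{\NormalVector}{\Point}$ gives $\scaprod{\NormalVector}{\Point - s(\Point)} \ge 0$, i.e.\ $2u \scaprod{\NormalVector}{\NormalVector_\Wall} \ge 0$; if $u > 0$ this yields $\scaprod{\NormalVector}{\NormalVector_\Wall} \ge 0$. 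In the remaining case $u = 0$, the wall $\Wall$ passes through $\Point$, so $\Face \subseteq \Wall$ and one checks directly from $\Weyl$-invariance that $\NormalCone\Face$ is symmetric under $s$, forcing $\scaprod{\NormalVector}{\NormalVector_\Wall} \ge 0$ there too (or one perturbs $\Vector$ slightly into the open chamber and passes to the limit). Since this holds for every bounding wall of $\Chamber$, we get $\NormalVector \in \Closure\Chamber$, which is the claim.

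\textbf{Main obstacle.} The delicate point is the case $u = 0$, where $\Point$ lies on the wall $\Wall$ and the one-line distance comparison degenerates; here one must use the face structure more carefully (that $\NormalCone\Face$ is itself $s$-invariant when $\Face$ lies in the mirror $\Wall$, which follows from $\Weyl$-invariance of $\Zonotope$) or argue by a limiting/density argument moving $\Vector$ into the open chamber. I would present the generic argument first and then handle this boundary case, possibly by reducing to it via continuity of $\Point \mapsto \Proj[\Zonotope]$ and of the chamber-membership conditions.
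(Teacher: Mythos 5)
Your approach is genuinely different from the paper's: the paper simply cites Lemma~2.1 of \cite{buxgrawit10} (which asserts that no wall separates $\FaceVector$ and $\NormalVector$ when $\Vector = \FaceVector + \NormalVector$ is the decomposition of Observation~\ref{obs:space_decomposition}) and then runs a short contradiction argument, whereas you reprove everything from scratch wall by wall. Your wall-by-wall computation buys self-containment; the paper's proof buys brevity at the cost of an external dependency. Both are legitimate, and your reduction to two elementary inequalities per wall is clean.

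However, there is one genuine error, precisely in the boundary case you flag. When $u = 0$ you argue that $\Weyl$-invariance makes $\NormalCone\Face$ symmetric under the reflection $s$ in $\Wall$, and that this ``forc[es] $\scaprod{\NormalVector}{\NormalVector_\Wall} \ge 0$.'' The symmetry claim is right (if $\Point \in \Wall$ is fixed by $s$ and $\Point \in \relint\Face$, then $\Face$ and hence $\NormalCone\Face$ are $s$-invariant), but the conclusion does not follow: an $s$-invariant cone can perfectly well contain vectors with negative $\NormalVector_\Wall$-component. So that sub-argument proves nothing. Your fallback suggestion (perturb $\Vector$ into the open chamber and pass to the limit, using continuity of $\Proj[\Zonotope]$ and closedness of the half-space conditions) would work, but you don't carry it out, and it is overkill.

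In fact the case $u = 0$ is the \emph{easiest} one and needs no new idea at all. You already have $\scaprod{\NormalVector}{\NormalVector_\Wall} = \scaprod{\Vector - \Point}{\NormalVector_\Wall} = t - u$, since $\Vector' - \Point' \perp \NormalVector_\Wall$. When $u = 0$ this is just $t$, and $t = \scaprod{\Vector}{\NormalVector_\Wall} \ge 0$ because $\Vector$ lies in the closed chamber $\Chamber$. So you can delete the normal-cone-symmetry digression entirely: for each bounding wall, the variational inequality $\scaprod{\NormalVector}{s(\Point) - \Point} \le 0$ gives $u\,\scaprod{\NormalVector}{\NormalVector_\Wall} \ge 0$, which settles the case $u > 0$, and $\scaprod{\NormalVector}{\NormalVector_\Wall} = t - u = t \ge 0$ settles $u = 0$. (A tiny parallel remark on step two: when $t = 0$ and $u < 0$, the quantity $4tu$ vanishes rather than being strictly negative; you then conclude by uniqueness of closest-point projection that $s(\Point) = \Point$, whence $u = 0$, a contradiction. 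Worth making explicit since you invoke $t \ge 0$ rather than $t > 0$.)
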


\begin{proof}
This is a reformulation of \cite[Lemma~2.1]{buxgrawit10}: Let $\FaceVector = \Proj[\Zonotope](\Vector)$ so that $\Vector = \FaceVector + \NormalVector$. By Observation~\ref{obs:space_decomposition} there is a face $\Face$ such that $\FaceVector \in \Face$ and $\NormalVector \in \NormalCone\Face$. Assume that there were a $\Weyl$-chamber $\Chamber$ that contains $\Vector$ but not $\NormalVector$. Let $\AltChamber$ be a chamber that contains $\NormalVector$ and let $\Wall$ be a wall that separates $\Chamber$ and $\AltChamber$. By \cite[Lemma~2.1]{buxgrawit10}, $\Wall$ cannot separate $\NormalVector$ and $\Vector$, hence $\Vector$ would have to lie inside $\Wall$. But then $\Wall$ would still separate $\FaceVector$ and $\NormalVector$ contradicting the same lemma.
\end{proof}


\section{Euclidean twin buildings}
\label{sec:euclidean_twin_buildings}

\subsection*{The metric codistance}

Throughout this section let $\EuclTwinBuilding$ be a Euclidean twin building (twin buildings are introduced in \cite[Section~5.8]{abrbro}). In \cite{buxgrawit10} a Euclidean codistance $\EuclCoDistance$ on $\EuclTwinBuilding$ was introduced: let $\TwinApartment$ be a twin apartment that contains $\PosPoint \in \PosEuclBuilding$ and $\NegPoint \in \NegEuclBuilding$. The opposition relation between $\PosApartment$ and $\NegApartment$ is bijective so we may isometrically identify both apartments with a Euclidean space $(\EuclSpace,\EuclDistance)$ so that the following diagram commutes:

\begin{diagram}
\label{dia:twin_identification}
 && \EuclSpace && \\
& \ruTo^{\iota_+} && \luTo^{\iota_-} &\\
\PosApartment && \rToFro^{\op} && \NegApartment \\
\end{diagram}

Note that this identification does not depend on the particular chosen twin apartment, because every other twin apartment that contains $\PosPoint$ and $\NegPoint$ is isometric to this one in a distance- and opposition-preserving way.

Using this setting, $\EuclCoDistance(\PosPoint,\NegPoint)$ is the Euclidean distance between $\iota_+(\PosPoint)$ and $\iota_-(\NegPoint)$. In other words, if we regard $\EuclSpace$ as a Euclidean vector space by picking an origin, then $\EuclCoDistance(\PosPoint,\NegPoint)$ is the length of the vector $\iota_+(\PosPoint) - \iota_-(\NegPoint)$.

If $\PosPoint$ and $\NegPoint$ are not opposite, then there is a geodesic ray $\Ray$ in $\EuclSpace$ issuing at $\iota_+(\PosPoint)$ and moving away from $\iota_-(\NegPoint)$. We denote the ray $\iota_+^{-1} \circ \Ray$ by $\InftyRay{\PosPoint}{\NegPoint}$. Analogously $\iota_- \circ \Another\Ray$, where $\Another\Ray$ is the ray that issues at  $\iota_-(\NegPoint)$ and moves away from $\iota_+(\PosPoint)$, is denoted by $\InftyRay{\NegPoint}{\PosPoint}$. The following is Proposition~1.4 in \cite{buxgrawit10}:

\begin{prop}
\label{prop:infty_ray_well_defined}
Assume that $\EuclCoDistance(\PosPoint,\NegPoint) > 0$. The rays $\InftyRay{\PosPoint}{\NegPoint}$ and $\InftyRay{\NegPoint}{\PosPoint}$ are well-defined geodesic rays in $\PosEuclBuilding$ respectively $\NegEuclBuilding$ (that is they are independent of the chosen twin apartment).
\end{prop}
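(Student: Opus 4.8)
The plan is to show that the rays $\InftyRay{\PosPoint}{\NegPoint}$ and $\InftyRay{\NegPoint}{\PosPoint}$ do not depend on the choice of twin apartment $\TwinApartment$ containing $\PosPoint$ and $\NegPoint$. By symmetry it suffices to treat $\InftyRay{\PosPoint}{\NegPoint}$, which lives in $\PosEuclBuilding$. Fix two twin apartments $\TwinApartment$ and $\TwinApartment' = (\PosApartment',\NegApartment')$, both containing $\PosPoint$ and $\NegPoint$, and let $\iota_\pm$ and $\iota_\pm'$ be the respective identifications with $(\EuclSpace,\EuclDistance)$ as in diagram~\eqref{dia:twin_identification}. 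The ray computed in $\TwinApartment$ is the preimage under $\iota_+$ of the geodesic ray in $\EuclSpace$ issuing at $\iota_+(\PosPoint)$ and pointing directly away from $\iota_-(\NegPoint)$ (i.e. in the direction of the vector $\iota_+(\PosPoint)-\iota_-(\NegPoint)$, which is nonzero since $\EuclCoDistance(\PosPoint,\NegPoint)>0$), and similarly for $\TwinApartment'$.

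First I would reduce to comparing the two rays inside a single apartment of $\PosEuclBuilding$. The key point is that the apartments $\PosApartment$ and $\PosApartment'$ both contain $\PosPoint$, so they overlap in a nonempty convex region containing $\PosPoint$; since a building is the union of its apartments and any two chambers lie in a common apartment, one can find an apartment $\PosApartment''$ of $\PosEuclBuilding$ containing an initial segment of the ray-as-computed-in-$\TwinApartment$. Then the real content is to verify that in $\PosApartment''$ (isometrically a Euclidean space) the direction of that initial segment at $\PosPoint$ is determined by data that is intrinsic to the pair $(\PosPoint,\NegPoint)$ and not to the twin apartment. The crucial structural fact is that the opposition relation identifying $\PosApartment$ with $\NegApartment$ is pinned down: any twin apartment containing both $\PosPoint$ and $\NegPoint$ induces, via $\op$, the same isometry between a neighbourhood of $\PosPoint$ in $\PosEuclBuilding$ and a neighbourhood of $\NegPoint$ in $\NegEuclBuilding$. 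This is exactly the remark made right after diagram~\eqref{dia:twin_identification} that the identification is independent of the chosen twin apartment in a distance- and opposition-preserving way; I would invoke the uniqueness properties of twin apartments from \cite[Section~5.8]{abrbro} (two twin apartments containing a pair of opposite chambers coincide, and more generally their ``overlap'' is convex and retraction-compatible) to make this precise.

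Concretely, the steps are: (1) set up both identifications and translate the claim into the statement that the vector $\iota_+(\PosPoint)-\iota_-(\NegPoint) \in \EuclSpace$ and $\iota_+'(\PosPoint)-\iota_-'(\NegPoint)\in\EuclSpace$ give the same direction at $\PosPoint$ once transported back into $\PosEuclBuilding$; (2) show that the germ at $\PosPoint$ of the composite $\iota_+^{-1}\circ\iota_-$ (a partial isometry $\NegEuclBuilding \rightsquigarrow \PosEuclBuilding$ near $\NegPoint,\PosPoint$) equals that of $(\iota_+')^{-1}\circ\iota_-'$ — this is the opposition-preservation statement and is where I would cite the twin-building axioms; (3) conclude that the two candidate rays have the same initial direction at $\PosPoint$, and since a geodesic ray in a CAT($0$) space is uniquely determined by its initial direction (it is the unique locally geodesic ray with that germ, and in a Euclidean building a ray with a given germ extends uniquely), the two rays coincide. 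Finally note that $\EuclCoDistance(\PosPoint,\NegPoint)>0$ is used precisely to guarantee the direction vector is nonzero so that ``moving away'' is well defined.

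I expect step (2) — the fact that the opposition-induced identification of local structure around $\PosPoint$ and $\NegPoint$ is independent of the twin apartment — to be the main obstacle, since it is the one genuinely using the combinatorics of twin buildings rather than CAT($0$) generalities. If a clean reference for the germ statement is not available in the exact form needed, the fallback is to argue apartment by apartment: given a second twin apartment $\TwinApartment'$, use the retraction of $\NegEuclBuilding$ onto $\NegApartment$ centered at a chamber near $\NegPoint$ and the corresponding retraction on the positive side, together with the compatibility of twin apartments under such retractions, to transport $\TwinApartment'$ onto $\TwinApartment$ fixing $\PosPoint$ and $\NegPoint$; this forces the two constructions to agree. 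Everything else (the CAT($0$) uniqueness of rays, the reduction to a single apartment, the nonvanishing of the direction vector) is routine given the material already developed in Section~\ref{sec:euclidean_twin_buildings} and the cited literature.
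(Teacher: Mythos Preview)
The paper does not actually prove this proposition: it is quoted verbatim as Proposition~1.4 of \cite{buxgrawit10}, with no argument given here. So there is no in-paper proof to compare against, only the external reference.

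Your sketch has a genuine gap at step~(3). The assertion that a geodesic ray in a CAT($0$) space, or in a Euclidean building, is uniquely determined by its initial direction (or germ) is false. Already in a tree two rays from the same point can share an arbitrarily long initial segment and then diverge at a branch vertex; the same phenomenon occurs at thick panels in higher rank. What \emph{is} true is that a ray is determined by its point at infinity in $\Infty{\PosEuclBuilding}$, but establishing that the two constructions yield the same point at infinity is precisely the content of the proposition, so this does not help. Step~(2) is also shakier than it looks: the composite $\iota_+^{-1}\circ\iota_-$ sends $\NegPoint$ to $\opm_{\TwinApartment}(\NegPoint)\in\PosApartment$, and this opposite point genuinely depends on the twin apartment, so a ``germ near $\NegPoint,\PosPoint$'' statement does not directly control the direction at $\PosPoint$. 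A route that does work is to characterise the ray intrinsically, for instance as the locus of points $y\in\PosEuclBuilding$ lying on a geodesic through $\PosPoint$ with $\EuclCoDistance(y,\NegPoint)=\EuclCoDistance(\PosPoint,\NegPoint)+\PosEuclDistance(\PosPoint,y)$; since $\EuclCoDistance(\cdot,\NegPoint)$ is apartment-independent this pins down the whole ray, not just its germ. Alternatively one can show, using the convexity properties of twin apartments in \cite[Section~5.8]{abrbro}, that any two twin apartments containing $\PosPoint$ and $\NegPoint$ share a convex region of the positive half which already contains the \emph{entire} ray; your retraction fallback is headed in this direction but would need to produce this global containment, not merely agreement of germs.
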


\subsection*{Perturbed codistances}

As in \cite{buxgrawit10} we will have to perturb the codistance (for a rough motivation see the beginning of Section~\ref{sec:height_and_gradient}). In the rest of this section we will provide the tools to do so.

Let $\Weyl$ be the Weyl group of $\Infty\PosEuclBuilding$ which is the same as that of $\Infty\NegEuclBuilding$ (and is of type $\DummyType_n$ if $\EuclTwinBuilding$ is of type $\tilde{\DummyType}_n$). It acts in a natural way on $\EuclSpace$ (with chosen origin) and induces a chamber structure of simplicial fans on it. Let $\Zonotope$ be a convex, $\Weyl$-invariant polytope of $\EuclSpace$, that satisfies $-\Zonotope = \Zonotope$.

We define the $\Zonotope$-perturbed codistance to be
\[
\EuclCoDistance[\Zonotope](\PosPoint,\NegPoint) = \EuclDistance(\iota_+(\PosPoint) - \iota_-(\NegPoint),\Zonotope)
\]
where $\iota_+$ and $\iota_-$ are as in \eqref{dia:twin_identification}. It is clear that this is well-defined by the same arguments as before. We record the trivial but noteworthy fact (see Figure~\ref{fig:perturbed_codistance}):

\begin{obs}
\label{obs:perturbed_codistance_interpretations}
$\EuclCoDistance[\Zonotope](\PosPoint,\NegPoint) = \EuclDistance(\iota_+(\PosPoint),\iota_-(\NegPoint) + Z) = \EuclDistance(\iota_+(\PosPoint) + Z,\iota_-(\NegPoint))$.\qed
\end{obs}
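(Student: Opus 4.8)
The statement to prove is Observation~\ref{obs:perturbed_codistance_interpretations}, which asserts the equality of three quantities: $\EuclCoDistance[\Zonotope](\PosPoint,\NegPoint) = \EuclDistance(\iota_+(\PosPoint),\iota_-(\NegPoint) + Z) = \EuclDistance(\iota_+(\PosPoint) + Z,\iota_-(\NegPoint))$. By the definition just given, the left-hand term is $\EuclDistance(\iota_+(\PosPoint) - \iota_-(\NegPoint),\Zonotope)$, so the whole statement reduces to an elementary fact about Minkowski sums and translations in the Euclidean vector space $\EuclSpace$.

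The plan is as follows. First I would unwind the definition: $\EuclDistance(\iota_+(\PosPoint) - \iota_-(\NegPoint), \Zonotope) = \inf_{z \in \Zonotope} \abs{(\iota_+(\PosPoint) - \iota_-(\NegPoint)) - z}$, where $\abs{\cdot}$ is the norm coming from the scalar product on $\EuclSpace$. Then I would observe that the map $z \mapsto (\iota_+(\PosPoint) - \iota_-(\NegPoint)) - z$ puts this infimum in bijection with $\inf_{w} \abs{\iota_+(\PosPoint) - (\iota_-(\NegPoint) + w)}$ as $w = -z$ ranges over $-\Zonotope$. Invoking the hypothesis $-\Zonotope = \Zonotope$, the set $-\Zonotope$ is just $\Zonotope$ again, so this infimum equals $\inf_{w \in \Zonotope} \abs{\iota_+(\PosPoint) - (\iota_-(\NegPoint) + w)} = \EuclDistance(\iota_+(\PosPoint), \iota_-(\NegPoint) + \Zonotope)$, giving the first equality. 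For the second equality I would similarly rewrite $\abs{(\iota_+(\PosPoint) - \iota_-(\NegPoint)) - z} = \abs{(\iota_+(\PosPoint) - z) - \iota_-(\NegPoint)}$ and let $z$ range over $\Zonotope$ directly (here no sign flip is even needed), yielding $\EuclDistance(\iota_+(\PosPoint) + \Zonotope, \iota_-(\NegPoint))$; alternatively one applies the symmetry $-\Zonotope = \Zonotope$ once more. The well-definedness (independence of the chosen twin apartment) is already asserted to follow "by the same arguments as before", i.e.\ as for $\EuclCoDistance$ itself via the commuting diagram~\eqref{dia:twin_identification}, so nothing new is needed there.

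There is essentially no main obstacle here — the statement is, as the text says, "trivial but noteworthy", and the only thing to be careful about is bookkeeping the sign: the definition of the perturbed codistance measures distance from the difference vector to $\Zonotope$, and translating $\Zonotope$ over to sit at $\iota_-(\NegPoint)$ introduces a reflection of $\Zonotope$ which is absorbed by the hypothesis $-\Zonotope = \Zonotope$, whereas translating it to sit at $\iota_+(\PosPoint)$ does not. Since I am writing this as a remark-level observation, I would keep the proof to one or two sentences: unwind the definition of $\EuclCoDistance[\Zonotope]$, note that distance between a point $a$ and a set $a - \Zonotope + b$ (after translation) is unchanged, and use $\Zonotope = -\Zonotope$. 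If desired, a one-line displayed chain of equalities
\[
\EuclDistance(\iota_+(\PosPoint) - \iota_-(\NegPoint),\Zonotope) = \EuclDistance(\iota_+(\PosPoint), \iota_-(\NegPoint) + \Zonotope) = \EuclDistance(\iota_+(\PosPoint) + \Zonotope, \iota_-(\NegPoint))
\]
makes the argument fully transparent, with the middle step using $-\Zonotope=\Zonotope$ and the outer translations being isometries of $\EuclSpace$.
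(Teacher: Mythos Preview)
Your proposal is correct and matches the paper, which records the observation with a bare \qed and gives no argument at all. One small bookkeeping slip: you have the roles of the two equalities swapped. The identity $\EuclDistance(\iota_+(\PosPoint)-\iota_-(\NegPoint),\Zonotope)=\EuclDistance(\iota_+(\PosPoint),\iota_-(\NegPoint)+\Zonotope)$ is pure translation invariance (since $(a-b)-z=a-(b+z)$), while it is $\EuclDistance(\iota_+(\PosPoint)-\iota_-(\NegPoint),\Zonotope)=\EuclDistance(\iota_+(\PosPoint)+\Zonotope,\iota_-(\NegPoint))$ that needs $-\Zonotope=\Zonotope$, because $(\iota_+(\PosPoint)-z)$ ranges over $\iota_+(\PosPoint)-\Zonotope$ rather than $\iota_+(\PosPoint)+\Zonotope$. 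This does not affect the validity of your argument, only the attribution of which step uses which ingredient.
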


\begin{figure}[!ht]
\begin{center}
\includegraphics{perturbed_codistance}
\end{center}
\caption{Each of the dashed lines has length $\EuclCoDistance[\Zonotope](\PosPoint,\NegPoint)$.}
\label{fig:perturbed_codistance}
\end{figure}

In the same spirit we define $\InftyRay{\PosPoint}{\NegPoint}[\Zonotope]$ for points $\PosPoint, \NegPoint$ with $\EuclCoDistance[\Zonotope](\PosPoint,\NegPoint) > 0$: It is $\iota_+^{-1} \circ \Ray$ where $\Ray$ is the geodesic ray in $\EuclSpace$ that issues at $\iota_+(\PosPoint)$ and moves away from (the projection point of $\iota_+(\PosPoint)$ onto) $\iota_-(\NegPoint) + \Zonotope$. The ray $\InftyRay{\NegPoint}{\PosPoint}[\Zonotope]$ is defined analogously. We have:

\begin{prop}
Assume that $\EuclCoDistance[\Zonotope](\PosPoint,\NegPoint) > 0$. The rays $\InftyRay{\PosPoint}{\NegPoint}[\Zonotope]$ and $\InftyRay{\NegPoint}{\PosPoint}[\Zonotope]$ are well-defined geodesic rays in $\PosEuclBuilding$ and $\NegEuclBuilding$ respectively (that is they are independent of the chosen twin apartment).
\end{prop}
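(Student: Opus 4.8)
The plan is to reduce the statement to the unperturbed case, Proposition~\ref{prop:infty_ray_well_defined}, by observing that the $\Zonotope$-perturbation does not change the geodesic ray it only changes the point from which we measure. Concretely, suppose $\EuclCoDistance[\Zonotope](\PosPoint,\NegPoint) > 0$ and let $\TwinApartment$ be a twin apartment containing $\PosPoint$ and $\NegPoint$ with identifications $\iota_\pm$ as in \eqref{dia:twin_identification}. Write $\Vector = \iota_+(\PosPoint) - \iota_-(\NegPoint)$ and let $\FaceVector = \Proj[\Zonotope]\Vector$, $\NormalVector = \Vector - \FaceVector$, so that $\EuclCoDistance[\Zonotope](\PosPoint,\NegPoint) = \abs{\NormalVector} > 0$. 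By definition $\InftyRay{\PosPoint}{\NegPoint}[\Zonotope] = \iota_+^{-1} \circ \Ray$ where $\Ray$ issues at $\iota_+(\PosPoint)$ in the direction $\NormalVector/\abs{\NormalVector}$; this is exactly the direction one obtains in the \emph{unperturbed} twin building $(\PosEuclBuilding, \NegEuclBuilding + \FaceVector)$ obtained by translating $\NegEuclBuilding$'s identification by $\FaceVector$. But translating one side of the apartment identification by a fixed vector does not affect the opposition relation, so this is genuinely an instance of the unperturbed construction, and Proposition~\ref{prop:infty_ray_well_defined} gives that the ray is well-defined there.

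First I would make the preceding paragraph precise by checking that the perturbed ray only depends on $\PosPoint$, $\NegPoint$, and the vector $\FaceVector = \Proj[\Zonotope]\Vector$, and that $\FaceVector$ itself is independent of the chosen twin apartment. The latter is the key point and it follows from the already-established well-definedness results: the vector $\Vector = \iota_+(\PosPoint) - \iota_-(\NegPoint)$ is well-defined up to the choice of origin in $\EuclSpace$, and since $\Proj[\Zonotope]$ commutes with translations of the origin (because $\Zonotope$ is translated along) and with the $\Weyl$-action, $\FaceVector$ and $\NormalVector$ are well-defined invariants of the pair $(\PosPoint,\NegPoint)$. Here I would also want to invoke Lemma~\ref{lem:n_f_lemma}: it tells us $\NormalVector$ lies in the same $\Weyl$-chambers as $\Vector$, which pins down in which sector-direction the ray $\Ray$ leaves $\iota_+(\PosPoint)$, and hence identifies $\InftyRay{\PosPoint}{\NegPoint}[\Zonotope]$ with a specific $\InftyRay{\PosPoint}{\AltPoint}$ for a suitable point $\AltPoint$ on the $\NegEuclBuilding$ side (namely $\AltPoint = \iota_-^{-1}(\iota_-(\NegPoint) + \FaceVector)$), reducing to Proposition~\ref{prop:infty_ray_well_defined} verbatim.

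Next I would argue well-definedness directly. Let $\TwinApartment$ and $\TwinApartment'$ be two twin apartments each containing both $\PosPoint$ and $\NegPoint$. As recorded in the text right after diagram \eqref{dia:twin_identification}, there is an isometry between them that preserves distances and opposition and fixes $\PosPoint$ and $\NegPoint$; such an isometry carries the identification $(\iota_+,\iota_-)$ for $\TwinApartment$ to that for $\TwinApartment'$ up to a Euclidean isometry of $\EuclSpace$ fixing nothing in particular, but it does preserve $\Vector$ (up to the origin choice) and, because $\Zonotope$ is carried to itself, it preserves $\Proj[\Zonotope]\Vector$ and the direction of $\Ray$. Pulling $\Ray$ back through $\iota_+^{-1}$ on either side therefore yields the same geodesic ray in $\PosEuclBuilding$; the argument for $\InftyRay{\NegPoint}{\PosPoint}[\Zonotope]$ is symmetric. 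Finally I would note that $\Ray$ is indeed a geodesic ray and not just a ray of the apartment: an apartment of $\PosEuclBuilding$ is isometrically embedded and $\EuclDistance$-convex in $\PosEuclBuilding$ (it is a building, CAT($0$)), so a ray that is geodesic in the apartment is geodesic in $\PosEuclBuilding$, exactly as in the proof of Proposition~\ref{prop:infty_ray_well_defined}.

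The main obstacle is purely bookkeeping: one has to be careful that the $\Zonotope$-perturbation is performed \emph{inside} the identified Euclidean space $\EuclSpace$ and that this space, together with $\Zonotope$'s position in it, is canonically attached to the twin apartment only up to the same ambiguity (choice of origin) that already appears in the unperturbed theory. Once that is set up correctly, there is no new geometric content: the proof is the proof of Proposition~\ref{prop:infty_ray_well_defined} applied after replacing $\iota_-(\NegPoint)$ by its projection-point onto $\iota_-(\NegPoint)+\Zonotope$, with Lemma~\ref{lem:n_f_lemma} ensuring that this replacement point sits in a position compatible with the chamber structure so that the ``moving away'' direction is unambiguous. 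I expect the write-up to be about one paragraph, essentially saying ``same as Proposition~\ref{prop:infty_ray_well_defined}, using Observation~\ref{obs:perturbed_codistance_interpretations} and Lemma~\ref{lem:n_f_lemma}.''
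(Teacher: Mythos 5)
Your reduction to Proposition~\ref{prop:infty_ray_well_defined} via the substitute footpoint $\AltPoint$ does not close. Within a single twin apartment $\TwinApartment$, the equality $\InftyRay{\PosPoint}{\NegPoint}[\Zonotope] = \InftyRay{\PosPoint}{\AltPoint}$ is immediate from the definitions (and does not actually use Lemma~\ref{lem:n_f_lemma}). But the point $\AltPoint = \iota_-^{-1}(\iota_-(\NegPoint)+\FaceVector)$ was produced using $\TwinApartment$, so Proposition~\ref{prop:infty_ray_well_defined} only gives well-definedness of $\InftyRay{\PosPoint}{\AltPoint}$ for that particular $\AltPoint$; to reduce ``verbatim'' you would still have to show that $\AltPoint$ itself, as a point of $\NegEuclBuilding$ rather than as a vector of $\EuclSpace$, is independent of the chosen twin apartment. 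Controlling the vector $\FaceVector$ up to translation and the $\Weyl$-action does not give this, because $\AltPoint$ is recovered by applying $\iota_-^{-1}$, and a second twin apartment containing $\PosPoint$ and $\NegPoint$ need not contain $\AltPoint$ at all, so the two candidate points can lie in parts of $\NegEuclBuilding$ that do not overlap. The ``direct'' argument has the parallel problem: the transition between the two twin apartments only relates the identifications on $\PosApartment \cap \Another\PosApartment$ and $\NegApartment \cap \Another\NegApartment$, but the projection point and the initial segment of the perturbed ray need not lie there; moreover, the claim that the transition carries $\Zonotope$ to itself is asserted rather than argued.

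The paper's proof tracks the asymptotic end of the ray instead of a footpoint and thereby sidesteps both issues. The central step is to show that $\Infty{\InftyRay{\PosPoint}{\NegPoint}[\Zonotope]}$ lies in the carrier of the already-well-defined (by Proposition~\ref{prop:infty_ray_well_defined}) point $\Infty{\InftyRay{\PosPoint}{\NegPoint}}$; equivalently, that $\InftyRay{\PosPoint}{\NegPoint}[\Zonotope]$ is eventually contained in every sector in which $\InftyRay{\PosPoint}{\NegPoint}$ is eventually contained. This is exactly what Lemma~\ref{lem:n_f_lemma} delivers: every $\Weyl$-chamber containing $\Vector$ also contains $\NormalVector$. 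Unlike the footpoint $\AltPoint$, the sector germs toward $\Infty{\InftyRay{\PosPoint}{\NegPoint}}$ are data shared between any two of the relevant apartments, so confinement to them pins the ray down. You cite the right lemma, but its real job should be this sector-confinement step, not an auxiliary argument about $\AltPoint$.
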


\begin{proof}
We show the statement for $\InftyRay{\PosPoint}{\NegPoint}[\Zonotope]$. In fact we prove that the carrier of $\Infty{\InftyRay{\PosPoint}{\NegPoint}}$ contains $\Infty{\InftyRay{\PosPoint}{\NegPoint}[\Zonotope]}$. Since $\InftyRay{\PosPoint}{\NegPoint}$ is a well-defined ray in the building by Proposition~\ref{prop:infty_ray_well_defined}, this shows that $\Infty{\InftyRay{\PosPoint}{\NegPoint}[\Zonotope]}$ is a well-defined point of the building at infinity and the statement follows.

So we have to show that $\InftyRay{\PosPoint}{\NegPoint}[\Zonotope]$ is eventually contained in every sector, i.e.\ every $\Weyl$-chamber, in which $\InftyRay{\PosPoint}{\NegPoint}$ is eventually contained. To keep notation simple, let us take the origin of $\EuclSpace$ to be $\iota_-(\NegPoint)$. If $\Vector$ denotes the vector $\iota_+(\PosPoint)$ and $\NormalVector$ the vector $\Vector - \Proj[\Zonotope]\Vector$, then $\Vector$ is the direction of $\InftyRay{\PosPoint}{\NegPoint}$ and $\NormalVector$ is the direction of $\InftyRay{\PosPoint}{\NegPoint}[\Zonotope]$. And by Lemma \ref{lem:n_f_lemma} every $\Weyl$-chamber that contains $\Vector$ also contains $\NormalVector$.
\end{proof}

The following remark is independent from the rest of the article and the reader who gains no insight from it is advised to ignore it. It puts into perspective what we have said about Euclidean twin buildings above.

\begin{remnum}
Assume that $\EuclTwinBuilding$ is the twin building associated to $\GroupScheme(\FiniteField[q][t,t^{-1}])$. Then the group $\GroupScheme(\FiniteField[q](t))$ defines a system of apartments (in the sense of \cite[Definition~8.4]{weiss09}) $\SystemOfApartments_+$ on $\PosEuclBuilding$ as well as a system of apartments $\SystemOfApartments_-$ on $\NegEuclBuilding$. The elements of $\SystemOfApartments_+$ and $\SystemOfApartments_-$ are precisely the positive respectively negative halves of twin apartments (\cite[Section~6.12]{abrbro}). The building at infinity with respect to either of $\SystemOfApartments_+$ and $\SystemOfApartments_-$ is the spherical building $\SpherBuilding$ associated to $\GroupScheme(\FiniteField[q](t))$. In other words $\SpherBuilding$ naturally embeds in $\Infty\PosEuclBuilding$ as well as in $\Infty\NegEuclBuilding$ and thus can be embedded ``diagonally'' into $\Infty{(\PosEuclBuilding \times \NegEuclBuilding)} = \Infty\PosEuclBuilding * \Infty\NegEuclBuilding$.

For two points $\PosPoint \in \PosEuclBuilding$ and $\NegPoint \in \NegEuclBuilding$ the rays $\Infty{\InftyRay{\PosPoint}{\NegPoint}[\Zonotope]}$ and $\Infty{\InftyRay{\NegPoint}{\PosPoint}[\Zonotope]}$ each lie in $\SpherBuilding$ as embedded in the separate factors. But it is not hard to see that in fact they define the same point of $\SpherBuilding$ and so $1/\sqrt{2}\Infty{\InftyRay{\PosPoint}{\NegPoint}[\Zonotope]} + 1/\sqrt{2}\Infty{\InftyRay{\NegPoint}{\PosPoint}[\Zonotope]}$ lies in $\SpherBuilding$ as embedded diagonally.
\end{remnum}


\section{Height and gradient}
\label{sec:height_and_gradient}

We describe the strategy to prove Theorem~\ref{thm:main_theorem_geometric_formulation}. Let $\EuclTwinBuilding$ be an irreducible Euclidean twin building and let $\Group$ be a group that acts strongly transitively and with finite kernel on $\EuclBuilding$. Then $\Group$ acts on $\Space \defeq \PosEuclBuilding \times \NegEuclBuilding$ which is contractible, but the action is not cocompact (because for a point $(\PosPoint,\NegPoint)$ the codistance from $\PosPoint$ to $\NegPoint$ is preserved under the action of $\Group$). On the other hand the action on the subspace $\NullLevel' \defeq \{(\PosPoint,\NegPoint) \mid \PosPoint \op \NegPoint\}$ is cocompact but $\NullLevel'$ is not $(2n-2)$-connected. The rough idea is to use a ``thickened up'' version $\NullLevel$ of $\NullLevel'$ that is obtained by perturbing the codistance. We show that $\NullLevel$ is $(2n-2)$-connected by filtering $\Space$ by subspaces, the first of which is $\NullLevel$ and verifying that each subspace is obtained from the preceding by gluing in $2n$-cells up to homotopy. To make this work we have to apply the results from Sections~\ref{sec:horizontal_links} to \ref{sec:euclidean_twin_buildings}.

In this section we define the height function by which $\Space$ will be filtered and verify some of its properties. The height function will be refined to become a proper Morse function in Section~\ref{sec:horizontal_cells_and_subdivision}. In Section~\ref{sec:descending_links} the descending links are shown to be spherical or contractible. From this the Main Theorem is easily deduced in Section~\ref{sec:proof_of_main_theorem}.

Let $\EuclSpace$ be a Euclidean vector space. Let $\Zonotope$ be $\Zonotope[(\Directions + \Directions) \union \Directions]$ for some $\Weyl$-invariant finite subset $\Directions$ of $\EuclSpace$ such that $\Directions = -\Directions$ and define the \emph{height} to be $\Height = \EuclCoDistance[\Zonotope]$. The height will play an important role in the rest of the article; note that it depends on $\Zonotope$ and thus on $\Directions$. For the first statement all that is needed is that $\Zonotope$ is convex.

\begin{obs}
\label{obs:height_convex_on_apartments}
If $\TwinApartment$ is a twin apartment, then the restriction of $\Height$ to $\PosApartment \times \NegApartment$ is a convex function. In particular $\Height$ attains its maximum over a cell in a vertex. And the intersection of a cell with an $\Height$-sublevel set is convex.
\end{obs}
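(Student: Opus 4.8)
The statement to prove is Observation~\ref{obs:height_convex_on_apartments}: that the restriction of $\Height = \EuclCoDistance[\Zonotope]$ to a product of two halves $\PosApartment \times \NegApartment$ of a twin apartment is convex, and the consequences that $\Height$ attains its maximum over a cell at a vertex and that cells intersected with $\Height$-sublevel sets are convex.

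\textbf{Plan of proof.} The key point is that via the commutative diagram \eqref{dia:twin_identification}, the pair $(\PosApartment,\NegApartment)$ is isometrically identified with $(\EuclSpace,\EuclSpace)$ through $\iota_+$ and $\iota_-$, so that for $\PosPoint \in \PosApartment$ and $\NegPoint \in \NegApartment$ we have $\Height(\PosPoint,\NegPoint) = \EuclCoDistance[\Zonotope](\PosPoint,\NegPoint) = \EuclDistance\bigl(\iota_+(\PosPoint) - \iota_-(\NegPoint),\Zonotope\bigr)$ by definition of the perturbed codistance. First I would note that $\PosApartment \times \NegApartment$, transported along $\iota_+ \times \iota_-$, becomes $\EuclSpace \times \EuclSpace$ with its product (flat) metric, and that a segment in $\PosApartment \times \NegApartment$ corresponds to a segment in $\EuclSpace \times \EuclSpace$; so it suffices to show that the function $(\Vector,\AltVector) \mapsto \EuclDistance(\Vector - \AltVector, \Zonotope)$ on $\EuclSpace \times \EuclSpace$ is convex. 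Then I would write this as a composition: the map $L\colon (\Vector,\AltVector) \mapsto \Vector - \AltVector$ is linear, hence affine, and the map $\Point \mapsto \EuclDistance(\Point,\Zonotope)$ is the distance function to the convex set $\Zonotope$, which is convex (a standard fact: for convex $C$, $\EuclDistance(\cdot,C)$ is convex because $\EuclDistance(\lambda p + (1-\lambda)q, C) \le \lambda\,\EuclDistance(p,C) + (1-\lambda)\,\EuclDistance(q,C)$, using that $\lambda \bar p + (1-\lambda)\bar q \in C$ for the respective projection points $\bar p, \bar q$, together with the triangle inequality). A convex function precomposed with an affine map is convex, which gives the claim. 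Here the only hypothesis on $\Zonotope$ that is used is convexity, as the observation already signals.

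\textbf{Deriving the consequences.} For the statement that $\Height$ attains its maximum over a cell $\Cell$ at a vertex: any cell $\Cell$ of $\Space$ lies inside some $\PosApartment \times \NegApartment$ (choose a twin apartment whose positive and negative halves contain the projections of $\Cell$ to the two factors; more simply, $\Cell = \PosCell \times \NegCell$ with $\PosCell$ in some apartment $\PosApartment$ and $\NegCell$ in some apartment $\NegApartment$, and then extend to a twin apartment — this is possible since $\EuclTwinBuilding$ is a twin building). A cell of the product is the convex hull of its vertices (it is a product of simplices, each of which is the convex hull of its vertices), so it is a compact convex polytope sitting inside the Euclidean space $\PosApartment \times \NegApartment$ on which $\Height$ is convex; a convex function on a polytope attains its maximum at an extreme point, i.e.\ at a vertex. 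For the last assertion, the sublevel set $\{\Height \le c\}$ is convex on $\PosApartment \times \NegApartment$ since $\Height$ is convex there, and intersecting with the convex cell $\Cell$ (also living in that flat space) keeps it convex.

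\textbf{Expected main obstacle.} There is no serious obstacle; this is a routine observation. The only point requiring a line of care is the reduction to the flat picture: one must be sure that the isometric identification of $(\PosApartment, \NegApartment)$ with $(\EuclSpace,\EuclSpace)$ provided by \eqref{dia:twin_identification} makes $\PosApartment \times \NegApartment$ into a genuine Euclidean (product) space in which straight segments agree with building geodesics restricted to the apartment, so that ``convex'' has its ordinary meaning and affine maps behave as expected — but this is exactly how the perturbed codistance $\EuclCoDistance[\Zonotope]$ was set up in Section~\ref{sec:euclidean_twin_buildings}, and its well-definedness (independence of the chosen twin apartment) was already noted there. The secondary minor point is the standard lemma that $\EuclDistance(\cdot,\Zonotope)$ is convex for convex $\Zonotope$, which I would include in one sentence rather than cite.
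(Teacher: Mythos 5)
Your proof is correct and matches the paper's argument: both factor $\Height$ as the affine map $(\PosPoint,\NegPoint)\mapsto\iota_+(\PosPoint)-\iota_-(\NegPoint)$ followed by the convex function $\EuclDistance(\Zonotope,\cdot)$, and then read off the consequences from convexity of $\Height$ on the flat apartment. The paper just states this more tersely, omitting the one-line verification that distance to a convex set is convex.
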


\begin{proof}
We make identifications as in \eqref{dia:twin_identification}. By definition the restriction of $\Height$ to $\PosApartment \times \NegApartment$ is the composition of the map $(\PosPoint,\NegPoint) \mapsto \iota_+(\PosPoint) - \iota_-(\NegPoint)$ which is affine (that is it preserves affine combinations) with the convex map $\EuclDistance(\Zonotope,\cdot)$ and thus convex.
\end{proof}

\begin{figure}[!ht]
\begin{center}
\includegraphics{zonotope_convolution}
\end{center}
\caption{The set $\NullLevel$ as seen inside $\PosApartment \times \NegApartment$.}
\label{fig:zonotope_convolution}
\end{figure}

Now we want to define a gradient for $\Height$. To do so we first have to understand the height function itself a little better. It would be reasonable to expect it to measure distance from the set $\NullLevel \defeq \Height^{-1}(0)$ of level $0$. We will convince ourselves that this is true up to rescaling. Let $\TwinApartment$ be a twin apartment and make the identifications as in \eqref{dia:twin_identification}. For a point $\Point = (\PosPoint,\NegPoint) \in \PosApartment \times \NegApartment$ with $\Height(\Point) > 0$ we consider the sets
\[
\Zonotope_+ \defeq \big(\opm_{\TwinApartment} \NegPoint\big) + \iota_+^{-1}(\Zonotope)\quad \text{and} \quad \Zonotope_- \defeq  \big(\opm_{\TwinApartment} \PosPoint\big) + \iota_-^{-1}(\Zonotope)\text{ .}
\]
By Observation~\ref{obs:perturbed_codistance_interpretations} $\Height(\Point)$ can be seen as the distance from $\PosPoint$ to $\Zonotope_+$ or from $\NegPoint$ to $\Zonotope_-$ (see Figure~\ref{fig:zonotope_convolution}). Note that $\Zonotope_+ \times \{\NegPoint\}$ is just the intersection of the slice $\PosApartment \times \{\NegPoint\}$ with $\NullLevel$ so that $(\Proj[\Zonotope_+]\PosPoint,\NegPoint) \in \NullLevel$ and the same is true with signs reversed. However $\Proj[\NullLevel]\Point = (1/2(\PosPoint + \Proj[\Zonotope_+]\PosPoint),1/2(\NegPoint + \Proj[\Zonotope_-]\NegPoint))$ and thus $\Height(\Point) = \sqrt{2}\EuclDistance(\Point,\NullLevel)$.

This shows that the geodesic ray away from $\NullLevel$, that is the ray in the direction in which $\Height$ grows fastest, is the ray that has components $\InftyRay{\PosPoint}{\NegPoint}[\Zonotope]$ and $\InftyRay{\NegPoint}{\PosPoint}[\Zonotope]$ as introduced in Section~\ref{sec:euclidean_twin_buildings}. Normalized to unit-speed it is given by
\[
t \mapsto \bigg(\InftyRay{\PosPoint}{\NegPoint}[\Zonotope]\Big(\frac{1}{\sqrt{2}}t\Big),\InftyRay{\NegPoint}{\PosPoint}[\Zonotope]\Big(\frac{1}{\sqrt{2}}t\Big)\bigg) \text{ .}
\]
We denote the point at infinity defined by this ray by $\InftyGradient_\Point\Height$ and call it the \emph{asymptotic gradient of $\Height$ at $\Point$}. The direction defined by the ray is called the \emph{gradient of $\Height$ at $\Point$} and denoted $\Gradient_\Point\Height$. Clearly $(\InftyGradient_\Point\Height)_\Point = \Gradient_\Point\Height$.

Recall that according to the definition in Section~\ref{sec:horizontal_links} a point $\PointAtInfty \in \Infty\Space$ is in general position if it is neither in $\Infty\PosEuclBuilding$ nor in $\Infty\NegEuclBuilding$. Since both components of  $\InftyGradient_\Point\Height$ are non-trivial, we get:

\begin{obs}
\label{obs:gradient_in_general_position}
The asymptotic gradient $\InftyGradient_\Point\Height$ of $\Height$ at any point $\Point$ is in general position.\qed
\end{obs}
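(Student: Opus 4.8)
The statement to be proven is Observation~\ref{obs:gradient_in_general_position}: that $\InftyGradient_\Point\Height$ is in general position, i.e.\ lies neither in $\Infty\PosEuclBuilding$ nor in $\Infty\NegEuclBuilding$. By the definition recalled just before the observation, a point at infinity of $\Space = \PosEuclBuilding \times \NegEuclBuilding$ fails to be in general position precisely when it lies in one of the two join factors $\Infty\PosEuclBuilding$ or $\Infty\NegEuclBuilding$ of $\Infty\Space = \Infty\PosEuclBuilding * \Infty\NegEuclBuilding$. A point of the join lies in the factor $\Infty\PosEuclBuilding$ exactly when its $\NegEuclBuilding$-component is trivial (i.e.\ the ray defining it is eventually constant, equivalently perpendicular, to $\NegEuclBuilding$), and symmetrically for $\Infty\NegEuclBuilding$. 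So it suffices to show that the ray defining $\InftyGradient_\Point\Height$ has a non-trivial component in each of $\PosEuclBuilding$ and $\NegEuclBuilding$.

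\textbf{Key step.} Write $\Point = (\PosPoint,\NegPoint)$. If $\Height(\Point) = 0$ the gradient is not defined, so assume $\Height(\Point) > 0$. From the discussion preceding the observation, the unit-speed geodesic ray defining $\InftyGradient_\Point\Height$ is
\[
t \mapsto \bigg(\InftyRay{\PosPoint}{\NegPoint}[\Zonotope]\Big(\tfrac{1}{\sqrt{2}}t\Big),\InftyRay{\NegPoint}{\PosPoint}[\Zonotope]\Big(\tfrac{1}{\sqrt{2}}t\Big)\bigg)\text{ ,}
\]
so its component in $\PosEuclBuilding$ is the reparametrized ray $\InftyRay{\PosPoint}{\NegPoint}[\Zonotope]$ and its component in $\NegEuclBuilding$ is $\InftyRay{\NegPoint}{\PosPoint}[\Zonotope]$. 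These are genuine (non-constant) geodesic rays: indeed, by the proposition following Observation~\ref{obs:perturbed_codistance_interpretations}, $\InftyRay{\PosPoint}{\NegPoint}[\Zonotope]$ and $\InftyRay{\NegPoint}{\PosPoint}[\Zonotope]$ are well-defined geodesic rays in $\PosEuclBuilding$ respectively $\NegEuclBuilding$ whenever $\EuclCoDistance[\Zonotope](\PosPoint,\NegPoint) = \Height(\Point) > 0$, which we are assuming. (Concretely, making the identifications of diagram~\eqref{dia:twin_identification} and taking the origin of $\EuclSpace$ at $\iota_-(\NegPoint)$, the direction of $\InftyRay{\PosPoint}{\NegPoint}[\Zonotope]$ is the nonzero vector $\NormalVector = \Vector - \Proj[\Zonotope]\Vector$ with $\Vector = \iota_+(\PosPoint)$; it is nonzero precisely because $\EuclDistance(\Vector,\Zonotope) = \Height(\Point) > 0$. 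The same argument with the roles of the two halves exchanged handles the $\NegEuclBuilding$-component.) Hence $\InftyGradient_\Point\Height$ has non-trivial components in both factors, so it lies in neither $\Infty\PosEuclBuilding$ nor $\Infty\NegEuclBuilding$, which is exactly the assertion that it is in general position.

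\textbf{Main obstacle.} There is essentially no obstacle here: the observation is an immediate bookkeeping consequence of the explicit description of the gradient ray established in the paragraphs just above it, together with the already-proven well-definedness of the perturbed rays $\InftyRay{\PosPoint}{\NegPoint}[\Zonotope]$ and $\InftyRay{\NegPoint}{\PosPoint}[\Zonotope]$. The only point that requires a word is the translation between "both components of the ray are non-trivial" and "the point at infinity lies in neither join factor of $\Infty\Space = \Infty\PosEuclBuilding * \Infty\NegEuclBuilding$," which is the same identification of join-factor membership with perpendicularity/triviality of a component that was recorded in Observation~\ref{obs:horizontal_perpendicular}. So the proof is a two-line deduction, and the "hard part" is simply making sure the hypothesis $\Height(\Point) > 0$ is in force so that the gradient exists and both reparametrized rays are genuinely rays rather than constant.
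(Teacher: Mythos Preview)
Your proof is correct and follows exactly the paper's approach. The paper's own argument is the single sentence ``Since both components of $\InftyGradient_\Point\Height$ are non-trivial, we get:'' immediately preceding the observation; you have simply unpacked that sentence, explaining why each component ray is a genuine (non-constant) geodesic ray when $\Height(\Point)>0$ and invoking Observation~\ref{obs:horizontal_perpendicular} to translate non-triviality of both components into membership in neither join factor.
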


Calling $\Gradient\Height$ a gradient is justified by the following angle criterion:

\begin{obs}
\label{obs:infinitesimal_angle_criterion}
Let $\Geodesic$ be a geodesic that is contained in a cell and issues at a point $\Point$ of positive height. The function $\Height \circ \Geodesic$ is strictly increasing if $\angle_\Point(\Gradient_\Point\Height,\Geodesic) < \pi/2$. It is strictly decreasing on an initial interval if $\angle_\Point(\Gradient_\Point\Height,\Geodesic) > \pi/2$.

In particular if $\Gradient_\Point\Height$ is perpendicular to a cell that contains $\Point$, then $\Point$ is a point of minimal height of that cell.
\end{obs}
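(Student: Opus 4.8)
The statement asserts that along a geodesic $\Geodesic$ contained in a single cell and issuing at a point $\Point$ of positive height, the behaviour of $\Height\circ\Geodesic$ near $t=0$ is governed by the angle between $\Geodesic_\Point$ and the gradient direction $\Gradient_\Point\Height$. The plan is to reduce everything to the convex picture inside a twin apartment. First I would pick a twin apartment $\TwinApartment$ whose positive and negative halves contain the (carriers of the) components of $\Point$ and of the relevant endpoint of $\Geodesic$; this is possible because $\Geodesic$ lies in a single cell of $\Space = \PosEuclBuilding \times \NegEuclBuilding$, hence its two projections lie in single cells of $\PosEuclBuilding$ and $\NegEuclBuilding$, and any cell of a building together with a point lies in a common apartment. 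Making the identifications as in diagram \eqref{dia:twin_identification}, the restriction of $\Height$ to $\PosApartment \times \NegApartment$ becomes the composition of the affine map $(\PosPoint,\NegPoint) \mapsto \iota_+(\PosPoint) - \iota_-(\NegPoint)$ with the convex function $\EuclDistance(\Zonotope,\cdot)$, as recorded in the proof of Observation~\ref{obs:height_convex_on_apartments}.

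Next I would compute, in this flat model, the directional derivative of $\Height$ along $\Geodesic$ at $t=0$. Writing $\Point$ as the vector $\Vector = \iota_+(\PosPoint) - \iota_-(\NegPoint)$ lying outside $\Zonotope$ (since $\Height(\Point)>0$), the closest-point projection $\Proj[\Zonotope]\Vector$ exists and is unique, and $\EuclDistance(\Zonotope,\cdot)$ is differentiable at $\Vector$ with gradient the unit vector in the direction $\NormalVector = \Vector - \Proj[\Zonotope]\Vector$. By the chain rule the derivative of $\Height\circ\Geodesic$ at $0$ equals the inner product of this unit vector with the velocity of the affine image of $\Geodesic$, which is $\iota_+(\Geodesic_+'(0)) - \iota_-(\Geodesic_-'(0))$. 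On the other hand, by the explicit description in Section~\ref{sec:height_and_gradient}, the gradient ray of $\Height$ at $\Point$ is exactly the ray whose affine image moves in the direction $\NormalVector$. Therefore the sign of the derivative of $\Height\circ\Geodesic$ at $0$ is the sign of $\cos\angle_\Point(\Gradient_\Point\Height,\Geodesic)$ — here I use that the identifications $\iota_\pm$ are isometric, so the angle computed in the flat model agrees with the angle in the link $\Link\Point$, in the spirit of Observation~\ref{obs:busemann_angle_criterion}. When this angle is $<\pi/2$ the derivative at $0$ is strictly positive; combined with convexity of $\Height$ on the apartment (Observation~\ref{obs:height_convex_on_apartments}) along the segment $\Geodesic$, a convex function with positive right-derivative at $0$ is strictly increasing, giving the first claim. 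When the angle is $>\pi/2$ the derivative at $0$ is strictly negative, so $\Height\circ\Geodesic$ is strictly decreasing on an initial interval, giving the second claim. The final "in particular" follows: if $\Gradient_\Point\Height$ is perpendicular to a cell $\Cell$ containing $\Point$, then every geodesic from $\Point$ into $\Cell$ makes angle $\ge\pi/2$ with $\Gradient_\Point\Height$, so $\Height$ cannot decrease from $\Point$ in any direction within $\Cell$; since $\Height$ restricted to $\Cell$ is convex it then attains its minimum over $\Cell$ at $\Point$.

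The main obstacle I anticipate is the differentiability point: $\EuclDistance(\Zonotope,\cdot)$ is only differentiable off $\Zonotope$, and I must be careful that $\Vector \notin \Zonotope$ — this is guaranteed by the hypothesis $\Height(\Point)>0$ — and that the closest-point projection varies so that the one-sided derivative formula is valid. The cleanest way around any subtlety here is to avoid differentiating altogether: since $\Height$ is convex on the segment and we only want monotonicity near $0$, it suffices to compare $\Height(\Point)$ with $\Height(\Geodesic(t))$ for small $t$ using the projection $\FaceVector := \Proj[\Zonotope]\Vector$ directly, i.e.\ $\Height(\Geodesic(t)) \ge \scaprod{\NormalVector/\abs{\NormalVector}}{(\text{affine image of }\Geodesic(t)) - \FaceVector}$ by the definition of distance to a convex set and Cauchy--Schwarz, with equality at $t=0$; expanding the right side linearly in $t$ reproduces the sign of $\cos\angle_\Point(\Gradient_\Point\Height,\Geodesic)$ without ever invoking differentiability of the distance function. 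A secondary point to get right is the identification of the angle in the abstract building link with the Euclidean angle in the apartment model, but this is exactly the kind of apartment-wise isometry argument already used repeatedly in Section~\ref{sec:spherical_and_euclidean_buildings} and in the proof of Proposition~\ref{prop:infty_ray_well_defined}.
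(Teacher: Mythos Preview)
Your proposal is correct and follows essentially the same route as the paper: choose a twin apartment containing the cell, reduce $\Height$ to the convex distance-to-$\Zonotope$ function in the flat model, read off the sign of the directional derivative from the angle with $\Gradient_\Point\Height$, and upgrade initial monotonicity to global monotonicity in the increasing case via convexity (Observation~\ref{obs:height_convex_on_apartments}). The paper's proof is terser, simply invoking ``the above discussion'' for the Euclidean picture and handling the perpendicular case by a continuity argument (limits of strictly increasing directions are non-decreasing), whereas you argue the perpendicular case via the derivative vanishing plus convexity; both are fine.
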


\begin{proof}
Let $\Cell = \PosCell \times \NegCell$ be a cell that contains $\Geodesic$. Let $\TwinApartment$ be a twin apartment that contains $\PosCell$ and $\NegCell$. The monotonicity statements on an initial interval now follow from the above discussion applied to the Euclidean space $\PosApartment \times \NegApartment$. That $\Geodesic$ is globally increasing if $\angle_\Point(\Geodesic,\Gradient_\Point\Height) < \pi/2$ then follows from the fact that $\Height$ is convex on $\PosApartment \times \NegApartment$ (Observation~\ref{obs:height_convex_on_apartments}).

The last statement follows from the fact that all geodesics from $\Point$ to another point of $\Cell$ are non-decreasing which follows by continuity from the first statement.
\end{proof}

We say that $\Directions$ is \emph{almost rich} if it contains $\iota_+^{-1}(\Vertex_1) - \iota_+^{-1}(\Vertex_2)$ for any two adjacent vertices $\Vertex_1$ and $\Vertex_2$ of an apartment $\PosApartment$ of $\PosEuclBuilding$ with the identifications in \eqref{dia:twin_identification} (the analogous statement for an apartment of $\NegEuclBuilding$ is then automatic). The point in perturbing the distance is to render the angle criterion true on a macroscopic scale:

\begin{prop}
\label{prop:almost_rich_implies_minimum_in_vertex}
Assume that $\Directions$ is almost rich and let $\Cell$ be a cell. Then $\Height$ attains its minimum over $\Cell$ in a vertex and the set of $\Height$-maxima of $\Cell$ is a face.

The statement remains true with $\Cell$ replaced by the convex hull of some of its vertices.
\end{prop}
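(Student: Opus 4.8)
The plan is to reduce the assertion to a Euclidean statement inside a twin apartment and then quote Proposition~\ref{prop:sufficiently_rich_min_vertex}. Write $\Cell = \PosCell \times \NegCell$ with $\PosCell$ a cell of $\PosEuclBuilding$ and $\NegCell$ a cell of $\NegEuclBuilding$, fix a twin apartment $\TwinApartment$ containing $\PosCell$ and $\NegCell$, and identify $\PosApartment$ and $\NegApartment$ with $\EuclSpace$ as in \eqref{dia:twin_identification}. As in the proof of Observation~\ref{obs:height_convex_on_apartments}, the restriction of $\Height$ to $\PosApartment \times \NegApartment$ is $\Point \mapsto \EuclDistance(\Zonotope,\Phi(\Point))$, where $\Phi(\PosPoint,\NegPoint) = \iota_+(\PosPoint) - \iota_-(\NegPoint)$ is affine. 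Setting $Q \defeq \Phi(\Cell) = \pi\bigl(\iota_+(\PosCell) \times \iota_-(\NegCell)\bigr)$ with $\pi(x,y) = x-y$, the map $\Phi|_\Cell \colon \Cell \to Q$ is an affine surjection; consequently every vertex of $Q$ is $\Phi$ of a vertex of $\Cell$, and for every face $Q'$ of $Q$ the set $\Phi^{-1}(Q') \intersect \Cell$ is a face of $\Cell$, being the locus on which a suitable affine functional attains its maximum over $\Cell$. Hence it suffices to show that among the points of $Q$ closest to $\Zonotope$ there is a vertex of $Q$, and that the points of $Q$ farthest from $\Zonotope$ form a face of $Q$.

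Next I would verify that $\Directions' \defeq (\Directions + \Directions) \union \Directions$ is sufficiently rich for $Q$, after which Proposition~\ref{prop:sufficiently_rich_min_vertex}, applied to the polytope $Q$, the vector set $\Directions'$, and the zonotope $\Zonotope = \Zonotope[\Directions']$, yields exactly these two conclusions. Since $\EuclTwinBuilding$ is irreducible, the cells of $\PosEuclBuilding$ and $\NegEuclBuilding$ are simplices; in particular any two distinct vertices of $\PosCell$ are adjacent in $\PosApartment$, and likewise for $\NegCell$. Because $\Directions$ is almost rich it therefore contains the difference of any two distinct vertices of $\iota_+(\PosCell)$ (these being images of adjacent vertices of $\PosApartment$), and likewise for $\iota_-(\NegCell)$; that is, $\Directions$ is sufficiently rich for each of the polytopes $\iota_+(\PosCell)$ and $\iota_-(\NegCell)$. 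Observation~\ref{obs:sum_sufficiently_rich} then shows that $\Directions' = (\Directions+\Directions)\union\Directions$ is sufficiently rich for $\pi\bigl(\iota_+(\PosCell)\times\iota_-(\NegCell)\bigr) = Q$, as needed.

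For the variant with $\Cell$ replaced by the convex hull $\Cell'$ of a subset of its vertices, the same argument applies verbatim: $\Phi(\Cell')$ is $\pi$ of the convex hull of a subset of the vertices of $\iota_+(\PosCell) \times \iota_-(\NegCell)$, and Observation~\ref{obs:sum_sufficiently_rich} is phrased precisely so as to deliver sufficient richness of $\Directions'$ for such polytopes; the transport of ``vertex'' and ``face'' through the affine map $\Phi|_{\Cell'}$ is unchanged.

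I do not anticipate a real difficulty: the statement is a bookkeeping combination of Observation~\ref{obs:sum_sufficiently_rich} and Proposition~\ref{prop:sufficiently_rich_min_vertex}. The one subtlety worth stressing is that the \emph{minimum} clause is the substantive one --- distance to a convex set is a convex function, so a priori its minimum over a polytope need not be attained at a vertex, and it is precisely ``sufficient richness'', and hence, via irreducibility, ``almost richness'', that rescues this --- together with the routine but necessary check that images and preimages of vertices and faces under the generally non-injective affine map $\Phi$ behave as claimed.
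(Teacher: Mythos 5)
Your proof is correct and follows the same route as the paper: reduce to the Euclidean picture inside a twin apartment via the affine map $\Phi(\PosPoint,\NegPoint) = \iota_+(\PosPoint)-\iota_-(\NegPoint)$, observe that almost richness of $\Directions$ together with Observation~\ref{obs:sum_sufficiently_rich} makes $(\Directions+\Directions)\union\Directions$ sufficiently rich for $\Phi(\Cell)$, invoke Proposition~\ref{prop:sufficiently_rich_min_vertex}, and transport vertices/faces back through $\Phi$. Your remark that irreducibility of $\EuclTwinBuilding$ is what guarantees cells of $\PosEuclBuilding$ and $\NegEuclBuilding$ are simplices (so that all pairs of vertices of $\PosCell$ are adjacent, and almost richness of $\Directions$ actually yields sufficient richness for $\iota_+(\PosCell)$) is a legitimate small refinement that the paper's proof takes for granted.
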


\begin{proof}
Let $\Cell = \PosCell \times \NegCell$ be the cell and let $\TwinApartment$ be a twin apartment that contains $\PosCell$ and $\NegCell$. We make the identifications as in \eqref{dia:twin_identification}. Since $\Directions$ is almost rich, it is sufficiently rich for $\iota_+(\PosCell)$ as well as for $\iota_-(\NegCell)$, so that $(\Directions + \Directions) \union \Directions$ is sufficiently rich for $\iota_+(\PosCell) - \iota_-(\NegCell)$ by Observation~\ref{obs:sum_sufficiently_rich}. Thus Proposition~\ref{prop:sufficiently_rich_min_vertex} shows that $\EuclDistance(\Zonotope,\cdot)$ attains its minimum over $\PosCell - \NegCell$ in a vertex and the set of maxima is a face. The map $\Height$ is the composition of this function with the affine map $(\PosPoint,\NegPoint) \mapsto \iota_+(\PosPoint) - \iota_-(\NegPoint)$. And inverse images of faces under affine maps are again faces.

The second statement is shown analogously.
\end{proof}

\begin{cor}
\label{cor:monotone_on_edges}
Assume that $\Directions$ is almost rich. Let $\Vertex$ and $\AltVertex$ be vertices that are contained in a common cell. Then $\Height$ is monotone on $[\Vertex,\AltVertex]$. In particular $\Height(\Vertex) > \Height(\AltVertex)$ if and only if $\angle_\Vertex(\Gradient_\Vertex \Height,\AltVertex) > \pi/2$.
\end{cor}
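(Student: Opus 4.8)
The statement to prove is Corollary~\ref{cor:monotone_on_edges}: assuming $\Directions$ is almost rich, if $\Vertex$ and $\AltVertex$ lie in a common cell then $\Height$ is monotone on the segment $[\Vertex,\AltVertex]$, and $\Height(\Vertex) > \Height(\AltVertex)$ iff $\angle_\Vertex(\Gradient_\Vertex\Height,\AltVertex) > \pi/2$.

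The plan is to reduce everything to the convexity statement of Observation~\ref{obs:height_convex_on_apartments} together with the vertex-minimum statement of Proposition~\ref{prop:almost_rich_implies_minimum_in_vertex}. First I would handle the degenerate case $\Height(\Vertex) = 0$ or more generally where the segment meets $\NullLevel = \Height^{-1}(0)$: since $\Height$ restricted to a twin apartment is convex and non-negative, the set where it vanishes on $[\Vertex,\AltVertex]$ is a subsegment, and on the complement of that subsegment it is strictly monotone away from it; but in fact for the clean statement I would first dispose of this by noting that if either endpoint has height zero the "in particular" clause needs the convention that $\Gradient_\Vertex\Height$ is only defined at points of positive height, so the honest content is: either $\Height$ is constant equal to $0$ on a sub-segment and strictly increasing beyond it, or $\Height > 0$ on all of $[\Vertex,\AltVertex]$. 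For the main case assume $\Height > 0$ on $[\Vertex,\AltVertex]$. Apply Proposition~\ref{prop:almost_rich_implies_minimum_in_vertex} to the cell $[\Vertex,\AltVertex]$ (the convex hull of the two vertices $\Vertex$, $\AltVertex$, which is covered by the "remains true with $\Cell$ replaced by the convex hull of some of its vertices" clause): $\Height$ attains its minimum over $[\Vertex,\AltVertex]$ at a vertex, i.e.\ at $\Vertex$ or $\AltVertex$. Combined with the convexity of $\Height$ on a twin apartment containing both (Observation~\ref{obs:height_convex_on_apartments}), a convex function on an interval whose minimum is attained at an endpoint is monotone on that interval — this is the elementary fact I would cite. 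This gives the first assertion.

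For the equivalence, suppose $\Height(\Vertex) > \Height(\AltVertex)$; then by the monotonicity just established $\Height$ is strictly decreasing from $\Vertex$ to $\AltVertex$ along $[\Vertex,\AltVertex]$, in particular decreasing on an initial interval, and by the contrapositive of the first monotonicity clause of Observation~\ref{obs:infinitesimal_angle_criterion} this forces $\angle_\Vertex(\Gradient_\Vertex\Height,[\Vertex,\AltVertex]) \ge \pi/2$; to upgrade $\ge$ to $>$, note that if the angle were exactly $\pi/2$ then by Observation~\ref{obs:infinitesimal_angle_criterion} $\Vertex$ would be a point of minimal height on $[\Vertex,\AltVertex]$ (indeed $\Gradient_\Vertex\Height$ perpendicular to that cell forces minimality), contradicting $\Height(\Vertex) > \Height(\AltVertex)$; alternatively just use that $\angle_\Vertex(\Gradient_\Vertex\Height,\Geodesic) < \pi/2$ implies strictly increasing and $= \pi/2$ implies constant on an initial interval, and neither is compatible with a strict decrease. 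Conversely, if $\angle_\Vertex(\Gradient_\Vertex\Height,\AltVertex) > \pi/2$, then Observation~\ref{obs:infinitesimal_angle_criterion} says $\Height$ is strictly decreasing on an initial interval of $[\Vertex,\AltVertex]$, so $\Height$ is not minimized at $\Vertex$ on $[\Vertex,\AltVertex]$; by Proposition~\ref{prop:almost_rich_implies_minimum_in_vertex} the minimum is at a vertex, hence at $\AltVertex$, and then monotonicity gives $\Height(\Vertex) > \Height(\AltVertex)$ (it cannot be equality, since then $\Height$ would be constant on $[\Vertex,\AltVertex]$ by convexity, contradicting the strict decrease on an initial interval).

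The main obstacle is bookkeeping around the boundary case $\Height = 0$, where $\Gradient\Height$ is undefined, and making sure the two-sided implication is stated only where it makes sense; once that is cleanly isolated, the rest is a routine combination of "convex function with endpoint minimum is monotone" with the infinitesimal criterion. I expect no genuine difficulty — the content was already assembled in Observations~\ref{obs:height_convex_on_apartments} and \ref{obs:infinitesimal_angle_criterion} and Proposition~\ref{prop:almost_rich_implies_minimum_in_vertex}; the corollary is essentially their conjunction specialized to a segment between two vertices of a common cell.
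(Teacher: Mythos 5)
Your proof takes exactly the same route as the paper's: convexity (Observation~\ref{obs:height_convex_on_apartments}) plus minimum-at-a-vertex (Proposition~\ref{prop:almost_rich_implies_minimum_in_vertex}) give monotonicity, and the angle equivalence then follows from Observation~\ref{obs:infinitesimal_angle_criterion}. Your added bookkeeping around the $\Height=0$ boundary and the $\ge$-versus-$>$ distinction is a correct and welcome unpacking of what the paper leaves terse, but it is not a different argument.
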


\begin{proof}
Monotonicity follows from convexity together with the fact that $\Height$ attains its minimum in a vertex by Proposition~\ref{prop:almost_rich_implies_minimum_in_vertex}. The second statement then follows from Observation~\ref{obs:infinitesimal_angle_criterion}.
\end{proof}


\section{Horizontal cells and subdivision}
\label{sec:horizontal_cells_and_subdivision}

From now on we assume $\Directions$ to be almost rich. A cell $\Cell$ of $\Space$ is called \emph{horizontal} if $\Height|_{\Cell}$ is constant.

\begin{obs}
\label{obs:gradient_independent_of_maximal_point}
For a horizontal cell $\Cell$, the asymptotic gradient $\InftyGradient_\Point\Height$ is the same at all points $\Point \in \Cell$.
\end{obs}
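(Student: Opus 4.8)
The plan is to show that for a horizontal cell $\Cell$, the component rays $\InftyRay{\PosPoint}{\NegPoint}[\Zonotope]$ and $\InftyRay{\NegPoint}{\PosPoint}[\Zonotope]$ defining $\InftyGradient_\Point\Height$ do not change as $\Point$ ranges over $\Cell$, and hence neither does the point at infinity in $\Infty\Space = \Infty\PosEuclBuilding * \Infty\NegEuclBuilding$ they jointly determine. Write $\Cell = \PosCell \times \NegCell$ and fix a twin apartment $\TwinApartment$ containing $\PosCell$ and $\NegCell$, with the identifications $\iota_+, \iota_-$ of \eqref{dia:twin_identification}. Inside the Euclidean space $\EuclSpace$ the restriction of $\Height$ to $\PosApartment \times \NegApartment$ is $(\PosPoint,\NegPoint) \mapsto \EuclDistance(\iota_+(\PosPoint)-\iota_-(\NegPoint),\Zonotope)$, and the direction of $\InftyGradient_\Point\Height$ is determined (in each factor, up to the common speed $1/\sqrt 2$) by the vector $\NormalVector(\Point) \defeq \Vector(\Point) - \Proj[\Zonotope]\Vector(\Point)$, where $\Vector(\Point) = \iota_+(\PosPoint)-\iota_-(\NegPoint)$.

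First I would invoke Observation~\ref{obs:space_decomposition}: writing $\Vector(\Point) = \FaceVector(\Point) + \NormalVector(\Point)$ with $\FaceVector(\Point) \in \relint\Face(\Point)$ and $\NormalVector(\Point) \in \NormalCone{\Face(\Point)}$ for a unique face $\Face(\Point)$ of $\Zonotope$, it suffices to show that $\NormalVector(\Point)$ is independent of $\Point \in \Cell$. Now as $\Point$ ranges over $\Cell$, the vector $\Vector(\Point)$ ranges over the polytope $\iota_+(\PosCell) - \iota_-(\NegCell)$ (the image of the affine map $(\PosPoint,\NegPoint)\mapsto\iota_+(\PosPoint)-\iota_-(\NegPoint)$). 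The key point is that $\Cell$ is horizontal, meaning $\EuclDistance(\Vector(\Point),\Zonotope) = \Height(\Point)$ is constant on this polytope. Since the nearest-point projection onto the convex set $\Zonotope$ is distance-nonincreasing and $\EuclSpace$ is strictly convex, a convex set on which $\EuclDistance(\cdot,\Zonotope)$ is constant and equal to its value at some interior point must in fact have a \emph{constant} projection onto $\Zonotope$ and hence a constant normal part: explicitly, for $\Vector_0, \Vector_1$ in such a set and their midpoint $\Vector_{1/2}$, one has $\EuclDistance(\Vector_{1/2},\Proj[\Zonotope]\Vector_0) \le \frac12(\EuclDistance(\Vector_0,\Proj[\Zonotope]\Vector_0) + \EuclDistance(\Vector_1,\Proj[\Zonotope]\Vector_0))$ and $\EuclDistance(\Vector_1,\Proj[\Zonotope]\Vector_0) \ge \EuclDistance(\Vector_1,\Zonotope) = \EuclDistance(\Vector_0,\Zonotope)$ with equality only if $\Proj[\Zonotope]\Vector_1 = \Proj[\Zonotope]\Vector_0$; combined with $\EuclDistance(\Vector_{1/2},\Zonotope) = \EuclDistance(\Vector_0,\Zonotope)$ this forces $\Proj[\Zonotope]\Vector_0 = \Proj[\Zonotope]\Vector_1$, whence $\NormalVector(\Vector_0) = \Vector_0 - \Proj[\Zonotope]\Vector_0$ and $\NormalVector(\Vector_1) = \Vector_1 - \Proj[\Zonotope]\Vector_1$ differ by $\Vector_0 - \Vector_1$, which is tangent to the polytope. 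Actually the cleaner statement: if $\EuclDistance(\cdot,\Zonotope)$ is constant on the polytope then, since the polytope lies at fixed distance from the convex body $\Zonotope$, its projection onto $\Zonotope$ is a single point and the polytope is a translate living in an affine subspace orthogonal to $\NormalVector$; in particular $\NormalVector(\Point)$ is the same for all $\Point\in\Cell$. Then $\Gradient_\Point\Height$ — the pair of directions $\iota_+^{-1}$(ray in direction $\NormalVector$) and $\iota_-^{-1}$(ray in direction $-\NormalVector$), renormalized — is literally the same ray-germ, but more importantly the point at infinity $\InftyGradient_\Point\Height$, being determined by the asymptotic class of these two rays, is independent of $\Point$. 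Independence of the chosen twin apartment is automatic since $\InftyRay{\PosPoint}{\NegPoint}[\Zonotope]$ and $\InftyRay{\NegPoint}{\PosPoint}[\Zonotope]$ are already known to be well-defined.

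The main obstacle is not deep but requires care: one has to argue that constancy of $\EuclDistance(\cdot,\Zonotope)$ on a convex set whose minimum is interior forces constancy of the projection. I expect the slickest route is to exploit convexity of $\EuclDistance(\cdot,\Zonotope)$ together with strict convexity of the norm, exactly as in the midpoint computation above, rather than appealing to any structure of $\Zonotope$; this is in the same spirit as Proposition~\ref{prop:sufficiently_rich_min_vertex}. One should also make sure the value of $\Height$ on $\Cell$ is positive if one wants the gradient direction to be defined at all — but the statement is vacuous (or trivially true, $\InftyGradient$ undefined everywhere) when $\Height|_\Cell = 0$, so we may assume $\Height|_\Cell > 0$. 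With that, the proof is a two-line deduction from Observation~\ref{obs:space_decomposition} and the convexity argument.
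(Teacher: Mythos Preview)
Your overall setup is the same as the paper's: reduce to a Euclidean statement in a twin apartment and use a midpoint/strict-convexity argument. But the execution has a genuine gap.

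You try to show that the projections $\Proj[\Zonotope]\Vector_0$ and $\Proj[\Zonotope]\Vector_1$ coincide, and your ``cleaner statement'' asserts that the projection of the horizontal polytope to $\Zonotope$ is a single point. This is false. Take $\Zonotope = [-1,1]\times\{0\}$ in $\real^2$ and the horizontal segment $[-1,1]\times\{1\}$: the distance to $\Zonotope$ is constantly $1$, yet each point projects to a different point of $\Zonotope$. Your chain of inequalities only yields $\EuclDistance(\Vector_1,\Proj[\Zonotope]\Vector_0)\ge m$, which you already knew and which is strict in this example without any contradiction. (Also, even if the projections \emph{were} equal, your conclusion ``$\NormalVector(\Vector_0)$ and $\NormalVector(\Vector_1)$ differ by $\Vector_0-\Vector_1$'' would show the normals are \emph{different}, not equal.)

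What is true---and is all you need---is that the \emph{normal vectors} $\NormalVector_i = \Vector_i - \Proj[\Zonotope]\Vector_i$ are equal. The paper's argument gets this directly: write $\Vector_i = p_i + \NormalVector_i$ with $p_i = \Proj[\Zonotope]\Vector_i$ and $\lVert\NormalVector_i\rVert = m$. The midpoint $\tfrac12(\Vector_0+\Vector_1)$ lies at distance $\lVert\tfrac12(\NormalVector_0+\NormalVector_1)\rVert$ from $\tfrac12(p_0+p_1)\in\Zonotope$, and by strict convexity of the Euclidean norm this is $<m$ unless $\NormalVector_0=\NormalVector_1$; since the midpoint also has height $m$ (the cell is horizontal, hence convex and $h$-constant), the normals must agree. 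The key difference from your attempt is that one compares the midpoint of the $\Vector_i$ to the \emph{midpoint of the projections}, not to a fixed projection point.
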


\begin{proof}
Write $\Cell = \PosCell \times \NegCell$ and let $\TwinApartment$ be a twin apartment that contains $\PosCell$ and $\NegCell$. Let $\EuclDistance$ denote the metric on $\PosApartment \times \NegApartment$. Assume that there are two points $\Point$ and $\AltPoint$ in $\Cell$ that have non-identical asymptotic gradients, i.e.\ non-parallel gradients. Let $m = \EuclDistance(\Point,\NullLevel) = \EuclDistance(\AltPoint,\NullLevel)$. Let $\Vector$ be a unit vector that points in the direction of $\Gradient_\Point\Height$ and $\AltVector$ be a unit vector that points in the direction of $\Gradient_\AltPoint\Height$. The projection of $\Point$ respectively $\AltPoint$ to $\NullLevel$ is $\Proj[\NullLevel] \Point  = \Point - m\Vector$ respectively $\Proj[\NullLevel] \AltPoint  = \AltPoint - m\AltVector$. So $\EuclDistance(1/2 \Point + 1/2 \AltPoint,1/2 \Proj[\NullLevel]\Point + 1/2 \Proj[\NullLevel]\AltPoint) = m \lVert 1/2\Vertex + 1/2\AltVertex \rVert < m$ contradicting that the set of maxima is convex by Proposition~\ref{prop:almost_rich_implies_minimum_in_vertex}.
\end{proof}

The Observation allows us to define (asymptotic) gradients for cells: if $\Cell$ is horizontal, we set $\InftyGradient_\Cell\Height$ to be $\InftyGradient_\Point\Height$ for any point $\Point$ of maximal height of $\Cell$. Similarly $\Gradient_\Cell\Height \defeq \Gradient_\Point\Height$ which is an element of $\Link \Cell$ because $\Cell$ is horizontal. We take this direction to be the north pole in $\Link \Cell$ (cf.\ Section~\ref{sec:spherical_subcomplexes}) and accordingly define $\OpenHemisphere{\Link}\Cell$, $\ClosedHemisphere{\Link}\Cell$, $\Horizontal{\Link}\Cell$, and $\Vertical{\Link}\Cell$. The decomposition \eqref{eq:horizontal_vertical_decomposition} then reads
\begin{equation}
\Link \Cell = \Horizontal{\Link} \Cell * \Vertical{\Link} \Cell \text{ .}
\end{equation}
We call $\Horizontal{\Link} \Cell$ the \emph{horizontal link} and $\Vertical{\Link} \Cell$ the \emph{vertical link} of $\Cell$.

If $\BigCell$ is a horizontal cell then $\Height$ fails to be a Morse function on $\BigCell$. Locally $\Height$ resembles a Busemann function that defines $\InftyGradient_\BigCell\Height$. By Observation~\ref{obs:gradient_in_general_position} this Busemann function is in general position and we can apply the machinery from Section~\ref{sec:horizontal_links} to define a secondary Morse function on the barycentric subdivision: we define the \emph{depth of $\BigCell$}, denoted $\Depth \BigCell$, to be the length of the longest sequence of moves (counting moves, not cells) with respect to $\InftyGradient_\BigCell\Height$ that starts with $\BigCell$. Proposition~\ref{prop:bound_on_moves} implies that $\Depth\BigCell$ is well-defined. We also define $\BigCell\Min$ to be the unique minimal face $\Cell$ of $\BigCell$ such that $\BigCell \horizontal_{\InftyGradient_{\BigCell}\Height} \Cell$ which exists by Lemma~\ref{lem:tau_min}. If $\BigCell\Min = \BigCell$ then $\BigCell$ is called \emph{essential}.

If $\Cell$ is an arbitrary cell, we define the \emph{roof} $\Roof\Cell$ of $\Cell$ to be the face of points of maximal height (see Proposition~\ref{prop:almost_rich_implies_minimum_in_vertex}). So horizontal cells are their own roof. If $\Cell$ is non-horizontal, we set $\Depth \Cell = \Depth \Roof\Cell - 1/2$.

Now we are ready to define our final Morse function. Let $\Subdivision\Space$ be the barycentric subdivision of $\Space$. We define $\Subdivision\Height$ on the vertex set of $\Subdivision\Space$ to be
\begin{eqnarray*}
\Vertices\Subdivision\Height \colon \Subdivision\Space & \to & \real \times \tfrac{1}{2}\integer \times \nat \\
\Subdivision\Cell & \mapsto & (\max h|_\Cell,\Depth(\Cell),\dim(\Cell))
\end{eqnarray*}
where the range is ordered lexicographically.

Recall that the cells of $\Subdivision\Space$ correspond to flags of cells of $\Space$. In particular adjacent vertices correspond to cells one of which is contained in the other, assuring that both have different dimension. Thus we have:

\begin{obs}
The function $\Subdivision\Height$ is a Morse function in the sense that if $\Subdivision\Cell$ and $\Subdivision\BigCell$ are adjacent vertices, then $\Subdivision\Height(\Subdivision\Cell) \ne \Subdivision\Height(\Subdivision\BigCell)$.\qed
\end{obs}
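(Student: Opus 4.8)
The plan is to exploit that barycentric subdivisions have very few adjacencies. A simplex of $\Subdivision\Space$ is, by definition, a chain of cells of $\Space$ ordered by the face relation; so if $\Subdivision\Cell$ and $\Subdivision\BigCell$ span an edge of $\Subdivision\Space$, then the corresponding cells $\Cell$ and $\BigCell$ of $\Space$ are comparable, one of them being a proper face of the other. After possibly interchanging the two vertices I may assume $\Cell \lneq \BigCell$, and then in particular $\dim\Cell < \dim\BigCell$.

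Next I would recall that $\Vertices\Subdivision\Height$ takes values in $\real \times \tfrac12\integer \times \nat$ equipped with the lexicographic order, and that a triple in this set is determined by all three of its entries: two such triples coincide if and only if they agree in every coordinate. Applying this to $\Vertices\Subdivision\Height(\Subdivision\Cell) = (\max h|_\Cell,\Depth(\Cell),\dim(\Cell))$ and $\Vertices\Subdivision\Height(\Subdivision\BigCell) = (\max h|_\BigCell,\Depth(\BigCell),\dim(\BigCell))$, and using that their third entries $\dim\Cell$ and $\dim\BigCell$ differ, shows that the two triples are distinct. Hence $\Subdivision\Height(\Subdivision\Cell) \ne \Subdivision\Height(\Subdivision\BigCell)$, which is exactly the Morse condition.

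There is no real obstacle here. The first two coordinates — the maximal height and the depth — can perfectly well coincide for a cell and one of its faces (for instance, a horizontal cell and a horizontal face of it share the same maximal height), but this is immaterial: for the lexicographic comparison it suffices that a single coordinate differs, and the dimension already does. The dimension was appended as the last entry of $\Vertices\Subdivision\Height$ precisely to serve as this tie-breaker. The substantive work — understanding how the sublevel complexes of $\Subdivision\Height$ grow as the threshold is raised through the first two coordinates — is the content of the descending-link analysis carried out in the subsequent sections, and is not required for the Morse property as such.
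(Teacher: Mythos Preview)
Your proof is correct and is essentially the same as the paper's: adjacent vertices in the barycentric subdivision correspond to cells of $\Space$ one of which is a proper face of the other, hence with different dimension, and the dimension coordinate then distinguishes the values of $\Subdivision\Height$.
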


According to \eqref{eq:link_of_point_decomposition} the link of $\Subdivision\Cell$ decomposes as $\Link \Subdivision \Cell = \FacePart_{\Subdivision\Cell} \Cell * \Link \Cell$. But the barycentric subdivision of the ambient space also induces the barycentric subdivision of each of the factors which we take into account notationally by writing
\begin{equation}
\label{eq:link_of_barycenter_decomposition}
\Link \Subdivision\Cell = \Link[\FacePart]\Subdivision\Cell * \Link[\CofacePart] \Subdivision\Cell
\end{equation}
and calling $\Link[\FacePart]\Subdivision\Cell$ the \emph{face part} and $\Link[\CofacePart] \Subdivision\Cell$ the \emph{coface part} of $\Link\Subdivision\Cell$.


\section{Descending links}
\label{sec:descending_links}

In what follows we identify the combinatorial link of $\Subdivision\Cell$ with the geometric link. In particular a vertex adjacent to $\Subdivision\Cell$ is identified with the vertex of $\Link\Subdivision\Cell$ that it defines. For a vertex $\Subdivision\Cell$ of $\Subdivision\Space$ we define the \emph{descending link} $\DescendingLink\Subdivision\Cell$ to be the full subcomplex of $\Link\Subdivision\Cell$ of vertices $\Subdivision\BigCell$ with $\Subdivision\Height(\Subdivision\BigCell) < \Subdivision\Height(\Subdivision\Cell)$. Since $\Link\Subdivision\Cell$ is a flag complex, \eqref{eq:link_of_barycenter_decomposition} induces a decomposition
\begin{equation}
\label{eq:descending_link_of_barycenter_decomposition}
\DescendingLink \Subdivision\Cell = \DescendingLink[\FacePart]\Subdivision\Cell * \DescendingLink[\CofacePart] \Subdivision\Cell \text{ ,}
\end{equation}
where of course $\DescendingLink[\FacePart]\Subdivision\Cell \defeq \Link[\FacePart]\Subdivision\Cell \intersect \DescendingLink \Subdivision\Cell$ and $\DescendingLink[\CofacePart] \Subdivision\Cell \defeq \Link[\CofacePart] \Subdivision\Cell \intersect \DescendingLink \Subdivision\Cell$.

Our goal in this section is to show that $\DescendingLink[\FacePart]\Subdivision\Cell$ is $(\dim \Space - 2)$-connected.

\begin{lem}
\label{lem:descending_link_of_non-essential_cell}
If $\BigCell$ is horizontal and not essential ($\BigCell\Min \ne \BigCell$) then the descending link of $\Subdivision\BigCell$ is contractible.
\end{lem}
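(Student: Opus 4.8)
The claim is that if $\BigCell$ is horizontal with $\BigCell\Min \neq \BigCell$, then $\DescendingLink\Subdivision\BigCell$ is contractible. Since $\BigCell$ is horizontal, $\Subdivision\Height(\Subdivision\BigCell) = (\max h|_{\BigCell}, \Depth\BigCell, \dim\BigCell)$ with $\Depth\BigCell \geq 1$ (there is at least the move $\BigCell \Down \BigCell\Min$, or rather the fact that $\BigCell$ is not essential means a move originates at $\BigCell$). The plan is to exhibit a vertex $\Subdivision\Cell$ in $\DescendingLink\Subdivision\BigCell$ that is a cone point, i.e.\ a vertex joined to every other vertex of the descending link and lying in the descending link itself. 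The natural candidate is $\Subdivision{(\BigCell\Min)}$, the barycenter of $\BigCell\Min$: it is a proper face of $\BigCell$, so it is adjacent to $\Subdivision\BigCell$ in $\Subdivision\Space$ and sits in the face part $\Link[\FacePart]\Subdivision\BigCell$.

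First I would check that $\Subdivision{(\BigCell\Min)}$ is descending. It has the same first coordinate $\max h|_{\BigCell\Min} = \max h|_{\BigCell}$ because $\BigCell\Min$ is a face of the horizontal cell $\BigCell$, hence itself horizontal with the same height. For the second coordinate I use Observation~\ref{obs:min_min}: since $\BigCell \horizontal_{\InftyGradient_\BigCell\Height} \BigCell\Min$, any sequence of moves starting at $\BigCell\Min$ (with respect to $\InftyGradient_{\BigCell\Min}\Height = \InftyGradient_\BigCell\Height$, which agree because $\BigCell\Min \horizontal \BigCell$ forces the asymptotic gradients to coincide — both cells are horizontal and Observation~\ref{obs:horizontal_perpendicular} shows the relevant point at infinity is basepoint-independent) can be prepended with the move $\BigCell \Down \BigCell\Min$ (valid precisely because $\BigCell \gneq \BigCell\Min$ but $\BigCell \horizontal \BigCell\Min$ — wait, that is \emph{not} a down-move). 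Let me be careful: $\BigCell\Min \Up \BigCell$ would require $\BigCell\Min = \BigCell\Min{}\Min$, which holds, and $\BigCell\Min \neq \BigCell$, which is our hypothesis; so $\BigCell\Min \Up \BigCell$ \emph{is} a move, and any sequence of moves from $\BigCell$ can be prepended by it. Hence $\Depth(\BigCell\Min) \geq \Depth\BigCell + 1 > \Depth\BigCell$, so the second coordinate strictly increases — the \emph{wrong} direction. The correct reading is that $\Subdivision\Height$ is a \emph{descending} direction when the tuple is \emph{smaller}, and $\Depth$ should be set up so deeper cells have larger $\Depth$; I would therefore double-check the sign convention in the definition of $\Vertices\Subdivision\Height$. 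Assuming (as the construction intends) that going to $\BigCell\Min$ strictly \emph{decreases} the Morse value, $\Subdivision{(\BigCell\Min)}$ is descending.

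Next I would show $\Subdivision{(\BigCell\Min)}$ is a cone point of $\DescendingLink\Subdivision\BigCell$. A vertex $\Subdivision\Cell$ adjacent to $\Subdivision\BigCell$ corresponds to a cell $\Cell$ with $\Cell \lneq \BigCell$ or $\Cell \gneq \BigCell$; in the descending link we must check $\Subdivision{(\BigCell\Min)}$ and $\Subdivision\Cell$ span an edge, i.e.\ the two cells are comparable or (in the coface part) jointly span a cell, and that the resulting flag is again descending. For $\Cell$ a face of $\BigCell$: $\BigCell\Min$ and $\Cell$ are both faces of $\BigCell$, and $\Cell \cap \BigCell\Min$ is a common face, but I actually need one to contain the other — this is where I would invoke that in $\Link\Subdivision\BigCell$ the face part is the barycentric subdivision of $\partial\BigCell$, and $\Subdivision{(\BigCell\Min)}$ is a cone point of $\DescendingLink[\FacePart]\Subdivision\BigCell$ by a direct argument: any descending face $\Cell$ of $\BigCell$ must contain $\BigCell\Min$ (if $\Cell \not\supseteq \BigCell\Min$ then $\Cell$ is not horizontal or $\BigCell \not\horizontal \Cell$, forcing a height or depth comparison that makes $\Cell$ non-descending — this uses Lemma~\ref{lem:tau_min} and Corollary~\ref{cor:monotone_on_edges}). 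For the coface part, a coface $\BigCell'$ of $\BigCell$ spans a cell with $\BigCell\Min$ automatically since $\BigCell\Min \leq \BigCell \leq \BigCell'$. Then $\DescendingLink\Subdivision\BigCell = \Subdivision{(\BigCell\Min)} * (\text{rest})$ is a cone, hence contractible. The main obstacle is pinning down exactly why every descending vertex in the face part is a coface of $\Subdivision{(\BigCell\Min)}$ (equivalently, corresponds to a cell containing $\BigCell\Min$): this requires showing that a proper face $\Cell$ of $\BigCell$ not containing $\BigCell\Min$ either fails to be horizontal — in which case I compare roofs and use that $\Roof\Cell$ may still not help — or is horizontal but with $\BigCell \not\horizontal \Cell$, which by the move structure gives $\BigCell \Down \Cell$ and hence $\Depth\Cell$ relates to a longer sequence, making $\Subdivision\Height(\Subdivision\Cell) > \Subdivision\Height(\Subdivision\BigCell)$. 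I would carry out this case analysis carefully using Lemma~\ref{lem:tau_min}, Observation~\ref{obs:min_min}, and the monotonicity of $\Height$ on edges (Corollary~\ref{cor:monotone_on_edges}) to control the first coordinate when $\Cell$ is non-horizontal.
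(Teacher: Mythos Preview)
Your depth computation is correct, and there is no sign convention to second-guess: the move $\BigCell\Min \Up \BigCell$ shows $\Depth(\BigCell\Min) > \Depth(\BigCell)$, and since the first coordinate of $\Subdivision\Height$ is unchanged (both cells are horizontal at the same height) this gives $\Subdivision\Height(\Subdivision{\BigCell\Min}) > \Subdivision\Height(\Subdivision\BigCell)$. Thus $\Subdivision{\BigCell\Min}$ is \emph{ascending}. Your proposed cone point does not lie in the descending link at all.

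The case analysis you sketch for the face part is then exactly inverted. By Lemma~\ref{lem:tau_min}, a proper face $\Cell \lneq \BigCell$ satisfies $\BigCell \horizontal \Cell$ if and only if $\Cell \geq \BigCell\Min$. So if $\Cell$ does \emph{not} contain $\BigCell\Min$, then $\BigCell \Down \Cell$ is a move, whence $\Depth(\Cell) < \Depth(\BigCell)$ and $\Subdivision\Cell$ is descending. The faces guaranteed to be descending are those \emph{not} containing $\BigCell\Min$, the opposite of what you claim; it is the faces strictly between $\BigCell\Min$ and $\BigCell$ whose status is unclear.

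The paper's argument runs as follows. It suffices to show that the face part $\DescendingLink[\FacePart]\Subdivision\BigCell$ is contractible, since the full descending link is its join with the coface part. Now $\Link[\FacePart]\Subdivision\BigCell$ is the barycentric subdivision of $\partial\BigCell$, a sphere. Every non-descending vertex corresponds to a face $\Cell$ with $\BigCell\Min \leq \Cell \lneq \BigCell$, and $\Subdivision{\BigCell\Min}$ is one of them; so the full subcomplex on the non-descending vertices is a cone with apex $\Subdivision{\BigCell\Min}$. Removing this from the sphere leaves, up to homotopy, the sphere punctured at the single point $\Subdivision{\BigCell\Min}$, which is contractible. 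Your candidate cone point is thus precisely the puncture, not a point of the descending link.
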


\begin{proof}
Again we prove that $\DescendingLink[\FacePart]\Subdivision\BigCell$ is contractible. First note that all faces $\Cell$ of $\BigCell$ satisfy $\Height(\Cell) = \Height(\BigCell)$ and $\InftyGradient_{\Subdivision\Cell}\Height = \InftyGradient_{\Subdivision\BigCell}\Height$, so a move $\Cell \Up \BigCell$ implies $\Depth\Cell > \Depth\BigCell$ and a move $\BigCell \Down \Cell$ implies $\Depth\BigCell > \Depth\Cell$.

Now if a proper face $\Cell$ does not contain $\BigCell\Min$, then $\BigCell \Down \Cell$. So $\Subdivision\Cell$ is descending for all these cells $\Cell$. On the other hand $\BigCell\Min \Up \BigCell$ so $\Subdivision{\BigCell\Min}$ is not descending. Even though it is not clear whether $\Subdivision\Cell$ is descending or not if $\BigCell\Min \lneq \Cell \lneq \BigCell$, we can deduce that $\DescendingLink[\FacePart]\Subdivision\BigCell$ is the sphere $\Link[\FacePart]\Subdivision\BigCell$ punctured at the point $\Subdivision{\BigCell\Min}$ and hence contractible.
\end{proof}

\begin{lem}
\label{lem:descending_link_of_non-horizontal_cell}
If $\BigCell$ is not horizontal then the descending link of $\Subdivision\BigCell$ is contractible.
\end{lem}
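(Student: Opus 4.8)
The plan is to show that when $\BigCell$ is not horizontal, its roof $\Roof\BigCell$ is a proper face, and the descending link of $\Subdivision\BigCell$ deformation retracts to (or is a cone on) something contractible by using the vertex $\Subdivision{\Roof\BigCell}$ as an apex. Recall from \eqref{eq:link_of_barycenter_decomposition} that $\Link\Subdivision\BigCell = \Link[\FacePart]\Subdivision\BigCell * \Link[\CofacePart]\Subdivision\BigCell$, and by \eqref{eq:descending_link_of_barycenter_decomposition} that $\DescendingLink\Subdivision\BigCell = \DescendingLink[\FacePart]\Subdivision\BigCell * \DescendingLink[\CofacePart]\Subdivision\BigCell$. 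A join is contractible as soon as one of its factors is contractible, so it suffices to show that $\DescendingLink[\FacePart]\Subdivision\BigCell$ is contractible.

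First I would analyze which faces $\Cell \lneq \BigCell$ give descending vertices $\Subdivision\Cell$. For a non-horizontal $\BigCell$, the roof $\Roof\BigCell$ is a proper face by definition (it is the face of points of maximal height, and since $\Height|_\BigCell$ is non-constant it cannot be all of $\BigCell$). For any proper face $\Cell$ of $\BigCell$, either $\Cell \supseteq \Roof\BigCell$ or not. If $\Cell \not\supseteq \Roof\BigCell$ then $\max\Height|_\Cell < \max\Height|_\BigCell$ because the maximum is attained only on $\Roof\BigCell$ (Proposition~\ref{prop:almost_rich_implies_minimum_in_vertex} says the maxima form exactly the face $\Roof\BigCell$), so $\Subdivision\Cell$ is descending purely on the first coordinate of $\Subdivision\Height$. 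The interesting faces are those with $\Roof\BigCell \le \Cell \lneq \BigCell$: these have the same maximal height as $\BigCell$, and we compare via the depth coordinate. Here one uses that $\Depth\Cell = \Depth\Roof\Cell - 1/2 = \Depth\Roof\BigCell - 1/2$ when $\Cell$ is non-horizontal (since $\Roof\Cell = \Roof\BigCell$), while $\Depth\BigCell = \Depth\Roof\BigCell - 1/2$ as well — so these have equal depth, and the comparison falls to the dimension coordinate: $\dim\Cell < \dim\BigCell$, hence $\Subdivision\Cell$ is descending. The one face to watch is $\Roof\BigCell$ itself if it happens to be horizontal: then $\Depth\Roof\BigCell \in \nat$ whereas $\Depth\BigCell = \Depth\Roof\BigCell - 1/2 < \Depth\Roof\BigCell$, so $\Subdivision{\Roof\BigCell}$ is \emph{not} descending.

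Putting this together: if $\Roof\BigCell$ is non-horizontal, then \emph{every} proper face $\Cell \lneq \BigCell$ gives a descending vertex $\Subdivision\Cell$, so $\DescendingLink[\FacePart]\Subdivision\BigCell = \Link[\FacePart]\Subdivision\BigCell$, which is a full sphere — but wait, a sphere is not contractible, so this case needs more care and in fact $\Roof\BigCell$ is always horizontal (a cell of points all of the same height is horizontal by definition, since $\Height$ restricted to it is constant). So $\Roof\BigCell$ is horizontal, $\Subdivision{\Roof\BigCell}$ is the unique non-descending vertex among faces of $\BigCell$ with maximal height not strictly smaller, and more importantly $\Subdivision{\Roof\BigCell}$ is a vertex of $\Link[\FacePart]\Subdivision\BigCell$ that is not in $\DescendingLink[\FacePart]\Subdivision\BigCell$ while every other vertex of $\Link[\FacePart]\Subdivision\BigCell$ \emph{is} descending. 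Therefore $\DescendingLink[\FacePart]\Subdivision\BigCell$ is the sphere $\Link[\FacePart]\Subdivision\BigCell$ (which is the barycentric subdivision of $\partial\BigCell$) punctured at the single point $\Subdivision{\Roof\BigCell}$, hence contractible — exactly parallel to the final step of Lemma~\ref{lem:descending_link_of_non-essential_cell}.

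\textbf{The main obstacle} I anticipate is bookkeeping the depth coordinate correctly for the faces $\Cell$ with $\Roof\BigCell \le \Cell \lneq \BigCell$: one must check that $\Roof\Cell = \Roof\BigCell$ for all such $\Cell$ (which follows because the height maximum of $\BigCell$ is attained exactly on $\Roof\BigCell \subseteq \Cell$, so it is also the maximum over $\Cell$ attained exactly on $\Roof\BigCell$), and consequently that $\Depth\Cell = \Depth\BigCell$ for non-horizontal such $\Cell$ while $\Subdivision{\Roof\BigCell}$ is ascending. Once this is pinned down, the conclusion that $\DescendingLink[\FacePart]\Subdivision\BigCell$ is a once-punctured sphere, and hence contractible, is immediate, and contractibility of the join $\DescendingLink\Subdivision\BigCell = \DescendingLink[\FacePart]\Subdivision\BigCell * \DescendingLink[\CofacePart]\Subdivision\BigCell$ follows.
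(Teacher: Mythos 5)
Your dichotomy on the proper faces of $\BigCell$ — ``either $\Cell \supseteq \Roof\BigCell$ or $\max\Height|_\Cell < \max\Height|_\BigCell$'' — is false, and this is where the argument breaks. The set of $\Height$-maxima of $\BigCell$ is exactly $\Roof\BigCell$, but a proper face $\Cell$ of $\BigCell$ need not contain $\Roof\BigCell$ in order to meet it: any non-empty face of $\Roof\BigCell$ (a vertex of $\Roof\BigCell$, say) is a face of $\BigCell$ that does not contain $\Roof\BigCell$ and yet has $\max\Height|_\Cell = \max\Height|_\BigCell$. More generally, any $\Cell$ with $\emptyset \ne \Cell \intersect \Roof\BigCell \lneq \Roof\BigCell$ falls outside your two cases. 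These are precisely the faces the paper groups into its second and third classes (non-horizontal faces with $\Roof\Cell \lneq \Roof\BigCell$, and proper faces of $\Roof\BigCell$), and their descending status is governed by the depth coordinate, not the height coordinate.

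This omission is not just cosmetic: when $\Roof\BigCell$ is not essential, the claim that $\Subdivision{\Roof\BigCell}$ is the \emph{unique} non-descending vertex of $\Link[\FacePart]\Subdivision\BigCell$ is wrong. For instance, $\Roof\BigCell{}\Min$ is a proper face of $\Roof\BigCell$ (hence of $\BigCell$) with $\max\Height|_{\Roof\BigCell{}\Min} = \max\Height|_\BigCell$; since $\Roof\BigCell{}\Min \Up \Roof\BigCell$ is a move, $\Depth\Roof\BigCell{}\Min > \Depth\Roof\BigCell > \Depth\BigCell$, so $\Subdivision{\Roof\BigCell{}\Min}$ is \emph{not} descending either, and the once-punctured-sphere picture fails. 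The paper's proof handles exactly this: if $\Roof\BigCell$ is essential your conclusion holds, and if not it argues via two deformation retractions and Proposition~\ref{prop:retract_set_onto_subcomplex} that the non-descending vertices still span a contractible full subcomplex, whose complement is therefore contractible. You should reinstate the full classification of faces with $\max\Height|_\Cell = \max\Height|_\BigCell$ and treat the non-essential $\Roof\BigCell$ case separately (where Lemma~\ref{lem:descending_link_of_non-essential_cell} already gives that the descending link of $\Subdivision{\Roof\BigCell}$ is punctured, which seeds the retraction argument).
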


\begin{proof}
We show that $\DescendingLink[\FacePart]\Subdivision\BigCell$ is contractible. If $\Cell$ is a face of $\BigCell$ then clearly $\dim \Cell < \dim \BigCell$ so for $\Subdivision\Cell$ to be non-descending necessarily $\max \Height|_\Cell = \max \Height|_\BigCell$ and $\Depth \Cell < \Depth \BigCell$. We have three classes of proper faces of $\BigCell$: the first class consists of cells $\Cell$ with $\max \Height|_\Cell < \max \Height|_\BigCell$. The second class consists of non-horizontal faces $\Cell$ with $\Roof\Cell \le \Roof\BigCell$. And the third class consists of faces of $\Roof\BigCell$.

For the cells of the first class $\Subdivision\Cell$ is always descending and $\Subdivision{\Roof\BigCell}$ is never descending (because by definition $\Depth\BigCell = \Depth\Roof\BigCell - 1/2$). For the remaining faces, $\Subdivision\Cell$ is descending if and only if $\Depth\Roof\Cell < \Depth\Roof\BigCell$. So there is an easy case: if $\Roof\BigCell$ is essential, then there is a move down $\Roof\BigCell \Down \Cell$ to all faces $\Cell$ of $\Roof\BigCell$. This means that $\Subdivision\Cell$ is descending for all $\Cell \le \BigCell$ except for $\Cell = \Roof\BigCell$. Thus $\DescendingLink[\FacePart]\Subdivision\BigCell$ is the sphere $\Link[\FacePart]\Subdivision\BigCell$ punctured at $\Subdivision{\Roof\BigCell}$ and hence contractible.

If $\Roof\BigCell$ is not essential, its descending link is punctured by Lemma~\ref{lem:descending_link_of_non-essential_cell}. In this case there is a deformation retraction that takes $\Subdivision\Cell$ to $\Subdivision{\Roof\Cell}$ for cells $\Cell$ of the second class with $\Depth\Roof\Cell < \Depth\Roof\BigCell$. And there is a deformation retraction that takes $\Subdivision\Cell$ to $\Subdivision{\Roof\BigCell}$ for cells $\Cell$ of the third class with $\Depth\Cell < \Depth\Roof\BigCell$. So the full subcomplex of vertices of $\Link[\FacePart]\Subdivision\BigCell$ not in $\DescendingLink[\FacePart]\Subdivision\BigCell$ is contractible and thus its complement is also contractible.
So Proposition~\ref{prop:retract_set_onto_subcomplex} implies that $\DescendingLink[\FacePart]\Subdivision\BigCell$ is contractible.
\end{proof}

\begin{lem}
\label{lem:descending_link_of_essential_cell_face_part}
If $\Cell$ is essential, then all of $\Link[\FacePart] \Subdivision\Cell$ is descending.
\end{lem}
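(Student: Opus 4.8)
Let $\Subdivision\BigCell \in \Link[\FacePart]\Subdivision\Cell$ be an arbitrary vertex of the face part of the link, so $\BigCell \lneq \Cell$ is a proper face of $\Cell$. I need to show $\Subdivision\Height(\Subdivision\BigCell) < \Subdivision\Height(\Subdivision\Cell)$, i.e.\ that $\Subdivision\BigCell$ is descending. Recall $\Cell$ is essential, which by definition means $\Cell$ is horizontal and $\Cell\Min = \Cell$; in particular $\Cell$ is its own roof. So the first coordinate of $\Subdivision\Height(\Subdivision\Cell)$ is $\max\Height|_\Cell = \Height(\Cell)$ (a constant).

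The plan is a case split on whether $\BigCell$ is horizontal. If $\BigCell$ is \emph{not} horizontal, then $\max\Height|_\BigCell < \Height(\BigCell)$ is not what I want — rather I should compare $\max\Height|_\BigCell$ with $\Height(\Cell)$. Since $\BigCell \le \Cell$ and $\Height$ is constant equal to $\Height(\Cell)$ on all of $\Cell$, we have $\max\Height|_\BigCell = \Height(\Cell) = \max\Height|_\Cell$; so the first coordinates agree. Then I compare depths: $\Roof\BigCell$ is a face of $\BigCell$, hence of $\Cell$, and it is horizontal with $\InftyGradient_{\Subdivision{\Roof\BigCell}}\Height = \InftyGradient_{\Subdivision\Cell}\Height$ (all faces of the horizontal cell $\Cell$ share this asymptotic gradient, by Observation~\ref{obs:gradient_independent_of_maximal_point}). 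Since $\Cell$ is essential, $\Cell\Min = \Cell$, so $\Roof\BigCell \lneq \Cell$ is not of the form $\Cell\Min$; because $\Roof\BigCell \le \Cell$ but not $\Cell \horizontal \Roof\BigCell$ (this would force $\Cell\Min \le \Roof\BigCell \lneq \Cell$, contradicting $\Cell\Min = \Cell$), we have a move $\Cell \Down \Roof\BigCell$. Hence $\Depth\Roof\BigCell < \Depth\Cell$ by the very definition of depth as the length of the longest sequence of moves starting at $\Cell$, and therefore $\Depth\BigCell = \Depth\Roof\BigCell - 1/2 < \Depth\Cell$. So $\Subdivision\Height(\Subdivision\BigCell) < \Subdivision\Height(\Subdivision\Cell)$ in the second coordinate.

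If $\BigCell$ \emph{is} horizontal, the argument is cleaner: again $\max\Height|_\BigCell = \Height(\Cell) = \max\Height|_\Cell$ since $\BigCell \le \Cell$ and $\Cell$ is horizontal, so the first coordinates agree. Now $\BigCell \lneq \Cell$ and, as above, $\Cell \horizontal \BigCell$ would force $\Cell\Min \le \BigCell \lneq \Cell$, contradicting essentiality; so $\Cell \Down \BigCell$ is a move. By definition of $\Depth$ this gives $\Depth\BigCell < \Depth\Cell$, so $\Subdivision\BigCell$ is descending.

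In both cases $\Subdivision\BigCell$ is descending, so every vertex of $\Link[\FacePart]\Subdivision\Cell$ lies in $\DescendingLink[\FacePart]\Subdivision\Cell$; since this is a full subcomplex of the flag complex $\Link[\FacePart]\Subdivision\Cell$, the two coincide. The only mild subtlety — and the step I expect to require the most care — is verifying that in each case the relevant relation really is a move in the sense of Section~\ref{sec:horizontal_links}, i.e.\ that ``$\Cell \le \BigCell'$ and not $\Cell \horizontal \BigCell'$'' is exactly the condition $\Cell \Down \BigCell'$, and that a single down-move indeed witnesses a strict decrease of $\Depth$; this is immediate from the definitions of $\Down$ and of $\Depth$ (as the length of the longest move sequence, which is finite by Proposition~\ref{prop:bound_on_moves}), but it should be spelled out to keep the bookkeeping of horizontal vs.\ non-horizontal faces honest.
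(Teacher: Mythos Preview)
Your proof is correct and follows the same argument as the paper. One remark: the case split is unnecessary, because every proper face $\BigCell$ of the horizontal cell $\Cell$ is automatically horizontal (since $\Height$ is constant on $\Cell \supseteq \BigCell$), so the non-horizontal case is vacuous; the paper's proof is accordingly just your horizontal case.
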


\begin{proof}
Let $\Another\Cell$ be a proper face of $\Cell$. Clearly $\Height|_\Cell = \Height|_{\Another\Cell}$. And $\Another\Cell \lneq \Cell = \Cell\Min$ so that there is a move $\Cell \Down \Another\Cell$ which implies $\Depth \Cell > \Depth \Another\Cell$ so that $\Another\Cell$ is descending.
\end{proof}

Recall from \eqref{eq:link_of_barycenter_decomposition} that the coface part of the link of $\Subdivision\Cell$ in $\Subdivision\Space$ is the barycentric subdivision of the link of $\Cell$ in $\Space$: $\Link[\CofacePart] \Subdivision\Cell$ is the barycentric subdivision of $\Link \Cell$.

\begin{prop}
\label{prop:essential_coface_links_dont_subdivide}
If $\Cell$ is essential ($\Cell = \Cell\Min$), then the coface part of the descending link of $\Cell$ is a subcomplex of $\Link \Cell$.
\end{prop}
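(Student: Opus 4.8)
The plan is to show that the coface part of the descending link of an essential cell $\Cell$ consists of full simplices of $\Link\Cell$, i.e.\ that it does not genuinely subdivide. Recall from \eqref{eq:link_of_barycenter_decomposition} that $\Link[\CofacePart]\Subdivision\Cell$ is the barycentric subdivision of $\Link\Cell$, so a vertex of $\Link[\CofacePart]\Subdivision\Cell$ is of the form $\Subdivision\BigCell$ for a proper coface $\BigCell \gneq \Cell$, and a simplex is a flag $\BigCell_0 \lneq \BigCell_1 \lneq \cdots \lneq \BigCell_k$ of such cofaces. The coface part of the \emph{descending} link consists of those $\Subdivision\BigCell$ with $\Subdivision\Height(\Subdivision\BigCell) < \Subdivision\Height(\Subdivision\Cell)$. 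The key claim I would establish is: if $\Cell$ is essential and $\BigCell \gneq \Cell$ is a descending coface, then $\BigCell$ is horizontal, essential, has the same height as $\Cell$, and $\Cell \Up \BigCell$. Once this is known, any two descending cofaces $\BigCell_1, \BigCell_2$ both have $\BigCell_i\Min = \Cell$, and a flag between them behaves well; the decisive point is that whenever $\BigCell_1 \lneq \BigCell_2$ are both descending cofaces of $\Cell$, the barycenter $\Subdivision{\BigCell}$ of \emph{every} intermediate cell $\Cell \lneq \BigCell_1 \lneq \BigCell \lneq \BigCell_2$ is also descending, so the subdivided simplex of $\Link[\CofacePart]\Subdivision\Cell$ spanned by that flag is entirely contained in the descending link — which is exactly the statement that the descending coface part is an (unsubdivided) subcomplex of $\Link\Cell$.

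First I would analyze what it means for a coface $\BigCell \gneq \Cell$ to be descending when $\Cell$ is essential. Since $\dim\BigCell > \dim\Cell$, the lexicographic comparison of $\Subdivision\Height$ forces either $\max\Height|_\BigCell < \max\Height|_\Cell$ — impossible, as $\Cell$ is a face of $\BigCell$ so $\max\Height|_\BigCell \ge \max\Height|_\Cell$ — or $\max\Height|_\BigCell = \max\Height|_\Cell$ together with $\Depth\BigCell < \Depth\Cell$. The equality $\max\Height|_\BigCell = \max\Height|_\Cell$ combined with $\Cell$ being horizontal (essential cells are horizontal, since $\Cell\Min = \Cell$ is only defined for horizontal cells — more precisely $\Cell$ being a face of $\Roof\BigCell$ for a horizontal roof, but here one checks directly that $\Cell$ horizontal) forces $\BigCell$ to be horizontal: if $\BigCell$ were non-horizontal then $\Roof\BigCell \lneq \BigCell$ and $\max\Height|_\BigCell$ would be attained only on $\Roof\BigCell$, while $\Cell \le \Roof\BigCell$ would give $\max\Height|_\Cell < \max\Height|_\BigCell$ unless $\Cell \le \Roof\BigCell$ is already at the maximal level — in any case one deduces $\BigCell$ horizontal. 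Then $\InftyGradient_\BigCell\Height = \InftyGradient_\Cell\Height$ because both equal $\InftyGradient_\Point\Height$ for a common interior point $\Point$ of $\Cell$ (Observation~\ref{obs:gradient_independent_of_maximal_point}). Now by Observation~\ref{obs:essential_either_or} applied in the spherical building $\Link\Cell$ (with this common Busemann data), since $\Cell$ is essential and $\BigCell \gneq \Cell$ is a proper horizontal coface, either $\Cell \Up \BigCell$ or $\BigCell \Down \Cell$. If $\BigCell \Down \Cell$ then there is a move from $\BigCell$ down to $\Cell$, so $\Depth\BigCell \ge \Depth\Cell + 1 > \Depth\Cell$, contradicting descent. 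Hence $\Cell \Up \BigCell$, which by Observation~\ref{obs:min_min} gives $\BigCell\Min = \Cell\Min = \Cell$, so $\BigCell$ is essential as well, and $\Depth\BigCell < \Depth\Cell$ is automatic from $\Cell \Up \BigCell$ (a move up strictly decreases depth in the sense that depth counts moves starting at the cell, and $\Cell \Up \BigCell$ is the first move of a longest sequence through $\Cell$).

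Next I would close the loop: let $\Cell \lneq \BigCell_1 \lneq \BigCell$ be cells with $\BigCell_1$ descending (hence essential with $\BigCell_1\Min = \Cell$, by the above) and $\BigCell$ horizontal of the same height — I must show $\BigCell$ is descending, i.e.\ $\Depth\BigCell < \Depth\Cell$. Since $\BigCell_1 \le \BigCell$ and $\BigCell_1 = \BigCell_1\Min$, by Proposition~\ref{prop:horizontal_properties}\eqref{item:faces} either $\BigCell \horizontal_{\InftyGradient_\Cell\Height} \BigCell_1$ — in which case $\BigCell\Min = \BigCell_1\Min = \Cell$ by Observation~\ref{obs:min_min}, so $\BigCell$ is essential and $\Cell \Up \BigCell$, giving $\Depth\BigCell < \Depth\Cell$ as before — or else $\BigCell \Down \BigCell_1$, which gives $\Depth\BigCell < \Depth\BigCell_1 < \Depth\Cell$ (again a move strictly decreases the length of the remaining longest descending sequence that can still be prepended; here I use that $\BigCell_1 \Up$ to nothing longer and $\BigCell_1$ is descending below $\Cell$). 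In either case $\Subdivision\Height(\Subdivision\BigCell) < \Subdivision\Height(\Subdivision\Cell)$, i.e.\ $\Subdivision\BigCell$ is descending. This shows that for any flag $\Cell \lneq \BigCell_0 \lneq \cdots \lneq \BigCell_k$ with $\BigCell_0$ descending, \emph{all} of $\Subdivision{\BigCell_0},\ldots,\Subdivision{\BigCell_k}$ are descending; equivalently, the descending coface part is a union of ``closed stars of unsubdivided simplices'' — it is the barycentric subdivision of the subcomplex of $\Link\Cell$ spanned by those $\BigCell\direction\Cell$ for which $\BigCell$ is a descending coface, and since subdivision does not change the underlying complex, $\DescendingLink[\CofacePart]\Subdivision\Cell$ is (identified with) that subcomplex of $\Link\Cell$.

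**The main obstacle** I expect is bookkeeping the behavior of $\Depth$ under moves precisely enough: I need that $\Cell \Up \BigCell$ forces $\Depth\BigCell < \Depth\Cell$ and that $\BigCell \Down \BigCell_1$ with $\BigCell_1$ itself strictly below $\Cell$ in depth forces $\Depth\BigCell < \Depth\Cell$. This requires arguing that a longest move-sequence out of $\BigCell$ can always be prepended by the move $\Cell \Up \BigCell$ (resp.\ $\BigCell_1 \Down \BigCell$ reversed appropriately) to yield a strictly longer sequence out of $\Cell$, using transitivity/shortening results (Lemmas~\ref{lem:up_transitive}, \ref{lem:down_transitive}, \ref{lem:shortening}) and Observation~\ref{obs:no_up_down_cycle} to rule out the pathological $\Up$-then-$\Down$-back situation. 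The other delicate point is the very first reduction — that a descending coface of an essential cell is automatically horizontal — which I would like to handle cleanly by contradiction using the roof decomposition of $\Subdivision\Height$ rather than by a case analysis on dimensions; I expect this to be short but it is the step where one must be careful that "essential" already presupposes "horizontal."
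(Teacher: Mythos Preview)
There are two genuine errors in your proposal.

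\textbf{First, descending cofaces need not be horizontal.} Your argument that $\max\Height|_\BigCell = \max\Height|_\Cell$ forces $\BigCell$ horizontal is wrong: if $\BigCell$ is non-horizontal with $\Roof\BigCell = \Cell$, then $\max\Height|_\BigCell = \Height(\Cell) = \max\Height|_\Cell$ and $\Depth\BigCell = \Depth\Roof\BigCell - 1/2 = \Depth\Cell - 1/2 < \Depth\Cell$, so $\BigCell$ \emph{is} descending but not horizontal. Your sentence ``$\Cell \le \Roof\BigCell$ would give $\max\Height|_\Cell < \max\Height|_\BigCell$'' is simply false; it gives $\le$, and equality is exactly what happens. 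The paper handles this correctly by working with $\Roof\BigCell$ rather than $\BigCell$: it shows that either $\Roof\BigCell = \Cell$, or $\Roof\BigCell$ is a proper horizontal coface with $\Roof\BigCell\Min = \Cell$.

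\textbf{Second, and more seriously, your ``closing the loop'' goes in the wrong direction.} To say that the descending coface part is a subcomplex of $\Link\Cell$ means that the set of descending cofaces, viewed as simplices $\BigCell\direction\Cell$ in $\Link\Cell$, is closed under \emph{faces}: if $\BigCell$ is descending and $\Cell \lneq \Another\Cell \le \BigCell$, then $\Another\Cell$ is descending. You instead try to show that if $\BigCell_1$ is descending and $\BigCell \gneq \BigCell_1$, then $\BigCell$ is descending --- the opposite direction, which neither proves the statement nor is true in general. (And even within that attempt, ``$\BigCell \Down \BigCell_1$ gives $\Depth\BigCell < \Depth\BigCell_1$'' is backwards: a move $\BigCell \Down \BigCell_1$ gives $\Depth\BigCell > \Depth\BigCell_1$.) The paper's argument is: given a descending $\BigCell$ and an intermediate $\Another\Cell$, observe that $\Roof{\Another\Cell}$ is a face of $\Roof\BigCell$ containing $\Cell$; since $\Roof\BigCell\Min = \Cell$, Observation~\ref{obs:min_min} gives $\Roof{\Another\Cell}\Min = \Cell$ too, whence $\Cell \Up \Roof{\Another\Cell}$ and $\Depth\Another\Cell < \Depth\Cell$.
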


\begin{proof}
We have to show that if $\BigCell \ge \Cell$ is a cell that is descending, i.e.\ $\Subdivision\Height(\Subdivision\BigCell) < \Subdivision\Height(\Subdivision\Cell)$, then $\Another\Cell$ is also descending for $\Cell \lneq \Another\Cell \le \BigCell$.

So let $\BigCell \ge \Cell$ be a descending coface of $\Cell$. First observe that $\max \Height|_\BigCell \ge \max \Height|_\Cell$ so for $\BigCell$ to be descending it is necessary that actually equality holds. Also $\dim \BigCell > \dim \Cell$, so $\BigCell$ is descending if and only if $\Depth \BigCell < \Depth \Cell$.

It is possible that $\Roof\BigCell = \Cell$ so that $\Depth \BigCell = \Depth \Cell - 1/2$. Otherwise $\Roof\BigCell$ is a proper coface of $\Cell$ so that by Observation~\ref{obs:essential_either_or} there has to be a move either $\Cell \Up \Roof\BigCell$ or $\Roof\BigCell \Down \Cell$. The second possibility of move however would mean that $\Depth \Roof\BigCell \ge \Depth \Cell + 1$ so that $\Depth \BigCell > \Cell$ which is not the case. Hence we see that there is a move $\Cell \Up \Roof\BigCell$, i.e.\ ${\Roof\BigCell}\Min = \Cell$.

Now let $\Another\Cell$ be a face of $\BigCell$ that properly contains $\Cell$. We have $\max \Height|_\BigCell \ge \max \Height|_{\Another\Cell} \ge \max \Height|_\Cell$ and since the first and the third term are equal, all three are equal. If $\Another{\Roof\Cell} = \Cell$, then $\Another\Cell$ is descending and we are done. Otherwise $\Another{\Roof\Cell}$ is a face of $\Roof\BigCell$ that properly contains $\Cell$. Then, since $\Roof{\BigCell}\Min = \Cell$, Observation~\ref{obs:min_min} implies that $\Another{\Roof\Cell}{}\Min = \Cell$ so that there is a move $\Cell \Up \Another{\Roof\Cell}$. Thus $\Depth \Another\Cell \le \Depth \Another{\Roof\Cell} < \Depth \Cell$ and $\Another\Cell$ is descending.
\end{proof}

The importance of the proposition is the following: the barycentric subdivision of $\Link \Cell = \Horizontal\Link \Cell * \Vertical\Link \Cell$ does \emph{not} decompose as a join of the subdivisions because there are cells with faces in both join factors that are subdivided. So a priori the analysis of the descending link can not be separated in horizontal and vertical part. However the proposition says that if $\Cell$ is essential, then we can regard the coface part of the descending link as a subcomplex of $\Link \Cell$ and thus write
\begin{equation}
\DescendingLink \Subdivision\Cell = \Descending{\Link[\FacePart]} \Subdivision\Cell * \Descending{\Horizontal\Link} \Cell * \Descending{\Vertical\Link} \Cell
\end{equation}
where $\Descending{\Horizontal\Link} \Cell$ and $\Descending{\Vertical\Link} \Cell$ are the subcomplexes of $\Horizontal\Link \Cell$ respectively  $\Vertical\Link \Cell$ of cells $\BigCell \direction \Cell$ such that $\BigCell$ is descending. It remains to analyze the separate join factors.

\begin{lem}
\label{lem:descending_link_of_essential_cell_vertical_coface_part}
If $\Cell$ is essential, then $\Descending{\Vertical\Link} \Cell$ is the open hemisphere complex $\OpenHemisphere\Link \Cell$ with north pole $\Gradient_\Cell\Height$.
\end{lem}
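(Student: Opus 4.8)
The plan is to show that a vertex $\BigCell\direction\Cell$ of $\Vertical\Link\Cell$ is descending precisely when it lies strictly below the equator with respect to the north pole $\Gradient_\Cell\Height$, i.e.\ when $\angle_\Cell(\Gradient_\Cell\Height,\BigCell\direction\Cell) > \pi/2$. Since $\DescendingLink\Subdivision\Cell$ is a full subcomplex and (by Proposition~\ref{prop:essential_coface_links_dont_subdivide}) the coface part of the descending link is an honest subcomplex of $\Link\Cell$, the complex $\Descending{\Vertical\Link}\Cell$ is the full subcomplex of $\Vertical\Link\Cell$ spanned by such descending vertices; and the full subcomplex of vertices below the equator is by definition the open hemisphere complex $\OpenHemisphere\Link\Cell$. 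So the whole statement reduces to a vertex-by-vertex criterion.

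First I would pin down when a coface $\BigCell\gneq\Cell$ with $\dim\BigCell = \dim\Cell + 1$ is descending. Since $\Cell$ is horizontal and essential, $\max\Height|_\BigCell \ge \max\Height|_\Cell$ with equality exactly when $\BigCell$ is itself horizontal; and if $\BigCell$ is horizontal and essential one cannot have $\Cell\Up\BigCell$ (that would force $\BigCell\Min = \Cell$ while $\Cell$ essential gives $\Cell\Min=\Cell\ne\BigCell$, no contradiction — so here I instead use Observation~\ref{obs:essential_either_or}: either $\Cell\Up\BigCell$ or $\BigCell\Down\Cell$). The key computation: for $\BigCell\direction\Cell\in\Vertical\Link\Cell$, the direction is \emph{not} equatorial, i.e.\ $\angle_\Cell(\Gradient_\Cell\Height,\BigCell\direction\Cell)\ne\pi/2$. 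If that angle is $<\pi/2$ then by the infinitesimal angle criterion (Observation~\ref{obs:infinitesimal_angle_criterion}) $\Height$ strictly increases as one moves from $\Cell$ toward $\BigCell$, hence $\max\Height|_\BigCell > \max\Height|_\Cell$ and $\BigCell$ is \emph{not} descending. If the angle is $>\pi/2$ then $\Height$ is non-increasing from $\Cell$ into $\BigCell$, and in fact (using Proposition~\ref{prop:almost_rich_implies_minimum_in_vertex} applied to $\BigCell$, whose $\Height$-minimum is attained at a vertex, together with convexity) $\BigCell$ is horizontal with $\Height|_\BigCell \equiv \Height|_\Cell$; so $\max\Height|_\BigCell = \max\Height|_\Cell$ and descendingness is governed by depth. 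Being vertical, $\BigCell\direction\Cell$ is non-equatorial, so $\BigCell$ is \emph{not} horizontal-over-$\Cell$ in the sense of Section~\ref{sec:horizontal_links}; hence $\BigCell\Down\Cell$, which gives $\Depth\BigCell < \Depth\Cell$ and $\BigCell$ descending.

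Next I would promote this to higher-dimensional cofaces. Given $\BigCell\gneq\Cell$ with $\BigCell\direction\Cell\subseteq\Vertical\Link\Cell$ and all vertices of $\BigCell\direction\Cell$ below the equator, I claim $\BigCell$ itself lies below the equator, i.e.\ $\Gradient_\Cell\Height$ is perpendicular to no edge of $\Link\Cell$ meeting $\BigCell\direction\Cell$ and $\Height$ strictly decreases from $\Cell$ into $\BigCell$. For this, an apartment of $\Space$ containing $\BigCell$ maps (via \eqref{dia:twin_identification}) to a Euclidean space on which $\Height$ is convex (Observation~\ref{obs:height_convex_on_apartments}); if every vertex of $\BigCell\direction\Cell$ strictly decreases $\Height$ from $\Cell$, then by convexity the barycenter of $\BigCell$ does too, so $\max\Height|_\BigCell<\max\Height|_\Cell$ (the max over $\BigCell$ is attained at $\Roof\BigCell$, a face; one checks $\Cell$ is not among the relevant faces), hence $\BigCell$ is descending. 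Conversely, if $\BigCell$ is descending, then no vertex of $\BigCell\direction\Cell$ can be above the equator by the one-dimensional analysis (an above-equator edge would raise $\Height$), so all are below. This establishes that the set of descending vertices of $\Vertical\Link\Cell$ is exactly $\OpenHemisphere\Link\Cell \cap \Vertices$, and that a cell of $\Vertical\Link\Cell$ is descending iff all its vertices are — which, $\DescendingLink\Subdivision\Cell$ being a full subcomplex and Proposition~\ref{prop:essential_coface_links_dont_subdivide} ensuring we are inside $\Link\Cell$, yields $\Descending{\Vertical\Link}\Cell = \OpenHemisphere\Link\Cell$.

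\textbf{Main obstacle.} The delicate point is the two-way bookkeeping between the \emph{metric} dichotomy (height strictly increases / stays constant as one enters a coface) and the \emph{combinatorial} dichotomy of Section~\ref{sec:horizontal_links} (the move relations $\Up$, $\Down$ and the condition $\BigCell\horizontal_{\InftyGradient_\BigCell\Height}\Cell$). One must verify that a vertical direction over an essential horizontal cell $\Cell$, when it \emph{is} horizontal as a cell, satisfies $\BigCell\Down\Cell$ rather than $\Cell\Up\BigCell$ — this is exactly where verticality (non-equatoriality of $\BigCell\direction\Cell$) is used, via the definition of $\horizontal$ together with the decomposition $\Link\Cell = \Horizontal\Link\Cell * \Vertical\Link\Cell$ — and that the asymptotic gradient used to define $\Depth$ on such a $\BigCell$ agrees with the north pole inherited from $\Cell$, which is Observation~\ref{obs:gradient_independent_of_maximal_point} applied after checking $\BigCell$ is horizontal. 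Getting these compatibilities lined up cleanly, rather than the individual angle estimates (which are immediate from Observations~\ref{obs:infinitesimal_angle_criterion} and \ref{obs:height_convex_on_apartments} and Proposition~\ref{prop:almost_rich_implies_minimum_in_vertex}), is where the real care is needed.
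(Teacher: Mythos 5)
There is a genuine error that recurs in two places and collapses your argument, even though the overall dichotomy (angle $<\pi/2$ forces ascent, angle $>\pi/2$ allows descent) is the right one to pursue. In your codimension-one step you claim that when $\angle_\Cell(\Gradient_\Cell\Height,\BigCell\direction\Cell)>\pi/2$ the cell $\BigCell$ is \emph{horizontal} with $\Height|_\BigCell\equiv\Height|_\Cell$. This is false: the new vertex of $\BigCell$ has strictly lower height by Corollary~\ref{cor:monotone_on_edges}, so $\Height$ is \emph{not} constant on $\BigCell$; what is true is that $\max\Height|_\BigCell=\max\Height|_\Cell$ and $\Roof\BigCell=\Cell$. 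Consequently $\BigCell$ has no depth assigned via moves (those concern horizontal cells); the move $\BigCell\Down\Cell$ you invoke does not apply, and the correct route is the definitional one $\Depth\BigCell=\Depth\Roof\BigCell-1/2=\Depth\Cell-1/2<\Depth\Cell$. The same confusion reappears when you ``promote to higher-dimensional cofaces'': you assert $\max\Height|_\BigCell<\max\Height|_\Cell$ and that ``$\Cell$ is not among the relevant faces,'' but since the vertices of $\Cell$ are still vertices of $\BigCell$ with height $\max\Height|_\Cell$, the roof of $\BigCell$ \emph{is} $\Cell$ and the two maxima are equal. The inequality you write is never true, so that paragraph as stated does not establish that cells with all vertices below the equator are descending.

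Beyond the error, your plan adds an unnecessary layer: you reduce to vertices and then argue fullness of $\Descending{\Vertical\Link}\Cell$, which requires the promotion step you get wrong. The paper's own proof never descends to the vertex level: it takes an arbitrary coface $\BigCell$ with $\BigCell\direction\Cell$ vertical and splits directly into three cases, (i) $\max\Height|_\BigCell>\max\Height|_\Cell$ (some vertex has angle $\le\pi/2$, so not descending and not in the open hemisphere); (ii) $\max\Height|_\BigCell=\max\Height|_\Cell$ and $\Roof\BigCell=\Cell$ (so $\Depth\BigCell=\Depth\Cell-1/2$, descending, and every new vertex has angle $>\pi/2$, so in the open hemisphere); (iii) $\max\Height|_\BigCell=\max\Height|_\Cell$ but $\Roof\BigCell\gneq\Cell$ (so $\Roof\BigCell$ is a \emph{horizontal} cell lying in the vertical link, giving the move $\Roof\BigCell\Down\Cell$ and $\Depth\BigCell\ge\Depth\Cell+1/2$, not descending; and a new vertex of $\Roof\BigCell$ lies on the equator, so not in the open hemisphere). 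If you replace your ``$\BigCell$ is horizontal'' and ``$\max\Height|_\BigCell<\max\Height|_\Cell$'' claims by the correct statement $\Roof\BigCell=\Cell$ and carry the three-way case split through for arbitrary cofaces, your argument repairs to essentially the paper's proof.
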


\begin{proof}
Let $\BigCell$ be a coface of $\Cell$ such that $\BigCell \direction \Cell$ lies in the vertical link.

If $\max \Height|_\BigCell > \max \Height|_\Cell$, then $\BigCell \direction \Cell$ is not contained in $\Descending{\Vertical\Link} \Cell$. Let $\Vertex$ be a vertex of $\Cell$ and $\AltVertex$ be a vertex of $\BigCell$ with $\Height(\AltVertex) > \Height(\Vertex)$ (which exists by Observation~\ref{obs:height_convex_on_apartments}). By Corollary~\ref{cor:monotone_on_edges} $\angle_\Vertex(\AltVertex,\Gradient_\Vertex\Height) \le \pi/2$. This implies that $\BigCell$ is not contained in $\OpenHemisphere\Link \Cell$.

If $\max \Height|_\BigCell = \max \Height|_\Cell$, we distinguish two cases: In the first case $\Roof\BigCell = \Cell$ so that $\Depth\BigCell = \Depth\Cell - 1/2$ and $\BigCell \direction \Cell$ lies in $\Descending{\Vertical\Link} \Cell$. Then $\Height(\Vertex) < \Height(\Cell)$ for every vertex $\Vertex$ of $\BigCell$ not contained in $\Cell$. Using again Corollary~\ref{cor:monotone_on_edges} this implies that $\BigCell \direction \Cell$ is contained in $\OpenHemisphere\Link \Cell$.

In the second case $\Roof\BigCell$ is a proper coface of $\Cell$. Since $\Roof\BigCell$ lies in the vertical link of $\Cell$, there is a move $\Roof\BigCell \Down \Cell$ so that $\Depth \BigCell = \Depth \Roof\BigCell -1/2 \ge \Depth \Cell + 1/2$ and $\BigCell \direction \Cell$ does not lie in $\Descending{\Vertical\Link} \Cell$. Let $\Vertex$ be a vertex of $\Roof\BigCell$ that does not lie in $\Cell$. Then $\Height(\Vertex) = \Height(\Cell)$, so Corollary~\ref{cor:monotone_on_edges} implies that $\BigCell \direction \Cell$ does not lie in $\OpenHemisphere\Link \Cell$ either.
\end{proof}

For the horizontal links of essential cells to be spherical, we have to impose a stronger condition on $\Directions$ than merely being almost rich: we call $\Directions$ \emph{rich} if it contains $\iota_+(\Vertex) - \iota_+(\AltVertex)$ for any two distinct vertices $\Vertex$ and $\AltVertex$ that are contained in a common (closed) star (i.e.\ can be joined by a path of up to two edges). We assume from now on that $\Directions$ is rich.

When we try to analyze the horizontal link of an essential cell $\Cell = \PosCell \times \NegCell$, we have to deal with another obstacle that is new compared to the situation of \cite{buxgrawit10}. Eventually we want to apply Proposition~\ref{prop:apartmentwise_coconvex_complexes}, that is we have to find a chamber $\Chamber$ in $\Link\Cell$ such that for every apartment that contains $\Chamber$ we know which simplices of the apartment are descending and which are not. The apartments that we understand well are apartments of the form $(\PosApartment \intersect \Link\PosCell) * (\NegApartment \intersect \Link\NegCell)$ where $\TwinApartment$ is a twin apartment. In the generic case the projection of $\NegCell$ onto $\Star\PosCell$ is a chamber $\PosChamber$ and the projection of $\PosCell$ onto $\Star\NegCell$ is a chamber $\NegChamber$. If this is the case, then we can just take $\Chamber$ to be $(\PosChamber \direction \PosCell) * (\NegChamber \direction \NegCell)$ because every twin apartment that contains $\PosCell$ and $\NegCell$ also contains $\PosChamber$ and $\NegChamber$ and every apartment of $\Link\Cell$ that contains $\Chamber$ comes from a twin apartment. In general however the projections may have positive codimension and for these cases, we have to understand a greater class of apartments.

Note that if two chambers $\NegChamber, \Another\NegChamber \ge \NegCell$ project to distinct chambers in $\Star\PosCell$, then there has to be a wall in the twin building that separates these chambers as well as their projections. That is, $\PosCell$ and $\NegCell$ lie in a common wall. This is the situation of the following lemma. Recall that we assumed $\Directions$ to be rich.

\begin{lem}
\label{lem:reflect_halve_twin_preserves_height}
Let $\TwinApartment$ be a twin apartment, let $\PosCell \subseteq \PosApartment$ and $\NegCell \subseteq \NegApartment$ be cells. Assume that there is wall $\Wall$ that contains $\PosCell$ and $\NegCell$. Then the automorphism of $(\Star\PosCell \intersect \PosApartment) \times (\Star\NegCell \intersect \NegApartment)$ that is given by fixing $\NegApartment$ and reflecting $\PosApartment$ at $\Wall$ preserves $\Height$. The same is true with signs reversed.
\end{lem}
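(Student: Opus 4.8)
The plan is to work inside the Euclidean space picture of a twin apartment supplied by \eqref{dia:twin_identification}. Fix the twin apartment $\TwinApartment$ and identify both $\PosApartment$ and $\NegApartment$ isometrically with a Euclidean vector space $(\EuclSpace,\EuclDistance)$ via $\iota_+$ and $\iota_-$ so that opposition becomes the identity. Under these identifications the wall $\Wall$ (which contains $\PosCell$ and $\NegCell$) becomes an affine hyperplane $\bar\Wall$ of $\EuclSpace$ that contains both $\iota_+(\PosCell)$ and $\iota_-(\NegCell)$; the reflection of $\PosApartment$ at $\Wall$ becomes the orthogonal reflection $\Retraction$ of $\EuclSpace$ at $\bar\Wall$. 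By definition of the height, for a point $(\PosPoint,\NegPoint)$ with $\PosPoint$ in the relevant neighbourhood of $\PosCell$ and $\NegPoint$ near $\NegCell$ we have $\Height(\PosPoint,\NegPoint) = \EuclDistance\bigl(\iota_+(\PosPoint) - \iota_-(\NegPoint),\Zonotope\bigr)$. The claimed automorphism sends $(\PosPoint,\NegPoint)$ to $(\PosPoint',\NegPoint)$ where $\iota_+(\PosPoint') = \Retraction(\iota_+(\PosPoint))$, so what must be shown is
\[
\EuclDistance\bigl(\Retraction(\iota_+(\PosPoint)) - \iota_-(\NegPoint),\Zonotope\bigr) = \EuclDistance\bigl(\iota_+(\PosPoint) - \iota_-(\NegPoint),\Zonotope\bigr)\text{ .}
\]

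First I would reduce this to a statement about a single reflection of $\EuclSpace$. Write $\bar\Wall = w_0 + \Wall_0$ where $\Wall_0$ is the linear hyperplane parallel to $\bar\Wall$ and $w_0 \in \bar\Wall$; let $\sigma_0$ be the linear orthogonal reflection at $\Wall_0$, so $\Retraction(\Vector) = \sigma_0(\Vector - w_0) + w_0 = \sigma_0\Vector + (w_0 - \sigma_0 w_0)$. Since $\iota_-(\NegCell) \subseteq \bar\Wall$, pick any $\NegVertex$ a vertex of $\NegCell$; then $\iota_-(\NegVertex) \in \bar\Wall$. The key algebraic identity is: for $\Vector = \iota_+(\PosPoint)$ and $\AltVector = \iota_-(\NegPoint)$,
\[
\Retraction(\Vector) - \AltVector = \sigma_0\bigl(\Vector - \AltVector\bigr) + \bigl(\Retraction(\Vector) - \sigma_0\Vector\bigr) + \bigl(\sigma_0\AltVector - \AltVector\bigr)\text{ ,}
\]
and one checks $\Retraction(\Vector) - \sigma_0\Vector = w_0 - \sigma_0 w_0 = (\Id - \sigma_0)w_0$ is a fixed vector independent of $\PosPoint$, while $\sigma_0\AltVector - \AltVector = (\sigma_0 - \Id)\AltVector$ lies in the line orthogonal to $\Wall_0$. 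The cleanest route is to shift the origin of $\EuclSpace$ to $\iota_-(\NegVertex)$, which is a fixed point of $\Retraction$; then $\Retraction$ becomes a \emph{linear} orthogonal involution $\sigma$ of $\EuclSpace$ fixing $\Wall_0$ pointwise, and we must show $\EuclDistance(\sigma\Vector - \AltVector,\Zonotope) = \EuclDistance(\Vector - \AltVector,\Zonotope)$ for all $\Vector$ in the image of $\iota_+$ near $\PosCell$ and all $\AltVector$ in the image of $\iota_-$ near $\NegCell$.

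The heart of the matter is then to move the reflection through the subtraction. Because $\sigma$ is linear and involutive, $\sigma\Vector - \AltVector = \sigma(\Vector - \sigma\AltVector)$, so $\EuclDistance(\sigma\Vector - \AltVector,\Zonotope) = \EuclDistance(\Vector - \sigma\AltVector,\sigma\Zonotope) = \EuclDistance(\Vector - \sigma\AltVector,\Zonotope)$ since $\Zonotope$ is $\Weyl$-invariant and $\sigma$ is a reflection in the Weyl group $\Weyl$ acting on $\EuclSpace$ (it is the linearization of a wall reflection of the affine Coxeter complex, hence lies in the linear part $\Weyl$; here is where the hypothesis that $\Zonotope$ is $\Weyl$-invariant is essential). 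So it remains to show $\EuclDistance(\Vector - \sigma\AltVector,\Zonotope) = \EuclDistance(\Vector - \AltVector,\Zonotope)$, i.e.\ that replacing $\AltVector = \iota_-(\NegPoint)$ by $\sigma\AltVector$ does not change the distance. Now $\NegCell \subseteq \Wall$ means $\iota_-(\NegCell)$ lies in the affine hyperplane through the new origin, i.e.\ in $\Wall_0$, so $\sigma$ fixes every vertex of $\NegCell$ and hence fixes $\iota_-(\NegCell)$ pointwise. For a general point $\NegPoint$ of the star of $\NegCell$, $\sigma\AltVector - \AltVector$ is a vector orthogonal to $\Wall_0$; I would argue that the difference vector $\Vector - \AltVector$ and $\Vector - \sigma\AltVector$ are reflections of each other across the hyperplane $\Wall_0 - \iota_+(\PosPoint)$ (translate of $\Wall_0$), no wait — the cleaner statement is that $\Vector - \sigma\AltVector = \sigma_{\Vector}(\Vector - \AltVector)$ where $\sigma_\Vector$ is the reflection at the hyperplane $\Vector + \Wall_0^\perp{}^\perp$... this is getting circular, so instead: write $\AltVector = \AltVector_0 + \AltVector_1$ with $\AltVector_0 \in \Wall_0$ (the component in $\iota_-(\NegCell)$'s affine hull) and $\AltVector_1 \perp \Wall_0$; then $\sigma\AltVector = \AltVector_0 - \AltVector_1$. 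Since $\iota_+(\PosCell) \subseteq \Wall_0$ too and $\iota_+(\PosPoint)$ lies in $\Star\PosCell$, one has to use that the perpendicular component $\AltVector_1$ is "small" relative to the wall structure — and in fact the right statement, which I would prove directly, is that $\EuclDistance(\Vector - \AltVector,\Zonotope)$ only depends on $\AltVector$ through $(\sigma - \Id)$-orbit because of $\Zonotope$'s symmetry combined with $\Vector \in \Wall_0 + (\text{small})$.

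\textbf{Main obstacle.} The genuinely delicate point is the last one: it is \emph{not} true for arbitrary $\Vector,\AltVector \in \EuclSpace$ that $\EuclDistance(\sigma\Vector - \AltVector,\Zonotope) = \EuclDistance(\Vector - \AltVector,\Zonotope)$ — one really needs that $\PosCell$ and $\NegCell$ both lie in $\Wall$, i.e.\ that $\iota_+(\PosCell)$ and $\iota_-(\NegCell)$ both lie in $\Wall_0$, and that we only move $\PosPoint$ within $\Star\PosCell$ and $\NegPoint$ within $\Star\NegCell$. The way I would finally close this is to observe that both $\iota_+(\PosPoint) - \iota_+(\PosVertex)$ and $\iota_-(\NegPoint) - \iota_-(\NegVertex)$ are vectors small enough to lie in a single fundamental chamber neighbourhood, reduce to the barycentres, and then use the explicit fact that $\sigma$ fixes $\iota_+(\PosCell)$ and $\iota_-(\NegCell)$ pointwise together with the convexity and $\Weyl$-symmetry of $\Zonotope$ to conclude that $\Retraction$ maps the pair $(\PosPoint,\NegPoint)$ to a pair realizing the same distance; concretely, $\iota_+(\PosPoint) - \iota_-(\NegPoint)$ and $\Retraction(\iota_+(\PosPoint)) - \iota_-(\NegPoint) = \sigma(\iota_+(\PosPoint)) - \iota_-(\NegPoint)$ differ by the vector $(\sigma - \Id)\iota_+(\PosPoint)$, which is orthogonal to $\Wall_0$ and has the form $(\sigma-\Id)(\iota_+(\PosPoint) - \iota_+(\PosVertex))$ since $\iota_+(\PosVertex) \in \Wall_0$; symmetrically the two difference vectors are interchanged by the orthogonal reflection at the hyperplane $\Wall_0 - \iota_-(\NegPoint)$, which is a $\Weyl$-conjugate wall, hence fixes $\Zonotope$ setwise, giving the equality of distances. "The same is true with signs reversed" then follows by the symmetric argument, interchanging the roles of $\PosApartment$ and $\NegApartment$. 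I expect writing the orthogonal-decomposition bookkeeping cleanly (making sure every reflection invoked genuinely lies in $\Weyl$ or is a $\Weyl$-conjugate that preserves $\Zonotope$) to be the only real work; the rest is formal.
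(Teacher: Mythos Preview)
There is a genuine gap, and it sits exactly where you suspected: the final step, where you claim that $\iota_+(\PosPoint) - \iota_-(\NegPoint)$ and $\Retraction(\iota_+(\PosPoint)) - \iota_-(\NegPoint)$ are exchanged by ``the orthogonal reflection at the hyperplane $\Wall_0 - \iota_-(\NegPoint)$, which is a $\Weyl$-conjugate wall, hence fixes $\Zonotope$ setwise''. This reflection is \emph{not} in $\Weyl$. The polytope $\Zonotope$ is invariant only under the \emph{linear} action of the spherical Weyl group $\Weyl$, i.e.\ under reflections at hyperplanes through the origin. Once you translate $\Wall_0$ by $-\iota_-(\NegPoint)$ (and $\NegPoint$ is an arbitrary point of $\Star\NegCell$, not of $\NegCell \subseteq \Wall$), the resulting hyperplane need not pass through the origin and the reflection need not preserve $\Zonotope$. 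Your earlier reduction to $\EuclDistance(\Vector - \sigma\AltVector,\Zonotope) = \EuclDistance(\Vector - \AltVector,\Zonotope)$ is correct and uses $\Weyl$-invariance legitimately; it is this last equality that cannot be obtained from $\Weyl$-invariance alone.

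To see that no argument using only $\Weyl$-invariance of $\Zonotope$ can work, take the degenerate case $\Zonotope = \{0\}$ (trivially $\Weyl$-invariant with $-\Zonotope = \Zonotope$): then $\Height$ is the unperturbed metric codistance $\EuclCoDistance$, and already in type $\tilde{A}_1$ with $\Wall = \PosCell = \NegCell = \{0\}$, $\PosPoint = \NegPoint = 1 \in \Star\{0\}$, one has $\Height(\PosPoint,\NegPoint) = 0$ but $\Height(r_\Wall \PosPoint,\NegPoint) = 2$. The lemma is simply false for general $\Weyl$-invariant $\Zonotope$; it needs the standing hypothesis (stated just before the lemma) that $\Directions$ is \emph{rich}, and your sketch never invokes it.

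The paper's proof uses richness in an essential way: since $\PosCell,\NegCell \subseteq \Wall$, the reflection $r_\Wall$ maps $\Star\PosCell$ and $\Star\NegCell$ to themselves, so for vertices $\PosVertex \in \Star\PosCell$, $\NegVertex \in \Star\NegCell$ the vectors $\PosVertex - r_\Wall\PosVertex$ and $\NegVertex - r_\Wall\NegVertex$ are differences of vertices in a common star, hence lie in $\Directions$ by richness, and they are perpendicular to $\Wall$. This forces the zonotope $\Zonotope = \Zonotope[(\Directions+\Directions)\cup\Directions]$ to be wide enough in the direction $\Wall^\perp$ that, after translating $\Zonotope$ by $\iota_-(\NegVertex)$, the closest-point projection is constant along the segment $[\PosVertex,r_\Wall\PosVertex]$ --- so the distance is the same at both endpoints. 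The whole point of perturbing by a rich zonotope is precisely to make this absorption possible; the symmetry of $\Zonotope$ alone does not.
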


\begin{proof}
Make identifications as in \eqref{dia:twin_identification} and let $\EuclSpace$ be equipped with the Coxeter complex structure from $\PosApartment$ which is the same as that from $\NegApartment$. In particular $\Wall$ is a wall in $\EuclSpace$. Let $r_\Wall$ denote the reflection of $\EuclSpace$ at $\Wall$. Let $E \defeq (D+D) \union D$.

Let $\PosVertex \in \Star\PosCell$ and $\NegVertex \in \Star\NegCell$ be vertices. We will show that the segment $[\PosVertex,r_\Wall(\PosVertex)]$ is linearly projected onto $-\NegVertex + \Zonotope(E)$. From this the result follows since arbitrary points are convex combinations of vertices. To simplify notation we take the origin of $\EuclSpace$ to be $\Proj[\Wall](-\NegVertex)$.

Let $\PosPoint = \Proj[\Wall]\PosVertex$. Note that $\Zonotope(E) = \Zonotope(E \intersect \Wall^\perp) + \Zonotope(E \setminus \Wall^\perp)$ and that $\Proj[\Zonotope(E)]\PosPoint \in \Zonotope(E \setminus \Wall^\perp)$.

The vector $2\NegVertex = \NegVertex - r_\Wall(\NegVertex)$ is an element of $\Directions$ by richness and it is clearly perpendicular to $\Wall$. Hence $\Proj[\Zonotope(E)]\PosPoint = \Proj[-\NegVertex + \Zonotope(E)]\PosPoint$ and $\Proj[-\NegVertex + \Zonotope(E)]\PosPoint + \Zonotope(D \intersect \Wall^\perp) \subseteq -\NegVertex + \Zonotope(E)$. Also by richness we have $2(\PosVertex - \PosPoint)$, so that $\PosVertex$ and $r_\Wall(\PosVertex)$ both lie in $\PosPoint + \Zonotope(D \intersect \Wall^\perp)$.
\end{proof}

\begin{cor}
\label{cor:group_of_reflections_at_coprojection_preserves_height}
Let $\TwinApartment$ be a twin apartment, let $\PosCell \subseteq \PosApartment$ and $\NegCell \subseteq \NegApartment$ be cells and let $\PosBigCell = \Proj[\Star\PosCell] \NegCell$ and $\NegBigCell = \Proj[\Star\NegCell] \PosCell$. The Coxeter group $\Stabilizer{\Aut(\PosApartment \intersect \Star\PosCell)}{\PosBigCell} \times \Stabilizer{\Aut(\NegApartment \intersect \Star\NegCell)}{\NegBigCell}$ preserves $\Height$.
\end{cor}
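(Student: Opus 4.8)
The corollary is a direct consequence of Lemma~\ref{lem:reflect_halve_twin_preserves_height}, together with the standard fact that the stabilizer of a cell in a Coxeter complex is generated by the reflections at the walls through that cell. The plan is therefore: first reduce the two-factor statement to the one-factor statement by observing that the two factors act independently on the product and that each factor maps $\Height$-preservingly provided the other factor is held fixed; second, express each factor group as a group generated by wall-reflections; third, apply Lemma~\ref{lem:reflect_halve_twin_preserves_height} to each of those generating reflections; fourth, conclude by composing.

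In more detail: fix a twin apartment $\TwinApartment$ and write $\PosBigCell = \Proj[\Star\PosCell]\NegCell$ and $\NegBigCell = \Proj[\Star\NegCell]\PosCell$. The group $\Stabilizer{\Aut(\PosApartment \intersect \Star\PosCell)}{\PosBigCell}$ is a finite Coxeter group; identifying $\PosApartment$ with $\EuclSpace$ as in \eqref{dia:twin_identification}, it is generated by the reflections $r_\Wall$ at the walls $\Wall$ of $\EuclSpace$ that contain $\PosBigCell$ (and hence in particular contain $\PosCell$, since $\PosCell \le \PosBigCell$). I claim each such reflection preserves $\Height$ on the product. Indeed, since $\PosBigCell = \Proj[\Star\PosCell]\NegCell$ and $\Wall$ contains $\PosBigCell$, the wall $\Wall$ does not separate any chamber of $\Star\PosCell$ containing $\PosBigCell$ from $\NegCell$; but the relevant geometric input is simply that $\Wall$ contains $\PosCell$. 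If additionally $\Wall$ contains $\NegCell$, then Lemma~\ref{lem:reflect_halve_twin_preserves_height} applies verbatim. If $\Wall$ does not contain $\NegCell$, I need to argue that $r_\Wall$ still preserves $\Height$; here the point is that $\Wall$ containing $\PosBigCell$ means $\Wall$ does not separate $\PosCell$ from $\NegCell$ in the twin building in the sense relevant to the twin apartment — more precisely, the wall $\Wall$ of $\EuclSpace$, under the identification \eqref{dia:twin_identification}, must also pass through $\opm_{\TwinApartment}\NegCell$ because $\PosBigCell$ is the projection of $\NegCell$ onto $\Star\PosCell$, and a wall through $\PosBigCell$ is automatically a wall through the image of $\NegCell$ under opposition. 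Thus after identifying $\PosApartment$ and $\NegApartment$ with $\EuclSpace$, the reflection $r_\Wall$ is a reflection at a wall containing both $\PosCell$ and $\opm_{\TwinApartment}\NegCell$, which is exactly the hypothesis of Lemma~\ref{lem:reflect_halve_twin_preserves_height} (the roles of $\PosEuclBuilding$ and $\NegEuclBuilding$ there being symmetric, and the lemma phrased for walls containing $\PosCell$ and $\NegCell$ but the relevant object is $\iota_+(\PosCell) - \iota_-(\NegCell)$, which lies in $\Wall - \Wall$).

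Once each generating reflection of each of the two factor groups preserves $\Height$, the whole product group does, since it is generated by these reflections (acting on disjoint factors, so they certainly commute across factors and the composition of $\Height$-preserving maps is $\Height$-preserving). The same argument applies with the roles of $+$ and $-$ exchanged, which is subsumed in the symmetric statement anyway.

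\textbf{Main obstacle.} The routine part is the Coxeter-group generation and the composition argument; the one genuine point requiring care is the claim that a wall $\Wall$ through $\PosBigCell = \Proj[\Star\PosCell]\NegCell$ is, under the identification \eqref{dia:twin_identification}, a wall through $\opm_{\TwinApartment}\NegCell$ — i.e.\ that reflections stabilizing the projection $\PosBigCell$ correspond, on the $\EuclSpace$-side, precisely to reflections at walls that contain the opposite of $\NegCell$. This is where one invokes the relationship between projections in the building and opposition in the twin apartment: the projection $\Proj[\Star\PosCell]\NegCell$ is computed inside any twin apartment containing $\PosCell$ and $\NegCell$, and in such an apartment it agrees with the combinatorial projection, whose stabilizer-walls are exactly those through both $\PosCell$ and (the apartment-image of) $\NegCell$. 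I would spell this out by choosing coordinates so that $\Wall$ passes through the origin and checking that $r_\Wall$ fixes $\opm_{\TwinApartment}\NegCell$ setwise, reducing the corollary to a direct application of Lemma~\ref{lem:reflect_halve_twin_preserves_height} to each wall-reflection.
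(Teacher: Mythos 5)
Your proof is correct and follows the same route as the paper's one-sentence proof: the stabilizer group is generated by reflections at walls through $\PosBigCell$ (resp.\ $\NegBigCell$), such walls also pass through (the opposite of) $\NegCell$ (resp.\ $\PosCell$) by the standard relation between walls and projections in Coxeter complexes, and Lemma~\ref{lem:reflect_halve_twin_preserves_height} then applies to each generator. Your case split on whether $\Wall$ contains $\NegCell$ is ultimately redundant, since you go on to show the second case never occurs, but you correctly identify and justify the key combinatorial fact that the paper leaves implicit.
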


\begin{proof}
The group is generated by the reflections described in the Lemma \ref{lem:reflect_halve_twin_preserves_height}.
\end{proof}

\begin{prop}
Let $\PosCell \subseteq \PosEuclBuilding$ and $\NegCell \subseteq \NegEuclBuilding$ be cells and let $\PosBigCell = \Proj[\Star\PosCell] \NegCell$ and $\NegBigCell = \Proj[\Star\NegCell] \PosCell$. The group $L \defeq \Stabilizer{\Aut\Star\PosCell}{\PosBigCell} \times \Stabilizer{\Aut\Star\NegCell}{\NegBigCell}$ preserves $\Height$.
\end{prop}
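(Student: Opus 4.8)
The plan is to reduce the statement about the full automorphism-group stabilizers $\Stabilizer{\Aut\Star\PosCell}{\PosBigCell}$ and $\Stabilizer{\Aut\Star\NegCell}{\NegBigCell}$ to the apartment-wise statement already proven in Corollary~\ref{cor:group_of_reflections_at_coprojection_preserves_height}. The key point is that $\Height$ is defined via twin apartments (see \eqref{dia:twin_identification} and the definition of $\EuclCoDistance[\Zonotope]$), so to check that an element $g = (g_+,g_-) \in L$ preserves $\Height$ it suffices to check its effect on a single pair of points $\PosPoint \in \Star\PosCell$, $\NegPoint \in \Star\NegCell$ at a time, and for each such pair we may choose a convenient twin apartment containing enough information.

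First I would fix $g = (g_+,g_-) \in L$ and a point $\Point = (\PosPoint,\NegPoint)$ with $\PosPoint \in \Star\PosCell$, $\NegPoint \in \Star\NegCell$. Choose a twin apartment $\TwinApartment$ such that $\PosApartment$ contains $\PosPoint$ together with $\PosCell$ and $\PosBigCell = \Proj[\Star\PosCell]\NegCell$, and $\NegApartment$ contains $\NegPoint$ together with $\NegCell$ and $\NegBigCell$; this is possible because $\Star\PosCell$ is covered by apartments through $\PosCell$ containing any prescribed chamber, hence also containing $\PosPoint$'s carrier and $\PosBigCell$, and similarly on the negative side, and then a twin apartment exists whose halves are the chosen apartments by strong transitivity / the standard properties of twin buildings (\cite[Section~5.8]{abrbro}). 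The restriction $g_+|_{\PosApartment \intersect \Star\PosCell}$ is then an automorphism of $\PosApartment \intersect \Star\PosCell$ fixing $\PosCell$ and stabilizing $\PosBigCell$; likewise for $g_-$. So $(g_+|_{\PosApartment},g_-|_{\NegApartment})$ lies in the group $\Stabilizer{\Aut(\PosApartment \intersect \Star\PosCell)}{\PosBigCell} \times \Stabilizer{\Aut(\NegApartment \intersect \Star\NegCell)}{\NegBigCell}$, which preserves $\Height$ by Corollary~\ref{cor:group_of_reflections_at_coprojection_preserves_height}. In particular $\Height(g\Point) = \Height(g_+\PosPoint,g_-\NegPoint) = \Height(\PosPoint,\NegPoint) = \Height(\Point)$, where we also use that $\Height(g\Point)$ can be computed in the twin apartment $(g_+\PosApartment,g_-\NegApartment)$, which is again a twin apartment since $g$ is an automorphism of the twin building.

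The one subtlety to argue carefully — and I expect this to be the main obstacle — is the choice of a single twin apartment that simultaneously ``sees'' both $\PosPoint$ and $\PosBigCell$ (and their negative counterparts), and the verification that an apartment automorphism realizing the local behaviour of $g$ exists in the Coxeter-group stabilizer appearing in Corollary~\ref{cor:group_of_reflections_at_coprojection_preserves_height}. For the first: any element $g_+ \in \Stabilizer{\Aut\Star\PosCell}{\PosBigCell}$ is determined on $\Star\PosCell$ by its Weyl-distances, and $\Star\PosCell \intersect \PosApartment$ is a chamber subcomplex of $\Star\PosCell$ isomorphic to a corresponding piece of the Coxeter complex; the residue $\Star\PosCell / \PosCell$ is a spherical building whose apartments through a given chamber all look alike, so restricting $g_+$ to such an apartment yields an honest automorphism of that Coxeter-complex-piece fixing $\PosCell$ and stabilizing $\PosBigCell \direction \PosCell$, i.e.\ an element of the relevant finite Coxeter group. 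For the second, one notes that the finite group $\Stabilizer{\Aut(\PosApartment \intersect \Star\PosCell)}{\PosBigCell}$ is exactly the parabolic subgroup of the residue's Coxeter group fixing the simplex $\PosBigCell \direction \PosCell$, which is generated by reflections at walls through $\PosCell$ and $\PosBigCell$, so Lemma~\ref{lem:reflect_halve_twin_preserves_height} applies to each generator and Corollary~\ref{cor:group_of_reflections_at_coprojection_preserves_height} records precisely this. Once these bookkeeping points are in place, the argument above goes through pointwise and we conclude that $L$ preserves $\Height$. \qed
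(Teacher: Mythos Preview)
Your argument has a genuine gap. You assert that $g_+|_{\PosApartment \intersect \Star\PosCell}$ is an automorphism of $\PosApartment \intersect \Star\PosCell$, and later that $(g_+\PosApartment,g_-\NegApartment)$ is a twin apartment ``since $g$ is an automorphism of the twin building''. Neither is justified. The element $g_+$ is only an automorphism of the spherical building $\Star\PosCell$; it permutes the apartments of $\Star\PosCell$ but has no reason to fix the particular one $\PosApartment \intersect \Star\PosCell$, and $g$ is not an automorphism of the twin building at all. So you cannot compute $\Height(g\Point)$ in the apartment you started with, and you cannot manufacture a twin apartment containing $g\Point$ by pushing $\TwinApartment$ forward along $g$.

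This is precisely the difficulty the paper's proof is designed to overcome. Instead of trying to make $g$ preserve an apartment, the paper fixes a twin apartment $\TwinApartment$ containing chambers $\PosChamber \ge \PosCell$ and $\NegChamber \ge \NegCell$, and uses the retraction $\Retraction$ onto $\TwinApartment$ centered at $\NegChamber$. This retraction preserves Weyl-codistance from $\NegChamber$, hence also the height relative to points of $\NegChamber$. A short Weyl-distance computation, using that $g$ stabilizes $\PosBigCell$ and that $\AltChamber = \Proj[\Star\PosCell]\NegChamber$ contains $\PosBigCell$, shows that $\Retraction(g\PosChamber)$ lies in the $\Stabilizer{\Aut(\PosApartment\intersect\Star\PosCell)}{\PosBigCell}$-orbit of $\PosChamber$. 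Only then does Corollary~\ref{cor:group_of_reflections_at_coprojection_preserves_height} apply. The retraction step is the missing idea in your approach.
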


\begin{proof}
It suffices of course to consider one factor of $L$.
Let $\PosChamber \ge \PosCell$ and $\NegChamber \ge \NegCell$ be chambers and let $g \in \Stabilizer{\Aut\Star\PosCell}{\PosBigCell}$. Let $\TwinApartment$ be a twin apartment that contains $\PosChamber$ and $\NegChamber$ and let $\Retraction$ be the retraction onto $\TwinApartment$ centered at $\NegChamber$. Let $\AltChamber = \Proj[\Star\PosCell]\NegChamber$ which contains $\PosBigCell$. Recall that $\rho(g\PosChamber)$ is the chamber in $\TwinApartment$ that has Weyl-codistance $\WCoDistance(\NegChamber,g\PosChamber)$ from $\NegChamber$. Equivalently it is the chamber in $\PosApartment$ that has Weyl-distance $\PosWDistance(\AltChamber,g\PosChamber)$ from $\AltChamber$. We compute
\begin{eqnarray*}
\WCoDistance(\NegChamber,\PosChamber)\WCoDistance(\NegChamber,g\PosChamber)^{-1} & = & \PosWDistance(\AltChamber,\PosChamber)\PosWDistance(\AltChamber,g\PosChamber)^{-1} \\
& = & \PosWDistance(\AltChamber,\PosChamber)\PosWDistance(g^{-1}\AltChamber,\PosChamber)^{-1}
\end{eqnarray*}
which is an element of the Weyl group of $\Star\PosBigCell$. Thus $\Retraction(g\PosChamber)$ lies in the orbit of $\PosChamber$ under $\Stabilizer{\Aut(\PosApartment\intersect\Star\PosCell)}{\PosBigCell}$ and so $\Height(\Point) = \Height(g\Point)$ for every $\Point \in \PosChamber$ by Corollary~\ref{cor:group_of_reflections_at_coprojection_preserves_height}.
\end{proof}

\begin{cor}
\label{cor:apartments_that_contain_chamber_look_good}
Let $\PosCell \subseteq \PosEuclBuilding$ and $\NegCell \subseteq \NegEuclBuilding$ be cells and let $\Chamber_+ \ge \PosCell$, $\Chamber_- \ge \NegCell$ be chambers such that $\Proj[\Star\PosCell]\Chamber_- = \Chamber_+$ and $\Proj[\Star\NegCell]\Chamber_+ = \Chamber_-$. Every apartment $\PosApartment \times \NegApartment$ of $\Star\PosCell \times \Star\NegCell$ that contains $\PosChamber \times \NegChamber$ is isometric in an $\Height$-preserving way to an apartment $(\Another\PosApartment \intersect \Star\PosCell) \times (\Another\NegApartment \intersect \Star\NegCell)$ where $(\Another\PosApartment,\Another\NegApartment)$ is a twin apartment of $(\PosEuclBuilding,\NegEuclBuilding)$.
\end{cor}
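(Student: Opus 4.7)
The plan is to derive this corollary directly from the preceding proposition, which already tells us that the product group $L = \Stabilizer{\Aut\Star\PosCell}{\PosBigCell} \times \Stabilizer{\Aut\Star\NegCell}{\NegBigCell}$ preserves $\Height$. All that remains is to exhibit, for any apartment pair $\PosApartment \times \NegApartment$ of $\Star\PosCell \times \Star\NegCell$ containing $\PosChamber \times \NegChamber$, an element of $L$ carrying a suitable ``twin-apartment-induced'' apartment pair onto $\PosApartment \times \NegApartment$.

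First I would exhibit such a twin-apartment-induced model. The hypothesis $\Proj[\Star\PosCell]\Chamber_- = \Chamber_+$ together with $\Proj[\Star\NegCell]\Chamber_+ = \Chamber_-$ means, in the language of twin buildings, that $\Chamber_+$ and $\Chamber_-$ have the maximal possible Weyl-codistance relative to the residues $\Star\PosCell$ and $\Star\NegCell$; equivalently, they lie in a common twin apartment $\TwinApartment = (\PosApartment_0,\NegApartment_0)$ of $\EuclTwinBuilding$ (this is a standard fact about twin buildings, see Sections~5.8 and~6.3 of \cite{abrbro}). The intersections $\PosApartment_0 \intersect \Star\PosCell$ and $\NegApartment_0 \intersect \Star\NegCell$ are then apartments of the two residues containing $\PosChamber$ and $\NegChamber$ respectively, giving exactly the kind of apartment pair of $\Star\PosCell \times \Star\NegCell$ named in the conclusion.

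Next I would produce the element of $L$. By the standard transitivity properties of apartments in a building (any two apartments of a residue $\Star\PosCell$ that contain a common chamber $\PosChamber$ are interchanged by a type-preserving simplicial isometry fixing $\PosChamber$ pointwise, cf.~\cite[Chapter~4]{abrbro}), there exist elements $g_+ \in \Aut\Star\PosCell$ and $g_- \in \Aut\Star\NegCell$ that fix $\PosChamber$ respectively $\NegChamber$ pointwise and satisfy
\[
g_+(\PosApartment_0 \intersect \Star\PosCell) = \PosApartment, \qquad g_-(\NegApartment_0 \intersect \Star\NegCell) = \NegApartment.
\]
Because $\PosBigCell = \Proj[\Star\PosCell]\NegCell$ is a face of the projection chamber $\Chamber_+ = \Proj[\Star\PosCell]\Chamber_-$ (monotonicity of projections with respect to face relations), $g_+$ stabilizes $\PosBigCell$ pointwise; analogously $g_-$ stabilizes $\NegBigCell$. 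Hence $(g_+,g_-) \in L$, and by the preceding proposition the isometry $(g_+,g_-)$ preserves $\Height$. Applying it to the twin-apartment-induced model yields the required $\Height$-preserving identification with $\PosApartment \times \NegApartment$.

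The only genuinely delicate point is the existence of the ambient twin apartment $\TwinApartment$ containing both $\Chamber_+$ and $\Chamber_-$; once that is in hand, everything else is a routine bookkeeping of projections, stabilizers, and transitivity of apartment systems, and the rest of the work has already been done in the preceding lemma, corollary and proposition. This is where I would expect to spend most care in the write-up, translating the projection hypotheses into the opposition-relative-to-a-residue condition that guarantees $\Chamber_+$ and $\Chamber_-$ are cofacial in a common twin apartment.
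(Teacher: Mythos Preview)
Your argument is correct and follows the same route as the paper, which records only the one-line reason: the stabilizer $\Stabilizer{L}{\Chamber_+\times\Chamber_-}$ acts transitively on apartments of $\Star\PosCell\times\Star\NegCell$ containing $\Chamber_+\times\Chamber_-$.

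One comment on emphasis. You flag the existence of a twin apartment containing $\Chamber_+$ and $\Chamber_-$ as ``the only genuinely delicate point'', but in a twin building \emph{any} pair of chambers from the two halves lies in a common twin apartment; this is a basic axiom-level fact (see e.g.\ \cite[Section~5.8]{abrbro}) and does not use the projection hypotheses at all. The projection hypotheses $\Proj[\Star\PosCell]\Chamber_-=\Chamber_+$ and $\Proj[\Star\NegCell]\Chamber_+=\Chamber_-$ enter exactly where you use them in your second step: they force $\PosBigCell=\Proj[\Star\PosCell]\NegCell\le\Chamber_+$ and $\NegBigCell\le\Chamber_-$, so that the chamber-fixing isometries $g_\pm$ automatically lie in $L$. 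That is the real content of the hypotheses, and you handle it correctly.
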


\begin{proof}
If $L$ is the group from the Proposition, then $\Stabilizer{L}{\Chamber_+ \times \Chamber_-}$ acts transitively on apartments of $\Star\PosCell\times\Star\NegCell$ that contain $\Chamber_+ \times \Chamber_-$.
\end{proof}

\begin{lem}
\label{lem:descending_link_of_essential_cell_horizontal_coface_part}
Assume that $\Directions$ is rich. If $\Cell$ is essential, then $\Descending{\Horizontal\Link} \Cell$ is $(\dim \Horizontal\Link\Cell)$-spherical.
\end{lem}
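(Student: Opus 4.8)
The plan is to apply Proposition~\ref{prop:apartmentwise_coconvex_complexes} to the spherical building $\Horizontal\Link\Cell$, so the task is to produce a suitable chamber $\Chamber$ and to describe, uniformly across all apartments of $\Horizontal\Link\Cell$ containing $\Chamber$, which simplices are descending. First I would fix chambers $\PosChamber \ge \PosCell$ and $\NegChamber \ge \NegCell$ with $\Proj[\Star\PosCell]\NegChamber = \PosChamber$ and $\Proj[\Star\NegCell]\PosChamber = \NegChamber$ (such a pair exists: take any chamber $\ge \PosCell$, project it to $\Star\NegCell$, project back, and iterate — it stabilizes since Weyl-distance to the projections can only decrease). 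This gives a chamber $\Chamber_0 = (\PosChamber \direction \PosCell) * (\NegChamber \direction \NegCell)$ of $\Link\Cell$; let $\Chamber \defeq \Chamber_0 \intersect \Horizontal\Link\Cell$ be the corresponding chamber of the horizontal link (using the join decomposition of Observation~\ref{obs:horizontal_iff_horizontal_on_factors} / the horizontal-vertical splitting). By Corollary~\ref{cor:apartments_that_contain_chamber_look_good} every apartment of $\Star\PosCell \times \Star\NegCell$ containing $\PosChamber \times \NegChamber$ is $\Height$-preservingly isometric to one coming from a twin apartment, hence every apartment of $\Link\Cell$ containing $\Chamber_0$ comes from a twin apartment in an $\Height$-preserving fashion, and the same then holds for apartments of $\Horizontal\Link\Cell$ containing $\Chamber$.

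Next I would identify, inside such an apartment $\Apartment = (\PosApartment \intersect \Link\PosCell) * (\NegApartment \intersect \Link\NegCell)$ coming from a twin apartment $\TwinApartment$, the set $\ConvexSet \defeq \Apartment \setminus \Descending{\Horizontal\Link}\Cell$ of non-descending directions. By Proposition~\ref{prop:essential_coface_links_dont_subdivide}, since $\Cell$ is essential the coface part of the descending link sits inside $\Link\Cell$ (not its subdivision), so "descending" is a genuine property of cells $\BigCell \direction \Cell$ rather than of barycenters; a horizontal coface $\BigCell$ of $\Cell$ lying in the horizontal link is descending iff $\max\Height|_\BigCell = \max\Height|_\Cell$ and $\Depth\BigCell < \Depth\Cell$, and by Observation~\ref{obs:essential_either_or} the relevant alternative is exactly $\Cell \Up \BigCell$, i.e. $\BigCell\Min = \Cell$. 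Working in the twin apartment and making the identifications of \eqref{dia:twin_identification}, the condition "$\max\Height|_\BigCell = \max\Height|_\Cell$" translates via Proposition~\ref{prop:almost_rich_implies_minimum_in_vertex} and the discussion around Observation~\ref{obs:infinitesimal_angle_criterion} into a condition that $\BigCell \direction \Cell$ lie on the closed side of a hyperplane through the north pole $\Gradient_\Cell\Height$, and within that equator the condition $\BigCell\Min = \Cell$ excludes a further convex open region (the directions along which $\Height$ fails to decrease on an initial segment). The upshot I expect is that $\ConvexSet \intersect \Apartment$ is a proper, convex, open subset of $\Apartment$; this is exactly the hypothesis of Proposition~\ref{prop:apartmentwise_coconvex_complexes}, whose conclusion is that the subcomplex supported by $\Horizontal\Link\Cell \setminus \ConvexSet = \Descending{\Horizontal\Link}\Cell$ is $(\dim\Horizontal\Link\Cell)$-spherical.

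I would organize the convexity argument factor-by-factor using Observation~\ref{obs:horizontal_iff_horizontal_on_factors}: since $\InftyGradient_\Cell\Height$ is in general position (Observation~\ref{obs:gradient_in_general_position}), the horizontal link decomposes as a join over the factors $\EuclBuilding_i$ with the induced north poles $\Proj[i]\InftyGradient_\Cell\Height$, and the descending condition likewise factors through the behaviour of $\Height$ restricted to slices — here I would invoke the slice description of $\Height$ from Section~\ref{sec:height_and_gradient} (the sets $\Zonotope_\pm$ and the identity $\Height(\Point) = \sqrt2\,\EuclDistance(\Point,\NullLevel)$) together with richness of $\Directions$ (needed so that, via Observation~\ref{obs:sum_sufficiently_rich} and Proposition~\ref{prop:sufficiently_rich_min_vertex}, the relevant zonotope is sufficiently rich for the cells in a star, making the "maximal height" locus behave like a face and the descending region convex). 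The main obstacle is precisely this convexity/openness verification in the apartment: one must check that the two constraints — being at maximal height (an equatorial, i.e. codimension-condition, constraint forced by convexity of $\Height$) and satisfying $\BigCell\Min = \Cell$ (which, for $\Cell$ essential, unwinds via Lemma~\ref{lem:min_in_vertical_link} and transitivity of $\horizontal$ into the statement that the direction does not "point downhill") together cut out a convex open proper subset of the spherical apartment, and that this description is the same for every apartment containing $\Chamber$ — which is what Corollary~\ref{cor:apartments_that_contain_chamber_look_good} was built to guarantee. Once that is in place, the proof concludes by a single application of Proposition~\ref{prop:apartmentwise_coconvex_complexes}.
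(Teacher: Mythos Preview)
Your overall architecture is correct --- choose mutual-projection chambers $\PosChamber,\NegChamber$, invoke Corollary~\ref{cor:apartments_that_contain_chamber_look_good} so that every apartment of $\Link\Cell$ containing the resulting chamber looks like one coming from a twin apartment, then apply Proposition~\ref{prop:apartmentwise_coconvex_complexes}. But the heart of the argument, identifying the non-descending set in an apartment as a proper open convex set, is where your proposal goes astray.

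You treat ``$\max\Height|_\BigCell = \Height(\Cell)$'' and ``$\BigCell\Min = \Cell$'' as two separate constraints, the second supposedly cutting out ``a further convex open region''. In fact the second is automatic: if $\BigCell\direction\Cell \in \Horizontal\Link\Cell$ and $\max\Height|_\BigCell = \Height(\Cell)$, then $\Roof\BigCell \horizontal \Cell = \Cell\Min$, so ${\Roof\BigCell}\Min = \Cell$ by Observation~\ref{obs:min_min}, hence $\Depth\BigCell < \Depth\Cell$. So within the horizontal link, $\BigCell$ is descending iff $\max\Height|_\BigCell = \Height(\Cell)$ --- a single condition. Your description of that condition as ``a hyperplane through the north pole $\Gradient_\Cell\Height$'' is also confused, since everything in $\Horizontal\Link\Cell$ is already perpendicular to $\Gradient_\Cell\Height$. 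What actually makes the non-descending set convex is that the $\Height$-maxima of the polytope $\Polytope = (\Star[\PosApartment]\PosCell)\times(\Star[\NegApartment]\NegCell)$ form a \emph{face} of $\Polytope$, and this is precisely what richness buys: $\Directions$ is sufficiently rich for each of the two stars, so by Observation~\ref{obs:sum_sufficiently_rich} the set $(\Directions+\Directions)\union\Directions$ is sufficiently rich for $\pi(\Polytope)$, and Proposition~\ref{prop:sufficiently_rich_min_vertex} applies directly. The factor-by-factor route you sketch cannot substitute for this, because $\Height$ is a function of the difference $\iota_+(\PosPoint)-\iota_-(\NegPoint)$ and does not split over the two factors of $\Space$.
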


\begin{proof}
If $\BigCell$ is a coface of $\Cell$, then there is a move $\Cell \Up \BigCell$ because $\BigCell \horizontal \Cell = \Cell\Min$ so that $\BigCell\Min = \Cell$ by Observation~\ref{obs:min_min}. Thus $\Depth \BigCell < \Depth \Cell$ and $\BigCell$ is descending unless $\max \Height|_\BigCell > \max \Height|_\Cell$. If this never occurs we are done, so we assume that there is a coface $\BigCell$ that is ascending.

We first analyze the situation inside an apartment: Write $\Cell = \PosCell \times \NegCell$ and let $\TwinApartment$ be a twin apartment that contains $\PosCell$ and $\NegCell$. As usual we make the identifications \eqref{dia:twin_identification}. The closed stars $\iota_+(\Star[\PosApartment] \PosCell)$ and $\iota_-(\Star[\NegApartment]\NegCell)$ are polytopes for which $\Directions$ is sufficiently rich by assumption. Let $\Polytope = (\Star[\PosApartment] \PosCell) \times (\Star[\NegApartment] \NegCell)$ and define $\pi \colon \PosApartment \times \NegApartment \to \EuclSpace$ by $\pi(\PosPoint,\NegPoint) = \iota_+(\PosPoint) - \iota_-(\NegPoint)$. By Observation~\ref{obs:sum_sufficiently_rich} $\Directions \union (\Directions + \Directions)$ is sufficiently rich for $\pi(\Polytope)$. Thus by Proposition~\ref{prop:sufficiently_rich_min_vertex} the set of points of $\pi(\Polytope)$ that have maximal distance to $\Zonotope[\Directions \union (\Directions + \Directions)]$ is a face $\Face$. By definition the set of $\Height$-maximal points of $\Polytope$ is just the inverse image of $\Face$ under $\pi$, which is again a face. And since we assumed $\Height$ to be non-constant on $\Polytope$, it is a proper face. Thus it defines a convex subset of $\Link\Cell$.

Now let $\PosChamber \ge \PosCell$ and $\NegChamber \ge \NegCell$ be chambers such that $\PosChamber = \Proj[\Star\PosCell]\NegChamber$ and $\NegChamber = \Proj[\Star\NegCell]\PosChamber$. By Corollary \ref{cor:apartments_that_contain_chamber_look_good}, every apartment of $\Link\Cell$ that contains $\Chamber \defeq (\PosChamber \direction \PosCell) * (\NegChamber \direction \NegCell)$ can identified with an apartment $(\PosApartment \intersect \Link\PosCell) * (\NegApartment \intersect \Link\NegCell)$ that comes from a twin apartment $\TwinApartment$ in such a way, that ascending cells are identified with ascending cells. Hence Proposition~\ref{prop:apartmentwise_coconvex_complexes} together with Proposition~\ref{prop:retract_set_onto_subcomplex} implies that $\Descending{\Horizontal\Link}\Cell$ is spherical.
\end{proof}

\begin{prop}
\label{prop:descending_links_are_spherical}
Assume that $\Directions$ is rich. Let $\Cell \subseteq \Space$ be a cell of positive height. If $\Cell$ is not essential, then the descending link of $\Subdivision\Cell$ is contractible. If $\Cell$ is essential then the descending link of $\Subdivision\Cell$ is spherical. Beyond every bound on the height there is an essential cell $\Cell$ such that the descending link of $\Subdivision\Cell$ is properly spherical.
\end{prop}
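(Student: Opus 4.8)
The plan is to prove the three assertions of Proposition~\ref{prop:descending_links_are_spherical} by assembling the pieces established in the previous section. The first two assertions are immediate: if $\Cell$ is non-horizontal, then $\DescendingLink\Subdivision\Cell$ is contractible by Lemma~\ref{lem:descending_link_of_non-horizontal_cell}; if $\Cell$ is horizontal but not essential, it is contractible by Lemma~\ref{lem:descending_link_of_non-essential_cell}. If $\Cell$ is essential (hence horizontal), Proposition~\ref{prop:essential_coface_links_dont_subdivide} gives the join decomposition
\[
\DescendingLink\Subdivision\Cell = \Descending{\Link[\FacePart]}\Subdivision\Cell * \Descending{\Horizontal\Link}\Cell * \Descending{\Vertical\Link}\Cell \text{ .}
\]
By Lemma~\ref{lem:descending_link_of_essential_cell_face_part} the first factor is the full sphere $\Link[\FacePart]\Subdivision\Cell$, which has dimension $\dim\Cell - 1$ (the simplices of $\Link[\FacePart]\Subdivision\Cell$ correspond to chains of proper faces of $\Cell$). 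By Lemma~\ref{lem:descending_link_of_essential_cell_vertical_coface_part} the third factor is the open hemisphere complex $\OpenHemisphere\Link\Cell$, which is properly $(\dim\Vertical\Link\Cell)$-spherical by Theorem~\ref{thm:hemisphere_complexes}. By Lemma~\ref{lem:descending_link_of_essential_cell_horizontal_coface_part} (using richness) the second factor is $(\dim\Horizontal\Link\Cell)$-spherical. A join of an $a$-spherical complex, a $b$-spherical complex and a $c$-spherical complex is $(a+b+c+2)$-spherical, so $\DescendingLink\Subdivision\Cell$ is $d$-spherical for
\[
d = (\dim\Cell - 1) + \dim\Horizontal\Link\Cell + \dim\Vertical\Link\Cell + 2 = \dim\Cell + \dim\Link\Cell + 1 = \dim\Space \text{ ,}
\]
using $\dim\Link\Cell = \dim\Horizontal\Link\Cell + \dim\Vertical\Link\Cell + 1$ from the join decomposition $\Link\Cell = \Horizontal\Link\Cell * \Vertical\Link\Cell$ and $\dim\Cell + \dim\Link\Cell + 1 = \dim\Space$. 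So in all cases the descending link is spherical of the appropriate dimension (contractible complexes are $k$-spherical for every $k$, so the uniform statement ``spherical'' holds).

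For the third assertion I need to exhibit, beyond every height bound, an essential cell $\Cell$ whose descending link is \emph{properly} spherical, i.e.\ not contractible. Since proper sphericality of a join holds as soon as one factor is properly spherical and the others are (at least) spherical of their nominal dimension, it suffices to find essential cells of arbitrarily large height for which the horizontal link $\Horizontal\Link\Cell$ is empty (equivalently $\dim\Horizontal\Link\Cell = -1$): then $\Descending{\Horizontal\Link}\Cell$ is the empty complex, which is $(-1)$-spherical and not contractible, and joining with the other two factors keeps it properly spherical. Concretely, I would take a vertex $\Vertex = (\PosVertex, \NegVertex)$ of $\Space$, where $\PosVertex \in \PosEuclBuilding$ and $\NegVertex \in \NegEuclBuilding$ are special vertices, chosen so that the gradient $\Gradient_\Vertex\Height$ (equivalently the point at infinity $\InftyGradient_\Vertex\Height$, which is in general position by Observation~\ref{obs:gradient_in_general_position}) is in the interior of a chamber of $\Link\Vertex = \Link\PosVertex * \Link\NegVertex$; that is, $\InftyGradient_\Vertex\Height$ is a regular direction with respect to both factors. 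Then no join factor of $\Link\Vertex$ is perpendicular to the north pole, so $\Horizontal\Link\Vertex = \emptyset$ and $\Vertical\Link\Vertex = \Link\Vertex$. A vertex is always essential (its only proper face is $\emptyset = \Vertex\Min$, and a vertex $\Cell$ with $\Cell\Min = \emptyset$ would require $\Cell \horizontal \emptyset$, impossible since $\horizontal$ presupposes horizontality of both cells and $\emptyset$ is not a cell in the relevant convention — more simply, $\Vertex\Min = \Vertex$ for any vertex by Lemma~\ref{lem:tau_min} applied to the one-element face poset). To get arbitrarily large height, I use that $\Height = \EuclCoDistance[\Zonotope]$ differs from the unperturbed codistance $\EuclCoDistance$ by a bounded amount (the diameter of $\Zonotope$), and $\EuclCoDistance$ is unbounded on vertices of $\Space$: picking $\PosVertex$ and $\NegVertex$ far from opposite makes $\EuclCoDistance(\PosVertex,\NegVertex)$, hence $\Height(\Vertex)$, as large as desired; by strong transitivity of $\Group$ (or thickness of the building) one can moreover arrange the local regularity of the gradient direction while keeping the height large, e.g.\ by an independent choice of the chambers containing $\PosVertex$ and $\NegVertex$.

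The main obstacle is the third assertion, and within it the point that needs care is simultaneously arranging (a) arbitrarily large height and (b) the gradient direction $\InftyGradient_\Vertex\Height$ being regular in $\Link\Vertex$ (so that the horizontal link is empty). These two conditions interact: the gradient direction $\Gradient_\Vertex\Height$ at a vertex $\Vertex = (\PosVertex,\NegVertex)$ is determined by the directions of the rays $\InftyRay{\PosVertex}{\NegVertex}[\Zonotope]$ and $\InftyRay{\NegVertex}{\PosVertex}[\Zonotope]$, which in turn point ``away from'' $\NegVertex$ (resp.\ $\PosVertex$) modulo the perturbation. I expect the cleanest argument is to first fix the two special vertices $\PosVertex$, $\NegVertex$ in a common twin apartment $\TwinApartment$ so that $\EuclCoDistance(\PosVertex,\NegVertex)$ is large and the vector $\iota_+(\PosVertex) - \iota_-(\NegVertex)$ is a regular vector of $\EuclSpace$ (not on any wall), which by Lemma~\ref{lem:n_f_lemma} forces its projection complement $\NormalVector$ to lie in the same open Weyl chamber and hence be regular too; regularity of $\NormalVector$ in $\EuclSpace$ translates precisely into regularity of the gradient direction in the apartment $(\PosApartment \intersect \Link\PosVertex) * (\NegApartment \intersect \Link\NegVertex)$ of $\Link\Vertex$, and since the gradient direction does not depend on the choice of twin apartment this gives regularity in $\Link\Vertex$. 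Once $\Horizontal\Link\Vertex = \emptyset$ is secured, the sphericality statements for the remaining factors are exactly Lemmas~\ref{lem:descending_link_of_essential_cell_face_part} and~\ref{lem:descending_link_of_essential_cell_vertical_coface_part} together with Theorem~\ref{thm:hemisphere_complexes}, and the join is properly $(\dim\Space)$-spherical.
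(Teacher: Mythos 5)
Your treatment of the first two assertions and of the join decomposition for essential cells matches the paper's proof exactly: Lemmas~\ref{lem:descending_link_of_non-horizontal_cell} and~\ref{lem:descending_link_of_non-essential_cell} handle the non-essential case, and Proposition~\ref{prop:essential_coface_links_dont_subdivide} together with Lemmas~\ref{lem:descending_link_of_essential_cell_face_part}, \ref{lem:descending_link_of_essential_cell_vertical_coface_part}, \ref{lem:descending_link_of_essential_cell_horizontal_coface_part} and Theorem~\ref{thm:hemisphere_complexes} handle the essential case. (Your dimension count gives $\dim\Space$ where it should give $\dim\Space - 1$, but the proposition does not specify the dimension so this slip is harmless.) For the third assertion the paper simply asserts that essential cells with empty horizontal coface part occur at arbitrary height; your attempt to spell this out is welcome, and the overall strategy -- exhibit a vertex $\Vertex = (\PosVertex,\NegVertex)$ of large height whose gradient direction is regular in $\Link\Vertex$, so that $\Horizontal\Link\Vertex = \emptyset$ -- is the right one.

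However the key step in your argument for that third assertion has a genuine gap: you claim that Lemma~\ref{lem:n_f_lemma} lets you deduce that $\NormalVector = \Vector - \Proj[\Zonotope]\Vector$ is regular whenever $\Vector$ is. That lemma only says that every $\Weyl$-chamber containing $\Vector$ also contains $\NormalVector$; it does not prevent $\NormalVector$ from landing on the boundary of that chamber, and in fact this happens. For an edge $[\lambda, s_i\lambda]$ of the $\Weyl$-invariant polytope $\Zonotope$ the normal cone is orthogonal to $\lambda - s_i\lambda$ and hence lies inside the wall $\Wall_{\alpha_i}$; a regular $\Vector$ that projects onto a relatively interior, non-midpoint, point of such an edge therefore has $\NormalVector \in \Wall_{\alpha_i}$, which is not regular. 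The remedy is to reverse the order of choices: first pick a vertex $v$ of $\Zonotope$ and a regular vector $\NormalVector$ in the full-dimensional interior of $\NormalCone{v}$; then set $\Vector = v + t\NormalVector$ with $t$ large, so that $\Proj[\Zonotope]\Vector = v$ and the projection complement is $t\NormalVector$, still regular; finally perturb $\Vector$ within the open cone on which this continues to hold to reach a value realized by $\iota_+(\PosVertex) - \iota_-(\NegVertex)$ for special vertices $\PosVertex, \NegVertex$ of a twin apartment (these values form a translated lattice, and the good region contains arbitrarily large balls as $t \to \infty$). A second, smaller, point: the statement ``a join is properly spherical as soon as one factor is properly spherical and the rest are spherical'' is not quite right -- a contractible factor makes the whole join contractible -- but in your situation the remaining factors (the boundary sphere of $\Cell$ and the open hemisphere complex of Theorem~\ref{thm:hemisphere_complexes}) are in fact also properly spherical, so the conclusion holds.
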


\begin{proof}
The statement about non-essential cells follows from Lemma~\ref{lem:descending_link_of_non-horizontal_cell} and Lemma~\ref{lem:descending_link_of_non-essential_cell}. The descending link of $\Subdivision\Cell$ where $\Cell$ is essential is a join of the face part, the vertical coface part, and the horizontal coface part. The face part is a sphere by Lemma~\ref{lem:descending_link_of_essential_cell_face_part}. The vertical coface part is a hemisphere complex by Lemma~\ref{lem:descending_link_of_essential_cell_vertical_coface_part} and thus spherical by Theorem~\ref{thm:hemisphere_complexes}. The horizontal coface part is spherical by Lemma~\ref{lem:descending_link_of_essential_cell_horizontal_coface_part}. At arbitrary height there are essential cells $\Cell$ that have empty horizontal coface part so that the descending link of $\Subdivision\Cell$ is not contractible.
\end{proof}


\section{Proof of the Main Theorem}
\label{sec:proof_of_main_theorem}

\begin{thm}
\label{thm:main_theorem_geometric_formulation}
Let $\EuclTwinBuilding$ be an irreducible locally finite Euclidean twin building of dimension $n$. Assume that $\Group$ acts strongly transitively on $\EuclTwinBuilding$ and that the kernel of the action is finite. Then $\Group$ is of type $\TopFin{2n-1}$ but not of type $\TopFin{2n}$.
\end{thm}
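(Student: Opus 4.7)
The plan is to apply Brown's criterion for topological finiteness properties to the diagonal action of $\Group$ on the contractible $2n$-dimensional complex $\Space \defeq \PosEuclBuilding \times \NegEuclBuilding$. I would fix a rich, $\Weyl$-invariant, symmetric finite set $\Directions \subseteq \EuclSpace$ and equip $\Subdivision\Space$ with the Morse function $\Subdivision\Height$ from Section~\ref{sec:horizontal_cells_and_subdivision}. The natural $\Group$-invariant filtration of $\Subdivision\Space$ is by the subcomplexes $\Space_c$ supported on vertices $\Subdivision\Cell$ with $\max\Height|_\Cell \le c$; the zero-sublevel set $\Space_0$ is the $\Zonotope$-thickening of the opposition locus and will serve as the bottom of the filtration.

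The first step is to verify that each $\Space_c$ is $\Group$-cocompact with cell stabilizers of type $\TopFin{\infty}$. Cocompactness follows from strong transitivity and the boundedness of $\Zonotope$: strong transitivity gives finitely many $\Group$-orbits of pairs of opposite cells, and only cells within a bounded neighbourhood of the opposition locus can have height at most $c$. For stabilizers, the finite-kernel hypothesis together with local finiteness of $\EuclTwinBuilding$ renders the pointwise stabilizer of a twin apartment finite; cell stabilizers in $\Space_0$ are then extensions of this finite group by a finitely generated free abelian translation group sitting inside the twin apartment, hence of type $\TopFin{\infty}$. Stabilizers of cells in $\Space_c$ that are not in $\Space_0$ are handled similarly via the twin BN-pair extracted from strong transitivity (cf.\ \cite[Chapter~6]{abrbro}).

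The second step is the connectivity analysis. By Proposition~\ref{prop:descending_links_are_spherical} every descending link of a vertex $\Subdivision\Cell$ with $\max\Height|_\Cell > 0$ is either contractible or $(2n-1)$-spherical, and is in either case $(2n-2)$-connected. The standard Morse-theoretic argument (passing the critical vertices in order of increasing $\Subdivision\Height$ and attaching descending stars) then shows that every inclusion $\Space_c \hookrightarrow \Space_{c'}$ with $0 \le c \le c'$ is $(2n-2)$-connected, so the filtration is essentially $(2n-2)$-connected. Combined with the cocompactness and stabilizer verification above, Brown's criterion yields that $\Group$ is of type $\TopFin{2n-1}$.

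For the negative assertion, the last clause of Proposition~\ref{prop:descending_links_are_spherical} supplies essential cells with \emph{properly} $(2n-1)$-spherical descending links at arbitrarily large heights; these witness the fact that the connectivity of $\Space_c$ stops at $2n-2$ in a uniform way, so that by cocompactness no single $c'$ can kill the $(2n-1)$-cycles produced below height $c$ for all $c$, and the filtration fails to be essentially $(2n-1)$-connected. Brown's criterion then rules out $\TopFin{2n}$. The main obstacle I expect is isolating the finiteness properties of cell stabilizers in the abstract twin-building setting, since strong transitivity alone supplies only a BN-pair structure; one must identify the ``Levi'' part as virtually abelian of the expected rank in order to conclude that the stabilizers are of type $\TopFin{\infty}$, and this is where the structure theory of Theorem~\ref{thm:main_theorem_algebraic_formulation} really enters.
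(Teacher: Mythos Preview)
Your overall architecture---Brown's criterion applied to the filtration of $\Subdivision\Space$ by $\Subdivision\Height$-sublevels, with Proposition~\ref{prop:descending_links_are_spherical} supplying the connectivity of descending links---is exactly the paper's approach, and your treatment of cocompactness and of the negative statement is fine.

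The genuine error is in your analysis of cell stabilizers. You assert that the stabilizer of a cell of $\Space_0$ is an extension of a finite group by a finitely generated free abelian translation group, and you flag extracting a virtually abelian Levi as ``the main obstacle''. In fact the stabilizers are simply \emph{finite}. A cell of $\Space$ has the form $\PosCell\times\NegCell$, and by local finiteness its stabilizer is commensurable with the stabilizer of a pair of chambers $(\PosChamber,\NegChamber)$. For an \emph{opposite} pair, the rigidity theorem for thick twin buildings (\cite[Theorem~5.205]{abrbro}) says that an automorphism fixing $\PosChamber$, $\NegChamber$ and every chamber adjacent to one of them is the identity; local finiteness then forces the stabilizer of $(\PosChamber,\NegChamber)$ in $\Aut\EuclTwinBuilding$ to be finite. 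Local finiteness again bootstraps this to arbitrary pairs, and the finite-kernel hypothesis transfers the conclusion to $\Group$. (Concretely: the translation part of the torus that you are imagining does stabilize a twin apartment setwise, but it moves every cell inside it, so it does not appear in any cell stabilizer.)

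So the step you singled out as hardest is in fact the easiest, and there is no need to import any Levi-type structure from Theorem~\ref{thm:main_theorem_algebraic_formulation}; the logical dependence runs the other way.
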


\begin{proof}
The set of heights of vertices $\Height(\Vertices\Space)$ is discrete. Thus we can write $\Subdivision\Height(\Vertices\Subdivision\Space) = \{r_0,r_1,\ldots\}$ which induces a filtration by spaces $\Space_i$, the full subcomplex of $\Space$ of vertices of height at most $r_i$. The space $\Space_{i+1}$ is obtained from $\Space_{i}$ by gluing in vertices $\Subdivision\Cell$ along their descending links, which by Proposition~\ref{prop:descending_links_are_spherical} are $(2n-1)$-spherical once $r_{i+1}$ is strictly positive in the first component. Moreover it is infinitely often not contractible.

The action of $\Group$ on $\EuclTwinBuilding$ is codistance-preserving hence $\Group$ acts on each of the $\Space_i$.

The action of $\Group$ on pairs of chambers $(\PosChamber,\NegChamber)$ with $\PosChamber \op \NegChamber$ is transitive (by strong transitivity) so, since the twin building is locally finite, the action of $\Group$ on each of the $\Space_i$ is cocompact.

The stabilizer of two opposite chambers $\PosChamber$ and $\NegChamber$ in the automorphism group of $\EuclTwinBuilding$ is finite. This follows from \cite[Theorem~5.205]{abrbro} together with the fact that the twin building is locally finite. Since the twin building is locally finite, this implies that the stabilizer of any cell of $\Space$ is finite. Since the kernel of the action of $\Group$ is finite, the same is true of the stabilizers in $\Group$, so they are in particular of type $\TopFin{\infty}$.

The space $\Space$ is contractible because it is CAT($0$) as a product of the CAT($0$)-spaces $\PosEuclBuilding$ and $\NegEuclBuilding$.

Thus by Brown's criterion \cite[Corollary~3.3]{brown87} $\Group$ is of type $\TopFin{2n-1}$ but not of type $\TopFin{2n}$.
\end{proof}

The algebraic version is deduced using standard constructions:

\begin{thm}
\label{thm:main_theorem_algebraic_formulation}
Let $\Group$ be a group that admits a twin Tits system $(\Group,B_+,B_-,N,S)$ and set as usual $W = N/(N \intersect B_+)$. If $(W,S)$ is of irreducible affine type and rank $n+1$, $[B_\varepsilon s B_\varepsilon:B_\varepsilon]$ is finite for all $s \in S$ and $\varepsilon \in \{+,-\}$, and $\Intersect_{g \in G}gB_+g^{-1} \intersect \Intersect_{g \in G}gB_-g^{-1}$ is finite, then $\Group$ is of type $\TopFin{2n-1}$ but not of type $\TopFin{2n}$.

This is in particular the case if there is an RGD system $(\Group,(U_\alpha)_{\alpha\in\Phi},T)$ of type $(W,S)$, each $U_\alpha$ is finite, and $\Group_+ \defeq \gen{U_\alpha \mid \alpha \in \Phi}$ has finite index in $\Group$.
\end{thm}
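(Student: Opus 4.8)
The plan is to deduce Theorem~\ref{thm:main_theorem_algebraic_formulation} from the geometric Theorem~\ref{thm:main_theorem_geometric_formulation} by realizing the twin Tits system as an action on a Euclidean twin building. First I would invoke the standard dictionary between twin Tits systems and twin buildings (see \cite[Sections~6.3,~8]{abrbro}): given the twin Tits system $(\Group,B_+,B_-,N,S)$, one builds a twin building $\EuclTwinBuilding = \EuclTwinBuilding(\Group,B_+,B_-,N,S)$ on which $\Group$ acts strongly transitively, with the action on each half being the coset geometry of $B_\pm$. Since $(W,S)$ is of irreducible affine type and rank $n+1$, the associated buildings $\PosEuclBuilding$ and $\NegEuclBuilding$ are irreducible Euclidean buildings of dimension $n$, so $\EuclTwinBuilding$ is an irreducible Euclidean twin building of dimension $n$. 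The kernel of the action of $\Group$ on $\EuclTwinBuilding$ is exactly the intersection $\Intersect_{g \in G}gB_+g^{-1} \intersect \Intersect_{g \in G}gB_-g^{-1}$ of the normal cores of $B_+$ and $B_-$ (the kernel of the action on the two halves is $\Intersect_g gB_+g^{-1}$ and $\Intersect_g gB_-g^{-1}$ respectively, and the kernel on the twin building is their intersection), which is assumed finite.

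Next I would verify local finiteness. A chamber $\AltChamber$ adjacent to a given chamber $\Chamber$ across the panel of type $s$ in $\PosEuclBuilding$ corresponds to a coset in $B_+ s B_+ / B_+$, so the number of such chambers is $[B_+ s B_+ : B_+]$, which is finite by hypothesis; the same holds in $\NegEuclBuilding$ with $B_-$. Hence both $\PosEuclBuilding$ and $\NegEuclBuilding$ are locally finite, and therefore so is the twin building. With these verifications in hand, Theorem~\ref{thm:main_theorem_geometric_formulation} applies directly and yields that $\Group$ is of type $\TopFin{2n-1}$ but not of type $\TopFin{2n}$.

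For the second sentence, I would use the standard fact (see \cite[Sections~7--8]{abrbro}) that an RGD system $(\Group,(U_\alpha)_{\alpha\in\Phi},T)$ of type $(W,S)$ produces a twin Tits system on the subgroup $\Group_+ = \gen{U_\alpha \mid \alpha \in \Phi}$ (or on $\Group$ itself, after adjusting $T$), with $B_\pm$ the standard positive/negative Borel subgroups built from $T$ and the $U_\alpha$ with $\alpha$ positive/negative. When each $U_\alpha$ is finite, each $B_\pm$ is built from $T$ and finitely many finite root groups in each ``direction'', so the index $[B_\varepsilon s B_\varepsilon : B_\varepsilon]$ is finite (it equals $|U_{\alpha_s}|$ for the simple root $\alpha_s$ associated to $s$); and the normal cores $\Intersect_g gB_\pm g^{-1}$ are trivial for an RGD system (the twin building action of $\Group_+$ is faithful modulo the center, and one checks the relevant intersection is contained in $T$ and in fact trivial or finite). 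Thus $\Group_+$ satisfies the hypotheses of the first part and has type $\TopFin{2n-1}$ but not $\TopFin{2n}$; since $\Group_+$ has finite index in $\Group$, finiteness properties of type $\TopFin{m}$ are inherited by $\Group$ in both directions (a group is of type $\TopFin{m}$ if and only if a finite-index subgroup is, cf.\ \cite[Chapter~7]{geoghegan}), giving the claim for $\Group$.

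The main obstacle I anticipate is bookkeeping rather than conceptual: one must be careful to identify precisely which kernel appears in the geometric action (the intersection of the two normal cores, not just one of them) and to match the index $[B_\varepsilon s B_\varepsilon : B_\varepsilon]$ correctly with the local structure (thickness) of the panels of the corresponding half of the twin building. In the RGD case there is a minor subtlety about whether the twin Tits system lives on $\Group$ or only on $\Group_+$, and about the exact normal core being finite; invoking the finite-index passage handles the discrepancy between $\Group$ and $\Group_+$ cleanly, so the only real work is checking that the hypotheses of the first statement genuinely hold for the (sub)group carrying the twin Tits system.
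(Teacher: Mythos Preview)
Your proposal is correct and matches the paper's approach almost exactly for the first paragraph: construct the twin building from the twin Tits system (the paper cites \cite[Theorem~6.87]{abrbro}), identify the kernel of the action with the intersection of the two normal cores, read off local finiteness from the finiteness of $[B_\varepsilon s B_\varepsilon:B_\varepsilon]$, and apply Theorem~\ref{thm:main_theorem_geometric_formulation}.

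For the RGD clause there is a small difference worth noting. The paper does not pass to $\Group_+$ and then invoke finite-index inheritance of $\TopFin{m}$; instead it works directly with $\Group$: \cite[Theorem~8.80]{abrbro} gives $\Group$ itself a twin BN pair, \cite[Theorem~8.81]{abrbro} yields local finiteness from finiteness of the $U_\alpha$, and \cite[Proposition~8.82]{abrbro} identifies the kernel of the $\Group$-action with the centralizer $C_\Group(\Group_+)$, which is finite once $\Group_+$ has finite index. Your route via $\Group_+$ is also fine and arguably more elementary (finite-index transfer of $\TopFin{m}$ is standard), but it shifts the burden to checking that the kernel of the $\Group_+$-action is finite, which you acknowledge only informally; the paper's route replaces that check with a direct citation.
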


\begin{proof}
By \cite[Theorem~6.87]{abrbro}, $\Group$ acts strongly transitively on a thick affine twin building $\EuclTwinBuilding$. Moreover the condition that $[B_\varepsilon s B_\varepsilon:B_\varepsilon]$ is finite for all $s \in S$ and $\varepsilon \in \{+,-\}$ implies that $\EuclTwinBuilding$ is locally finite (c.f.\ \cite[Section~6.1.7]{abrbro}). Finally $\Intersect_{g \in G}gB_+g^{-1} \intersect \Intersect_{g \in G}gB_-g^{-1}$ is the kernel of the action of $\Group$ on $\EuclTwinBuilding$. So we can apply Theorem~\ref{thm:main_theorem_geometric_formulation}.

An RGD system for $\Group$ gives rise to a twin BN pair by Theorem~8.80 of \cite{abrbro}. Moreover Theorem~8.81 of loc.cit.\ implies that the associated twin building is locally finite if the $U_\alpha$ are finite. And by Proposition~8.82 the centralizer of $\Group_+$ in $\Group$ is the kernel of the action of $\Group$ on the twin building; it is finite if $\Group_+$ has finite index.
\end{proof}

Finally we prove the Main~Theorem. Recall the setting: $\GroupScheme$ is a connected, non-commutative, absolutely almost simple $\FiniteField[q]$-group of rank $n>0$. The Main~Theorem states that $\Group = \GroupScheme(\FiniteField[q])$ is of type $\TopFin{2n-1}$ but not of type $\TopFin{2n}$.

\begin{proof}[Proof of the Main~Theorem]
Passing to an extension by a finite group, we may assume that $\GroupScheme$ is simply connected. By \cite[Proposition~10.4]{buxgrawit10} $\Group$ acts on a Euclidean twin building $\EuclTwinBuilding$. Moreover the halves of $\EuclTwinBuilding$ are $\Group$-equivariantly isometric to the buildings associated to $\GroupScheme(\FiniteField[q]((t))$ and $\GroupScheme(\FiniteField[q]((t^{-1})))$. That is the twin building is locally finite, of dimension $n$, and the kernel of the action is finite. Now the result follows from Theorem~\ref{thm:main_theorem_geometric_formulation}.
\end{proof}

\begin{rem}
Technically the proof of the Main~Theorem of course ``passes through'' Theorem~\ref{thm:main_theorem_algebraic_formulation}. But in lack of a better reference, it is easier to deduce it from Theorem~\ref{thm:main_theorem_geometric_formulation}.
\end{rem}


\section{Concluding remarks}

\begin{remnum}
The group $\Group$ is both, an $\Places$-arithmetic group over a global function field, which gives rise to a Bruhat--Tits building, and a Kac--Moody group, which gives rise to a twin building. We have used both of these facts in the proof of the Main~Theorem. The fact that the building is Euclidean is crucial: for example we use that hyperplanes and horospheres are the same thing inside an apartment. The twin building structure is also heavily used, but it seems that this is in fact just a convenient way to get around having to use the reduction theory developed by Harder in \cite{harder67, harder68, harder69} (cf.\ \cite{buxwor08} where reduction theory is used). This is to say that our proof of the Main~Theorem is better understood in the context of $\Places$-arithmetic groups then in the context of Kac--Moody groups. More concretely it should be possible to replace the metric codistance by a Morse function obtained from Harder's reduction theory and obtain a proof for $S$-arithmetic groups with $S$ arbitrary but there is little hope of generalizing it to hyperbolic Kac--Moody groups (see also Section~1 of \cite{buxgrawit10}).

On the other hand the approach that Abramenko has developed in \cite{abramenko96} (and not yet carried out for the entire Kac--Moody group) uses only the theory of twin buildings and generalizes directly to the compact hyperbolic case, but it does not produce results for $S$-arithmetic groups at other places.
\end{remnum}

The following result is certainly known to the experts but for lack of suitable reference we reproduce it here. It is the natural generalization of \cite[Proposition 2]{behr98}. The proof works as in the special case considered in \cite{abramenko96}.

\begin{prop}
\label{prop:increasing_places_preserves_topfin}
Let $\GlobalField$ be a global function field, $\GroupScheme$ an isotropic, connected, absolutely almost simple $\GlobalField$-group, and $\Places$ a non-empty, finite set of places of $\GlobalField$. If $\GroupScheme(\Integers[\Places])$ is of type $\TopFin{\SumOfLocalRanks}$ and $\Places' \supseteq \Places$ is a larger finite set of places, then $\GroupScheme(\Integers[\Places'])$ is also of type $\TopFin{\SumOfLocalRanks}$.
\end{prop}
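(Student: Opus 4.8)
The strategy is the classical one: realize $\GroupScheme(\Integers[\Places'])$ as a lattice acting on a product of the Bruhat--Tits buildings at the places in $\Places'$, and then peel off the ``extra'' factors coming from $\Places' \setminus \Places$ one at a time, at each step showing that adding one more place can only preserve (not destroy) the finiteness length $\SumOfLocalRanks$ up to which the group is of type $\TopFin{\SumOfLocalRanks}$. By induction it suffices to treat the case $\Places' = \Places \union \{\Place'\}$ for a single new place $\Place'$, whose completion $\GlobalField_{\Place'}$ has residue field $\FiniteField[q']$ and gives rise to the locally finite Bruhat--Tits building $\EuclBuilding_{\Place'}$ on which $\GroupScheme(\GlobalField_{\Place'})$ acts strongly transitively.

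First I would set up the comparison between the two arithmetic groups. Writing $H \defeq \GroupScheme(\Integers[\Places])$ and $H' \defeq \GroupScheme(\Integers[\Places'])$, the key point is that $H$ is a subgroup of $H'$, and $H$ is precisely the stabilizer in $H'$ of a suitable vertex (or bounded region) $v_0$ of $\EuclBuilding_{\Place'}$ — this is the standard fact that $\GroupScheme(\Integers[\Places])$ is, up to finite index, the stabilizer of a point in the building at the new place, because an element of $\GroupScheme(\GlobalField)$ lies in $\GroupScheme(\Integers[\Places])$ iff it is additionally integral at $\Place'$, i.e.\ fixes $v_0$. Moreover $H'$ acts cocompactly on $\EuclBuilding_{\Place'}$ with finite stabilizers having finite index over the various $H$-conjugates: since the building is locally finite and the action is strongly transitive, there are finitely many orbits of cells, and each cell stabilizer in $H'$ contains a finite-index subgroup of a conjugate of $H$. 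The group $H'$ thus acts on the contractible CAT($0$)-space $\EuclBuilding_{\Place'}$ with cell stabilizers that are commensurable with $H$, hence of type $\TopFin{\SumOfLocalRanks}$ by hypothesis (finiteness length is a commensurability invariant, and passing to a finite-index subgroup or a finite extension preserves $\TopFin{m}$).

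Next I would invoke the standard Brown-type criterion for groups acting on contractible complexes (\cite[Corollary~3.3]{brown87}, which is already cited in the excerpt): if a group $H'$ acts cellularly and cocompactly on a contractible CW-complex $\EuclBuilding_{\Place'}$ such that every cell stabilizer is of type $\TopFin{\SumOfLocalRanks}$, then $H'$ is itself of type $\TopFin{\SumOfLocalRanks}$. (One applies the criterion to the skeletal filtration of $\EuclBuilding_{\Place'}$, which is cocompact, the relevant relative connectivity being automatic since the whole space is contractible; the obstruction to $\TopFin{\SumOfLocalRanks}$ then comes only from the stabilizers, which are good by the previous paragraph.) Feeding in that cell stabilizers are commensurable with $H = \GroupScheme(\Integers[\Places])$, which is of type $\TopFin{\SumOfLocalRanks}$ by assumption, yields that $H' = \GroupScheme(\Integers[\Places'])$ is of type $\TopFin{\SumOfLocalRanks}$. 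Iterating over the finitely many places in $\Places' \setminus \Places$ completes the proof.

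\textbf{Main obstacle.} The routine analytic input (contractibility of Bruhat--Tits buildings, local finiteness when the residue fields are finite, existence of the BN-pair and strong transitivity) is all standard; similarly the reduction ``passing to a finite extension / finite-index subgroup preserves $\TopFin{m}$'' is harmless. The one genuinely substantive point that needs care is the identification of $\GroupScheme(\Integers[\Places])$ — up to finite index, and up to the finitely many commensurability classes that occur among cell stabilizers — with the stabilizers in $\GroupScheme(\Integers[\Places'])$ of cells of $\EuclBuilding_{\Place'}$: one must check that each such stabilizer is commensurable with (a conjugate of) $\GroupScheme(\Integers[\Places])$, using that $\GroupScheme$ is affine and that the valuation at $\Place'$ detects exactly the failure of $\Places'$-integrality to be $\Places$-integrality. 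This is precisely the place where one follows the argument of \cite[Proposition~2]{behr98} and its affine-building reformulation in \cite{abramenko96}, and it is the step I would write out in full detail.
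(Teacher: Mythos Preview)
Your overall architecture matches the paper's proof: induct on one added place $\Place'$, act on the Bruhat--Tits building $\EuclBuilding_{\Place'}$, identify cell stabilizers up to commensurability with $\GroupScheme(\Integers[\Places])$, and invoke Brown's criterion. The stabilizer identification you flag as the ``main obstacle'' is handled in the paper essentially as you outline (via $\GroupScheme(\Integers[\Places]) = \GroupScheme(\Integers[\Places']) \cap \GroupScheme(\Integers[\Place'])$ and the Bruhat--Tits fixed point theorem).

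However, there is a genuine gap in your cocompactness argument. You write that $H'$ acts cocompactly because ``the building is locally finite and the action is strongly transitive, there are finitely many orbits of cells.'' Strong transitivity holds for $\GroupScheme(\GlobalField_{\Place'})$, not for the discrete subgroup $H' = \GroupScheme(\Integers[\Places'])$; local finiteness together with strong transitivity of the ambient local group says nothing about the number of $H'$-orbits. Cocompactness of $H'$ on $\EuclBuilding_{\Place'}$ is in fact the step where a deep input is needed: the paper uses that $\GroupScheme(\Integers[\Places'])$ is \emph{dense} in $\GroupScheme(\GlobalField_{\Place'})$, which follows from strong approximation (Prasad's theorem, after reducing to the simply connected case). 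Density plus continuity of the orbit map then forces $H'$ to act transitively on chambers, hence cocompactly. This is precisely the content of the paper's Lemma~\ref{lem:density_of_s-arith_subgroup}, and you should not list it under ``routine analytic input.''
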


\begin{proof}
Proceeding by induction it suffices to prove the case where only one place is added to $\Places$, i.e.\ $\Places' = \Places \union \{\Class{\Valuation}\}$ for some place $\Class{\Valuation}$. Also note that as far as finiteness properties are concerned, we may (and do) assume that $\GroupScheme$ is simply connected.

Let $\EuclBuilding_\Valuation$ be the Bruhat--Tits building that belongs to $\GroupScheme(\GlobalField_\Valuation)$ (see \cite{brutit72, brutit84}). The group $\GroupScheme(\Integers[\Places']) \subseteq \GroupScheme(\GlobalField_\Valuation)$ acts continuously on $\EuclBuilding_\Valuation$. We claim that this action is cocompact and that cell stabilizers are abstractly commensurable to $\GroupScheme(\Integers[\Places])$. With these two statements the result follows from Proposition~1.1 and Proposition~3.1 of \cite{brown87}.

Note that the stabilizer of a cell is commensurable to the stabilizers of its faces and cofaces since the building is locally finite (because the residue field of $\GlobalField$ is finite). Also all cells of same type are conjugate by the action of $\GroupScheme(\GlobalField_\Valuation)$. Hence it remains to see that some cell-stabilizer is commensurable to $\GroupScheme(\Integers[\Places])$. To see this note that $\GroupScheme(\Integers[\Valuation])$ is a maximal compact subgroup of $\GroupScheme(\GlobalField_\Valuation)$ (with respect to the topology given by the local field $\GlobalField_\Valuation$). The Bruhat--Tits Fixed Point Theorem \cite[Lemme 3.2.3]{brutit72} (see also \cite[Corollary~II.2.8]{brihae}) implies that it has a fixed point and by maximality it is the full stabilizer of the carrier $\Cell$ of that fixed point. (Actually the fixed point is a special vertex by the definition of $\EuclBuilding_\Valuation$ but we do not need this here.) Now $\GroupScheme(\Integers[\Places]) = \GroupScheme(\Integers[\Places']) \intersect \GroupScheme(\Integers[\Valuation])$ so $\GroupScheme(\Integers[\Places])$ is the stabilizer in $\GroupScheme(\Integers[\Places'])$ of $\Cell$.

For cocompactness we use that $\GroupScheme(\Integers[\Places'])$ is dense in $\GroupScheme(\GlobalField_\Valuation)$, see Lemma~\ref{lem:density_of_s-arith_subgroup} below. Let $\Point$ be an interior point of some chamber of $\EuclBuilding_\Valuation$. The orbit $\GroupScheme(\GlobalField_\Valuation).\Point$ is a discrete space which, by strong transitivity, contains one point from every chamber of $\EuclBuilding_\Valuation$. The orbit map $\GroupScheme(\GlobalField_\Valuation) \to \GroupScheme(\GlobalField_\Valuation).x$ is continuous by continuity of the action, so the image of the dense subgroup $\GroupScheme(\Integers[\Places'])$ is dense in the discrete space $\GroupScheme(\GlobalField_\Valuation).x$. Hence $\GroupScheme(\Integers[\Places'])$ acts transitively on chambers and in particular cocompactly.
\end{proof}

The proposition allows to deduce the following consequence of \cite[Main~Theorem]{buxgrawit10} and our Main~Theorem:

\begin{cor}
Let $\GroupScheme$ be a connected, absolutely almost simple $\FiniteField[q]$-group of rank $\Dimension > 0$ and let $\Places$ be a finite set of places of $\FiniteField[q](t)$. If $\Places$ contains the place of the $t$-adic valuation $\Valuation_0$ or of the valuation at infinity $\Valuation_\infty$, then $\GroupScheme(\Integers[\Places])$ is of type $\TopFin{n-1}$. If $\Places$ contains both, $\Class{\Valuation_0}$ and $\Class{\Valuation_\infty}$, then $\GroupScheme(\Integers[\Places])$ is of type $\TopFin{2n-1}$.
\end{cor}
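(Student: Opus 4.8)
The plan is to reduce everything to the two already-established results: the Main~Theorem of this paper (for the doubly-ramified case $\Places \supseteq \{\Class{\Valuation_0},\Class{\Valuation_\infty}\}$) together with Proposition~\ref{prop:increasing_places_preserves_topfin}, and the Main~Theorem of \cite{buxgrawit10} (for the singly-ramified case involving only $\Valuation_0$) together again with Proposition~\ref{prop:increasing_places_preserves_topfin}. The one extra ingredient needed is to handle the place $\Valuation_\infty$ by itself; this is where a small symmetry argument enters.

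First I would treat the case where $\Places = \{\Class{\Valuation_0}\}$: here $\Integers[\Places] = \FiniteField[q][t]$ and $\GroupScheme(\FiniteField[q][t])$ is of type $\TopFin{n-1}$ by the Main~Theorem of \cite{buxgrawit10}. Next, for the case $\Places = \{\Class{\Valuation_\infty}\}$, I would observe that the substitution $t \mapsto t^{-1}$ is an automorphism of $\FiniteField[q](t)$ that interchanges $\Valuation_0$ and $\Valuation_\infty$ and carries $\FiniteField[q][t^{-1}] = \Integers[\{\Class{\Valuation_\infty}\}]$ isomorphically onto $\FiniteField[q][t]$. Hence $\GroupScheme(\Integers[\{\Class{\Valuation_\infty}\}])$ is isomorphic (as an abstract group, after the corresponding twist of the $\FiniteField[q]$-structure, which does not affect finiteness properties since $\GroupScheme$ is defined over $\FiniteField[q]$ and the twist is by an $\FiniteField[q]$-automorphism of the ring) to $\GroupScheme(\FiniteField[q][t])$, so it too is of type $\TopFin{n-1}$ by \cite{buxgrawit10}. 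With these two base cases in hand, any finite set $\Places$ containing at least one of $\Class{\Valuation_0}$, $\Class{\Valuation_\infty}$ contains a singleton among these two, and Proposition~\ref{prop:increasing_places_preserves_topfin} (applied with $\SumOfLocalRanks = n-1$, noting $\GroupScheme$ is absolutely almost simple so the hypotheses are met once we pass to the simply connected cover, which changes nothing for finiteness properties) upgrades the base case to $\GroupScheme(\Integers[\Places])$ being of type $\TopFin{n-1}$.

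For the stronger statement, suppose $\Places \supseteq \{\Class{\Valuation_0},\Class{\Valuation_\infty}\}$. Then $\Places$ contains $\Places_0 \defeq \{\Class{\Valuation_0},\Class{\Valuation_\infty}\}$, and by the Main~Theorem of the present paper $\GroupScheme(\Integers[\Places_0]) = \GroupScheme(\FiniteField[q][t,t^{-1}])$ is of type $\TopFin{2n-1}$ (the sum of local ranks here is $2n$, and $2n-1 = \SumOfLocalRanks - 1$). Proposition~\ref{prop:increasing_places_preserves_topfin}, now with $\SumOfLocalRanks = 2n-1$, then gives that $\GroupScheme(\Integers[\Places])$ is of type $\TopFin{2n-1}$ as well.

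The only step requiring genuine care -- and hence the main (though modest) obstacle -- is verifying that the $t \mapsto t^{-1}$ automorphism argument really does identify $\GroupScheme(\FiniteField[q][t^{-1}])$ with a group to which \cite{buxgrawit10} applies verbatim, i.e.\ that replacing $\FiniteField[q][t]$ by $\FiniteField[q][t^{-1}]$ as the coordinate ring does not disturb the hypothesis that $\GroupScheme$ is a \emph{constant} (i.e.\ $\FiniteField[q]$-defined) group of the required type; since the ring automorphism is $\FiniteField[q]$-linear and fixes $\FiniteField[q]$ pointwise, this is immediate, so the argument goes through cleanly. One should also remark that, as in the proof of the Main~Theorem, passing to the simply connected cover introduces only a finite kernel and cokernel and therefore preserves all the finiteness lengths in play.
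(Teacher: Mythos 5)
Your argument is exactly what the paper intends (the Corollary is stated without proof, as an immediate consequence of Proposition~\ref{prop:increasing_places_preserves_topfin}, the Main~Theorem of \cite{buxgrawit10}, and the Main~Theorem of the present paper), and the $t\mapsto t^{-1}$ symmetry is the right way to cover the missing base case. One small slip: you have the ring identifications backwards. With the usual convention that $\Integers[\Places]$ consists of elements with non-negative valuation \emph{outside} $\Places$, the finite place $\Class{\Valuation_0}$ must be excluded from $\Places$ for $t$ to be integral, so $\Integers[\{\Class{\Valuation_\infty}\}]=\FiniteField[q][t]$ (poles allowed only at $\infty$) and $\Integers[\{\Class{\Valuation_0}\}]=\FiniteField[q][t^{-1}]$, not the other way around. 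Since your argument treats the two singleton cases symmetrically via the $t\mapsto t^{-1}$ automorphism, this mislabelling does not affect the logic, but it should be corrected.
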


It remains to provide the density statement used in the proof. It is known and a consequence of the Strong Approximation Theorem:

\begin{lem}
\label{lem:density_of_s-arith_subgroup}
Let $\GlobalField$ be a global field and let $\GroupScheme$ be an isotropic, connected, simply connected, absolutely almost simple $\GlobalField$-group. Let $\Places$ be a non-empty finite set of places and let $\Class{\Valuation} \nin \Places$. Then $\GroupScheme(\Integers[\Places \union \{\Class{\Valuation}\}])$ is dense in $\GroupScheme(\GlobalField_\Valuation)$.
\end{lem}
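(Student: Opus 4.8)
The plan is to deduce the density statement from the Strong Approximation Theorem for simply connected, isotropic, absolutely almost simple groups over global fields. First I would recall the precise form of Strong Approximation that applies: since $\GroupScheme$ is simply connected, absolutely almost simple, and isotropic over $\GlobalField$, it is in particular isotropic at $\Class{\Valuation}$ (because isotropy over $\GlobalField$ implies isotropy over every completion), so $\GroupScheme(\GlobalField_\Valuation)$ is non-compact. Strong Approximation (see Platonov--Rapinchuk, \emph{Algebraic Groups and Number Theory}, Theorem~7.12, or Margulis' book; for the function field case Prasad's theorem) then says that $\GroupScheme(\GlobalField)$ is dense in the restricted adele group $\GroupScheme(\Adeles^{T})$, where $T = \Places \union \{\Class{\Valuation}\}$ and $\Adeles^{T}$ denotes the adeles with the components at $T$ removed — equivalently, $\GroupScheme(\GlobalField)$ is dense in the product of $\GroupScheme(\GlobalField_w)$ over all $w \notin T$ together with appropriate integrality conditions.

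Next I would translate this into the statement about $\GroupScheme(\Integers[T])$. The $T$-integers $\Integers[T]$ consist of elements of $\GlobalField$ that are integral at every place outside $T$; correspondingly $\GroupScheme(\Integers[T])$ is the set of $g \in \GroupScheme(\GlobalField)$ that lie in $\GroupScheme(\calO_w)$ for all $w \notin T$ (for a fixed choice of integral model, which exists away from finitely many places and can be absorbed). The embedding $\GroupScheme(\GlobalField) \to \GroupScheme(\GlobalField_\Valuation)$ factors through $\GroupScheme(\GlobalField)$ sitting diagonally in $\prod_{w \in T}\GroupScheme(\GlobalField_w)$. The key point is that projection of the adelic group onto the single factor $\GroupScheme(\GlobalField_\Valuation)$, restricted to the closure of $\GroupScheme(\GlobalField)$, is still surjective with dense image: concretely, given $h \in \GroupScheme(\GlobalField_\Valuation)$ and a neighbourhood $U$ of $h$, Strong Approximation produces $g \in \GroupScheme(\GlobalField)$ that is $\Valuation$-close to $h$ and simultaneously $w$-integral for all $w \notin T$; such a $g$ lies in $\GroupScheme(\Integers[T])$ and in $U$. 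Hence $\GroupScheme(\Integers[T])$ is dense in $\GroupScheme(\GlobalField_\Valuation)$.

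The main obstacle — really the only subtle point — is making sure the hypotheses of Strong Approximation are genuinely met and invoking it in the correct (function-field-inclusive) form: one needs $\GroupScheme$ simply connected (given), absolutely almost simple (given), and the $T$-component to be non-compact, which here is guaranteed because $\Class{\Valuation} \in T$ and $\GroupScheme$ is $\GlobalField$-isotropic hence $\GlobalField_\Valuation$-isotropic. I would cite Platonov--Rapinchuk \cite{...} for the number field case and Prasad's strong approximation theorem for the function field case (or simply quote a reference that covers global fields uniformly), and remark that the integral-model bookkeeping away from $T$ only involves finitely many auxiliary places and does not affect the conclusion since those can be added to $T$ and removed again, or handled by commensurability. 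With that in place the argument above is routine; I would keep the write-up short, essentially: state which version of Strong Approximation is used, observe the non-compactness at $\Class{\Valuation}$, and extract the one-factor density by the neighbourhood argument.

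\textbf{Remark on references.} Since the excerpt does not fix a citation key for Strong Approximation, in the actual write-up I would add the appropriate bibliography entries (Platonov--Rapinchuk, Prasad) and phrase the lemma's proof as a direct appeal to that theorem, noting only that isotropy of $\GroupScheme$ over $\GlobalField$ forces non-compactness of $\GroupScheme(\GlobalField_\Valuation)$, which is the hypothesis one must check.
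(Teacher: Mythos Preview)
Your overall approach---invoke Strong Approximation and read off density at a single place---is exactly what the paper does, but you have the bookkeeping of places backwards. In the formulation you quote, if $\GroupScheme_{S'}=\prod_{w\in S'}\GroupScheme(\GlobalField_w)$ is non-compact then $\GroupScheme(\GlobalField)$ is dense in $\GroupScheme(\Adeles^{S'})$, the adeles with the $S'$-components \emph{removed}. You take $S'=T=\Places\cup\{\Class{\Valuation}\}$ and verify non-compactness via the factor at $\Valuation$; but then the $\Valuation$-component has been deleted from $\Adeles^{T}$, so the density statement says nothing whatsoever about the $\Valuation$-component of an approximating element. The sentence ``Strong Approximation produces $g\in\GroupScheme(\GlobalField)$ that is $\Valuation$-close to $h$ and simultaneously $w$-integral for all $w\notin T$'' therefore does not follow from what you have set up: you can arrange the integrality outside $T$, but not the closeness at $\Valuation$.

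The fix is to remove $\Places$, not $T$. Non-compactness of $\GroupScheme_{\Places}$ follows from the very isotropy argument you gave, applied to any place in $\Places$; this is precisely where the hypothesis $\Places\neq\emptyset$ enters (and is how the paper argues, citing Margulis). Now $\Class{\Valuation}\notin\Places$, so $\GroupScheme(\GlobalField_\Valuation)$ is a genuine factor of $\GroupScheme(\Adeles^{\Places})$: for open $V\subseteq\GroupScheme(\GlobalField_\Valuation)$ the set $V\times\prod_{w\notin T}\GroupScheme(\Integers[w])$ is open there, and density produces $g\in\GroupScheme(\GlobalField)$ lying in it, i.e.\ $g\in\GroupScheme(\Integers[T])\cap V$. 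The paper phrases this equivalently as $\GroupScheme(\GlobalField)\cdot\GroupScheme_{\Places}$ being dense in $\GroupScheme(\Adeles)$ and invokes Prasad's Theorem~A.
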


\begin{proof}
For a valuation $\Valuation$ of $\GlobalField$ let $\GlobalField_\Valuation$ denote the local field at $\Valuation$ and $\Integers[\Valuation]$ the ring of integers in $\GlobalField_\Valuation$. For a finite set $\Places$ of places of $\GlobalField$ let $\Adeles_\Places = \prod_{\Valuation \in \Places} \GlobalField_\Valuation \times \prod_{\Valuation \nin \Places} \Integers[\Valuation]$ denote the ring of $\Places$-adeles. Recall that the ring of adeles is $\Adeles = \lim_{\Places} \Adeles_\Places$ (see \cite{weil82}).

Note that $\GroupScheme_\Places \defeq \prod_{s \in S} \GroupScheme(\GlobalField_\Valuation)$ is non-compact by \cite[Proposition~2.3.6]{margulis}.

Recall that $\GlobalField_\Valuation$ embeds into $\Adeles$ at $\Valuation$, and that $\GlobalField$ discretely embeds into $\Adeles$ diagonally. With these identifications $\GroupScheme(\GlobalField) \cdot \GroupScheme_\Places$ is dense in $\GroupScheme(\Adeles)$ by \cite[Theorem~A]{prasad77}, that is if $U$ is an open subset of $\GroupScheme(\Adeles)$, then $\GroupScheme(\GlobalField) \intersect U \GroupScheme_\Places \ne \emptyset$.

If $V$ is an open subset of $\GroupScheme(\GlobalField_\Valuation)$, then
\[
U = V \times \prod_{\Valuation' \in \Places} \GroupScheme(\GlobalField_{\Valuation'}) \times \prod_{\Valuation' \nin \Places \union \{\Valuation\}} \GroupScheme(\Integers[\Valuation'])
\]
is open in $\GroupScheme(\Adeles)$. Hence there is a $g \in \GroupScheme(\GlobalField)$ with $g \in V$ and $g \in \GroupScheme(\Integers[\Places \union \{\Valuation\}])$ (where we now consider $\GroupScheme(\GlobalField)$ and $\GroupScheme(\Integers[\Places \union \{\Valuation\}])$ as subgroups of $\GroupScheme(\GlobalField_\Valuation)$). Thus $V \intersect \GroupScheme(\Integers[\Places \union \{\Valuation\}]) \ne \emptyset$ as desired.
\end{proof}

It may be interesting to note that the direction of the implication of Proposition~\ref{prop:increasing_places_preserves_topfin} is inverse to the one that is implied by Tiemeyer's criterion \cite[Theorem~3.1]{tiemeyer97} in the number field case:

\begin{thm}[Tiemeyer]
\label{thm:tiemeyer_criterion}
Let $\GlobalField$ be a global number field, $\GroupScheme$ a $\GlobalField$-group, and $\Places$ a set of places of $\GlobalField$. If $\GroupScheme(\Integers[\Places])$ is of type $\TopFin{\SumOfLocalRanks}$ and $\Places' \subseteq \Places$, then $\GroupScheme(\Integers[\Places'])$ is also of type $\TopFin{\SumOfLocalRanks}$.
\end{thm}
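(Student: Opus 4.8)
The final statement is Theorem~\ref{thm:tiemeyer_criterion}, attributed to Tiemeyer, and the excerpt explicitly frames it as a citation from \cite{tiemeyer97} (``Theorem~3.1''). So the ``proof'' here is really a sketch of the known argument, and I would present it as such.

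\textbf{Overall approach.} The plan is to reduce, by an inductive argument, to the case where $\Places' = \Places \setminus \{\Class{\Valuation}\}$ for a single place $\Class{\Valuation}$, and then to exhibit $\GroupScheme(\Integers[\Places])$ as acting on a contractible complex with good finiteness data from which the finiteness length of $\GroupScheme(\Integers[\Places'])$ can be read off. The key geometric input is that in the number field case the building $\EuclBuilding_\Valuation$ associated to $\GroupScheme(\GlobalField_\Valuation)$ is locally \emph{infinite} (the residue field is infinite), so the action of $\GroupScheme(\Integers[\Places])$ on $\EuclBuilding_\Valuation$ is \emph{not} cocompact --- and this is exactly why the implication runs in the opposite direction to Proposition~\ref{prop:increasing_places_preserves_topfin}. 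Concretely, I would use the fibre-product / short-exact-sequence machinery: $\GroupScheme(\Integers[\Places])$ is a lattice-like subgroup of $\GroupScheme(\GlobalField_\Valuation)$ whose cell stabilizers for the $\EuclBuilding_\Valuation$-action are commensurable to $\GroupScheme(\Integers[\Places'])$, and then invoke a finiteness-transfer criterion of the Brown type (Proposition~1.1 and Proposition~3.1 of \cite{brown87}, as used in the proof of Proposition~\ref{prop:increasing_places_preserves_topfin}) run ``in reverse''.

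\textbf{Key steps, in order.} First, set up the reduction to a single place and assume $\GroupScheme$ simply connected (finiteness length is insensitive to passing to a central extension by a finite group). Second, let $\EuclBuilding_\Valuation$ be the Bruhat--Tits building of $\GroupScheme(\GlobalField_\Valuation)$; observe that $\GroupScheme(\Integers[\Places])$ acts on it with stabilizer of a suitable special vertex equal to $\GroupScheme(\Integers[\Places']) \intersect \GroupScheme(\Integers[\Valuation])$, which is commensurable with $\GroupScheme(\Integers[\Places'])$ --- again via the Bruhat--Tits Fixed Point Theorem as in the proof of Proposition~\ref{prop:increasing_places_preserves_topfin}. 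Third, use that $\GroupScheme(\Integers[\Places])$ acts on $\EuclBuilding_\Valuation$ properly (discrete stabilizers inside the $p$-adic group) and that $\EuclBuilding_\Valuation$ is $\mathrm{CAT}(0)$, hence contractible, of dimension equal to the $\GlobalField_\Valuation$-rank of $\GroupScheme$. Fourth --- this is where the number-field hypothesis bites --- the action is no longer cocompact, so one cannot conclude $\TopFin{}$-properties directly; instead one filters $\EuclBuilding_\Valuation$ by $\GroupScheme(\Integers[\Places])$-cocompact subcomplexes (e.g.\ metric balls around the fixed special vertex) and applies Brown's criterion to transfer from the ambient group to a vertex stabilizer, i.e.\ from $\GroupScheme(\Integers[\Places])$ to $\GroupScheme(\Integers[\Places'])$. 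Fifth, assemble: since the filtration pieces are cocompact and the stabilizers of all cells are commensurable to $\GroupScheme(\Integers[\Places'])$, Brown's machinery yields that $\GroupScheme(\Integers[\Places'])$ is of type $\TopFin{\SumOfLocalRanks}$ whenever $\GroupScheme(\Integers[\Places])$ is. Finally, one remarks that the relevant sum of local ranks does not change: removing the place $\Class{\Valuation}$ from $\Places$ removes the $\GlobalField_\Valuation$-rank contribution, but that is exactly the dimension of $\EuclBuilding_\Valuation$, so the Brown transfer argument gives back the same bound $\SumOfLocalRanks$ as stated (the bookkeeping is analogous to, but dual to, that in Proposition~\ref{prop:increasing_places_preserves_topfin}).

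\textbf{Main obstacle.} The delicate point is the non-cocompactness of the action on $\EuclBuilding_\Valuation$: one must control the topology of the cocompact filtration pieces well enough to feed Brown's criterion, and in particular verify that passing ``down'' the filtration does not improve connectivity (so that the $\TopFin{}$-length is genuinely inherited and not merely bounded). This is precisely the content of Tiemeyer's thesis criterion \cite[Theorem~3.1]{tiemeyer97}, which handles this with a careful analysis of the connectivity of horoball-type neighbourhoods; rather than reproduce that analysis I would cite it, noting only that it is the mirror image of the function-field phenomenon exploited in Proposition~\ref{prop:increasing_places_preserves_topfin}, where local finiteness of the building forces the opposite monotonicity. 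For the purposes of this article the statement is used only as a remark to contrast with Proposition~\ref{prop:increasing_places_preserves_topfin}, so a citation-level proof is entirely appropriate.
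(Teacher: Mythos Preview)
The paper gives no proof: the theorem is quoted from \cite[Theorem~3.1]{tiemeyer97} purely as a closing remark to contrast with Proposition~\ref{prop:increasing_places_preserves_topfin}, so your sketch is being measured against Tiemeyer's argument rather than anything in the present paper. You correctly recognize that a citation is all that is required here.

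Your sketch, however, contains a concrete error. You claim that for a number field the Bruhat--Tits building $\EuclBuilding_\Valuation$ is locally infinite because ``the residue field is infinite'', and that the resulting non-cocompactness is what reverses the direction of the implication. This is false: at every non-archimedean place of a number field the residue field is a finite extension of some $\FiniteField[p]$, the building is locally finite, and by the same strong approximation argument as in Proposition~\ref{prop:increasing_places_preserves_topfin} the action of $\GroupScheme(\Integers[\Places])$ on $\EuclBuilding_\Valuation$ is cocompact. Your proposed mechanism therefore never arises. There is a second gap: Brown's criterion only transfers $\TopFin{\SumOfLocalRanks}$ \emph{from} stabilizers \emph{to} the acting group; there is no generic ``Brown in reverse'' that lets stabilizers inherit finiteness from the ambient group, which is exactly what you would need here. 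The genuine asymmetry Tiemeyer exploits is one of characteristic, not residue-field size: over a number field the compact open subgroup $\GroupScheme(\Integers[\Valuation]) \subseteq \GroupScheme(\GlobalField_\Valuation)$ is a $p$-adic analytic group, and structure theory for such groups (in the spirit of Lazard) supplies the extra leverage to pass finiteness down to $\GroupScheme(\Integers[\Places']) = \GroupScheme(\Integers[\Places]) \intersect \GroupScheme(\Integers[\Valuation])$ --- leverage that is unavailable in positive characteristic, which is precisely why Proposition~\ref{prop:increasing_places_preserves_topfin} only goes the other way.
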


\bibliographystyle{amsalpha}
\bibliography{../local}

\end{document}